\newtheorem{thm}{Theorem}[chapter]
\newtheorem{lemma}[thm]{Lemma}
\newtheorem{prop}[thm]{Proposition}
\newtheorem{cor}[thm]{Corollary}
\theoremstyle{definition}
\newtheorem{remark}[thm]{Remark}
\newtheorem{definition}[thm]{Definition}
\newtheorem{example}[thm]{Example}
\newcommand{\ist}[1]{\stackrel{\text{\makebox[0pt]{#1}}}{=}}
\newcommand{\ausruf}{!}
\newcommand{\ti}{\breve{\i}}
\newcommand{\vds}{\breve{p}}
\newcommand{\N}{{\mathbb N}}
\newcommand{\C}{{\mathbb C}}
\newcommand{\R}{{\mathbb R}}
\newcommand{\Z}{{\mathbb Z}}
\newcommand{\BB}{\mathcal{B}}
\newcommand{\CC}{\mathcal{C}}
\newcommand{\DD}{\mathcal{D}}
\newcommand{\HH}{\mathcal{H}}
\newcommand{\LL}{\mathcal{L}}
\newcommand{\PP}{\mathcal{P}}
\newcommand{\ZZ}{\mathcal{Z}}
\newcommand{\PB}{\mathrm{PB}}
\newcommand{\PBE}{\mathrm{PB}_E}
\newcommand{\PBdE}{\mathrm{PB}_{\partial E}}
\newcommand{\id}{\operatorname{id}}
\newcommand{\im}{{\operatorname{im}}}
\newcommand{\ev}{{\operatorname{ev}}}
\newcommand{\cov}{{\operatorname{cov}}}
\newcommand{\curv}{{\operatorname{curv}}}
\newcommand{\curvt}{{\widetilde{\mathrm{curv}}}}
\newcommand{\jt}{{\widetilde j}}
\newcommand{\ct}{{\widetilde{c}}}
\newcommand{\iotat}{{\widetilde{\iota}}}
\newcommand{\Ad}{\operatorname{Ad}}
\newcommand{\Hom}{\operatorname{Hom}}
\newcommand{\Ext}{\operatorname{Ext}}
\newcommand{\Tor}{\operatorname{Tor}}
\newcommand{\pr}{\operatorname{pr}}
\newcommand{\la}{\langle}       
\newcommand{\ra}{\rangle}    
\newcommand{\cdt}{\,\cdot\,}   
\newcommand{\Ul}{{\mathrm{U}(1)}}       
\newcommand{\hol}{\operatorname{Hol}}
\newcommand{\PT}{\operatorname{PT}}
\newcommand{\Hdr}{H_\mathrm{dR}}
\newcommand{\Ht}{{\widetilde H}}
\newcommand{\Ch}{{\tt{Chain}}}
\newcommand{\g}{{\mathfrak g}}       
\renewcommand{\phi}{\varphi}
\newcommand{\polhk}[1]{\setbox0=\hbox{#1}{\ooalign{\hidewidth
    \lower1.5ex\hbox{`}\hidewidth\crcr\unhbox0}}}
\begin{document}
%%%%%%%%%%%%%%%%%%%%%%%%%%%%%%%%%%%%%%%%%%%%%%%%%%%%%%%%%%%%%%%%%%%%%%%%%

\title{Differential Characters and Geometric Chains}
\author{Christian B\"ar and Christian Becker}
\address{Universit\"at Potsdam, Institut f\"ur Mathematik, Am Neuen Palais 10, 14469 Potsdam, Germany}
\email[C.~B\"ar]{baer@math.uni-potsdam.de}
\email[C.~Becker]{becker@math.uni-potsdam.de}
\keywords{Cheeger-Simons differential character, geometric chain, thin invariance, differential cohomology, multiplication of differential characters, fiber integration, up-down formula, relative differential character, holonomy, parallel transport, transgression, chain field theory}
\subjclass[2010]{53C08, 55N20}

\maketitle

\begin{abstract}
We study Cheeger-Simons differential characters and provide geometric descriptions of the ring structure and of the fiber integration map.
The uniqueness of differential cohomology (up to unique natural transformation) is proved by deriving an explicit formula for any natural transformation between a differential cohomology theory and the model given by differential characters.
Fiber integration for fibers with boundary is treated in the context of relative differential characters.
As applications we treat higher-dimensional holonomy, parallel transport, and transgression.
\end{abstract}

\tableofcontents
%%%%%%%%%%%%%%%%%%%%%%%%%%%%%%%%%%%%%%%%%%%%%%%%%%%%%%%%%%%%%%%%%%%%%%%%%
\chapter{Introduction}
%%%%%%%%%%%%%%%%%%%%%%%%%%%%%%%%%%%%%%%%%%%%%%%%%%%%%%%%%%%%%%%%%%%%%%%%%

Differential characters were introduced by Cheeger and Simons in \cite{CS83}.
Let $X$ be a differentiable manifold.
A differential character of degree $k$ on $X$ is a homomorphism $h: Z_{k-1}(X;\Z)\to \Ul$.
Here $Z_{k-1}(X;\Z)$ denotes the group of smooth integral-valued singular cycles of degree $k-1$. 
It is supposed that the evaluation on boundaries is given by integration of a form, more precisely, there exists a differential form $\curv(h)\in\Omega^k(X)$ such that $h(\partial c)=\exp \big( 2 \pi i  \int_c \curv(h) \big)$.
The form $\curv(h)$ is uniquely determined by $h$ and is called its \emph{curvature}.
We denote the set of all differential characters on $X$ of degree $k$ by $\widehat H^k(X;\Z)$.

In degree $k=1$ a differential character is essentially a smooth $\Ul$-valued function on $X$.
If one is given a $\Ul$-bundle over $X$ with connection, then one can associate a differential character by mapping any $1$-cycle to the holonomy of the bundle along this cycle.
This sets up a bijection between isomorphism classes of $\Ul$-bundles with connection to the set of differential characters of degree $k=2$. 
In a similar way, differential characters of higher degree correspond to ``higher $\Ul$-gauge theories'' like Hitchin gerbes in degree $k=3$.

The Chern class provides a bijection between $H^2(X;\Z)$ and the set of isomorphism classes of $\Ul$-bundles (without connection).
Hence $\widehat H^2(X;\Z)$ may be considered as a geometric enrichment of the singular cohomology group $H^2(X;\Z)$.
In fact, in any degree there is an analogous map $c:\widehat H^k(X;\Z) \to H^k(X;\Z)$ associating to a differential character its \emph{characteristic class}.
This observation can be axiomatized and leads to the concept of \emph{differential cohomology theory}.
Differential characters form a model for differential cohomology.
We give a constructive proof of the uniqueness of differential cohomology up to unique natural transformations by deriving an explicit formula for any natural transformation between a differential cohomology theory and differential characters.

Pointwise multiplication provides $\widehat H^k(X;\Z)$ with an obvious abelian group structure.
There is a less obvious \emph{multiplication} $\widehat H^k(X;\Z) \times \widehat H^l(X;\Z) \to \widehat H^{k+l}(X;\Z)$ which turns $\widehat H^*(X;\Z)$ into a ring.
We show that a set of natural axioms uniquely determines the ring structure.
Again, the proof is constructive and gives us an explicit geometric description of the ring structure, quite different from the original definition in \cite{CS83}.

Like for singular cohomology and for differential forms there is a concept of \emph{fiber integration} for differential characters.
We show that naturality and two compatibility conditions uniquely determine the fiber integration map. 
Let \mbox{$\pi:E\to X$} be a fiber bundle with closed oriented fibers $F$.
For the fiber integration map $\widehat\pi_!:\widehat H^{k+\dim(F)}(E;\Z)\to \widehat H^k(X;\Z)$ we obtain the geometric formula
\[
 (\widehat\pi_!h)(z)
= h(\lambda(z)) \cdot \exp \Big( 2\pi i \int_{a(z)} \fint_F \curv(h)  \Big).  
\]
Here $\lambda$ is a \emph{transfer map} and essentially does the following:
given a cycle $z$ in $X$ look at the homology class represented by $z$ and choose a closed manifold whose fundamental class also represents this homology class.
Then pull back the bundle $E$ to this manifold and take a representing cycle of the fundamental class of the resulting total space.
This is then a cycle in $E$ which can be inserted into $h$.
The ``correction factor'' $\exp \big( 2\pi i \int_{a(z)} \fint_F \curv(h)\big)$ involves the fiber integration $\fint$ of differential forms and a chain $a(z)$ associated with $z$.
It ensures that the construction is independent of the choices.

The uniqueness results for fiber integration and for differential cohomology together show that the various fiber integration maps for different models of differential cohomology in the literature are all equivalent.

There is the technical problem that not every homology class can be represented by a manifold.
For this reason we have to allow for certain ``manifolds'' with singularities, called \emph{stratifolds}.
We use stratifolds to define \emph{geometric chains} in order to provide a geometric description of singular homology theory.

There is a second reason to consider differential characters on more general ``smooth spaces'', rather than manifolds only.
Certain infinite-dimensional manifolds have to be allowed because we want to apply the theory to the loop space of a manifold, for instance.
 
The multiplication $*$ and the fiber integration map are compatible:
Given \mbox{$h \in \widehat H^k(X;\Z)$} and $f \in \widehat H^l(E;\Z)$, we show that the \emph{up-down formula} holds:
\[
\widehat \pi_!(\pi^*h * f) 
= h * (\widehat \pi_!f) \in \widehat H^{k+l-\dim F}(X;\Z).
\]

If the fibers of the bundle bound, then the fiber integrated differential character turns out to be topologically trivial.
This means that its characteristic class vanishes.
One finds an explicit topological trivialization involving the curvature.
A special case of this situation is the well-known \emph{homotopy formula}.
Let $f:[0,1] \times X \to Y$ be a homotopy between smooth maps $f_0,f_1: X \to Y$ and $h \in \widehat H^k(Y;\Z)$.
Then we find
$$
f_1^*h - f_0^*h 
=
\iota \Big( \int_0^1 f_s^*\curv(h) ds \,\Big) \,.
$$

We also consider the groups of \emph{relative differential characters}, denoted $\widehat H^k(X,A;\Z)$.
In degree $k=1$ they correspond to smooth $\Ul$-valued functions on $X$ with a lift to an $\R$-valued function over $A$.
In degree $k=2$ they correspond to $\Ul$-bundles with connection over $X$ with a section over $A$.
We derive long exact sequences relating absolute and relative differential characters.
Since differential cohomology theories are not cohomology theories in the usual sense, these exact sequences are more subtle than those in singular cohomology theory, for instance.
Our sequences provide criteria for a differential character to be topologically trivial over $A$.
\emph{Fiber integration for fibers with boundary} can now be defined.
It is a map $\widehat\pi_!^E: \widehat H^{k+\dim(F)}(E;\Z) \to \widehat H^{k+1}(X,X;\Z)$.

We apply fiber integration to construct transgression maps to the loop space $\LL(X)$ of a smooth manifolds $X$ and more general mapping spaces. 
Transgression along $S^1$ is a homomorphism $\widehat H^k(X;\Z) \to \widehat H^{k-1}(\LL(X);\Z)$.
It is constructed by pull-back of differential characters from $X$ to $\LL(X) \times S^1$ using the evaluation map followed by integration over the fiber of the trivial bundle.
Analogously, we define transgression along any oriented closed manifold $\Sigma$.
Using fiber integration for fibers with boundary we also define transgression along a compact oriented manifold with boundary.

Differential characters are \emph{thin invariant}:
A smooth singular chain $c \in C_k(X;\Z)$ is called \emph{thin} if the integral of any $k$-form over $c$ vanishes.
For instance this happens if $c$ is supported on a $(k-1)$-dimensional submanifold.
Differential characters of degree $k$ vanish on boundaries of thin $k$-chains.  
In particular, they are invariant under barycentric subdivision. 

We apply the notion of thin invariance to chain field theories, a modification of topological quantum field theories in the sense of Atiyah. 
Generalizing work of Bunke and others, we show that chain field theories are invariant under thin $2$-morphisms. 

\bigskip
\emph{Acknowledgment.}
It is a great pleasure to thank Matthias Kreck for very helpful discussion.
Moreover, the authors thank \emph{Sonderforschungsbereich 647} funded by \emph{Deutsche Forschungsgemeinschaft} for financial support.

%%%%%%%%%%%%%%%%%%%%%%%%%%%%%%%%%%%%%%%%%%%%%%%%%%%%%%%%%%%%%%%%%%%%%%%%%
\chapter{Smooth spaces} \label{sec:smoothspaces}
%%%%%%%%%%%%%%%%%%%%%%%%%%%%%%%%%%%%%%%%%%%%%%%%%%%%%%%%%%%%%%%%%%%%%%%%%

Differential characters were introduced by Cheeger and Simons in \cite{CS83} on finite-dimensional smooth manifolds.
We will need to consider differential characters on more general spaces $X$.
First of all, $X$ may be a manifold with a nonempty boundary.
Secondly, we have to allow certain infinite-dimensional spaces because we want to include examples such as the loop space $X=\LL(M)=C^\infty(S^1,M)$ of a finite-dimensional manifold $M$.
Thirdly, $X$ may also be any oriented compact regular $p$-stratifold as in \cite{K10}.
Stratifolds will be needed to represent homology classes.
\index{stratifold}

One convenient class of spaces to work with is that of differential spaces in the sense of Sikorski \cite{Si72}.
Recall their definition:

\begin{definition}\label{def:diffspace} \index{Definition!differential space}%
A \emph{differential space}\index{differential space} \index{space!differential $\sim$} is a pair $(X,C^\infty(X))$ where $X$ is a topological space and $C^\infty(X)$ is a subset of the set $C^0(X)$ of all continuous real-valued functions such that the following holds:
\begin{itemize}
\item 
\emph{Initial topology:}
$X$ carries the weakest topology for which all functions in $C^\infty(X)$ are continuous;
\item
\emph{Locality:}
If $f\in C^0(X)$ is such that for every point in $X$ there is a function $g\in C^\infty(X)$ coinciding with $f$ on a neighborhood of that point, then $f\in C^\infty(X)$;
\item
\emph{Composition with smooth functions:}
If $f_1,\ldots,f_k\in C^\infty(X)$ and $g$ is a smooth function defined on an open neighborhood of $f_1(X)\times f_k(X)\subset \R^k$, then $g\circ(f_1,\ldots,f_k)\in C^\infty(X)$.
\end{itemize}
\end{definition}
The functions in $C^\infty(X)$ are called \emph{smooth functions}.
A map $f:X\to Y$ between differential spaces is called \emph{smooth} if smooth functions on $Y$ pull back to smooth functions on $X$ along $f$.
This way we obtain the category of differential spaces.

On differential spaces one can define tangent vectors, $k$-forms, their exterior differential and one can pull back forms.
The usual rules such as Stokes's theorem apply \cite{MU94}.
In addition to that we will need that certain definitions of homology and cohomology which are equivalent in the case of manifolds remain equivalent.

\begin{definition}\label{def:smoothspace} \index{Definition!smooth space}%
A differential space is called a \emph{smooth space}\index{smooth space} \index{space!smooth $\sim$} if the following holds:
\begin{itemize}
\item 
\emph{Continuous versus smooth singular (co-)homology:}
The inclusion of the complex of smooth singular chains (with integral coefficients) into that of continuous singular chains induces isomorphisms for the corresponding homology and cohomology theories;
\item
\emph{deRham theorem:}
Integration of differential forms induces an isomorphism from deRham cohomology to smooth singular cohomology with real coefficients;
\item
\emph{Stratifold- versus singular homology:}
\index{stratifold homology}%
Pushing forward fundamental cycles induces an isomorphism from the bordism theory of oriented $p$-stratifolds to smooth singular homology theory with integral coefficients.
\end{itemize}
\end{definition}
Finite-dimensional manifolds (possibly with boundary), stratifolds and also infinite-dimensional Fr\'echet manifolds such as the loop space of a compact manifold are all examples for smooth spaces, see \cite[Ch.~7]{KM97} for infinite-dimensional manifolds and \cite{K10,E05} for stratifolds.

\begin{remark}
Instead of differential spaces one could also use diffeological spaces \index{diffeological space} \index{space!diffeological $\sim$} as in \cite{IZ13} to define smooth spaces in Definition~\ref{def:smoothspace}.
A smooth space would then be defined as a diffeological space satisfying the properties in Definition~\ref{def:smoothspace}.
These properties are not automatic; by \cite[p.~272]{IZ13} there are diffeological spaces for which the de Rham map fails to be an isomorphism.
\end{remark}

%%%%%%%%%%%%%%%%%%%%%%%%%%%%%%%%%%%%%%%%%%%%%%%%%%%%%%%%%%%%%%%%%%%%%%%%%
\chapter{Refined smooth singular homology}
\label{sec:refsingcohom}
%%%%%%%%%%%%%%%%%%%%%%%%%%%%%%%%%%%%%%%%%%%%%%%%%%%%%%%%%%%%%%%%%%%%%%%%%

Let $X$ be a smooth space in the sense explained above. 
For $n \in \N_0$, we denote by $C_n(X;\Z)$ \index{+CnXZ@$C_n(X;\Z)$, group of  singular chains} the abelian group of smooth singular $n$-chains in $X$ with integral coefficients.
The spaces of $n$-cycles and $n$-boundaries of the complex $(C_n(X;\Z),\partial)$ are denoted by $Z_n(X;\Z)$ \index{+ZnXZ@$Z_n(X;\Z)$, group of singular cycles} and $B_n(X;\Z)$,\index{+BnXZ@$B_n(X;\Z)$, group of singular boundaries} respectively.
Denote the space of smooth $n$-forms on $X$ by $\Omega^n(X)$.\index{+OnX@$\Omega^n(X)$, space of smooth $n$-forms}

\begin{definition}[Thin chains]\label{def:thin} \index{Definition!thin chains}%
A smooth singular chain $y\in C_n(X;\Z)$ is called {\em thin} \index{thin chain} if 
$$
\int_y \omega =0
$$
for all $\omega \in \Omega^{n}(X)$.
We denote by $S_n(X;\Z) \subset C_n(X;\Z)$ \index{+SnXZ@$S_n(X;\Z)$, group of thin $n$-chains} the subgroup of thin $n$-chains in $X$. 
\end{definition}

This definition of thin chains is similar to that of \emph{thin homotopies} in the literature, see e.g.\ \cite{B91,CP94}.
Thin homotopies will not occur in this paper, however.

If $X$ and $Y$ are smooth spaces and if $f:X\to Y$ is a smooth map, then if $c\in C_n(X;\Z)$ is thin, so is $f_*c\in C_n(Y;\Z)$.
Namely, for any $\omega \in \Omega^n(Y)$ we have
$$
\int_{f_*c}\omega = \int_c f^*\omega = 0.
$$
Hence $f_*(S_n(X;\Z)) \subset S_n(Y;\Z)$ and $f_*$ induces a homomorphism $f_* : C_n(X;\Z)/S_n(X;\Z) \to C_n(Y;\Z)/S_n(Y;\Z)$.

Denote the equivalence class of $c\in C_n(X;\Z)$ in $C_n(X;\Z)/S_n(X;\Z)$ by $[c]_{S_n}$.\index{+Sn@$[\cdot]_{S_n}$, equivalence class modulo thin chains}
By definition, integration of an $n$-form $\omega \in \Omega^n(X)$ descends to a linear map $C_n(X;\Z)/S_n(X;\Z) \to \R$, $[c]_{S_n} \mapsto \int_c\omega$.

Moreover, thin chains are preserved by the boundary operator. 
Namely, for $c\in S_{n+1}(X;\Z)$ and any $\eta\in\Omega^{n}$ we have by Stokes's theorem
$$
\int_{\partial c}\eta = \int_c d\eta = 0.
$$
Thus $\partial S_{n+1}(X;\Z) \subset S_{n}(X;\Z)$.
The boundary operator induces a homomorphism
$$
C_{n+1}(X;\Z)/S_{n+1}(X;\Z) \;\; \stackrel{\partial}{\longrightarrow} \;\; B_n(X;\Z) / \partial S_{n+1}(X;\Z) \,.
$$
Since $Z_n(X;\Z) \subset C_n(X;\Z)$ and $\partial S_{n+1}(X;\Z) \subset S_{n}(X;\Z)$ we have a natural homomorphism
\begin{equation}\label{map:inclproj}
Z_n(X;\Z)/\partial S_{n+1}(X;\Z) \;\; \longrightarrow \;\; C_n(X;\Z) / S_{n}(X;\Z)  \, .
\end{equation}
Denote the equivalence class of $z\in Z_n(X;\Z)$ in $Z_n(X;\Z)/\partial S_{n+1}(X;\Z)$ by $[z]_{\partial S_{n+1}}$.\index{+DSn@$[\cdot]_{\partial S_{n+1}}$, equivalence class modulo boundaries of thin chains}
Integration of differential forms induces well-defined maps
\[
\Omega^n(X) \times C_n(X;\Z) / S_{n}(X;\Z) \to \R , 
\quad
(\eta,[c]_{S_n}) \mapsto \int_{[c]_{S_n}} \eta := \int_{c} \eta , 
\]
and
\[
\Omega^n(X) \times Z_n(X;\Z) / \partial S_{n+1}(X;\Z) \to \R , 
\quad
(\eta,[z]_{\partial S_{n+1}}) \mapsto \int_{[z]_{\partial S_{n+1}}} \eta := \int_{z} \eta \, .
\]
Stokes's theorem says
$$
\int_{[c]_{S_n}} d\eta = \int_{\partial [c]_{S_{n}}} \eta .
$$
Recall that for a closed form $\omega \in \Omega^n(X)$, integration over a smooth singular cycle $z \in Z_n(X;\Z)$ corresponds to evaluation of the de Rham class $[\omega]_\mathrm{dR} \in \Hdr^n(X)$ on the homology class $[z] \in H_n(X;\Z)$, i.e.,
\index{+DR@$[\,\cdot\,]_\mathrm{dR}$, de Rham cohomology class} 
\index{+HdRnX@$\Hdr^n(X)$, $n$-the de Rham cohomology of $X$}
\index{+HnXZ@$H_n(X;\Z)$, $n$-th singular cohomology of $X$}
$$
\int_z \omega 
= \la [\omega]_\mathrm{dR} , [z] \ra \,.
$$

\begin{remark}
The quotients $C_n(X;\Z) / S_{n}(X;\Z)$ and $Z_n(X;\Z) / \partial S_{n+1}(X;\Z)$ are geo\-metrically very natural and appear in elementary constructions:
for instance, if $X$ is a closed smooth oriented $n$-manifold (or, more generally, an oriented compact $n$-dimensional regular $p$-stratifold without boundary) and if $c,c'\in Z_n(X;\Z)$ represent the fundamental class of $X$, then they are homologous, i.e., there exists $y\in C_{n+1}(X;\Z)$ with $c-c'=\partial y$.
For dimensional reasons $C_{n+1}(X;\Z)=S_{n+1}(X;\Z)$, hence $[c]_{\partial S_{n+1}} = [c']_{\partial S_{n+1}}$.
In fact, in this case $H_n(X;\Z) = Z_n(X;\Z) / B_{n}(X;\Z) = Z_n(X;\Z) / \partial S_{n+1}(X;\Z)$.

If $X$ has a boundary and $c,c' \in C_n(X;\Z)$ represent the fundamental class of $X$ in $H_n(X,\partial X;\Z)$, then we can find $y\in C_{n+1}(X;\Z)=S_{n+1}(X;\Z)$ such that $c-c'-\partial y$ is supported in the boundary of $X$ and is hence thin.
Therefore $[c]_{S_n} = [c']_{S_n}$ in this case.

Generalizations of these elementary observations are crucial for the construction of geometric chains in the next section.
\end{remark}

%%%%%%%%%%%%%%%%%%%%%%%%%%%%%%%%%%%%%%%%%%%%%%%%%%%%%%%%%%%%%%%%%%%%%%%%%
\chapter{Geometric chains}
\label{sec:geomchains}
%%%%%%%%%%%%%%%%%%%%%%%%%%%%%%%%%%%%%%%%%%%%%%%%%%%%%%%%%%%%%%%%%%%%%%%%%

We now define our notion of geometric chains.
The idea is to represent singular homology classes in $X$ by manifolds because this geometric description is well adapted for a geometric definition of fiber integration for Cheeger-Simons differential characters as we shall see. 
There is the problem however, that not all homology classes are representable by smooth manifolds.
Fortunately, Kreck's stratifolds \cite{K10} provide a suitable generalization of manifolds which repairs this defect.
\index{stratifold}

For $n\in \N_0$ let $\CC_n(X)$\index{+CnX@$\CC_n(X)$, group of geometric chains} \index{geometric chain} \index{chain!geometric $\sim$} be the set of diffeomorphism classes of smooth maps $f : M \to X$ where $M$ is an oriented compact $n$-dimensional regular $p$-stratifold with boundary, compare \cite[pp.~35 and 43]{K10}.
\index{+M@$M$, compact oriented $p$-stratifold}
Here two maps $f: M \to X$ and $f': M' \to X$ are called diffeomorphic if there is an orientation preserving diffeomorphism $F: M\to M'$ such that 
$$
\xymatrix{
M \ar[dr]^{f} \ar[d]_{F} &  \\
M' \ar[r]^{f'} & X
}
$$
commutes.
The equivalence class of $f:M\to X$ is denoted by $[M\xrightarrow{f}X]$.\index{+MfX@$[M\xrightarrow{f}X]$, geometric chain}
For $n<0$ put $\CC_n(X):=\{0\}$.
If $f:X \to Y$ is a smooth map, then we define $f_* : \CC_n(X) \to \CC_n(Y)$ by $f_*([M\xrightarrow{g}X]) := [M\xrightarrow{f\circ g}Y]$.

Disjoint union defines a structure of abelian semigroup on $\CC_n(X)$.
The boundary operator $\partial: \CC_n(X) \to \CC_{n-1}(X)$ is given by restriction to the geometric boundary.
For the boundary orientation we use the convention that an outward pointing tangent vector of $M$ at a regular point $p$ of $\partial M$ followed by an oriented basis of $T_p(\partial M)$ yields an oriented basis of $T_pM$.\index{orientation!of stratifolds with boundary}

We define a homomorphism $\phi_n: \CC_n(X) \to C_n(X;\Z)/S_n(X;\Z)$\index{+Phin@$\phi_n: \CC_n(X) \to C_n(X;\Z)/S_n(X;\Z)$} as follows:
For $f: M \to X$ choose a representing $n$-chain $c$ of the fundamental class of $M$ in $H_n(M,\partial M;\Z)$.
Then the equivalence class of $c$ in $C_n(M;\Z)/S_n(M;\Z)$ is independent of the particular choice of $c$ and we put $\phi_n([M\xrightarrow{f}X]) := [f_*(c)]_{S_n}$.

Similarly, if $\partial M = \emptyset$, then the equivalence class in $Z_n(M;\Z)/\partial S_{n+1}(M;\Z)$ of an $n$-cycle $c$ representing the fundamental class of $M$ in $H_n(M;\Z)$ does not depend on the particular choice of $c$ and we can define $\psi_n:\ZZ_n(X)\to Z_n(X;\Z)/\partial S_{n+1}(X;\Z)$\index{+Psin@$\psi_n:\ZZ_n(X)\to Z_n(X;\Z)/\partial S_{n+1}(X;\Z)$} by $\psi_n([M\xrightarrow{f}X]) := [f_*(c)]_{\partial S_{n+1}}$.

We call elements of $\CC_n(X)$ \emph{geometric chains} and elements of
\begin{align*}
\ZZ_n(X) &:= \{ \zeta \in \CC_n(X) \, | \, \partial \zeta =0 \} \\
\mbox{and} \qquad 
\BB_n(X) &:= \{ \zeta \in \CC_n(X) \,|\, \exists \beta \in \CC_{n+1}(X): \partial\beta=\zeta\} 
\end{align*}\index{+ZnX@$\ZZ_n(X)$, group of geometric cycles} \index{geometric cycle} \index{cycle!geometric $\sim$} \index{+BnX@$\BB_n(X)$, group of geometric boundaries} \index{geometric boundary} \index{boundary!geometric $\sim$}
\emph{geometric cycles} and \emph{geometric boundaries}, respectively.
We obtain the following commutative diagram:
\begin{equation}\label{eq:geomchainsboundary}
\xymatrix{
   \cdots \ar[r]
 & \CC_{n+1}(X) \ar[r]^{\partial} \ar[d]^{\phi_{n+1}} 
 & \BB_n(X) \ar[r]^{\mathrm{inclusion}} \ar[d]^{\psi_n|_{\BB_n(X)}} 
 & \ZZ_n(X) \ar[r]^{\mathrm{inclusion}} \ar[d]^{\psi_n} 
 & \CC_n(X)  \ar[d]^{\phi_n} \ar[r]
 & \cdots \\
   \cdots \ar[r]
 & \frac{C_{n+1}(X;\Z)}{S_{n+1}(X;\Z)} \ar[r]^{\partial}
 & \frac{B_n(X;\Z)}{\partial S_{n+1}(X;\Z)} \ar[r]^{\mathrm{inclusion}} 
 & \frac{Z_n(X;\Z)}{\partial S_{n+1}(X;\Z)} \ar[r]
 & \frac{C_n(X;\Z)}{S_{n}(X;\Z)}  \ar[r]
 & \cdots
}
\end{equation}
The map $Z_n(X;\Z)/\partial S_{n+1}(X;\Z) \; \to C_n(X;\Z)/S_{n}(X;\Z)$ is the one from \eqref{map:inclproj}.
Diagram~\eqref{eq:geomchainsboundary} is natural.
In particular, for any smooth map $f:X \to Y$ the diagram
$$
\xymatrix{
\CC_n(X) \ar[r]^{f_*} \ar[d]^{\phi_n} & 
\CC_n(Y)  \ar[d]^{\phi_n} \\
\frac{C_n(X;\Z)}{S_{n}(X;\Z)} \ar[r]^{f_*} &
\frac{C_n(Y;\Z)}{S_{n}(Y;\Z)}
}
$$
commutes and similarly for $\psi_n$.

From now on, we will, by slight abuse of notation, write $[\zeta]_{\partial S_{n+1}}$ instead of $\psi_n(\zeta)$ for $\zeta\in\ZZ_n(X)$ and $[\beta]_{S_n}$ instead of $\phi_n(\beta)$ for $\beta\in\CC_n(X)$.
\index{+ZetadelSn@$[\zeta]_{\partial S_{n+1}}$, image of $\zeta$ under $\psi_n$}
\index{+BetaSn@$[\beta]_{S_n}$, image of $\beta$ under $\phi_n$}

For an oriented stratifold $M$ we denote by $\overline{M}$ the same stratifold with reversed orientation.
\index{+Mn@$\overline{M}$, stratifold with reversed orientation}
\index{orientation!reversed $\sim$}%
Then $[M\xrightarrow{f}X]\mapsto[\overline{M}\xrightarrow{f}X]$ is an involution on $\CC_n(X)$ which commutes with $\partial$.
Furthermore, $\zeta + \overline\zeta \in \BB_n(X)$ for any $\zeta\in\ZZ_n(X)$ because $f \sqcup f: M \sqcup \overline M \to X$ is bounded by $f: [0,1] \times M \to X$.
In other words, the involution $\overline{\phantom{X}}: \ZZ_n(X) \to \ZZ_n(X)$ induces $-\id$ on homology, 
$$
[\overline{\zeta}] = - [\zeta] \mbox{ in } \HH_n(X) := \ZZ_n(X)/\BB_n(X).
$$
In particular, the geometric homology $\HH_n(X):=\ZZ_n(X)/\BB_n(X)$ is an abelian group, not just a semigroup.
\index{+HHnX@$\HH_n(X)$, geometric homology} \index{geometric homology}

The reason for using stratifolds instead of manifolds is the fact that the homomorphisms $\psi_n: \ZZ_n(X) \to Z_n(X;\Z)/\partial S_{n+1}(X;\Z)$ induce isomorphisms on homology (see \cite[Thm.~20.1]{K10})
:
$$
\HH_n(X) := \frac{\ZZ_n(X)}{\BB_n(X)}
\longrightarrow \frac{Z_n(X;\Z)/\partial S_{n+1}(X;\Z)}{B_n(X;\Z)/\partial S_{n+1}(X;\Z)} =  \frac{Z_n(X;\Z)}{B_n(X;\Z)}= H_n(X;\Z) \,.
$$
\index{stratifold homology}%
\index{geometric homology}%
The \emph{cross product} \index{cross product!of geometric chains} of geometric chains is defined by
\begin{align*}
\times: \CC_k(X) \otimes \CC_{k'}(X') &\to \CC_{k+k'}(X \times X'), \\
[M\xrightarrow{g}X] \otimes [M'\xrightarrow{g'}X'] &\mapsto [M \times M'\xrightarrow{g \times g'}X \times X'].
\end{align*}
By \cite[Thm.~20.1]{K10} this cross product in  $\HH_*$ is compatible with the usual cross product in $H_*$.
\index{+Timesa@$\times$, cross product of geometric chains}

\begin{remark}\label{rem:extend}
At various occasions we will have to extend homomorphisms $Z_n(X;\Z)\to G$ to homomorphisms $C_n(X;\Z) \to G$ where $G$ is an abelian group.
Since $B_{n-1}(X;\Z)$ is free, the exact sequence 
$$
0 \to Z_{n}(X;\Z) \xrightarrow{i} C_{n}(X;\Z) \xrightarrow{\partial} B_{n-1}(X;\Z) \to 0
$$ 
splits, though not canonically.
In particular, any basis of $Z_{n}(X;\Z)$ can be extended to a basis of $C_{n}(X;\Z)$. 
Therefore, any group homomorphism $Z_{n}(X;\Z) \to G$ can be extended as a group homomorphism to $C_{n}(X;\Z) \to G$ by defining it in an arbitrary manner on the complementary basis elements.
\end{remark}

\begin{lemma}[Representation by geometric chains]\label{lem:Liftazeta} \index{Lemma!representation by geometric chains}%
There are homomorphisms $\zeta : C_{n+1}(X;\Z) \to \CC_{n+1}(X)$, $a:C_n(X;\Z)\to C_{n+1}(X;\Z)$, and $y:C_{n+1}(X;\Z) \to Z_{n+1}(X;\Z)$ such that
\begin{align}
\partial \zeta(c) &= \zeta(\partial c) &\quad\quad \mbox{ for all } c\in C_{n+1}(X;\Z); \label{eq:delzetazetadel}\\
[\zeta(c)]_{S_{n+1}} &= [c - a(\partial c) - \partial a(c+y(c))]_{S_{n+1}}  &\quad\quad \mbox{ for all } c\in C_{n+1}(X;\Z); \label{eq:cadel}\\
[\zeta(z)]_{\partial S_{n+1}} &= [z-\partial a(z)]_{\partial S_{n+1}} &\quad\quad \mbox{ for all } z\in Z_{n+1}(X;\Z).\label{eq:zazeta}
\end{align}
\end{lemma}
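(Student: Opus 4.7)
The plan is to construct $\zeta$, $a$, $y$ in stages, first on cycles and then extending to all chains, using the isomorphism $\psi:\HH_\bullet(X)\to H_\bullet(X;\Z)$ guaranteed by $X$ being a smooth space (Definition~\ref{def:smoothspace}) together with the freeness of the singular chain groups (Remark~\ref{rem:extend}).

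First I would define $\zeta$ and $a$ on cycles, thereby securing property \eqref{eq:zazeta}. Surjectivity of $\psi_{n+1}$ onto $H_{n+1}(X;\Z)$ lets me choose, on a basis $\{z_i\}$ of $Z_{n+1}(X;\Z)$, geometric cycles $\zeta(z_i)\in\ZZ_{n+1}(X)$ representing $[z_i]$ and chains $a(z_i)\in C_{n+2}(X;\Z)$ witnessing $[\zeta(z_i)]_{\partial S_{n+1}}=[z_i-\partial a(z_i)]_{\partial S_{n+1}}$; extending linearly by Remark~\ref{rem:extend} gives \eqref{eq:zazeta}. Next I would extend $\zeta$ to all of $C_{n+1}(X;\Z)$ so that \eqref{eq:delzetazetadel} holds. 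Choose a splitting $C_{n+1}(X;\Z)=Z_{n+1}(X;\Z)\oplus W_{n+1}$ with $\partial:W_{n+1}\xrightarrow{\sim} B_n(X;\Z)$. For a basis element $w\in W_{n+1}$, the geometric cycle $\zeta(\partial w)$ represents $[\partial w]=0$ in $H_n(X;\Z)$, so it lies in $\BB_n(X)$ because $\psi_n$ is an isomorphism; pick a lift $\zeta(w)\in\CC_{n+1}(X)$ with $\partial\zeta(w)=\zeta(\partial w)$ and extend linearly. Setting $\zeta(c):=\zeta(z)+\zeta(w)$ for $c=z+w$ yields \eqref{eq:delzetazetadel}.

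Finally, I would define $y$ and extend $a$ from $Z_{n+1}(X;\Z)$ to all of $C_{n+1}(X;\Z)$ in order to fulfill \eqref{eq:cadel}. Properties \eqref{eq:delzetazetadel} and \eqref{eq:zazeta} already force the two sides of \eqref{eq:cadel} to have boundaries agreeing in $C_n(X;\Z)/S_n(X;\Z)$; hence their difference is a cycle in the quotient $C_{n+1}(X;\Z)/S_{n+1}(X;\Z)$, and it remains to trivialize this cycle on basis elements $w\in W_{n+1}$. The obstruction to writing it as $\partial a(w)$ modulo a thin chain is a homology class in $H_{n+1}(X;\Z)$. Here the freedom left over from the previous step is crucial: the lift $\zeta(w)$ was unique only up to adding an element of $\ZZ_{n+1}(X)$, and using surjectivity of $\psi_{n+1}$ I may alter it by a geometric cycle whose class kills the obstruction. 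The homomorphism $y:C_{n+1}(X;\Z)\to Z_{n+1}(X;\Z)$ records the cycle shifts introduced in this compensation; on a cycle $z$ one may take $y(z)=0$, so that \eqref{eq:cadel} then collapses to \eqref{eq:zazeta} and remains consistent with the cycle-level definition of $a$.

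The main obstacle is the simultaneous chain-level bookkeeping needed to keep all three maps homomorphic while arranging the obstructions to vanish. Freeness of $Z_{n+1}(X;\Z)$, $B_n(X;\Z)$ and the complement $W_{n+1}$ (Remark~\ref{rem:extend}) reduces everything to basis-wise constructions, but at each basis step one must verify that the necessary chains in $\CC_{n+1}(X)$, $C_{n+2}(X;\Z)$ and $Z_{n+1}(X;\Z)$ genuinely exist; this feasibility is precisely what the smooth-space axiom, i.e.\ the isomorphism $\psi:\HH_\bullet(X)\to H_\bullet(X;\Z)$ of Definition~\ref{def:smoothspace}, provides.
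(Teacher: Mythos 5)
Your proposal is correct and follows essentially the same route as the paper: define $\zeta$ and $a$ on cycles via surjectivity of $\psi$, extend $\zeta$ over a complement of $Z_{n+1}(X;\Z)$ by lifting the geometric boundaries $\zeta(\partial w)$, and absorb the remaining discrepancy into a cycle-valued correction $y$. The only wording to sharpen is in the last step: the obstruction should be taken as an actual singular cycle $y(w)$ representing the difference modulo thin chains (not merely its homology class), so that correcting by the geometric cycle $\zeta(y(w))$ and applying \eqref{eq:zazeta} to $y(w)$ produces exactly the term $\partial a(y(c))$ appearing in \eqref{eq:cadel} --- which is precisely what the paper does.
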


\begin{proof}
a)
For any $z\in Z_n(X;\Z)$ the singular homology class represented by $z$ lies in the image of the map induced by $\psi_n$.
Hence we may choose a geometric cycle $\zeta(z) \in \ZZ_n(X)$ such that $[z]_{\partial S_{n+1}}-[\zeta(z)]_{\partial S_{n+1}}\in B_n(X;\Z)/\partial S_{n+1}(X;\Z)$.
We may thus choose a smooth singular chain $a(z) \in C_{n+1}(X;\Z)$ such that \eqref{eq:zazeta} holds.
In particular, if $z = \partial c \in B_n(X;\Z)$ is a smooth singular boundary, then $\zeta(z) = \zeta(\partial c) \in \BB_n(X)$ is a geometric boundary.

Since $Z_n(X;\Z)$ is free, the choices in $z\mapsto \zeta(z)$ and $z\mapsto a(z)$ can be made such that $\zeta:Z_n(X;\Z)\to \ZZ_n(X)$\index{+Zeta@$\zeta:Z_n(X;\Z)\to \ZZ_n(X)$} and $a:Z_n(X;\Z)\to C_{n+1}(X;\Z)$\index{+A@$a:Z_n(X;\Z)\to C_{n+1}(X;\Z)$} are homomorphisms.
One simply makes choices on elements of a basis of $Z_n(X;\Z)$ and extends as a homomorphism. 
In particular, we then have $\zeta(0) = 0$.
We perform this construction in all degrees $n \in \N_0$.
By Remark~\ref{rem:extend} we can extend $a$ to a homomorphism $a:C_n(X;\Z)\to C_{n+1}(X;\Z)$.

b)
We construct an extension of the homomorphism $\zeta$ to a homomorphism from singular chains to geometric chains such that it commutes with the boundary operations.
As an auxiliary tool, we first define a group homomorphism $\alpha:C_{n+1}(X;\Z) \to \CC_{n+1}(X)$ by choosing $\alpha(c)$ on basis elements and extending as a homomorphism.
On the basis elements of $Z_{n+1}(X;Z)$ we set $\alpha(c) = \zeta(c)$.
On the complementary basis elements we choose $\alpha(c)$ such that $\partial \alpha(c) = \zeta(\partial c)$.
This can be done since $\zeta(\partial c)$ is a geometric boundary. 
We then have 
\begin{equation}
[\partial (c - a(\partial c)-\partial a(c))]_{\partial S_{n+1}} 
= [\partial c - \partial a(\partial c)]_{\partial S_{n+1}} 
\stackrel{\eqref{eq:zazeta}}{=} 
[\zeta(\partial c)]_{\partial S_{n+1}} 
= \partial [\alpha(c)]_{S_{n+1}}.
\label{eq:delcac} 
\end{equation}
Hence there exists a smooth singular cycle $y(c) \in Z_{n+1}(X;\Z)$ such that 
\begin{equation}
[c - a(\partial c) -\partial a(c)- y(c)]_{S_{n+1}} = [\alpha(c)]_{S_{n+1}}.
\label{eq:cayalpha}
\end{equation}
We can choose $c \mapsto y(c)$ as a group homomorphism $y:C_{n+1}(X;\Z) \to Z_{n+1}(X;\Z)$ by defining it on basis elements, as explained above.
On the basis elements of $Z_{n+1}(X;\Z)$ we set $y(c) =0$.
Condition \eqref{eq:zazeta} implies that \eqref{eq:cayalpha} holds in this case. 
On the complementary basis elements, we choose $y(c) \in Z_{n+1}(X;\Z)$ such that \eqref{eq:cayalpha} holds.  

We have $\zeta(y(c)) \in \ZZ_{n+1}(X)$ and $a(y(c)) \in C_{n+2}(X;\Z)$ with 
$$
[y(c) - \partial a(y(c))]_{\partial S_{n+2}} =[\zeta(y(c))]_{\partial S_{n+2}}.
$$ 
If $c \in C_{n+1}(X;\Z)$ is a cycle we have $\alpha(c) + \zeta(y(c)) = \zeta(c) + \zeta(0)=\zeta(c)$. 
We may thus extend the homomorphism $\zeta:Z_{n+1}(X;\Z) \to \ZZ_{n+1}(X)$ constructed above to a homomorphism $\zeta:C_{n+1}(X;\Z) \to \CC_{n+1}(X)$ by setting $\zeta(c) := \alpha(c) + \zeta(y(c)) \in \CC_{n+1}(X)$.
We perform this construction in all degrees $n \in \N_0$.

c)
We have constructed a group homomorphism $\zeta : C_{n+1}(X;\Z) \to \CC_{n+1}(X)$ such that in addition to \eqref{eq:zazeta} we have for all $c\in C_{n+1}(X;\Z)$:
\begin{equation*}
\partial\zeta(c) = \partial\alpha(c)+\partial\zeta(y(c))=\zeta(\partial c)+0=\zeta(\partial c)
\end{equation*}
which is \eqref{eq:delzetazetadel} and 
\begin{align}
[c - a(\partial c) - \partial a(c+y(c))]_{S_{n+1}} 
&=
[c - a(\partial c) -\partial a(c)- y(c)]_{S_{n+1}} +  [y(c) - \partial a(y(c))]_{S_{n+1}}\nonumber\\
&\ist{\eqref{eq:cayalpha},\eqref{eq:zazeta}}\,\,\,
[\alpha(c)]_{S_{n+1}}+ [\zeta(y(c))]_{S_{n+1}} \nonumber\\
&=
[\zeta(c)]_{S_{n+1}}.
\end{align}
which is \eqref{eq:cadel}.
\end{proof}

Now we turn to fiber bundles.
Let $F \hookrightarrow E \twoheadrightarrow X$ be a fiber bundle whose fibers are compact oriented manifolds possibly with boundary.
\index{+F@$F$, typical fiber of a fiber bundle}
\index{+E@$E$, total space of a fiber bundle}
For $\zeta = [M\xrightarrow{g}X] \in \CC_k(X)$ (and $\zeta\in\ZZ_k(X)$ if $F$ has a boundary) let 
$$
\xymatrix{
g^*E \ar[d] \ar[r]^{\widetilde g} & E \ar[d] \\
M \ar[r]^g & X
}
$$
be the pull-back of the fiber bundle to $M$.
\index{+G@$\widetilde g$, pull-back map}
Since $M$ and $F$ do not both have a boundary, $g^*E$ is an $(k+\dim F)$-dimensional compact oriented stratifold with boundary.
The orientation of $g^*E$ \index{orientation!of fiber bundles} is chosen such that an oriented horizontal tangent basis (defined by the orientation of $M$) followed by an oriented tangent basis along the fiber yields an oriented tangent basis of the total space.
Put 
$$
\PBE(\zeta) := [g^*E\xrightarrow{\widetilde g}E] \in \CC_{k+\dim F}(E).
$$\index{+PB@$\PB_\bullet$, pull-back of geometric chains}
\index{pull-back operation on geometric chains}
This defines homomorphisms $\PBE: \ZZ_k(X) \to \CC_{k+\dim F}(E)$ and also $\PBE: \CC_k(X) \to \CC_{k+\dim F}(E)$ if $\partial F=\emptyset$.
The following holds:
\begin{itemize}
\item
For each $\zeta \in \ZZ_k(X)$ we have
\begin{equation}\label{axiomD:fiber_int_bound}
\partial(\PBE\zeta) 
= 
\begin{cases}
\overline{\PBdE(\zeta)}, & \mbox{ if $k$ is odd,} \\
\PBdE(\zeta), & \mbox{ if $k$ is even.}
\end{cases}
\end{equation}

\item
If $\partial F = \emptyset$, then we have for all $\zeta \in \CC_k(X)$ 
\begin{equation}\label{eq:partial_iota_commute}
\partial(\PBE\zeta) 
= 
\PBE(\partial\zeta).
\end{equation}

\item
$\PB_\bullet$ is natural in the following sense: 
Whenever we have a commutative diagram 
$$
\xymatrix{
E \ar[r]^H \ar[d] & E' \ar[d] \\
X \ar[r]^h & X'
}
$$
where $h$ is smooth and $H$ restricts to an orientation preserving diffeomorphism  $E_x \to E'_{h(x)}$ for any $x\in X$, then 
\begin{equation}\label{eq:PBnatuerlich}
\xymatrix{
\CC_{k+\dim F}(E) \ar[r]^{H_*} & \CC_{k+\dim F}(E') \\
\ZZ_{k}(X) \ar[u]_{\PBE} \ar[r]^{h_*} & \ZZ_k(X') \ar[u]_{\PB_{E'}}
}
\end{equation}
commutes (replace $\ZZ_k$ by $\CC_k$ if $\partial F=\emptyset$).
\index{+Ex@$E_x$, fiber of $E$ over $x$}
\item
$\PB_\bullet$ is compatible with integration of differential forms in the following sense:
For all differential forms $ \omega \in \Omega^{k+\dim F}(E)$ and all $\zeta \in \ZZ_k(X)$ we have
\begin{equation}
\int_{[\PB_E\zeta]_{S_{k+\dim F}}} \omega 
=
\int_{[\zeta]_{\partial S_{k+1}}} \fint_F \omega  \,. \label{axiomD:fiber_int_bound_forms}
\end{equation}
Here $\fint$ denotes the ordinary fiber integration of differential forms.
\index{+Int@$\fint$, fiber integration of differential forms} \index{fiber integration!for differential forms}
If $\partial F=\emptyset$ replace $[\zeta]_{\partial S_{k+1}}$ by $[\zeta]_{S_k}$ and demand \eqref{axiomD:fiber_int_bound_forms} for all $\zeta\in \CC_k(X)$.
\item 
$\PB_\bullet$ is functorial with respect to composition of fiber bundle projections: 
\index{functoriality!of pull-back operation}%
For a fiber bundle $\kappa:N \to E$ with compact oriented fibers over a fiber bundle $\pi: E \to X$ with compact oriented fibers, we have the composite fiber bundle $\pi \circ \kappa:N \to X$ with the composite orientation.
In this case, we have
\begin{equation}\label{eq:PB_ass}
\PB_{\pi \circ \kappa} 
= \PB_{\kappa} \circ \PB_{\pi}. 
\end{equation}
\item
$\PB_\bullet$ is compatible with the fiber product of bundles:
For fiber bundles $E \to X$ and $E' \to X'$ with compact oriented fibers and geometric chains \mbox{$\zeta = [M\xrightarrow{g}X] \in \CC_k(X)$} and \mbox{$\zeta' = [M'\xrightarrow{g'}X'] \in \CC_{k'}(X')$}, we have:
\begin{equation}\label{eq:PB_prod}
\PB_{E \times E'}(\zeta \times \zeta')
= 
(-1)^{k'\cdot\dim F} \PBE(\zeta) \times \PB_{E'}(\zeta') 
\in \CC_{k+k'+\dim F\times F'}(E \times E').  
\end{equation}
\end{itemize}

Properties \eqref{axiomD:fiber_int_bound}, \eqref{eq:partial_iota_commute}, \eqref{axiomD:fiber_int_bound_forms}, and \eqref{eq:PB_ass} are readily checked.
The sign in \eqref{eq:PB_prod} is caused by the conventions on orientations.
To verify \eqref{eq:PBnatuerlich} we observe that there is an orientation preserving diffeomorphism $J:E \to h^*E'$ such that
$$
\xymatrix{
E \ar[r]^H \ar[d] \ar@/^2pc/[rr]_J & E' \ar[d] & h^*E' \ar[l]_{\tilde h} \ar[d] \\
X \ar[r]^h \ar@/_1.9pc/[rr]^{\id}& X' & X \ar[l]_h
}
$$
commutes.
Now for any $\zeta = [M\xrightarrow{g}X] \in \CC_{k}(X)$ we get an induced orientation preserving diffeomorphism $g^*J : g^*E \to g^*h^*E'$ such that 
$$
\xymatrix{
g^*h^*E' \ar[r]^{\tilde h \circ \widetilde g} & E' \\
g^*E \ar[u]^{g^*J} \ar[ru]_{H\circ \widetilde g} & 
}
$$
commutes.
Thus $[g^*E \xrightarrow{H\circ \widetilde g} E']=[g^*h^*E'\xrightarrow{\tilde h \circ \widetilde g} E'] \in \CC_{k+\dim F}(E')$.
We compute
\begin{align*}
\PB_{E'}(h_*(\zeta)) 
&=
\PB_{E'}([M\xrightarrow{h \circ g}X']) \\
&=
[g^* h^* E' \xrightarrow{\tilde h \circ  \widetilde g}E'] \\
&=
[g^*E \xrightarrow{H\circ \widetilde g} E'] \\
&=
H_*([g^*E \xrightarrow{\widetilde g} E]) \\
&=
H_*(\PBE(\zeta)) 
\end{align*}
and \eqref{eq:PBnatuerlich} is shown.

\begin{remark}\label{rem:lambda}
\emph{Transfer map on cycles}.
We construct a transfer map on the level of singular cycles.
Let $\zeta:C_{k-\dim F}(X;\Z)\to \CC_{k-\dim F}(X)$ be the homomorphism from Lemma~\ref{lem:Liftazeta}.
We construct a homomorphism $\lambda:Z_{k-\dim F}(X;\Z) \to Z_{k}(E;\Z)$ such that
\begin{equation}
[\lambda(z)]_{\partial S_{k+1}} = [\PBE(\zeta(z))]_{\partial S_{k+1}}
\label{eq:lambdazzeta}
\end{equation} 
for all cycles $z\in Z_{k-\dim F}(X;\Z)$.
\index{+Lambda@$\lambda$, transfer map} \index{transfer map}
For any $z$ in a basis of $Z_{k-\dim F}(X;\Z)$ we choose a cycle $\lambda(z)\in Z_{k}(E;\Z)$ representing $[\PBE(\zeta(z))]_{\partial S_{k+1}}$ and extend $\lambda$ as a homomorphism.
In particular, $\lambda$ maps $B_{k-\dim F}(X;\Z)$ to $B_{k}(E;\Z)$.  
We perform this construction in all degrees $k \geq \dim F$.

\emph{Extension to chains}.
We extend the transfer map $\lambda:Z_{k-\dim F}(X;\Z) \to Z_k(E;\Z)$ to a homomorphism $\lambda:C_{k-\dim F}(X;\Z) \to C_k(E;\Z)$ in an appropriate manner.
First, we extend $\lambda:Z_{k-\dim F}(X;\Z) \to Z_k(E;\Z)$ to a homomorphism $\gamma:C_{k-\dim F}(X;\Z) \to C_k(E;\Z)$ as described in Remark~\ref{rem:extend}.
On the basis elements of $Z_{k-\dim F}(X;\Z)$ we set $\gamma(c):= \lambda(c)$.
On the complementary basis elements we choose $k$-chains $\gamma(c)$ such that $\partial \gamma(c) = \lambda(\partial c)$.
This is possible since $\lambda(\partial c)$ is a boundary.

We then have:
\begin{align}
\partial [\gamma(c)]_{S_k} 
&= [\partial \gamma(c)]_{\partial S_k} \nonumber \\
&= [\lambda(\partial c)]_{\partial S_k} \nonumber \\
&\ist{\eqref{eq:lambdazzeta}}\, [\PBE(\zeta(\partial c))]_{\partial S_k} \nonumber \\
&\ist{\eqref{eq:delzetazetadel}} [\PBE(\partial\zeta(c))]_{\partial S_k} \nonumber \\
&\ist{\eqref{eq:partial_iota_commute},\eqref{eq:geomchainsboundary}}\,\,\,\,\, \partial [\PBE(\zeta(c))]_{S_k} \nonumber \,.  
\end{align}
Hence there exists a cycle $w(c) \in Z_k(E;\Z)$ such that
\begin{equation}
[\gamma(c) - w(c)]_{S_k} = [\PBE(\zeta(c))]_{S_k} .
\label{eq:gammacw} 
\end{equation}
\index{+WCkdimF@$w:C_{k-\dim F}(X;\Z) \to Z_k(X;\Z)$}%
We can choose $c \mapsto w(c)$ as a group homomorphism $w:C_{k-\dim F}(X;\Z) \to Z_k(X;\Z)$ by defining it on basis elements, as explained above.
On the basis elements of $Z_{k-\dim F}(X;\Z)$ we set $w(c) =0$.
Condition \eqref{eq:lambdazzeta} implies that \eqref{eq:gammacw} holds in this case. 
On the complementary basis elements, we choose $w(c) \in Z_k(E;\Z)$ such that \eqref{eq:gammacw} holds.  

If $c \in C_{k-\dim F}(X;\Z)$ is a cycle, we have $\gamma(c) - w(c) = \lambda(c) + 0 = \lambda(c)$.
We set $\lambda(c):=\gamma(c) - w(c)$ for general $c\in C_{k-\dim F}(X;\Z)$.

\emph{Transfer map on chains}.
We have extended the transfer map on cycles to a group homomorphism $\lambda:C_{k-\dim F}(X;\Z) \to C_k(E;\Z)$\index{+Lambda@$\lambda:C_{k-\dim F}(X;\Z) \to C_k(E;\Z)$} with
\begin{equation}
\partial \lambda(c) = \lambda(\partial c)
\label{eq:dellambdalambdadel}
\end{equation}
and 
\begin{equation}
[\lambda(c)]_{S_k}
= [\gamma(c)-w(c)]_{S_k}
\stackrel{\eqref{eq:gammacw}}{=} [\PBE(\zeta(c))]_{S_k}
\label{eq:lambdaphizeta}
\end{equation} 
The transfer map $\lambda$ should be thought of as the pull-back mapping on the level of chains.
\end{remark}

\begin{remark}
\emph{Transfer map and fiber integration of differential forms}.
\index{fiber integration!for differential forms}
From \eqref{axiomD:fiber_int_bound_forms}, we conclude that for any differential form $\omega \in \Omega^k(E)$ and any smooth singular chain $c \in C_{k-\dim F}(X;\Z)$, we have:
\begin{equation}
\int_{\lambda(c)} \omega
=
\int_{[\zeta(c)]_{S_{k-\dim F}}} \fint_F \omega \,. \label{eq:fint_lambda}  
\end{equation}
In particular, if $\omega$ is a closed form, \eqref{eq:cadel} yields:
\begin{equation}
\int_{\lambda(c)} \omega
=
\int_{c-a(\partial c)} \fint_F \omega \,. \label{eq:fint_lambda_closed}
\end{equation}
For a cycle $z \in Z_{k-\dim F}(X;\Z)$ and $\omega \in \Omega^k(E)$, we also have:
\begin{equation}
\int_{\lambda(z)} \omega
=
\int_{[\zeta(z)]_{S_{k-\dim F}}} \fint_F \omega   
=
\int_{[\zeta(z)]_{\partial S_{k-\dim F+1}}} \fint_F \omega \,.  \label{eq:fint_lambda_cycle}  
\end{equation}
\end{remark}

\begin{remark}\label{rem:fiber_int_cohomology}
\emph{Transfer map and fiber integration on singular cohomology}.
Let $F \hookrightarrow E \to X$ be a fiber bundle with compact oriented fibers without boundary.
The construction of the Leray-Serre spectral sequence in \cite{S51} involves the construction of Eilenberg-Zilber type maps $EZ:C_p(X;\Z) \otimes C_q(F;\Z) \to E^0_{p,q}$ for all $p,q \in \N_0$.
\index{+EZ@$EZ$, Eilenberg-Zilber map} \index{Eilenberg-Zilber map} \index{Leray-Serre spectral sequence}%
These maps induce a map of bigraded chain complexes
$$
(C_\bullet(X;\Z) \otimes C_\bullet(F;\Z),\mathds 1 \otimes \partial_F) 
\xrightarrow{EZ} 
(E^0_{\bullet,\bullet},d_0) \,.
$$
The induced maps on homology yield identifications $C_p(X;H_q(F_x;\Z)) \xrightarrow{\overline{EZ}} E^1_{p,q}$.
Here $F_x$ denotes the fiber of the bundle over $x\in X$ and $\{H_q(F_x;\Z)\}_{x\in X}$ the corresponding local coefficient system.

We consider the special case $q=\dim(F)$.
Since the bundle $F \hookrightarrow E \to X$ has compact oriented fibers the local coefficient system $\{H_q(F_x;\Z)\}_{x\in X}$ has a canonical section $x\mapsto [F_x]$ where $[F_x]\in H_{\dim F}(F_x;\Z)$ is the fundamental class.

The maps $\Z\to H_{\dim(F)}(F_x;\Z)$, $k\mapsto k\cdot[F_x]$, induce a homomorphism of chain complexes
$$
(C_\bullet(X;\Z),\partial) \to
(C_\bullet(X;H_{\dim(F)}(F_x;\Z)) ,\partial) \xrightarrow{\overline{EZ}} 
(E^1_{\bullet,\bullet},d_1) \,.
$$
On the homology of the last two chain complexes we get the well-known identification $H_p(X;H_q(F_x;\Z)) \xrightarrow{\cong} E^2_{p,q}$ for the case $q=\dim F$.

Let $c \in C_{k-\dim F}(X;\Z)$ be a smooth singular chain in the base $X$.
Let $[\mu] \in H^k(E;\Z)$ be a cohomology class on the total space and $\mu \in C^k(E;\Z)$ a cocycle representing it.
Fiber integration for singular cohomology as constructed in \cite{BH53} maps the class $[\mu] \in H^k(E;\Z)$ to
\index{+Pi@$\pi_\ausruf$, fiber integration for singular cohomology}\index{fiber integration!for singular cohomology}
$$
\pi_![\mu]
:= \big[ c \mapsto \mu(\overline{EZ}(c \otimes [F_x])) \big] \in H^{k-\dim F}(X;\Z) \,. 
$$
By the constructions of the pull-back operation $\PBE$ on smooth chains and the transfer map $\lambda$ on singular chains, the chain $\lambda(c) \in C_k(E;\Z)$ represents the equivalence class $\overline{EZ}(( c - a(\partial c) - \partial a(c+y(c))) \otimes [F_x]) \in E^1_{k-\dim F,\dim F}$ of smooth singular $k$-chains in $E$.
Combining this observation with the definition of the map $\pi_!:H^k(E;\Z) \to H^{k-\dim F}(X;\Z)$ we obtain:
\begin{align}
\pi_![\mu]
&= 
\big[ c \mapsto \mu(\overline{EZ}(c \otimes [F_x])) \big] \nonumber \\
&=
\big[ c \mapsto \mu(\overline{EZ}(c \otimes [F_x])) \big] 
+ \big[ \delta(c \mapsto \mu(\overline{EZ}(a(c) \otimes [F_x]))) \big] \nonumber \\
&= 
\big[ c \mapsto \mu(\overline{EZ}(c \otimes [F_x])) \big] 
+ \big[ c \mapsto \mu(\overline{EZ}(a(\partial c)) \otimes [F_x]) \big] \nonumber \\
& \qquad 
+ \big[ c \mapsto \underbrace{\mu(\overline{EZ}(\partial a(c+y(c)) \otimes [F_x]))}_{=0} \big] \nonumber \\
&= 
\big[ c \mapsto \mu(\overline{EZ}(c - a(\partial c) - \partial a(c+y(c))) \otimes [F_x]) \big] \nonumber \\
&= 
\big[ c \mapsto \mu(\lambda(c)) \big] \nonumber \\
&=
[ \mu \circ \lambda ] .
\label{eq:fiber_int_singular_cohomology}
\end{align}
Thus pre-composition of cochains with the transfer map on chains yields the fiber integration on singular cohomology. 
\end{remark}

\begin{remark}
\emph{Transfer map on homology}.
As for fiber integration on singular cohomology, the Eilenberg-Zilber map from the Leray-Serre spectral sequence induces the so-called homology transfer $H_*(X;\Z) \to E^2_{*,\dim F} \twoheadrightarrow H_{*+\dim F}(E;\Z)$, $[z] \mapsto [\overline{EZ}(z \otimes [F_x])]$.
\index{homology transfer}%
\index{transfer!homology $\sim$}
\index{Leray-Serre spectral sequence}%
By construction, homology transfer is represented on the level of cycles by the transfer map $\lambda:Z_*(X;\Z) \to Z_{*+\dim F}(E;\Z)$ constructed in Remark~\ref{rem:lambda}.
Hence the name.
\end{remark}

\begin{remark}
\emph{Fiber integration, transfer and push-forward}.
In the literature, fiber integration is sometimes referred to as cohomology transfer.
Both homology and cohomology transfer can be defined for any smooth map between compact oriented smooth manifolds by conjugating the pull-back and push-forward maps with Poincar\'e duality, see e.g.~\cite[Ch.~VIII, \S~10]{D95}.
Therefore, fiber integration is also referred to as push-forward.  
\index{cohomology transfer} \index{homology transfer} \index{push-forward} \index{transfer!cohomology $\sim$}%
\end{remark}

%%%%%%%%%%%%%%%%%%%%%%%%%%%%%%%%%%%%%%%%%%%%%%%%%%%%%%%%%%%%%%%%%%%%%%%%%
\chapter{Differential characters}
%%%%%%%%%%%%%%%%%%%%%%%%%%%%%%%%%%%%%%%%%%%%%%%%%%%%%%%%%%%%%%%%%%%%%%%%%

Differential characters were introduced by Cheeger and Simons in \cite{CS83}.
The group $\widehat H^k(X;\Z)$ of differential characters in a smooth space has various equivalent descriptions.
\index{+HhatkXZ@$\widehat H^k(X;\Z)$, group of differential characters}
For instance, it is isomorphic to the smooth Deligne cohomology group $H^{k-1}_\DD(X;\Ul)$, see e.g.~\cite{CJM04}.
\index{Deligne cohomology}%
Differential characters can also be described by differential forms with singularities as in \cite{C73} or as de Rham-Federer currrents as in \cite{HL06,HL08,HLZ03}.
\index{de Rham-Federer currents}%
The groups of differential characters are often referred to as differential cohomology.
We use the original definition of differential characters due to Cheeger and Simons.

We first recall the definition and some elementary properties of Cheeger-Simons differential characters.
Then we give a new proof of a result of Simons and Sullivan saying that for any differential cohomology theory there is a unique natural transformation to the model given by differential characters.
Our proof yields an explicit formula for this natural transformation.
Similarly, we reprove the abstract uniqueness result for the ring structure due to Simons and Sullivan by deriving an explicit formula from the axioms.

Stratifolds enter the game because they can be used to represent \emph{homology classes}.
\index{stratifold}%
However, we do not modify the definition of differential characters as in \cite{BKS10}.
The usage of stratifolds in \cite{BKS10} to represent {\em cohomology classes} is responsible for the limitation to finite-dimensional manifolds.
Instead of stratifolds one could also use Baas-Sullivan pseudomanifolds.
\index{pseudomanifold}%
It was proposed in \cite{G99} to use them to describe differential characters.

\section{Definition and examples}\label{subsec:diff_char}
Let $X$ be a smooth space.
We denote by $\widehat H^k(X;\Z)$ the abelian group of degree $k\ge 1$ differential characters, i.e.\footnote{It is convenient to shift the degree of the differential characters by $+1$ as compared to the original definition from \cite{CS83}. 
Thus a degree $k$ differential character has curvature and characteristic class of degree $k$.},
\begin{equation}
\widehat H^k(X;\Z) 
:= 
\big\{\, h \in \Hom(Z_{k-1}(X;\Z),\Ul) \,\big|\, h \circ \partial \in \Omega^k(X) \,\big\} \,. \label{eq:def_diff_charact_1}
\end{equation}
\index{differential characters}
The notation $h \circ \partial \in \Omega^k(X)$ means that there exists a differential form $\omega \in \Omega^k(X)$ such that for every smooth singular chain $c \in C_k(X;\Z)$, we have: 
\begin{equation}
h(\partial c) 
=
\exp \Big( 2 \pi i  \int_c \omega \Big) \, .
\label{eq:def_diff_charact_2}
\end{equation}
The differential form $\omega$ is uniquely determined by the differential character $h \in \widehat H^k(X;\Z)$.
Moreover, it is closed and has integral periods.
This form $\omega =: \curv(h)$ is called the {\em curvature} of $h$.
\index{+Curv@$\curv$, curvature of a differential character} \index{curvature!of a differential character}
If $\curv(h) =0$, then $h$ is called a {\em flat} differential character. \index{flat differential character}%

Moreover, a differential character $h$ determines a class $c(h) \in H^k(X;\Z)$, constructed as follows:
\index{+Cc@$c$, characteristic class of a differential character} \index{characteristic class!of a differential character}
Since $Z_{k-1}(X;\Z)$ is a free $\Z$-module, there exists a real lift $\tilde h$ of the differential character $h$, i.e., $\tilde h \in \Hom(Z_{k-1}(X;\Z),\R)$ such that  $h(z) = \exp( 2\pi i \tilde h(z))$ for all $z \in Z_{k-1}(X;\Z)$.
\index{+Htilde@$\tilde h$, real lift of differential character $h$}
Then set
\begin{equation}
\mu^{\tilde h}: C_k(X;\Z) \to \Z , \quad
c \mapsto \int_c \curv(h) - \tilde h(\partial c) \,. \label{eq:charact_cocycle}
\end{equation}
\index{+Mu@$\mu$, cocycle for characteristic class}%
Since $\curv$ is closed $\mu^{\tilde h}$ is a cocycle, and it follows from equation \eqref{eq:def_diff_charact_2} that it takes integral values.
The cohomology class $[\mu^{\tilde h}] \in H^k(X;\Z)$ does not depend on the choice of the lift $\tilde h$.
Now $c(h) := [\mu^{\tilde h}] \in H^k(X;\Z)$ is called the {\em characteristic class} of $h$.
If $c(h)=0$, then $h$ is called a {\em topologically trivial} differential character. \index{topologically trivial differential character}%

By definition, any real lift $\tilde h$ of a differential character $h$ yields a cocycle for the characteristic class $c(h)$. 
Conversely, if $\mu \in C^k(X;\Z)$ is a cocycle representing the cohomology class $c(h) \in H^k(X;\Z)$, then we can find a real lift $\tilde h'$ such that $\mu = \mu^{\tilde h'} := \curv(h) - \delta \tilde h'$.
For if $\tilde h$ is any real lift of $h$, then $\mu$ and $\mu^{\tilde h}$ are cohomologous, i.e.~there exists a cochain $t \in C^{k-1}(X;\Z)$ such that $\delta t = \mu^{\tilde h} - \mu$.
Setting $\tilde h' := \tilde h + t$ yields a real lift of $h$ with $\mu^{\tilde h'} = \curv(h) - \delta\tilde h - \delta t = \mu^{\tilde h} - \delta t = \mu$.  

Note that by \eqref{eq:charact_cocycle}, the image of $c(h)$ in $H^k(X;\R)$ coincides with the image of the de Rham cohomology class $[\curv(h)]_\mathrm{dR}$ of $\curv(h)$ under the de Rham isomorphism.

\begin{remark}\label{rem:Hkadditive}
Even though the abelian group $\Ul$ is written multiplicatively, we write $\widehat H^k(X;\Z)$ additively, i.e., for $h,h'\in \widehat H^k(X;\Z)$ and $z\in Z_{k-1}(X;\Z)$ we have
$$
(h+h')(z) = h(z)\cdot h'(z).
$$
The neutral element $0\in\widehat H^k(X;\Z)$ is the constant map
$$
0(z) = 1.
$$
The reason for this convention is that there is an additional multiplicative structure on $\widehat H^*(X;\Z)$ analogous to the cup product turning it into a ring.
The ring structure will be discussed in Section~\ref{subsec:ring}.
\end{remark}

Let $\eta \in \Omega^{k-1}(X)$ be a differential form on $X$.
We define a differential character $\iota(\eta)\in \widehat H^k(X;\Z)$ by setting 
\begin{equation}\label{eq:def_iota}
\iota(\eta)(z) 
:= 
\exp \Big( 2\pi i \int_z \eta \Big) \,.
\end{equation}
\index{+Iota@$\iota:\Omega^{*-1}(X) \to \widehat H^*(X;\Z)$}
Evaluating on boundaries, we see that in this case,
\begin{equation}
\curv(\iota(\eta)) = d \eta .
\label{eq:curv_deta}
\end{equation}
Taking $\widetilde{\iota(\eta)} (z):= \int_z \eta$ as real lift, we have by Stokes's theorem 
$$
\mu^{\widetilde{\iota(\eta)}}(x) 
= \int_x d\eta - \widetilde{\iota(\eta)}(\partial x) 
= \int_x d\eta - \int_{\partial x} \eta 
= 0
$$
so that $h$ is topologically trivial.
If also $d\eta =0$, then $\curv(\iota(\eta)) =0$, thus $h$ is flat.

We thus obtain a homomorphism $\iota: \Omega^{k-1}(X) \to \widehat H^k(X;\Z)$.
If the closed form $\eta$ has integral periods, then $\iota(\eta)(z)=1$ for every $z$, thus $\iota(\eta)=0$.
A form $\eta \in \Omega^{k-1}(X)$ such that $\iota(\eta) = h \in \widehat H^k(X;\Z)$ is called a {\em topological trivialization} of $h$.\index{topological trivialization!of a differential character}%

Let $u \in H^{k-1}(X;\Ul)$. 
\index{+HkXU@$H^*(X;\Ul)$, smooth singular cohomology with $\Ul$-coefficients}
We define a differential character $j(u) \in \widehat H^k(X;\Z)$ by setting 
\begin{equation}\label{eq:def_j}
j(u)(z):= \langle u,[z] \rangle. 
\end{equation}
Thus we obtain an injective map $j:H^{k-1}(X;\Ul) \to \widehat H^k(X;\Z)$.  
\index{+J@$j:H^{*-1}(X;\Ul) \to \widehat H^*(X;\Z)$}

By $\Omega^{k}_{cl}(X)$ we denote the space of closed $k$-forms and by $\Omega_0^k(X)\subset \Omega^{k}_{cl}(X)$ the set of closed $k$-forms with integral periods.
\index{+OmegakclX@$\Omega^{k}_{cl}(X)$, space of closed $k$-forms}
\index{+OmegaokX@$\Omega_0^k(X)$, space of closed $k$-forms with integral periods}
We identify the quotients 
$$
\frac{H^{k}(X;\R)}{H^{k}(X;\Z)_\R} \cong \frac{\Omega^{k}_{cl}(X)}{\Omega^k_0(X)}
$$ 
using the de Rham isomorphism.
Here $H^k(X;\Z)_\R \subset H^k(X;\R)$ denotes the image of $H^k(X;\Z)$ in $H^k(X;\R)$ under the natural map induced by the change of coefficients.
Recall that $\iota:\Omega^{k-1}(X) \to \widehat H^k(X;\Z)$ induces a homomorphism $\tfrac{\Omega^{k-1}(X)}{\Omega^{k-1}_0(X)} \to \widehat H^k(X;\Z)$, again denoted $\iota$.

We obtain the following commutative diagram with exact rows and columns:
\begin{equation}
\xymatrix{
& 0 \ar[d] & 0 \ar[d] & 0 \ar[d] & \\
0 \ar[r] & \frac{H^{k-1}(X;\R)}{H^{k-1}(X;\Z)_\R} \ar[d] \ar[r] 
  & \frac{\Omega^{k-1}(X)}{\Omega^{k-1}_0(X)} \ar[d]^{\iota} \ar[r]^d & d\Omega^{k-1}(X) \ar[d] \ar[r] & 0 \\
0 \ar[r] & H^{k-1}(X;\Ul) \ar[d] \ar[r]^j & \widehat H^k(X;\Z) \ar[d]^c \ar[r]^{\curv} 
  & \Omega^k_0(X) \ar[d] \ar[r] & 0 \\
0 \ar[r] & \Ext(H_{k-1}(X;\Z),\Z) \ar[d] \ar[r] & H^k(X;\Z) \ar[d] \ar[r] 
  & \Hom(H_k(X;\Z),\Z) \ar[d] \ar[r] & 0 \\
& 0 & 0 & 0 &
}
\label{eq:3x3diagram}
\end{equation}
The left column is obtained from the long exact cohomology sequence induced by the coefficient sequence $0 \to \Z \to \R \to \Ul \to 0$ together with the canonical identification of $\Ext(H_{k-1}(X;\Z),\Z)$ with the torsion subgroup of $H^k(X;\Z)$.
The middle column says that a differential character admits a topological trivialization if and only if it is topologically trivial.

For reasons that will become apparent later, we extend the definition of the group $\widehat H^k(X;\Z)$ by setting
\begin{equation}\label{eq:def_Hk_negativ}
\widehat H^k(X;\Z) := H^k(X;\Z) \qquad \mbox{for $k\leq 0$.} 
\end{equation}
This is the only possible choice compatible with the diagram \eqref{eq:3x3diagram}.
In particular, we have $\widehat H^k(X;\Z) = \{0\}$ for $k < 0$.
For $k \leq 0$, we define the characteristic class $c:\widehat H^k(X;\Z) \to H^k(X;\Z)$ to be the identity.

\begin{remark}\label{rem:thin_invariance}
\emph{Thin invariance}.
By construction, the evaluation of differential characters is well defined on $Z_{k-1}(X;\Z)/\partial S_k(X;\Z)$:
If $z \in Z_{k-1}(X;\Z)$ with $z = \partial y$ and $\int_y \eta =0$ for all $\eta \in \Omega^k(X)$, then we find:
$$
h(z)
= h(\partial y) 
= \exp \Big( 2\pi i \underbrace{\int_y \curv(h)}_{=0} \Big) 
= 1 \,. 
$$
We refer to this property of differential characters as \emph{thin invariance}.\index{thin invariance!of differential characters}%

In particular, differential characters are invariant under barycentric subdivision of smooth singular cycles.
This was already observed in \cite[p.~55]{CS83}.
\end{remark}

\begin{remark}\label{rem:def_f*h}
\emph{Naturality}.
If $f:X\to Y$ is a smooth map, then one can pull back differential characters $h\in \widehat H^k(Y;\Z)$ on $Y$ to $X$ by
$$
f^*h := h\circ f_* 
$$
where $f_* :Z_{k-1}(X;\Z) \to Z_{k-1}(Y;\Z)$ is the induced map on cycles.
This defines a homomorphism $f^*:\widehat H^k(Y;\Z) \to \widehat H^k(X;\Z)$.
One easily checks that $\curv(f^*h)=f^*\curv(h)$ and $c(f^*h) = f^*c(h)$.
\index{naturality!of differential characters}%
\index{differential characters!naturality}%
\end{remark}

\begin{remark}\label{rem:torsion_cycles}
\emph{Evaluation on torsion cycles.}
Let $h \in \widehat H^k(X;\Z)$ and let $z \in Z_{k-1}(X;\Z)$ be a cycle that represents a torsion class in $H_{k-1}(X;\Z)$.\index{torsion cycles}%
Hence there exists an $N \in \N$ such that $N \cdot [z] =0 \in H_{k-1}(X;\Z)$.
Choose $x \in C_k(X;\Z)$ such that $N \cdot z = \partial x$.
In particular, $z = \frac{1}{N} \cdot \partial x$ as real cycles.
Then we have:
\begin{align}
h(z)
&= \exp \Big(2\pi i \cdot  \tilde h(z) \Big) \notag \\
&= \exp \Big( 2\pi i \cdot \tilde h \big( \frac{1}{N} \cdot \partial x\big) \Big) \notag \\
&= \exp \Big( \frac{2\pi i}{N} \tilde h(\partial x) \Big) \notag \\
&= \exp \Big( \frac{2\pi i}{N} \delta \tilde h(x) \Big) \notag \\
&= \exp \frac{2\pi i}{N} \Big( \int_x\curv(h) - \mu^{\tilde h}(x) \Big). \notag 
\end{align}
If $\mu \in Z^k(X;\Z)$ is another cocycle representing the characteristic class $c(h)$, then we have $\mu^{\tilde h} - \mu  = \delta t$ for some $t \in C^{k+1}(X;\Z)$.
This yields 
$$
\frac{1}{N} \cdot (\mu^{\tilde h} - \mu)(x) = \delta t(\frac{1}{N} \cdot x) = t(\frac{1}{N} \cdot \partial x) = t(z) \in \Z.
$$
Thus although the evaluation of $c(h)$ on $x$ is not well defined, by abuse of notation we may write
\begin{equation}
h(z) = 
\exp  \frac{2\pi i }{N} \Big(\int_x\curv(h) - \langle c(h),x\rangle \Big). \label{eq:torsion_cycles}
\end{equation}
In particular, if $h$ is topologically trivial and flat, then it vanishes on torsion cycles.

The latter fact can also be deduced from the commutative diagram \eqref{eq:3x3diagram}: if $h$ is in the image of the map $\frac{H^{k-1}(X;\R)}{H^{k-1}(X;\Z)_\R} \to \widehat H^k(X;\Z)$, then the real lift $\tilde h$ can be chosen to be a real cocycle. 
Thus $\tilde h$ vanishes on torsion cycles, and so does $h$. 
\end{remark}

\begin{remark}\label{rem:diff_charact_stratifolds}
Let $h \in \widehat H^k(X;\Z)$ be a differential character on a smooth space $X$, and let $z \in Z_{k-1}(X;\Z)$ be a smooth singular cycle.
According to Lemma~\ref{lem:Liftazeta}, we get a geometric cycle $\zeta(z) = [M\xrightarrow{g}X] \in \ZZ_{k-1}(X)$ and a smooth singular chain $a(z) \in C_k(X;\Z)$ such that $[z - \partial a(z)]_{\partial S_k} = [\zeta(z)]_{\partial S_k}$.
Since differential characters are thin invariant, we have 
$$
h(z)
=
h([\zeta(z)]_{\partial S_k})\cdot h(\partial a(z))
=
h([\zeta(z)]_{\partial S_k}) \cdot \exp \Big( 2\pi i \int_{a(z)} \curv(h) \Big) \,. 
$$
We may also pull back the differential character along the smooth map $g$ to the stratifold $M$.
For dimensional reasons, $g^*h$ is topologically trivial and flat, hence $g^*h = \iota(\varrho)$ for a closed differential form $\varrho \in \Omega^{k-1}_{cl}(M)$.
By definition, the evaluation of $h$ on $[\zeta(z)]_{\partial S_k}$ is the same as the evaluation of $g^*h$ on any representing chain of the fundamental class $[M] \in H_{k-1}(M;\Z)=Z_{k-1}(M;\Z)/\partial S_k(M;\Z)$ of the stratifold $M$. 
So we may write:
\begin{align}
h(z)
&= 
h([\zeta(z)]_{\partial S_k}) \cdot \exp \Big( 2\pi i \int_{a(z)} \curv(h) \Big) \label{eq:evaluation_diff_charact_stratifold_1} \\
&= 
g^*h([M]) \cdot \exp \Big( 2\pi i \int_{a(z)} \curv(h) \Big) \label{eq:evaluation_diff_charact_stratifold_2} \\
&= 
\exp \Big( 2\pi i \int_M \varrho \Big) \cdot \exp \Big( 2\pi i \int_{a(z)} \curv(h) \Big) \,. 
  \label{eq:evaluation_diff_charact_stratifold_3} 
\end{align}

We check that \eqref{eq:evaluation_diff_charact_stratifold_2} is consistent with the property \eqref{eq:def_diff_charact_2} that defines differential characters: for a boundary $z = \partial c \in B_{k-1}(X;\Z)$ we choose $\zeta(c) \in \CC_k(X)$ and $a(c) \in C_k(X;\Z)$ as in Lemma~\ref{lem:Liftazeta}.
This yields:
\begin{align*}
h(\partial c)
&= 
h(\partial [\zeta(c)]_{S_k}) \cdot \exp \Big( 2\pi i \int_{a(\partial c)} \curv(h) \Big) \notag \\
&= 
\exp \Big[ 2\pi i \Big( \int_{[\zeta(c)]_{S_k}} \curv(h) + \int_{a(\partial c)} \curv(h) \Big)\Big] \notag \\
&\ist{\eqref{eq:cadel}}
\exp \Big[ 2 \pi i \Big( \int_{c} \curv(h) - \underbrace{\int_{\partial (a(c+y(c))} \curv(h)}_{=0} \Big)\Big] \\
&= 
\exp \Big( 2 \pi i \int_c \curv(h) \Big). 
\end{align*}
\end{remark}

We identify differential characters in low degrees as mentioned in \cite[p.~54]{CS83}.

\begin{example}\label{ex:H1}
\emph{$\Ul$-valued smooth functions}.
Let $X$ be a differentiable manifold and let $k=1$.
We show $\widehat H^1(X;\Z)=C^\infty(X,\Ul)$.
Any homomorphism $h:Z_{0}(X;\Z)\to\Ul$ corresponds to a map $\bar h:X\to \Ul$.
\index{+Hbar@$\bar h$, smooth $\Ul$-valued map}
For a fixed point $x_0\in M$ we identify a neighborhood of $x_0$ with a ball such that $x_0$ corresponds to its center.
For $x$ in this neighborhood we let $y(x)$ be the straight line from $x_0$ to $x$.
By \eqref{eq:def_diff_charact_2} we have
$$
\bar h(x)=\bar h(x_0)\cdot \exp\Big( 2 \pi i \cdot \int_{y(x)} \curv(h) \Big) .
$$
This shows that $\bar h$ is smooth.

Conversely, given a smooth function $\bar h:X\to\Ul$, we choose a smooth local lift   $\tilde h:U\subset X\to\R$, i.e., $\exp(2\pi i\tilde h(x))=\bar h(x)$, and put $\omega := d\tilde h$.
This form $\omega$ does not depend on the choice of lift and is therefore a globally defined $1$-form on $X$.
Now $h:Z_{0}(X;\Z)\to\Ul$ given by $h(\Sigma_j\alpha_j x_j)=\prod_j\bar h(x_j)^{\alpha_j}$ is a differential character with curvature $\omega$.
Hence $h$ is flat if and only if $\bar h$ is locally constant.
Moreover, $h$ is topologically trivial if and only if $\bar h$ has a \emph{global} lift $\tilde h:X\to\R$.

For the characteristic class one can check that
$$
c(h) = \bar h^*\theta
$$
where $\theta\in H^1(\Ul;\Z)$ is the fundamental class.
From now on we will identify $\widehat H^1(X;\Z)=C^\infty(X,\Ul)$ and not distinguish between $h\in\widehat H^1(X;\Z)$ and $\bar h \in C^\infty(X,\Ul)$.
\index{differential characters!of degree $1$}%
\end{example}

\begin{example}\label{ex:U1Buendel}
\emph{$\Ul$-bundles with connection}.
Let $X$ be a differentiable manifold and let $k=2$.
For a $\Ul$-bundle with connection $(P,\nabla)$ on $X$, the holonomy map associates to each smooth $1$-cycle $z$ an element $h(z)\in\Ul$.
\index{holonomy!of a line bundle with connection}%
\index{+Pnabla@$(P,\nabla)$, $\Ul$-bundle with connection}%
\index{+Nabla@$\nabla$, connection on $\Ul$-bundle or complex line bundle}%
Let $\PP_c^\nabla$ denote parallel transport along an oriented curve $c$ with respect to the connection $\nabla$.
\index{+Pp@$\PP$, parallel transport}% 
\index{parallel transport!for line bundle with connection}%
If $c$ is closed and $z$ is the cycle represented by $c$, then $h(z)$ is characterized by $\PP_c^\nabla (p)=p\cdot h(z)$.
Here $p\in P$ lies in the fiber over the initial point of $c$ and $h(z)$ does not depend on its choice.

This defines a differential character $h\in\widehat H^2(X;\Z)$ whose curvature is $\curv(h)=\frac{-1}{2\pi i}R^\nabla$, where $R^\nabla$ is the curvature of $\nabla$.
\index{+Rnabla@$R^\nabla$, curvature of $\nabla$}%
\index{curvature!of a line bundle with connection}%
The characteristic class $c(h)$ is the first Chern class of $P$.
\index{Chern class}

Conversely, any $h\in\widehat H^2(X;\Z)$ is the holonomy map of a $\Ul$-bundle with connection and determines the bundle up to connection-preserving isomorphism.
Hence differential characters in $\widehat H^2(X;\Z)$ are in 1-1 correspondence with isomorphism classes of $\Ul$-bundles with connection.
\index{differential characters!of degree $2$}%

\emph{Change of connections}.
Given a $\Ul$-bundle with connection $(P,\nabla)$ and a $1$-form $\rho\in\Omega^1(X)$, we get a new connection $\nabla'=\nabla + i\rho$ on $P$.
The differential character corresponding to $(P,\nabla')$ is obtained by adding $\iota(\tfrac{-1}{2\pi}\rho)$ to the character corresponding to $(P,\nabla)$.

\emph{Topological trivializations}.
If the $\Ul$-bundle $P \to X$ is topologically trivial, any trivialization $T:P \to X \times \Ul$ yields a 1-1 correspondence of connections $\nabla$ on $P$ and differential forms $\vartheta(\nabla,T) \in \Omega^1(X)$.\index{topological trivialization!of a line bundle}%
Under this correspondence, the connection $1$-form of $\nabla$ is given as $(T \circ \pr_1)^*( -2\pi i \vartheta(\nabla,T))$.
\index{+ThetanablaT@$\vartheta$, connection $1$-form}%
\index{connection $1$-form}%
Parallel transport along a curve $c$ in $X$ with respect to a connection $\nabla$ on $P$ corresponds to multiplication with $\exp \big(2\pi i \int_c \vartheta(\nabla,T) \big)$.
In particular, the holonomy map of $(P,\nabla)$ is given as $c \mapsto \exp \big(2\pi i \int_c \vartheta(\nabla,T) \big)$, hence $h = \iota(\vartheta)$.

Conversely, given a $1$-form $\varrho \in \Omega^1(X)$ such that $h= \iota(\varrho)$, then the first Chern class of the correponding $\Ul$-bundle $P$ vanishes, hence $P$ is topologically trivial.
One can directly construct global sections and hence trivializations of the bundle $P$ from the $1$-form $\varrho$.
This is explained in detail in Example~\ref{ex:H2rel} below.

\emph{Flat bundles}.
If $P \to X$ is a $\Ul$-bundle which admits a flat connection $\nabla$, then $c_1(P)$ is a torsion class.
The holonomy of $\nabla$ along a closed curve now only depends on the homotopy class of the curve and thus yields an element in $\Hom(\pi_1(X),\Ul) \cong \Hom(H_1(X);\Ul) \cong H^1(X;\Ul)$.
\index{flat line bundle}%

Conversely, for any homomorphism $\chi:\pi_1(X) \to \Ul$, the $\Ul$-bundle $P:= \widetilde X \times_\chi \Ul$ associated to the universal cover via the representation $\chi$ has Chern class $c_1(P) = \chi \in \Hom(\pi_1(X),\Ul) \cong H^1(X;\Ul)$.
The canonical flat connection on the trivial bundle $\widetilde X \times \Ul$ descends to a flat connection on $P$ with holonomy map $\chi$.      

The 1-1 correspondence between isomorphism classes of flat bundles and homomorphisms $\pi_1(X) \to \Ul$ thus obtained corresponds to the isomorphism $j:H^1(X,\Ul) \xrightarrow{\cong} \widehat H^2_\mathrm{flat}(X;\Z)$ of diagram \eqref{eq:3x3diagram}.
\end{example}

\begin{example}
\emph{Hitchin gerbes with connection}.
Let $X$ be a differentiable manifold and let $k=3$.
Similar to the case $k=2$ and $\Ul$-bundles with connection, there is a 1-1 correspondence between differential characters in $\widehat H^3(X;\Z)$ and isomorphism classes of Hitchin gerbes with connection \cite{Hit}.
\index{Hitchin gerbes}%
\end{example}

\section{Differential cohomology}
There are several ways to define differential cohomology axiomatically as a functor $\Ht^*(\cdt;\Z)$ from the category of smooth spaces to the category of $\Z$-graded abelian groups, together with natural transformations $\curvt:\Ht^*(\cdt;\Z) \to \Omega^*_0(\cdt)$ (curvature), $\ct:\Ht^*(\cdt;\Z) \to H^*(\cdt;\Z)$ (characteristic class), $\iotat:\Omega^{*-1}(\cdt)/\Omega^{*-1}_0(\cdt) \to \Ht^*(\cdt;\Z)$ (topological trivialization) and $\jt:H^{*-1}(\cdt;\Ul) \to \Ht^*(\cdt;\Z)$ (inclusion of flat classes).
One difference of our definition from those used in \cite{SS08} and \cite{BS10} is that we require the functor to be defined on a class of spaces also containing stratifolds.
\index{stratifold}

\begin{definition}[Differential cohomology theory]\label{def:diff_cohom} \index{Definition!differential cohomology theory}%
A \emph{differential cohomology} theory \index{differential cohomology theory} is a functor $\Ht^*(\cdt;\Z)$ from the category of smooth spaces to the categoy of $\Z$-graded abelian groups, together with four natural transformations 
\begin{itemize}
\item 
$\curvt:\Ht^*(\cdt;\Z) \to \Omega^*_0(\cdt)$, called \emph{curvature},
\item 
$\ct:\Ht^*(\cdt;\Z) \to H^*(\cdt;\Z)$, called \emph{characteristic class}, 
\item
$\iotat:\Omega^{*-1}(\cdt)/\Omega^{*-1}_0(\cdt) \to \Ht^*(\cdt;\Z)$, called \emph{topological trivialization}, and
\item 
$\jt:H^{*-1}(\cdt;\Ul) \to \Ht^*(\cdt;\Z)$, called \emph{inclusion of flat classes},
\end{itemize}
such that for any smooth space $X$ the following diagram commutes and has exact rows and columns: 
\index{+HtildeZ@$\Ht^*(\cdt;\Z)$, differential cohomology theory}
\begin{equation}
\xymatrix{
& 0 \ar[d] & 0 \ar[d] & 0 \ar[d] & \\
0 \ar[r] & \frac{H^{*-1}(X;\R)}{H^{*-1}(X;\Z)_\R} \ar[d] \ar[r] 
  & \frac{\Omega^{*-1}(X)}{\Omega^{*-1}_0(X)} \ar[d]^{\iotat} \ar[r]^d & d\Omega^{*-1}(X) \ar[d] \ar[r] & 0 \\
0 \ar[r] & H^{*-1}(X;\Ul) \ar[d] \ar[r]^{\jt} & \Ht^*(\cdt;\Z) \ar[d]^{\ct} \ar[r]^{\curvt} 
  & \Omega^*_0(X) \ar[d] \ar[r] & 0 \\
0 \ar[r] & \Ext(H_{*-1}(X;\Z),\Z) \ar[d] \ar[r] & H^*(X;\Z) \ar[d] \ar[r] 
  & \Hom(H_*(X;\Z),\Z) \ar[d] \ar[r] & 0 \\
& 0 & 0 & 0 &
}
\label{eq:3x3diagram_Q}
\end{equation}
\end{definition}

\begin{remark}
Note that the upper and lower rows as well as the left and right columns of \eqref{eq:3x3diagram_Q} are exact sequence, independently of the differential cohomology theory $\Ht^*(\cdt;\Z)$.
Thus the requirement is that the middle row and column are exact sequences and the whole diagram commutes.
Commutativity of the right upper quadrant means that $\curvt \circ \iotat$ is the exterior differential.
Commutativity of the left lower quadrant means that $\ct \circ \jt$ is the connecting homomorphism in cohomology for the coefficient sequence $0 \to \Z \to \R \to \Ul \to 0$.
Hence our definition of differential cohomology coincides with that of \emph{character functors} in \cite[p.~46]{SS08}.\index{character functor}%   
\end{remark}

In this section, we show uniqueness of differential cohomology theories up to unique natural transformations.
More precisely, for any differential cohomo\-logy theory $\Ht^*(\cdt;\Z)$, there exists a unique natural transformation \mbox{$\Xi:\Ht^*(\cdt;\Z) \to \widehat H^*(X;\Z)$} that commutes with the identity on the other functors in diagram~\ref{eq:3x3diagram_Q}. 
Equivalent statements were proved in \cite[Thm.~1.1]{SS08} and in \cite[Thm.~3.1]{BS10}.
Our proof differs from both in that for any fixed smooth space $X$ we obtain an explicit formula for $\Xi:\Ht^*(X;\Z) \to \widehat H^*(X;\Z)$.
However, we rely on \cite[Lemma~1.1]{SS08} to conclude that $\Xi$ commutes with the characteristic class.

The proof of uniqueness of differential cohomology up to unique natural transformation is done in two steps: 
We first show that if there exists a natural transformation, then it is uniquely determined. 

\begin{thm}[Uniqueness of differential cohomology I] \index{Theorem!uniqueness of differential cohomology I}%
Let $\Ht^*(\cdt;\Z)$ be a differential cohomology theory in the sense of Definition~\ref{def:diff_cohom}.
Suppose there exists a natural transformation $\Xi:\Ht^*(\cdt;\Z) \to \widehat H^*(\cdt ;\Z)$ that commutes with curvature and topological trivializations.
\index{+Xi@$\Xi$, natural transformation from a differential cohomology theory to differential characters}
Then $\Xi$ is uniquely determined by these requirements.
\end{thm}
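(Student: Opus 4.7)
The plan is to derive an explicit formula for $(\Xi(\widetilde h))(z)$ on each cycle $z\in Z_{k-1}(X;\Z)$ that involves only data coming from $\widetilde h$ itself, thereby forcing $\Xi$ to be unique. Set $h:=\Xi(\widetilde h)\in\widehat H^k(X;\Z)$.

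The first step is to represent $z$ geometrically: apply Lemma~\ref{lem:Liftazeta} to produce a geometric cycle $\zeta(z)=[M\xrightarrow{g}X]\in\ZZ_{k-1}(X)$ together with a smooth singular chain $a(z)\in C_k(X;\Z)$ satisfying $[\zeta(z)]_{\partial S_k}=[z-\partial a(z)]_{\partial S_k}$. Then by thin invariance (Remark~\ref{rem:thin_invariance}) and the hypothesis $\curv(h)=\curvt(\widetilde h)$, the computation carried out in Remark~\ref{rem:diff_charact_stratifolds} gives
\[
h(z) \;=\; (g^*h)([M]) \cdot \exp\Big(2\pi i\int_{a(z)}\curvt(\widetilde h)\Big).
\]
Naturality of $\Xi$ provides $g^*h=\Xi(g^*\widetilde h)$, so the task reduces to computing $\Xi(g^*\widetilde h)([M])$ for the closed oriented $(k-1)$-dimensional stratifold $M$.

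The key idea is that on $M$ every degree-$k$ datum vanishes for dimensional reasons: $\Omega^k(M)=0$ and $H^k(M;\Z)=0$. Exactness of the middle row of \eqref{eq:3x3diagram_Q} together with $\curvt(g^*\widetilde h)\in\Omega^k(M)=0$ yields $g^*\widetilde h=\jt(u)$ for some $u\in H^{k-1}(M;\Ul)$. Exactness of the Bockstein sequence associated with $0\to\Z\to\R\to\Ul\to 0$ together with $H^k(M;\Z)=0$ makes the map $H^{k-1}(M;\R)\to H^{k-1}(M;\Ul)$ surjective, so by the de Rham isomorphism $u$ is represented by a closed form $\eta\in\Omega^{k-1}(M)$. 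Commutativity of the upper-left square of \eqref{eq:3x3diagram_Q} then gives $g^*\widetilde h=\jt(u)=\iotat([\eta])$, and the hypothesis $\Xi\circ\iotat=\iota$ yields
\[
\Xi(g^*\widetilde h)([M]) \;=\; \iota(\eta)([M]) \;=\; \exp\Big(2\pi i\int_M\eta\Big).
\]
The ambiguity of $\eta$ within its class modulo $\Omega^{k-1}_0(M)$ is killed by $\exp(2\pi i\,\cdot\,)$ since forms in $\Omega^{k-1}_0(M)$ have integer periods. Substituting back into the first display produces an explicit formula for $h(z)$ depending only on $\widetilde h$ and on the auxiliary choices of Lemma~\ref{lem:Liftazeta}; because a differential character is determined by its values on cycles, $\Xi(\widetilde h)$ is thereby uniquely determined.

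The main obstacle I expect is the dimensional-vanishing step $\Omega^k(M)=0$ and $H^k(M;\Z)=0$ for a closed oriented $(k-1)$-dimensional regular $p$-stratifold $M$: for smooth manifolds these are trivial, but for stratifolds they must be extracted from Kreck's construction, since the smooth structure is stratified and the agreement of stratifold- and singular (co)homology is precisely one of the defining properties of smooth spaces in Definition~\ref{def:smoothspace}. Everything else is a diagram chase or a direct invocation of one of the two compatibility hypotheses on $\Xi$; notably, compatibility with $\jt$ and with $\ct$ is never used directly, only derived through the $\iotat$-hypothesis via the surjection $H^{k-1}(M;\R)\twoheadrightarrow H^{k-1}(M;\Ul)$.
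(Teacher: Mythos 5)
Your proposal is correct and follows essentially the same route as the paper's proof: represent $z$ by a geometric cycle via Lemma~\ref{lem:Liftazeta}, reduce to the stratifold $M$ by naturality and thin invariance, identify $g^*\widetilde h$ as lying in the image of $\iotat$ for dimensional reasons, and conclude with $\Xi\circ\iotat=\iota$. The only (minor) divergence is in how that image membership is obtained: the paper uses $\ct(g^*\widetilde h)=0$ and exactness of the middle column of \eqref{eq:3x3diagram_Q}, while you use flatness, the middle row, and the surjectivity of $H^{k-1}(M;\R)\to H^{k-1}(M;\Ul)$ coming from the Bockstein sequence --- both resting on the same dimensional vanishing $H^k(M;\Z)=0$ for the $(k-1)$-dimensional stratifold $M$ that you correctly flag as the stratifold-theoretic input.
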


\begin{proof}
Let $X$ be a smooth space.
By assumption, we have a homomorphism $\Xi:\Ht^*(X;\Z) \to \widehat H^*(X;\Z)$ satisfying
\begin{align}
\Xi \circ \iotat 
&= 
\iota \,, \label{eq:Xi1} \\
\curv \circ \Xi
&= \curvt \,. \label{eq:Xi3}
\end{align}
Moreover, naturality means that for any smooth map $f:Y \to X$ and any $x \in \Ht^*(X)$, we have:
\begin{equation}
f^*(\Xi(x))
=
\Xi(f^*x) \,. \label{eq:Xi0} \\  
\end{equation}
Now let $x \in \Ht^k(X;\Z)$, and let $z \in Z_{k-1}(X;\Z)$.
We show that $\Xi(x)(z)$ is uniquely determined:
Choose homomorphisms $\zeta:Z_{k-1}(X;\Z) \to \ZZ_{k-1}(X)$ and $a:Z_{k-1}(X;\Z) \to C_k(X;\Z)$ as in Lemma~\ref{lem:Liftazeta} such that $[z-\partial a(z)]_{\partial S_k} = [\zeta(z)]_{\partial S_k}$.
By Remark~\ref{rem:thin_invariance}, differential characters are thin invariant.
Thus we have
\begin{align*}
\Xi(x)(z)
&=
\Xi(x)([\zeta(z)]_{\partial S_k}) \cdot \Xi(x)(\partial a(z)) \notag \\
&\ist{\eqref{eq:def_diff_charact_2}}\,
\Xi(x)([\zeta(z)]_{\partial S_k}) \cdot \exp \Big( 2\pi i \int_{a(z)} \curv(\Xi(x)) \Big) \,. \notag 
\end{align*}
Write $\zeta(z) = [M\xrightarrow{g}X]$.
For dimensional reasons, we have $\ct(g^*x)=0$. 
Thus by \eqref{eq:3x3diagram_Q}, we find $\varrho \in \Omega^{k-1}(X)$ such that $g^*x = \iotat([\varrho])$.
This yields:
\begin{align}
\Xi(x)(z)
&\ist{\eqref{eq:Xi3}}
g^*\Xi(x)([M]) \cdot \exp \Big( 2\pi i \int_{a(z)} \curvt(x) \Big) \notag \\
&\ist{\eqref{eq:Xi0}}\,
\Xi(g^*x)([M]) \cdot \exp \Big( 2\pi i \int_{a(z)} \curvt(x) \Big) \notag \\
&=
\Xi(\iotat([\varrho]))([M]) \cdot \exp \Big( 2\pi i \int_{a(z)} \curvt(x) \Big) \notag \\
&\ist{\eqref{eq:Xi1}} \,
\iota(\varrho)([M]) \cdot \exp \Big( 2\pi i \int_{a(z)} \curvt(x) \Big)  \notag \\
&\ist{\eqref{eq:def_iota}}\,
\exp \Big[ 2\pi i \Big( \int_M \varrho + \int_{a(z)} \curvt(x) \Big) \Big] \,.   \label{eq:Xi_unique}
\end{align}
We have derived an explicit formula for $\Xi$ and, in particular, proved its uniqueness.
\end{proof}

Now we take \eqref{eq:Xi_unique} to define a natural transformation $\Xi:\Ht^*(\cdt;\Z) \to \widehat H^*(\cdt ;\Z)$:

\begin{definition} 
Let $\Ht^*(\cdt;\Z)$ be a differential cohomology theory.
We define a natural transformation $\Xi: \Ht^*(\cdt;\Z) \to \widehat H^*(\cdt;\Z)$ as follows:
Let $X$ be a smooth space and $x \in \Ht^k(X;\Z)$.
Choose homomorphisms $\zeta:Z_{k-1}(X;\Z) \to \ZZ_{k-1}(X)$ and $a:Z_{k-1}(X;\Z) \to C_k(X;\Z)$ as in Lemma~\ref{lem:Liftazeta} such that $[z-\partial a(z)]_{\partial S_k} = [\zeta(z)]_{\partial S_k}$ for all $z\in Z_{k-1}(X;\Z)$.
Write $\zeta(z) = [M\xrightarrow{g}X]$.
For dimensional reasons, we have $\ct(g^*x)=0$. 
Thus by \eqref{eq:3x3diagram_Q}, we find $\varrho \in \Omega^{k-1}(M)$ such that $g^*x = \iotat([\varrho])$.
Now we set:
\begin{equation}\label{eq:def_XiQ}
\Xi(x)(z)
:=
\exp \Big[ 2\pi i \Big( \int_M \varrho + \int_{a(z)} \curvt(x) \Big) \Big] \,. 
\end{equation}
\end{definition}

The following Lemma shows that $\Xi$ is well defined.
The fact that $\zeta$ and $a$ are homomorphisms will be convenient for the proof of Theorem~\ref{thm:diff_cohom_unique} but for formula \eqref{eq:def_XiQ} this is not relevant.

\begin{lemma}\label{lem:XiQ'}
Let $X$ be a smooth space and $x \in \Ht^k(X;\Z)$.
Let $z \in Z_{k-1}(X;\Z)$.
Let $\zeta'(z) = [M'\xrightarrow{g'}X] \in \ZZ_{k-1}(X)$ and $a'(z) \in C_k(X;\Z)$ be any choice of geometric cycle and singular chain such that $[z-\partial a'(z)]_{\partial S_k}=[\zeta'(z)]_{\partial S_k}$.
Let $\varrho' \in \Omega^{k-1}(M')$ be any differential form such that ${g'}^*x= \iotat([\varrho'])$.
Then we have
\begin{equation}\label{eq:XiQ'}
\Xi(x)(z)
=
\exp \Big[ 2\pi i \Big( \int_{M'} \varrho' + \int_{a'(z)} \curvt(x) \Big) \Big] \,. 
\end{equation}
\end{lemma}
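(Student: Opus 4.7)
The plan is to verify independence of the right-hand side of \eqref{eq:def_XiQ} against each of the three choices---the form $\varrho$, the chain $a(z)$, and the geometric cycle $\zeta(z)$---in turn.

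For fixed $\zeta(z)=[M\xrightarrow{g}X]$, the form $\varrho$ is determined by the condition $g^*x=\iotat([\varrho])$ up to elements of $\Omega_0^{k-1}(M)$, by the injectivity of $\iotat$ on $\Omega^{k-1}(M)/\Omega_0^{k-1}(M)$ from the middle column of \eqref{eq:3x3diagram_Q}. Integer periods of such closed forms over the fundamental class of the closed $(k-1)$-stratifold $M$ make $\int_M\varrho$ well-defined modulo $\Z$. With $\zeta(z)$ and $\varrho$ fixed, any two admissible $a_1(z),a_2(z)$ satisfy $\partial(a_1(z)-a_2(z))\in\partial S_k(X;\Z)$, so $a_1(z)-a_2(z)$ differs from a smooth singular $k$-cycle by a thin chain; integrality of the periods of $\curvt(x)\in\Omega_0^k(X)$ together with vanishing on thin chains then force $\int_{a_1(z)-a_2(z)}\curvt(x)\in\Z$.

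The remaining case compares two geometric cycles $\zeta(z)$ and $\zeta'(z)=[M'\xrightarrow{g'}X]$. Since both represent $[z]\in H_{k-1}(X;\Z)$ and $\HH_{k-1}(X)\cong H_{k-1}(X;\Z)$, the sum $\zeta(z)+\overline{\zeta'(z)}\in\BB_{k-1}(X)$ is the boundary of some $\beta=[N\xrightarrow{G}X]\in\CC_k(X)$ with $\partial N=M\sqcup\overline{M'}$, $G|_M=g$, and $G|_{\overline{M'}}=g'$. I would extend the boundary form $\overline\varrho\in\Omega^{k-1}(\partial N)$ (equal to $\varrho$ on $M$ and to $\varrho'$ on $\overline{M'}$) to $\widehat\varrho\in\Omega^{k-1}(N)$ via a collar, and form the character $y:=G^*x-\iotat([\widehat\varrho])\in\Ht^k(N;\Z)$. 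Then $y|_{\partial N}=0$ and $\eta:=\curvt(y)=G^*\curvt(x)-d\widehat\varrho$ vanishes on $\partial N$. Using Lemma~\ref{lem:Liftazeta} together with thin-chain bookkeeping, I would construct $w\in C_k(X;\Z)$ with $\partial w=g'_*(c_{M'})-g_*(c_M)$ and $\int_w\curvt(x)=\int_{a(z)-a'(z)}\curvt(x)$. Choosing $c_N\in C_k(N;\Z)$ representing $[N,\partial N]$, the chain $G_*(c_N)+w$ is a smooth singular cycle modulo thin chains, so $\int_N G^*\curvt(x)+\int_w\curvt(x)\in\Z$. Combined with Stokes---$\int_M\varrho-\int_{M'}\varrho'=\int_N d\widehat\varrho$---this reduces the logarithm of the ratio of the two expressions for $\Xi(x)(z)$ modulo $\Z$ to $-\int_N\eta$.

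The main obstacle is then to show $\int_N\eta\in\Z$. Since $H^k(\partial N;\Z)=0$ for dimensional reasons, the class $\ct(y)=\ct(G^*x)\in H^k(N;\Z)$ restricts trivially to $\partial N$ and lifts to $\tilde c\in H^k(N,\partial N;\Z)$, whose pairing with the relative fundamental class $[N,\partial N]$ lies in $\Z$. Under the long exact sequence of the pair and the de Rham isomorphism, $[\eta]_\mathrm{rel}\in H^k(N,\partial N;\R)$ and $\tilde c\otimes\R$ both lift $\ct(G^*x)\otimes\R\in H^k(N;\R)$, so $[\eta]_\mathrm{rel}-\tilde c\otimes\R=\delta_\R[\alpha]$ for some $\alpha\in H^{k-1}(\partial N;\R)$. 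The hard step is to show that $\alpha$ can be chosen integral---exploiting the constraint $y|_{\partial N}=0$ interpreted through the isomorphism $\Ht^k(\partial N;\Z)\cong H^{k-1}(\partial N;\Ul)$ coming from the dimensional collapse of \eqref{eq:3x3diagram_Q} on $\partial N$. An alternative route is to invoke the relative differential cohomology machinery of later chapters, which directly supplies the required integer pairing of a relative characteristic class with $[N,\partial N]$.
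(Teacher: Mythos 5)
Your first two reductions (independence of $\varrho$ for fixed $\zeta(z)$, and of $a(z)$ for fixed $\zeta(z)$ and $\varrho$) are correct. The problem is the third step, where you compare $\zeta(z)$ with $\zeta'(z)$: you reduce everything to the claim $\int_N\eta\in\Z$ for $\eta=\curvt(y)$, $y=G^*x-\iotat([\widehat\varrho])$, and then explicitly leave open the decisive point, namely that the class $\alpha\in H^{k-1}(\partial N;\R)$ measuring the defect between $[\eta]_{\mathrm{rel}}$ and the integral lift $\tilde c$ can be chosen integral. As it stands this is a genuine gap, not a routine verification: passing from the vanishing of the differential character $y|_{\partial N}$ to an integrality statement about a de Rham representative of a relative class is exactly the content that needs proving, and your sketch does not supply it.

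The paper avoids the relative-cohomology detour entirely by one observation you did not use: the bounding stratifold $N$ is $k$-dimensional with nonempty boundary and (after arranging the top-dimensional strata to be connected, as in the proof of Theorem~\ref{thm:diff_cohom_unique}) satisfies $H^k(N;\Z)=\{0\}$. Hence $\ct(G^*x)=0$, and by exactness of the middle column of diagram~\eqref{eq:3x3diagram_Q} there is a \emph{global} form $\eta\in\Omega^{k-1}(N)$ with $G^*x=\iotat([\eta])$ and $\eta|_{\partial N}-(\varrho'-\varrho)\in\Omega^{k-1}_0(\partial N)$. Then $\int_{M'}\varrho'-\int_M\varrho\equiv\int_{\partial N}\eta=\int_N d\eta=\int_N G^*\curvt(x) \pmod{\Z}$, and the remaining term $\int_{a'(z)-a(z)}\curvt(x)$ cancels this modulo $\Z$ via the cycle $w(z)$ and the integrality of the periods of $\curvt(x)$ --- a pure Stokes computation. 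In your own setup the same observation closes the gap immediately: since $\ct(y)=\ct(G^*x)=0$, you have $y=\iotat([\mu])$ for some $\mu\in\Omega^{k-1}(N)$; injectivity of $\iotat$ on $\partial N$ together with $y|_{\partial N}=0$ gives $\mu|_{\partial N}\in\Omega^{k-1}_0(\partial N)$, whence $\int_N\eta=\int_N d\mu=\int_{\partial N}\mu\in\Z$. Without the vanishing of $H^k(N;\Z)$ your argument does not go through, so you should state and justify it (connected sum of the top-dimensional strata plus the long exact sequence of the pair $(N,\partial N)$).
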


\begin{proof}
Since $\zeta(z)$ and $\zeta'(z)$ both represent the homology class of $z$, we find a geometric boundary $\partial\beta(z) \in \BB_{k-1}(X)$ such that $\partial\beta(z) = \zeta'(z) - \zeta(z)$.
Since 
$$
[\partial a(z) - \partial a'(z)] _{\partial S_k} = [\partial\beta(z)]_{\partial S_k} = \partial [\beta(z)]_{S_k},
$$ 
we find a smooth singular cycle $w(z) \in Z_k(X;\Z)$ such that 
\begin{equation}
[a(z) - a'(z) - w(z)]_{S_k} = [\beta(z)]_{S_k}. 
\label{eq:aawbeta}
\end{equation}
Write $\beta(z) = [N\xrightarrow{G}X]$, where $N$ is a $k$-dimensional oriented compact $p$-stratifold with boundary $\partial N = M' \sqcup \overline{M}$ and $g=G|_{M}$, $g'=G|_{M'}$. 
Since $H^k(N;\Z)=\{0\}$, we have $\ct(G^*x)=0$.
By \eqref{eq:3x3diagram_Q}, we find a differential form $\eta \in \Omega^{k-1}(N)$ such that $G^*x=\iotat([\eta])$.
Then we have
$$
\iotat([\varrho']) - \iotat([\varrho]) 
=
g^*x - {g'}^*x
=
G|_{\partial N}^*x
=
(G^*x)|_{\partial N}
=
\iotat([\eta])|_{\partial N} \,.
$$   
In particular, we have $\eta|_{\partial N} - (\varrho'-\varrho) \in \Omega^{k-1}_0(\partial N)$.
Inserting this into \eqref{eq:def_XiQ} and \eqref{eq:XiQ'}, we find: 
\begin{align*}
\Xi(x)(z) \cdot \exp \Big[2\pi i \Big(&\int_{M'} \varrho' + \int_{a'(z)} \curvt(x) \Big)\Big]^{-1} \\
&=
\exp \Big[2\pi i \Big(\int_{M'} \varrho' - \int_M \varrho + \int_{a'(z)-a(z)} \curvt(x) \Big)\Big] \\
&=
\exp \Big[2\pi i \Big( \int_{\partial N} \eta + \int_{a'(z)-a(z)} \curvt(x) \Big)\Big] \\
&=
\exp \Big[2\pi i \Big( \int_N d\eta + \underbrace{\int_{-w(z)} \curvt(x)}_{\in \Z} + \int_{-[\beta(z)]_{S_k}} \curvt(x) \Big)\Big] \\
&=
\exp \Big[2\pi i \Big( \int_N G^*\curvt(x) + \int_{-[\beta(z)]_{S_k}} \curvt(x) \Big)\Big] \\
&=
\exp \Big(2\pi i \int_{\underbrace{G_*[N]_{S_k} -[\beta(z)]_{S_k}}_{=0}} \curvt(x) \Big) \\
&=
1 \,.
\end{align*}
This yields \eqref{eq:XiQ'}.
\end{proof}

Now we complete the proof of uniqueness of differential cohomology up to unique natural transformation by establishing existence of a natural transformation.

\begin{thm}[Uniqueness of differential cohomology II]\label{thm:diff_cohom_unique} \index{Theorem!uniqueness of differential cohomology II}%
The map $\Xi:\Ht^*(\cdt;\Z) \to \widehat H^*(\cdt;\Z)$ defined in \eqref{eq:def_XiQ} is a natural transformation and commutes with curvature, topological trivializations and inclusion of flat classes.
More explicitly, we have 
\begin{align}
\Xi \circ \iotat 
&= 
\iota \,, \label{eq:XiQ11} \\
\Xi \circ \jt 
&=
j \,, \label{eq:XiQ12} \\
\curv \circ \Xi
&= \curvt \,. \label{eq:XiQ13} 
\intertext{
For any smooth map $f:Y \to X$, and any $x \in \Ht^k(X)$, we have:}
f^*\Xi(x) 
&= \Xi(f^*x) \label{eq:XiQ10} \,.
\end{align}
\end{thm}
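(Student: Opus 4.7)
The plan is to exploit Lemma~\ref{lem:XiQ'} throughout: since the value $\Xi(x)(z)$ is independent of the particular choices of $\zeta'(z)$, $a'(z)$ and $\varrho'$ subject to the compatibility hypothesis, I will make the most convenient choice for each of the four identities separately. As a preliminary, the fact that $\Xi(x)$ is actually a group homomorphism $Z_{k-1}(X;\Z) \to \Ul$ will follow from $\zeta$ and $a$ being chosen as homomorphisms in Lemma~\ref{lem:Liftazeta}, together with additivity of integration under disjoint union of stratifolds and the fact that $\varrho$ may be taken additively up to closed forms with integral periods.

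For the curvature identity \eqref{eq:XiQ13}, given $c \in C_k(X;\Z)$ I will evaluate $\Xi(x)(\partial c)$ using the trivial choice $\zeta'(\partial c) := 0$ and $a'(\partial c) := c$, for which the compatibility $[\partial c - \partial c]_{\partial S_k} = 0$ is automatic and no $\varrho'$ is needed. Formula \eqref{eq:XiQ'} then reduces at once to $\exp(2\pi i \int_c \curvt(x))$, proving simultaneously that $\Xi(x) \in \widehat H^k(X;\Z)$ and that $\curv \circ \Xi = \curvt$.

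For \eqref{eq:XiQ11}, given $\eta \in \Omega^{k-1}(X)$ and $z \in Z_{k-1}(X;\Z)$, naturality of $\iotat$ permits the choice $\varrho := g^*\eta$ where $\zeta(z) = [M \xrightarrow{g} X]$, while commutativity of the upper-right quadrant of \eqref{eq:3x3diagram_Q} gives $\curvt(\iotat([\eta])) = d\eta$. Stokes's theorem on $a(z)$ together with the identity $[\zeta(z)]_{\partial S_k} = [z - \partial a(z)]_{\partial S_k}$ from Lemma~\ref{lem:Liftazeta} then collapses \eqref{eq:def_XiQ} to $\exp(2\pi i \int_z \eta) = \iota(\eta)(z)$. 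For \eqref{eq:XiQ12}, given $u \in H^{k-1}(X;\Ul)$, the relation $\curvt(\jt(u)) = 0$ (exactness of the middle row of \eqref{eq:3x3diagram_Q}) annihilates the second term in \eqref{eq:def_XiQ}; commutativity of the upper-left square of \eqref{eq:3x3diagram_Q}, applied after pulling back to $M$, characterizes any admissible $\varrho$ as a closed form whose de Rham class is a real lift of $g^*u \in H^{k-1}(M;\Ul)$, whence $\exp(2\pi i \int_M \varrho) = \langle g^*u, [M]\rangle = \langle u, [z]\rangle = j(u)(z)$.

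Naturality \eqref{eq:XiQ10} is the most notation-heavy step but conceptually routine: given $f:Y \to X$ and $w \in Z_{k-1}(Y;\Z)$, I will evaluate $\Xi(x)(f_*w)$ via Lemma~\ref{lem:XiQ'} using the transported choices $\zeta'(f_*w) := f_*\zeta(w) = [M \xrightarrow{f \circ g} X]$, $a'(f_*w) := f_*a(w)$, and $\varrho' := \varrho$ obtained from $g^*(f^*x) = \iotat([\varrho])$; naturality of $\curvt$ then matches the resulting expression term-by-term with the defining formula \eqref{eq:def_XiQ} for $\Xi(f^*x)(w)$. The main obstacle is not any single hard step but the bookkeeping: at each stage I must check that the proposed ``convenient'' choices satisfy the compatibility hypothesis of Lemma~\ref{lem:XiQ'} and that the correct quadrant of diagram \eqref{eq:3x3diagram_Q} is invoked. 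Beyond these compatibilities, no new construction or estimate is needed.
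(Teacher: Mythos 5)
Your proposal is correct and, for three of the four identities (topological trivializations, flat classes, naturality) as well as for the additivity of $\Xi(x)$, it follows the paper's proof essentially verbatim: same choices $\varrho = g^*\eta$, same use of the quadrants of \eqref{eq:3x3diagram_Q}, same transported choices $\zeta(f_*z)=f_*\zeta(z)$, $a(f_*z)=f_*a(z)$ justified by Lemma~\ref{lem:XiQ'}.

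The one place where you genuinely diverge is the verification that $\Xi(x)$ is a differential character with $\curv(\Xi(x))=\curvt(x)$. The paper evaluates $\Xi(x)(\partial c)$ directly: it writes $\zeta(c)=[N\xrightarrow{f}X]$ with $\partial N = M$, invokes an auxiliary argument (due to Kreck, on making the top-dimensional strata connected) to arrange $H^k(N;\Z)=\{0\}$, trivializes $f^*x=\iotat([\eta])$ on $N$, and pushes the boundary integral $\int_M\varrho$ into $\int_N\curvt(f^*x)$ by Stokes, finishing with \eqref{eq:cadel}. You instead feed the degenerate data $\zeta'(\partial c):=0$, $a'(\partial c):=c$, $\varrho':=0$ into Lemma~\ref{lem:XiQ'}; the compatibility $[\partial c-\partial c]_{\partial S_k}=0=[\zeta'(\partial c)]_{\partial S_k}$ is immediate (note $\zeta(\partial c)=\partial\zeta(c)$ is a geometric boundary, so the homology hypothesis of the lemma is met), and \eqref{eq:XiQ'} collapses to $\exp(2\pi i\int_c\curvt(x))$. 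This is legitimate and shorter at the point of use: it avoids repeating the stratifold-cohomology argument inside the theorem's proof. The cost is only apparent, since that argument (the vanishing of $H^k(N;\Z)$ for the bounding stratifold) is already needed to prove Lemma~\ref{lem:XiQ'} itself; your route simply concentrates all such work in the lemma rather than distributing it. No gap.
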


\begin{remark}
It follows from \cite[Lemma~1.1]{SS08}, that $\Xi$ also satisfies
\begin{equation*}
c \circ \Xi
=
\ct \,. 
\end{equation*}
\end{remark}

\begin{proof}[Proof of Theorem~\ref{thm:diff_cohom_unique}]
a)
We first show that $\Xi$ takes values in $\widehat H^*(\cdt;\Z)$.
Let $X$ be a fixed smooth space and $x \in \Ht^k(X;\Z)$.
By construction, the maps $\zeta:Z_{k-1}(X;\Z) \to \ZZ_{k-1}(X)$ and $a:Z_{k-1}(X;\Z) \to C_k(X;\Z)$ are group homomorphisms, first defined on basis elements and then extended linearly.
Similarly, the choice of differential forms $\varrho \in \Omega^{k-1}(M)$ for $\zeta(z) = [M\xrightarrow{g}X]$ is made on a basis of $Z_{k-1}(X;\Z)$.
Extending linearly, the map $z \mapsto \exp \Big[ 2\pi i \Big( \int_M \varrho + \int_{a(z)} \curvt(x) \Big) \Big]$ defines a group homomorphism $\Xi(x): Z_{k-1}(X;\Z) \to \Ul$. 

It remains to show that $\Xi(x)$ satisfies condition~\eqref{eq:def_diff_charact_2} for the homomorphism $z \mapsto \Xi(x)(z)$ to be a differential character.
The argument is almost the same as in the proof of Lemma~\ref{lem:XiQ'}. 
Let $z = \partial c$ for some $c \in C_k(X;\Z)$.
By Lemma~\ref{lem:Liftazeta}, the homomorphism $\zeta:C_k(X;\Z) \to \CC_k(X;\Z)$ satisfies $\zeta(\partial c) = \partial \zeta(c)$.
We write $\zeta(c) = [N\xrightarrow{f}X]$, where $M = \partial N$ and $g = f|_M$. 

If $N$ were an oriented smooth manifold with boundary, we would have $H^k(N;\Z) =\{0\}$. 
By the following argument (suggested to us by M.~Kreck), we may also choose the stratifold $N$ such that its top dimensional cohomology vanishes:
Replacing the top dimensional strata of $N$ and $M$ by the connected sum of their components if necessary, we may assume the top dimensional strata of $N$ and $M$ to be connected.
This yields $H^k(N,M;\Z) \cong H^{k-1}(M;\Z) \cong \Z$, the first isomorphism being the boundary map.   
Now the long exact sequence of the pair $(N,M)$ yields $H^k(N;\Z) = \{0\}$.

Since $H^k(N;\Z) = \{0\}$, we have $f^*x = \iotat([\eta])$ for some $\eta \in \Omega^{k-1}(N)$.
Since $\iotat$ is natural with respect to smooth maps, we have: 
$$
\iotat([\varrho])
=
g^*x 
= 
(f^*x)|_{\partial N} 
=
\iotat([\eta])|_{\partial N}
=
\iotat([\eta|_{\partial N}]) \,.
$$
In particular, $\varrho - \eta|_M \in \Omega^{k-1}_0(M)$. 
This yields:
$$
\exp \Big( 2\pi i \int_M \varrho \Big)
=
\exp \Big( 2\pi i \int_{\partial N} \eta \Big) %\\
=
\exp \Big( 2\pi i \int_N d\eta \Big) %\\
=
\exp \Big( 2\pi i \int_N \curvt(f^*x) \Big) \,. 
$$
Inserting this into \eqref{eq:def_XiQ}, we obtain:
\begin{align*}
\Xi(x)(\partial c)
&=
\exp \Big[ 2\pi i \Big( \int_{\partial N} \varrho + \int_{a(\partial c)} \curvt(x) \Big) \Big] \\
&=
\exp \Big[ 2\pi i \Big( \int_N \curvt(f^*x) + \int_{a(\partial c)} \curvt(x) \Big) \Big] \\
&=
\exp \Big[ 2\pi i \Big( \int_{[\zeta(c)]_{S_k}} \curvt(x) + \int_{a(\partial c)} \curvt(x) \Big) \Big] \\
&\ist{\eqref{eq:cadel}}\,
\exp \Big[ 2\pi i \Big( \int_c \curvt(x) + \underbrace{\int_{\partial a(c+y(c))} \curvt(x)}_{=0} \Big) \Big] \\
&= 
\exp \Big( 2\pi i \int_c \curvt(x) \Big) \,. 
\end{align*}
Thus $\Xi(x)$ is a differential character in $\widehat H^k(X;\Z)$ with $\curv(\Xi(x)) = \curvt(x)$. 

b)
For any smooth space $X$, the map $\Xi:\Ht^*(X;\Z) \to \widehat H^*(X;\Z)$ defined by \eqref{eq:def_XiQ} is additive.
Thus $\Xi:\Ht^*(X;\Z) \to \widehat H^*(X;\Z)$ is a degree $0$ homomorphism of graded groups. 

c)
We show that $\Xi$ is natural with respect to smooth maps.
Let $f:Y \to X$ be a smooth map. 
Let $x\in \Ht^k(X)$ and $z \in Z_{k-1}(Y;\Z)$.
We need to show that $\Xi(f^*x)(z) = f^*(\Xi(x))(z)$.
Choose $\zeta(z) \in \ZZ_{k-1}(Y)$ and $a(z) \in C_k(Y;\Z)$ such that \mbox{$[z-\partial a(z)]_{\partial S_k}=[\zeta(z)]_{\partial S_k}$}.

Write $\zeta(z) = [M\xrightarrow{g}Y]$.
Setting $\zeta(f_*z) := f_*\zeta(z) = [M\xrightarrow{f\circ g}X]$ and $a(f_*z) := f_*a(z)$, we obtain
$$
[f_*z- \partial a(f_*z)]_{\partial S_k} 
= 
f_*[z-\partial a(z)]_{\partial S_k}
=
f_*[\zeta(z)]_{\partial S_k}
=
[f_*\zeta(z)]_{\partial S_k}.
$$
Now choose $\varrho \in \Omega^{k-1}(M)$ such that $(f \circ g)^*x = g^*(f^*x) = \iotat([\varrho])$.
By Remark~\ref{rem:def_f*h} and Lemma~\ref{lem:XiQ'}, we find: 
\begin{align*}
f^*(\Xi(x))(z) 
&:= 
\Xi(x)(f_*z) \\
&=
\exp \Big[ 2\pi i \Big( \int_M \varrho + \int_{a(f_*z)} \curvt(x) \Big) \Big] \\
&=
\exp \Big[ 2\pi i \Big( \int_M \varrho + \int_{a(z)} \curvt(f^*x) \Big) \Big] \\
&=
\Xi(f^*x)(z) \,.
\end{align*}

d)
We show that $\Xi$ commutes with inclusions of flat classes.
Let $u \in H^{k-1}(X;\Ul)$ and $z \in Z_{k-1}(X;\Z)$.
We choose $\zeta(z) = [M\xrightarrow{g}X]$ and $a(z) \in C_k(X;\Z)$ as above. 
Note that $\ct(g^*\jt(u))=0$ for dimensional reasons.
Thus $g^*u$ is the reduction mod $\Z$ of a class in $H^{k-1}(M;\R)$.
Let $\varrho \in \Omega^{k-1}(M)$ such that $g^*(\jt(u)) = \iotat([\varrho])$.
Since the upper left quadrant of diagram~\eqref{eq:3x3diagram_Q} commutes, the reduction mod $\Z$ of $[\varrho]_\mathrm{dR} \in H^{k-1}(M;\R)$ coincides with $g^*u$.
Moreover, the diagram \eqref{eq:3x3diagram_Q} yields $\curvt(\jt(u)) =0$.
Thus we have:
\allowdisplaybreaks{
\begin{align*}
\Xi(\jt(u))(z)
&=
\exp \Big[ 2\pi i \Big( \int_M \varrho + \int_{a(z)} \underbrace{{\curvt(\jt(u))}}_{=0} \Big) \Big] \\
&=
\exp \Big( 2\pi i \int_M [\varrho]_\mathrm{dR} \Big) \\
&=
\la g^*u,[M] \ra \\
&=
\la u,g_*[M] \ra \\
&\ist{\eqref{eq:zazeta}}
\la u,[z] \ra \\
&\ist{\eqref{eq:def_j}}
j(u)(z) \,.
\end{align*}
}

e)
We show that $\Xi$ commutes with topological trivializations.
Let $\varrho \in \Omega^{k-1}(X)$.
Then we have:
\begin{align*}
\Xi(\iotat([\varrho]))(z) 
&=
\exp \Big[ 2\pi i \Big( \int_M g^*\varrho + \int_{a(z)} \curvt(\iotat([\varrho])) \Big) \Big] \\
&\ist{\eqref{eq:3x3diagram_Q}}\,
\exp \Big[ 2\pi i \Big( \int_{g_*[M]_{\partial S_k}} \varrho + \int_{a(z)} d\varrho \Big) \Big] \\
&=
\exp \Big[ 2\pi i \Big( \int_{[\zeta(z)]_{\partial S_k}} \varrho + \int_{\partial a(z)} \varrho \Big) \Big] \\
&\ist{\eqref{eq:zazeta}}\,
\exp \Big( 2\pi i \int_z \varrho \Big) \\
&\ist{\eqref{eq:def_iota}}\,
\iota(\varrho)(z) \,. \qedhere 
\end{align*}
\end{proof}

\chapter{The ring structure}\label{subsec:ring}
In this section we discuss the ring structure on differential cohomology.
Existence of a natural ring structure on $\widehat H^*(X;\Z)$ compatible with curvature, characteristic class and topological trivializations was established in \cite[Thm.~1.11]{CS83} by an explicit formula using barycentric subdivision of singular chains and the chain homotopy from the subdivision to the identity.
Simple formulas for the product are obtained for differential characters represented by differential forms with singularities as in \cite{C73} or by de Rham-Federer currents as in \cite[Sec.~3]{HLZ03}. 
\index{de Rham-Federer currents}%

An axiomatic definition of a ring structure on differential cohomology was established in \cite{SS08}, together with a proof that the ring structure is uniquely determined by these axioms (see \cite[Thm.~1.2]{SS08}).
We use an axiomatic definition of the ring structure similar to the one in \cite{SS08}.
The sign convention for topological trivializations differs from the one in \cite[p.~51]{SS08} but coincides with the one in \cite[Def.~1.2]{BS09}.
We give a corresponding axiomatic definition of an external or cross product and prove that this product is uniqely determined by the axioms.
Uniquess of the external product has also been discussed in \cite[Ch.~6]{L06}.
Our proof has the advantage of giving an explicit geometric formula for the product.

\begin{definition}\label{def:int_prod_axioms} \index{Definition!internal product} \index{Definition!ring structure}%
An \emph{internal product} of differential characters yields for any smooth space $X$ and any $(k,l) \in \Z \times \Z$ a  map 
\begin{equation}
*: \widehat H^k(X;\Z) \times \widehat H^l(X;\Z) \to \widehat H^{k+l}(X;\Z) \,, \quad (h,f) \mapsto h*f \,,
\end{equation}
such that the following holds:
\index{+Star@$*$, internal product of differential characters}%
\index{internal product!of differential characters}%
\index{differential characters!internal product}%
\index{differential characters!ring structure}%
\begin{itemize}
\item[1.]{\em{Ring structure.}} 
The product $*$ is associative and $\Z$-bilinear, i.e. $(\widehat H^*(X;\Z),+,*)$ is a ring.
\item[2.]{\em{Graded commutativity.}}
The product $*$ is graded commutative, i.e.~for $h \in \widehat H^k(X;\Z)$ and $f \in \widehat H^l(X;\Z)$, we have $f*h = (-1)^{kl} h*f$.
\item[3.]{\em{Naturality.}}
For any smooth map $g: Y \to X$ and $h,f \in \widehat H^*(X;\Z)$, we have $g^*(h*f) = g^*h * g^*f$.
\item[4.]{\em{Compatibility with curvature.}}
The curvature $\curv:\widehat H^*(X;\Z) \to \Omega^*_0(X)$ is a ring homomorphism, i.e.~for $h,f \in \widehat H^*(X;\Z)$, we have $\curv(h*f)= \curv(h) \wedge \curv(f)$.
\index{compatibility!of internal product with curvature}%
\item[5.]{\em{Compatibility with characteristic class.}}
The characteristic class $c:\widehat H^*(X;\Z) \to H^*(X;\Z)$ is a ring homomorphism, i.e.~for $h,f \in \widehat H^*(X;\Z)$, we have $c(h*f)= c(h) \cup c(f)$.
\index{compatibility!of internal product with characteristic class}%
\item[6.]{\em{Compatibility with topological trivialization.}}
For $\varrho \in \Omega^*(X)$ and $f \in \widehat H^l(X;\Z)$, we have $\iota(\varrho)*f = \iota(\varrho \wedge \curv(f))$. 
\index{compatibility!of internal product with topological trivialization}%
\end{itemize}  
\end{definition}

An internal product on differential cohomology induces an \emph{external product} or \emph{differential cohomology cross product} 
$$
\times: \widehat H^k(X;\Z) \times \widehat H^{k'}(X';\Z) \to \widehat H^{k+l}(X;\Z), \quad h \times h' := \pr_1^*h * \pr_2^*h'\,.
$$
Here $\pr_1, \pr_2$ denotes the projection on the first and second factor of $X \times X'$, respectively.   

We may also define an \emph{external product} or \emph{differential cohomology cross product} axiomatically: 

\begin{definition}\label{def:ext_prod_axioms} \index{Definition!external product} \index{Definition!cross product}%
An \emph{external product} of differential characters yields for any smooth spaces $X$ and $X'$ and any $(k,k') \in \Z \times \Z$ a map 
\begin{equation}
\times: \widehat H^k(X;\Z) \times \widehat H^{k'}(X';\Z) \to \widehat H^{k+k'}(X \times X';\Z) \,, \quad (h,h') \mapsto h \times h'\,, 
\end{equation}
such that the following holds:
\index{+Times@$\times$, external or cross product of differential characters}
\index{differential characters!external product}%
\index{differential characters!cross product}%
\index{cross product!of differential characters}%
\index{cross product!differential cohomology $\sim$}
\index{differential cohomology cross product}%
\begin{itemize}
\item[1.]{\em{Associativity, bilinearity.}} 
The product $\times$ is associative and $\Z$-bilinear.
\item[2.]{\em{Graded commutativity.}}
The product $\times$ is graded commutative, i.e.~for $h \in \widehat H^k(X;\Z)$ and $h' \in \widehat H^{k'}(X';\Z)$, we have:
\begin{equation}\label{eq:ext_comm}
h' \times h 
= 
(-1)^{kk'} h\times h' \,. 
\end{equation}
\item[3.]{\em{Naturality.}}
For any smooth maps $g: Y \to X$ and $g': Y' \to X'$ and for $h \in \widehat H^*(X;\Z)$ and $h' \in \widehat H^*(X';\Z)$, we have:
\begin{equation}\label{eq:ext_nat}
(g \times g')^*(h\times h') 
= 
g^*h \times {g'}^*h' \,.
\end{equation}
\item[4.]{\em{Compatibility with curvature.}}
The curvature \mbox{$\curv:\widehat H^*(X;\Z) \to \Omega^*_0(X)$} commutes with external products, i.e.~for $h \in \widehat H^*(X;\Z)$ and \mbox{$h' \in \widehat H^*(X';\Z)$}, we have:
\begin{equation}\label{eq:ext_curv}
\curv(h\times h')
= 
\curv(h) \times \curv(h') \,. 
\end{equation}
\index{compatibility!of external product with curvature}%
\item[5.]{\em{Compatibility with characteristic class.}}
The characteristic class \mbox{$c:\widehat H^*(X;\Z) \to H^*(X;\Z)$} commutes with external products, i.e.~for $h \in \widehat H^*(X;\Z)$ and $h' \in \widehat H^*(X';\Z)$, we have:
\begin{equation}\label{eq:ext_c}
c(h \times h')
= 
c(h) \times c(h') \,. 
\end{equation}
\index{compatibility!of external product with characteristic class}%
\item[6.]{\em{Compatibility with topological trivialization.}}
For $\varrho \in \Omega^*(X)$ and \mbox{$h' \in \widehat H^k(X';\Z)$}, we have:
\begin{equation}\label{eq:ext_top_triv} 
\iota(\varrho) \times h' 
= 
\iota(\varrho \times \curv(h')) \,.
\end{equation}
\index{compatibility!of external product with topological trivialization}%
\end{itemize}  
\end{definition}
An external product yields an internal product by setting $h*f := \Delta_X^* (h \times f)$ for any $h,f \in \widehat H^*(X;\Z)$.
Here $\Delta_X:X \to X \times X$ denotes the diagonal map.
\index{+Delta@$\Delta$, diagonal map}

Internal and external products are equivalent in the sense that any one determines the other.
Starting with an internal product $*$, the induced external product recovers the original internal product:
for any $h,f \in \widehat H^*(X;\Z)$, we have
\begin{equation}\label{eq:ext_int}
\Delta_X^*(h \times f) 
= 
\Delta_X^* (\pr_1^*h * \pr_2^*f) 
= 
(\pr_1 \circ \Delta_X)^*h * (\pr_2 \circ \Delta_X)^*f 
= 
h*f.
\end{equation}
Conversely, starting with an external product $\times$, the induced internal product recovers the original external product: 
for $h \in \widehat H^*(X;\Z)$ and $h' \in \widehat H^*(X';\Z)$, we have
\begin{align*}\label{eq:int_ext}
\pr_1^*h * \pr_2^*h'
&=
\Delta_{X \times X'} (\pr_1^*h \times \pr_2^*h') \\
&=
\Delta_{X \times X'}^*(\pr_1 \times \pr_2)^*(h \times h') \\
&=
(\underbrace{(\pr_1 \times \pr_2) \circ \Delta_{X \times X'}}_{=\id_{X \times X'}})^*(h \times h') \\
&=
h\times h'.
\end{align*}
Internal products are useful, since they provide differential cohomology with a ring structure.
On the other hand, external products are sometimes more useful for explicit calculations, as we shall see below.

In the following, we show that the ring structure on differential coho\-mology is uniquely determined by the axioms in Definition~\ref{def:int_prod_axioms}.
By the discussion above, this is equivalent to the fact that the induced external product is uniquely determined by the axioms in Definition~\ref{def:ext_prod_axioms}. 
To prove the latter, we start with the following special case:

\begin{lemma}[Evaluation on cartesian products]\label{lem:x_M_M'} \index{Lemma!evaluation on cartesian products}%
Let $M$ and $M'$ be closed oriented $p$-stratifolds.
\index{stratifold}%
Suppose $\dim(M \times M') = k+k'-1$.
Let $\times$ be an external product in the sense of Definition~\ref{def:ext_prod_axioms}.
Then for differential characters $h \in \widehat H^k(M;\Z)$ and $h' \in \widehat H^{k'}(M';\Z)$, we have:
\begin{equation}\label{eq:x_M_M'}
(h \times h')([M \times M'])
= \begin{cases}
  h([M])^{\langle c(h'),[M']\rangle}  & \quad \mbox{for $(\dim(M),\dim(M')) = (k-1,k')$} \\ 
  h'([M'])^{(-1)^{k} \langle c(h),[M]\rangle}  & \quad \mbox{for $(\dim(M),\dim(M')) = (k,k'-1)$} \\ 
  1 & \quad \mbox{otherwise}
  \end{cases} 
\end{equation} 
\end{lemma}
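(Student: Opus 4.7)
The plan is to split into the three cases dictated by the dimension constraint $\dim M + \dim M' = k+k'-1$, exploiting that in each case the degree forces one of the characters to be either zero or topologically trivial.

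For the ``otherwise'' case I note that $\dim M \notin \{k-1,k\}$ together with the dimension constraint forces either $\dim M \leq k-2$ or $\dim M' \leq k'-2$; by symmetry assume the former. Because $M$ is a closed oriented $p$-stratifold, $\Omega^k(M) = 0$ and $H^{k-1}(M;\Ul) = 0$, since forms and cohomology vanish above the dimension of a closed stratifold. Exactness of the middle row of diagram~\eqref{eq:3x3diagram} then forces $\widehat H^k(M;\Z) = 0$, hence $h=0$, and by $\Z$-bilinearity $h \times h' = 0$, evaluating to $1$.

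For Case~1 ($\dim M = k-1$, $\dim M' = k'$), the dimension argument yields $H^k(M;\Z)=0$, so $c(h)=0$, and exactness of the middle column of diagram~\eqref{eq:3x3diagram} produces $\eta \in \Omega^{k-1}(M)$ with $h = \iota(\eta)$. Compatibility axiom~(6) of Definition~\ref{def:ext_prod_axioms} then gives $h \times h' = \iota(\eta \times \curv(h'))$. Evaluating on $[M \times M']$ via the definition of $\iota$, Fubini for the form cross product, and the identification $\int_{M'} \curv(h') = \langle c(h'),[M']\rangle$ coming from the de Rham compatibility in diagram~\eqref{eq:3x3diagram}, I recover $h([M])^{\langle c(h'),[M']\rangle}$.

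Case~2 ($\dim M = k$, $\dim M' = k'-1$) will reduce to Case~1 via graded commutativity: interpreting axiom~(2) as $h \times h' = (-1)^{kk'} T^*(h' \times h)$ for the swap $T\colon M \times M' \to M' \times M$, and combining with $T_*[M \times M'] = (-1)^{k(k'-1)}[M' \times M]$, the cumulative sign on the $U(1)$-exponent becomes $(-1)^{kk'+k(k'-1)} = (-1)^k$. Applying Case~1 to the swapped data yields $(h' \times h)([M' \times M]) = h'([M'])^{\langle c(h),[M]\rangle}$, which combined with the sign gives $h'([M'])^{(-1)^k \langle c(h),[M]\rangle}$. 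The hardest part of the proof is bookkeeping these signs correctly in Case~2; everything else is a direct application of the axioms together with the vanishing of high-degree forms and cohomology on closed stratifolds.
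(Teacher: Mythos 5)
Your proof is correct and follows essentially the same route as the paper: the same case split, the same dimensional vanishing of $\widehat H^k(M;\Z)$ in the ``otherwise'' case, and the same use of topological trivialization plus axiom~(6) in Case~1. The only difference is cosmetic: in Case~2 the paper trivializes $h'$ and commutes the resulting forms, picking up $(-1)^{kk'}\cdot(-1)^{k(k'-1)}=(-1)^k$, whereas you reduce to Case~1 via the swap map and track the orientation sign $(-1)^{k(k'-1)}$ of $T_*[M\times M']$ against the $(-1)^{kk'}$ from graded commutativity --- the same sign bookkeeping carried out in a different order, with the same result.
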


\begin{proof}
If $(\dim(M),\dim(M')) \notin \{(k-1,k'),(k,k'-1)\}$, then either $\dim(M) < k-1$ or $\dim(M') < k'-1$. 
In these cases we have $\widehat H^k(M;\Z) = \{0\}$ or $\widehat H^{k'}(M';\Z) = \{0\}$. 
Since $\times$ is bilinear, we have $h \times h'=0$ in these cases.

Suppose $(\dim(M),\dim(M')) = (k-1,k')$. 
Then $h$ is topologically trivial for dimensional reasons.
Thus we may choose $\varrho \in \Omega^{k-1}(M)$ such that $\iota(\varrho) = h$.
By Definition~\ref{def:ext_prod_axioms}, we then have:
\begin{align*}
(h \times h')([M \times M'])
&=
(\iota(\varrho) \times h')([M \times M']) \\
&\stackrel{\eqref{eq:ext_top_triv}}{=}
(\iota(\varrho \times \curv(h')))([M \times M']) \\
&=
\exp \Big( 2\pi i \int_{M \times M'} \varrho \times \curv(h') \Big) \\
&=
\exp \Big( 2\pi i \Big(\int_M \varrho \cdot \int_{M'} \curv(h') \Big) \\
&=
\exp \Big( 2\pi i \int_M \varrho \Big)^{\la c(h'),[M']\ra} \\
&=
h([M])^{\langle c(h'),[M']\rangle} \,.
\end{align*}
Similarly, for $(\dim(M),\dim(M')) = (k,k'-1)$, we find $\varrho' \in \Omega^{k'-1}(M')$ such that $h' = \iota(\varrho')$.
This yields 
\begin{align*}
h \times h' 
&= 
h \times \iota(\varrho') \\
&= 
(-1)^{kk'} \iota(\varrho') \times h \\
&= 
(-1)^{kk'} \iota(\varrho' \times \curv(h)) \\ 
&= 
(-1)^{kk'} \iota((-1)^{k(k'-1)} \curv(h) \times \varrho) \\
&= 
(-1)^k \iota(\curv(h) \times \varrho')
\end{align*} 
and hence 
$$
(h \times h')([M \times M'])
=
\exp\Big( 2\pi i \int_{M \times M'} (-1)^k\curv(h) \times \varrho'\Big)
=
h'([M'])^{(-1)^k \la c(h),[M] \ra} \,. \qedhere
$$   
\end{proof}

Now we use this special case to show that the differential cohomology cross product is uniquely determined by the axioms in Definition~\ref{def:ext_prod_axioms}.
The main idea of the proof is to use a splitting of the K\"unneth sequence \index{K\"unneth sequence}%
\begin{equation*}
0
\to \big[ H_*(X;\Z) \otimes H_*(X';\Z) \big]_n
\xrightarrow{\times} H_n(X \times X';\Z) 
\to \Tor( H_*(X;\Z) , H_*(X';\Z))_{n-1}
\to 0
\end{equation*}
on the level of cycles.
We use the well-known Alexander-Whitney and Eilenberg-Zilber maps $\xymatrix{C_*(X \times X';\Z) \ar@<2pt>[rr]^{AW} && C_*(X;\Z) \otimes C_*(X';\Z) \ar@<2pt>[ll]^{EZ}}$.
\index{+AW@$AW$, Alexander-Whitney map}
\index{Alexander-Whitney map}%
\index{+EZ@$EZ$, Eilenberg-Zilber map}
\index{Eilenberg-Zilber map}%
These are chain homotopy inverses of each other with $AW \circ EZ = \id_{C_*(X;\Z) \otimes C_*(X';\Z)}$ and $EZ \circ AW$ chain homotopic to the identity on $C_*(X \times X';\Z)$, see \cite[p.~167]{McC01}.
Let $i:Z_*(X;\Z) \to C_*(X;\Z)$ be the inclusion and let $s:C_*(X;\Z) \to Z_*(X;\Z)$ be a splitting as in Remark~\ref{rem:extend}.
Similarly, we have the inclusion $i'$ and a splitting $s'$ on $X'$. 
Set $S := (s \otimes s')\circ AW$ and $K:= EZ \circ (i \otimes i')$.
\index{+S@$S$, splitting of K\"unneth sequence}%
\index{K\"unneth sequence!splitting}%
\index{+K@$K$, splitting of K\"unneth sequence}%
Denoting by $Z(C_*(X;\Z) \otimes C_*(X';\Z))$ the cycles of the tensor product complex, we obtain the following splitting of the K\"unneth sequence on the level of cycles:
\begin{equation*}
\xymatrix{
0 \ar[r] & Z_*(X;\Z) \otimes Z_*(X';\Z) \ar@<4pt>[rr]^{i \otimes i'} \ar@<4pt>[drr]^(0.6)K && Z(C_*(X;\Z) \otimes C_*(X';\Z)) \ar@<4pt>[d]^{EZ} \ar[r] \ar@{-->}[ll]^{s \otimes s'} & \ldots \\
&&& Z_*(X \times X';\Z) \ar@{-->}[u]^{AW} \ar@{-->}[ull]^(0.4)S
} 
\end{equation*}
In particular, we have $S \circ K = (s \otimes s') \circ AW \circ EZ \circ (i \otimes i') = \id_{Z_*(X;\Z) \otimes Z_*(X';\Z)}$.  

Using this splitting, we proceed by carefully choosing the homomorphism $\zeta^{X \times X'}:Z_*(X \times X';\Z) \to \ZZ_*(X \times X')$:
We first construct the homomorphism \mbox{$\zeta^X:Z_*(X;\Z) \to \ZZ_*(X)$} as in Lemma~\ref{lem:Liftazeta}, and similarly for $X'$.
We compose $\zeta^X\otimes\zeta^{X'}$ with the cross product 
$$
\times: \ZZ_*(X) \otimes \ZZ_*(X') \to \ZZ_*(X \times X'), \quad [M\xrightarrow{g}X] \otimes [M'\xrightarrow{g'}X'] \mapsto [M \times M'\xrightarrow{g \times g'}X \times X'],
$$
and obtain a homomorphism
$Z_*(X;\Z)\otimes Z_*(X';\Z)\to \ZZ_*(X \times X')$.
Using the splitting we extend this map to a homomorphism $\zeta^{X \times X'}:Z_*(X \times X';\Z)\to \ZZ_*(X \times X')$.
We thus obtain the commutative diagram:
\begin{equation}\label{eq:diag_zeta_x}
\xymatrix{
\ZZ_*(X) \otimes \ZZ_*(X') \ar[rr]^\times && \ZZ_*(X \times X') \\
Z_*(X;\Z) \otimes Z_*(X';\Z) \ar@<4pt>[rr]^K \ar[u]^{\zeta^X \otimes \zeta^{X'}} && Z_*(X \times X';\Z) \ar@{-->}[ll]^S \ar[u]_{\zeta^{X \times X'}}   
}
\end{equation}

Now let $h \in \widehat H^k(X;\Z)$ and $h' \in \widehat H^{k'}(X';\Z)$ and $z \in Z_{k+k'-1}(X \times X';\Z)$.
We write $z= K \circ S(z) + (z- K \circ S(z))$.
The K\"unneth sequence implies that $(z-K \circ S(z))$ represents a torsion class.
Hence $(h \times h')(z-K\circ S(z))$ may be computed as in Remark~\ref{rem:torsion_cycles}.
We compute $(h \times h')(K \circ S(z))$ as described in Remark~\ref{rem:diff_charact_stratifolds} using geometric chains:

The splitting $S$ decomposes a cycle $z \in Z_*(X \times X';\Z)$ into a sum of tensor products of cycles with degrees adding up to $k+k'-1$.
We write 
\begin{equation}
S(K\circ S(z)) = S(z) = \sum_{i+j=k+k'-1}\sum_m y_i^m\otimes {y'}_j^m ,
\label{eq:Zerl}
\end{equation}
where $y_i^m\in Z_i(X;\Z)$ and ${y'}_j^m\in Z_j(X';\Z)$.
For the geometric cycles, we obtain correspondingly 
\begin{align*}
\zeta^{X \times X'}(K \circ S(z)) 
&= \sum_{i+j=k+k'-1} \sum_m \zeta^X(y_i^m) \times \zeta^{X'}({y'}_j^m).
\end{align*}
Now we are able to compute $(h \times h')(K\circ S(z))$.

\begin{thm}[Uniqueness of cross product]\label{thm:ext_prod_BB} \index{Theorem!uniqueness of cross product}%
The differential cohomology cross product is uniquely determined by the axioms in Definition~\ref{def:ext_prod_axioms}. 

Explicitly, for $h \in \widehat H^k(X;\Z)$ and $h' \in \widehat H^{k'}(X';\Z)$, the evaluation of $h \times h'$ on a cycle $z \in Z_{k+k'-1}(X \times X';\Z)$ can be computed as follows:
Decompose $S(z)$ as in \eqref{eq:Zerl}.
Choose $N \in \N$ and $x \in C_{k+k'}(X;\Z)$ as in Remark~\ref{rem:torsion_cycles} such that $N \cdot (z-K\circ S(z)) = \partial x$.
Then we have:
\begin{align}
(h \times h')(z)
&= \prod_{m}\left[h([\zeta^X(y_{k-1}^m)]_{\partial S_k})^{\langle c(h'),{y'}_{k'}^m\rangle} \cdot h'([\zeta^{X'}({y'}_{k'-1}^m)]_{\partial S_{k'}})^{(-1)^{k}\langle c(h),y_k^m\rangle}\right] \notag \\
& \quad \quad \cdot \exp 2\pi i \cdot \Big[ \int_{a(K \circ S(z))} \curv(h \times h') + \frac{1}{N} \Big(\int_x \curv(h\times h') - \langle c(h\times h'),x\rangle \Big)\Big]. \label{eq:ext_prod_BB}
\end{align}
\end{thm}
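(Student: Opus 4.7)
The plan is to derive the explicit formula \eqref{eq:ext_prod_BB} directly from the axioms; establishing this formula automatically yields uniqueness, since every term on the right-hand side depends only on data forced by Definition~\ref{def:ext_prod_axioms} together with Lemma~\ref{lem:x_M_M'}. I would begin by applying additivity of $h\times h'$ to the decomposition $z = K\circ S(z) + (z - K\circ S(z))$ coming from the splitting of the K\"unneth sequence at the chain level. By the K\"unneth theorem the residue $z - K\circ S(z)$ represents a torsion homology class, so Remark~\ref{rem:torsion_cycles} with the prescribed $N$ and $x$ evaluates $(h\times h')$ on it as $\exp\frac{2\pi i}{N}\bigl(\int_x \curv(h\times h') - \langle c(h\times h'),x\rangle\bigr)$, which is the last summand in \eqref{eq:ext_prod_BB}. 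Both $\curv(h\times h')$ and $c(h\times h')$ are determined by axioms~4 and~5.

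For the principal piece $(h\times h')(K\circ S(z))$, I would apply Remark~\ref{rem:diff_charact_stratifolds} with the geometric lift $\zeta^{X\times X'}$ constructed so as to make diagram \eqref{eq:diag_zeta_x} commute. Commutativity on the image of $K$ gives
\[
\zeta^{X\times X'}(K\circ S(z)) \;=\; \sum_{i+j=k+k'-1}\sum_m \zeta^X(y_i^m)\times \zeta^{X'}({y'}_j^m),
\]
using \eqref{eq:Zerl}. Thin invariance combined with the auxiliary singular chain $a(K\circ S(z))$ then yields
\[
(h\times h')(K\circ S(z)) = \prod_{i,j,m} (h\times h')\bigl([\zeta^X(y_i^m)\times\zeta^{X'}({y'}_j^m)]_{\partial S}\bigr)\cdot \exp\Bigl(2\pi i\int_{a(K\circ S(z))}\curv(h\times h')\Bigr),
\]
reducing the problem to evaluating $h\times h'$ on a single geometric product cycle in each bidegree $(i,j)$ with $i+j = k+k'-1$.

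For each such bidegree, writing $\zeta^X(y_i^m)=[M\xrightarrow{g}X]$ and $\zeta^{X'}({y'}_j^m)=[M'\xrightarrow{g'}X']$, naturality \eqref{eq:ext_nat} rewrites the evaluation as $(g^*h\times {g'}^*h')([M\times M'])$, to which Lemma~\ref{lem:x_M_M'} applies directly: the value is trivial unless $(i,j)\in\{(k-1,k'),(k,k'-1)\}$, and in the two surviving cases it equals $(g^*h)([M])^{\langle c({g'}^*h'),[M']\rangle}$ or $({g'}^*h')([M'])^{(-1)^k\langle c(g^*h),[M]\rangle}$, respectively. Using naturality of the characteristic class together with $[\zeta^X(y)]=[y]$ in homology (from Lemma~\ref{lem:Liftazeta}), these simplify to precisely the two factor types appearing in the product of \eqref{eq:ext_prod_BB}. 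Multiplying all contributions yields the claimed formula, and uniqueness follows at once.

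The main obstacle is essentially combinatorial bookkeeping: one must verify that only the two advertised bidegrees survive (all other degree pairs kill one of the two factors in $\widehat H^\ast$), that the sign $(-1)^k$ attaches to the correct summand as dictated by Lemma~\ref{lem:x_M_M'}, and that the auxiliary homomorphisms $\zeta^X$, $\zeta^{X'}$, $\zeta^{X\times X'}$ together with the splitting $S$ can be chosen compatibly so that \eqref{eq:diag_zeta_x} commutes strictly on the image of $K$ (rather than merely up to a boundary). This strict commutativity is what permits the clean multiplicative decomposition over $m$ without incurring hidden curvature corrections that would spoil the matching with \eqref{eq:ext_prod_BB}.
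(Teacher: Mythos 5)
Your proposal is correct and follows essentially the same route as the paper: decompose $z = K\circ S(z) + (z-K\circ S(z))$, handle the torsion remainder via Remark~\ref{rem:torsion_cycles} and axioms~4--5, and reduce the main term via the compatibly chosen $\zeta^{X\times X'}$, thin invariance, naturality, and Lemma~\ref{lem:x_M_M'} to the two surviving bidegrees. The ``strict commutativity'' of \eqref{eq:diag_zeta_x} on the image of $K$ that you flag as an obstacle is exactly what the construction preceding the theorem arranges, by defining $\zeta^{X\times X'}$ on $\im(K)$ as the geometric cross product of $\zeta^X\otimes\zeta^{X'}$ and extending via the splitting $S$.
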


\begin{proof}
As above, we write $z = K\circ S(z) + (z-K\circ S(z))$.
We evaluate $(h\times h')$ on the two summands separately.

a)
By Remark~\ref{rem:torsion_cycles}, we have:
\begin{align*}
(h \times h')(1-K \circ S(z))
&\ist{\eqref{eq:torsion_cycles}}\,\,
\exp \frac{2\pi i}{N} \Big( \int_x \curv(h \times h') - \la c(h\times h'), x\ra \Big)\notag \\
&\ist{\eqref{eq:ext_curv},\eqref{eq:ext_c}}\,\,\,\,\,\,\,
\exp \frac{2\pi i}{N} \Big( \int_x \curv(h) \times \curv(h') - \la c(h)\times c(h'), x\ra \Big) \,. 
\end{align*}
which yields the last contribution to \eqref{eq:ext_prod_BB}.
This shows in particular, that the value of $h \times h'$ on torsion cycles is uniquely determined by compatibility with curvature and characteristic class in Definition~\ref{def:ext_prod_axioms}.

b)
We represent the cycle $K \circ S(z)$ by the geometric cycle $\zeta^{X \times X'}(K \circ S(z))$ and a coboundary $\partial a(K \circ S(z))$ as in Lemma~\ref{lem:Liftazeta}.
We compute $(h \times h')(K \circ S(z))$ as in Remark~\ref{rem:diff_charact_stratifolds}:
\begin{align}
(h \times h')(K \circ S(z))
&\ist{\eqref{eq:evaluation_diff_charact_stratifold_1}}
(h \times h')([\zeta^{X \times X'}(K \circ S(z))]_{\partial S_{k+k'}}) \cdot \exp \Big( 2\pi i \int_{a(K \circ S(z))} \curv(h \times h') \Big) \notag\\
&= 
\prod_{i+j=k+k'-1}\prod_m (h \times h') \big([\zeta^X (y_i^m) \times \zeta^{X'}({y'}_j^m)]_{\partial S_{k+k'}} \big) \notag\\
& \qquad \qquad \cdot \exp \Big( 2\pi i \int_{a(K \circ S(z))} \curv(h \times h') \Big)
\label{eq:hkreuzh1}
\end{align}
Now we write $\zeta^X(y_i^m) = [M_i^m\xrightarrow{g_i^m}X]$ and $\zeta^{X'}({y'}_j^m) = [{M'}_j^m\xrightarrow{{g'}_j^m}X]$.
This yields:
\begin{align}
(h \times h') \big([\zeta^X(y_i^m) \times \zeta^{X'}({y'}_j^m)]_{\partial S_{k+k'}} \big)
&= (g_i^m \times {g'}_j^m)^*(h \times h')([M_i^m \times {M'}_j^m]) \notag\\
&\ist{\eqref{eq:ext_nat}} ((g_i^m)^*h \times ({g'}_j^m)^*h')([M_i^m \times {M'}_j^m]) .
\label{eq:hkreuzh2}
\end{align}
By construction of $\zeta^X$ and $\zeta^{X'}$, we have $\dim(M_i^m) = i$ and $\dim({M'}_j^m)=j$.
Using Lemma~\ref{lem:x_M_M'}, we find:
\allowdisplaybreaks{
\begin{align}
((g_i^m)^*h &\times ({g'}_j^m)^*h')([M_i^m \times {M'}_j^m]) \notag\\
&= \begin{cases} 
   (g_{k-1}^m)^*h([M_{k-1}^m])^{\langle ({g'}_{k'}^m)^* c(h'),[{M'}_{k'}^m]\rangle} & \quad \mbox{for $i=k-1$, $j=k'$} \\
   ({g'}_{k'-1}^m)^*h'([{M'}_{k'-1}^m])^{(-1)^{k}\langle (g_k^m)^* c(h),[M_k^m]\rangle} & \quad \mbox{for $i=k$, $j=k'-1$} \\
    1 & \quad \mbox{otherwise} 
   \end{cases}\notag\\
&= \begin{cases} 
   h([\zeta^X(y_{k-1}^m)]_{\partial S_k})^{\langle c(h'),{y'}_{k'}^m\rangle} & \quad \mbox{for $i=k-1$, $j=k'$} \\
   h'([\zeta^{X'}({y'}_{k'-1}^m)]_{\partial S_{k'}})^{(-1)^{k}\langle c(h),y_k^m\rangle} & \quad \mbox{for $i=k$, $j=k'-1$} \\
    1 & \quad \mbox{otherwise} 
   \end{cases}
\label{eq:hkreuzh3}
\end{align}
}
Inserting \eqref{eq:hkreuzh2} and \eqref{eq:hkreuzh3} into \eqref{eq:hkreuzh1} we obtain:
\begin{align*}
(h \times h')(K \circ S(z)) 
&= 
\prod_m \left[ h([\zeta^X(y_{k-1}^m)]_{\partial S_k})^{\langle c(h'),{y'}_{k'}^m\rangle} \cdot h'([\zeta^{X'}({y'}_{k'-1}^m)]_{\partial S_{k'}})^{(-1)^{k}\langle c(h),y_k^m\rangle}\right] \\
&\qquad \cdot \exp \Big( 2\pi i \int_{a(K \circ S(z))} \curv(h \times h') \Big)
\end{align*}
which yields the remaining terms in \eqref{eq:ext_prod_BB}.
In particular, the evaluation of $h \times h'$ on $K \circ S(z)$ is uniquely determined by the axioms in Definition~\ref{def:ext_prod_axioms} (through Lemma~\ref{lem:x_M_M'}).
\end{proof}

\begin{cor}[Uniqueness of ring structure] \index{Corollary!uniqueness of ring structure}
The ring structure on differential cohomology is uniquely determined by the axioms in Definition~\ref{def:int_prod_axioms}. 
\end{cor}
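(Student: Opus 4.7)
The plan is to reduce the statement to Theorem~\ref{thm:ext_prod_BB} (uniqueness of the cross product) by exploiting the equivalence between internal and external products established in the paragraph containing \eqref{eq:ext_int}. Concretely, given any internal product $*$ satisfying the axioms of Definition~\ref{def:int_prod_axioms}, I will show that the associated external product
\[
h \times h' := \pr_1^* h * \pr_2^* h' \in \widehat H^{k+k'}(X \times X';\Z)
\]
satisfies the axioms of Definition~\ref{def:ext_prod_axioms}. Once this is verified, Theorem~\ref{thm:ext_prod_BB} forces $\times$ to be given by the explicit formula \eqref{eq:ext_prod_BB}, and then the recovery identity $h * f = \Delta_X^*(h \times f)$ from \eqref{eq:ext_int} shows that $*$ itself is determined.

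The verification of the external axioms is essentially a mechanical consequence of the internal axioms combined with the naturality of pullback of differential characters (Remark~\ref{rem:def_f*h}) and of $\curv$, $c$, $\iota$. Associativity and bilinearity of $\times$ follow from those of $*$ after pulling along the projections. For naturality \eqref{eq:ext_nat}, one uses $(g \times g')^* \pr_i^* = \pr_i^* g^*$ (resp.\ ${g'}^*$) together with axiom~3 of Definition~\ref{def:int_prod_axioms}. For compatibility with $\curv$ and $c$, combine axioms~4 and 5 of Definition~\ref{def:int_prod_axioms} with $\curv(\pr_i^* \cdot)=\pr_i^* \curv(\cdot)$ and analogously for $c$, recovering $\curv(h)\times\curv(h')$ and $c(h)\times c(h')$ from the cup/wedge of the pulled-back classes on $X\times X'$. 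For compatibility with topological trivialization \eqref{eq:ext_top_triv}, use $\pr_1^* \iota(\varrho) = \iota(\pr_1^* \varrho)$ (which follows directly from the definition \eqref{eq:def_iota}), then apply axiom~6 of Definition~\ref{def:int_prod_axioms}:
\[
\iota(\varrho) \times h' = \iota(\pr_1^* \varrho) * \pr_2^* h' = \iota(\pr_1^* \varrho \wedge \pr_2^* \curv(h')) = \iota(\varrho \times \curv(h')).
\]

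The one axiom that requires slightly more care is graded commutativity \eqref{eq:ext_comm}: it needs graded commutativity of $*$ together with the identity $\pr_2 = \pr_1 \circ \tau$ and $\pr_1 = \pr_2 \circ \tau$, where $\tau: X' \times X \to X \times X'$ is the flip map. Pulling back along $\tau$ and applying naturality (axiom~3 of Definition~\ref{def:int_prod_axioms}) converts $h \times h'$ into $h' \times h$ up to the sign $(-1)^{kk'}$. This is the step where one must trust that the sign convention of the internal product matches the one in Definition~\ref{def:ext_prod_axioms}; no subtle new sign is introduced because $\tau$ does not affect the orientations of the individual factors.

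With all six external axioms verified, Theorem~\ref{thm:ext_prod_BB} applies and yields uniqueness of $\times$. Finally, uniqueness of $*$ follows from \eqref{eq:ext_int}: two internal products $*_1,*_2$ satisfying Definition~\ref{def:int_prod_axioms} would induce external products $\times_1,\times_2$ satisfying Definition~\ref{def:ext_prod_axioms}, hence $\times_1=\times_2$, hence $h *_1 f = \Delta_X^*(h \times_1 f) = \Delta_X^*(h \times_2 f) = h *_2 f$. The main (and only) obstacle is the bookkeeping in the commutativity check, which is why I singled it out above; everything else is an immediate transcription of the internal axioms through the two projections.
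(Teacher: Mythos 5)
Your proposal is correct and follows essentially the same route as the paper: the Corollary is obtained there precisely by noting that an internal product induces an external product via $h\times h' := \pr_1^*h * \pr_2^*h'$, that the two determine each other through $h*f = \Delta_X^*(h\times f)$ as in \eqref{eq:ext_int}, and then invoking Theorem~\ref{thm:ext_prod_BB}. The only difference is that you spell out the (routine) verification that the induced external product satisfies the axioms of Definition~\ref{def:ext_prod_axioms}, which the paper leaves implicit in the phrase ``by the discussion above''; your checks, including the flip-map argument for graded commutativity, are accurate.
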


\begin{remark}
We have shown \emph{uniqueness} of the ring structure.
We could take \eqref{eq:ext_prod_BB} as definition of a differential cohomology cross product to prove \emph{existence} of the cross product and ring structure on differential cohomology.
This would require to verify the axioms in Definition~\ref{def:ext_prod_axioms}.
Since this amounts to no more than tedious computation, we take existence of the ring structure and cross product for granted (see \cite[p.~55f]{CS83}).  
\end{remark}

\begin{example}
Let $h_1,h_2 \in \widehat H^1(X;\Z) \cong C^\infty(X;\Ul)$.
As in Example~\ref{ex:H1}, we denote the corresponding smooth functions by $\bar h_1$, $\bar h_2$.  
Now $h_1*h_2\in \widehat H^2(X;\Z)$.
Hence, given two smooth functions $\bar h_j:X\to \Ul$, we obtain a $\Ul$-bundle with connection over $X$ (up to isomorphism).
We now describe this bundle in classical geometric terms.

Let $i \in \widehat H^1(\Ul;\Z)$ be the differential character that corresponds to the smooth function $\bar i=\id_\Ul:\Ul \to \Ul$.
\index{+Ii@$i$, differential character on $\Ul$}
Then we have $\bar h_j = \id_{\Ul} \circ \bar h_j$ and thus $h_j = \bar h_j^*i$.
We put $\bar h=(\bar h_1,\bar h_2):X \to \Ul\times \Ul =:T^2$.
Let $\Delta:\Ul \to T^2$, $t\mapsto (t,t)$,  the diagonal map.
This yields
\begin{align*}
h_1 * h_2
&\stackrel{\phantom{\eqref{eq:ext_nat}}}{=} 
\Delta^*(h_1\times h_2)\\
&\stackrel{\phantom{\eqref{eq:ext_nat}}}{=} 
\Delta^*(\bar h_1^*i\times\bar h_2^*i)\\
&\stackrel{\eqref{eq:ext_nat}}{=} 
\Delta^*(\bar h_1 \times \bar h_2)^*(i \times i)\\
&\stackrel{\phantom{\eqref{eq:ext_nat}}}{=} 
((\bar h_1 \times \bar h_2)\circ\Delta)^*(i \times i)\\
&\stackrel{\phantom{\eqref{eq:ext_nat}}}{=} 
\bar h^*(i \times i).
\end{align*}
The bundle corresponding to $h_1 * h_2$ is thus given by pull-back along $\bar h$ of a universal bundle with connection $(P,\nabla)$ on $T^2$ which represents $i\times i\in\widehat H^2(T^2;\Z)$.
\index{internal product!of degree-$1$ characters}%

The bundle $(P,\nabla)$ was described in algebraic geometric terms in \cite[Sec.~1]{B81} where it leads to the regulator map in algebraic $K$-theory.
The total space is identified with the Heisenberg manifold $H(\R)/H(\Z)$. 
In \cite[p.~60]{Bu12} it is called the \emph{Poincar\'e bundle}.
\index{Poincar\'e bundle}%
We now determine this bundle.

The curvature $\curv(i)$ is a volume form on $\Ul$ with total volume $1$. 
Thus by \eqref{eq:ext_curv}, the curvature $\curv(i \times i)$ is a volume form on $T^2$ with total volume~$1$.
Since $H^2(T^2;\Z)$ has no torsion, the characteristic class $c(i \times i)$ can be identified with the de Rham class of $\curv(i \times i)$.
This class determines the $\Ul$-bundle $P \to T^2$ topologically.
It remains to determine the connection $\nabla$.

Let $\Theta_1,\Theta_2:\R^2 \to \R$ denote the projection on the first and second factor, respectively.  
Let $p: \R^2 \to \R^2/\Z^2 \cong T^2$, $v = (v_1,v_2) \mapsto (\exp(2\pi i v_1),\exp(2\pi i v_2))$, denote the projection.
Let $\nabla$ be any connection on $P$ with curvature $\frac{i}{2\pi}\curv(\nabla) = \curv(i \times i)$.
Fix a trivialization $T:p^*P \to \R^2 \times \Ul$.
As in Example~\ref{ex:U1Buendel}, we denote by $\vartheta(p^*\nabla,T) \in \Omega^1(\R^2)$ the $1$-form that corresponds to the connection $p^*\nabla$. 
The trivialization can be chosen such that 
$$
\vartheta(p^*\nabla,T) = (\Theta_1/2-w_1)d\Theta_2 - (\Theta_2/2-w_2)d\Theta_1=:A_w
$$ 
for some $w = (w_1,w_2)\in \R^2$.
Two forms $A_w$ and $A_{w'}$ describe the same connection $\nabla$ on $P$ if and only if $w-w'\in\Z^2$.

The parameter $w$, and hence the connection $\nabla$, can be determined by the holonomy along two particular curves in $T^2$.
Consider the curves \mbox{$\gamma_1: [0,1] \to T^2$}, \mbox{$t \mapsto (\exp (2\pi it),1)$}, and $\gamma_2: [0,1] \to T^2$, $t \mapsto (1,\exp (2\pi it))$.
Set $\Gamma_1: [0,1] \to \R^2$, \mbox{$t \mapsto (t,0)$}, and $\Gamma_2:[0,1] \mapsto \R^2$, $t \mapsto (0,t)$, so that $\Gamma_j$ lifts $\gamma_j$.  
Then we have:
\begin{align*}
\hol^\nabla(\gamma_1) 
&= \exp \Big(2\pi i\int_{\Gamma_1} A_w \Big) \\
&=\exp \Big(2\pi i\int_\Gamma (-0/2+w_2) d\Theta_1 \Big) \\
&= \exp \Big(2\pi iw_2 \Big)\\ 
\intertext{and similarly}
\hol^\nabla(\gamma_2) 
&= \exp \Big(-2\pi iw_1 \Big). 
\end{align*}
To determine the connection, we evaluate $i\times i$ on the cycles $\gamma_1$ and $\gamma_2$.
Denote the fundamental cycle $[0,1]\to\Ul$, $t\mapsto \exp(2\pi it)$, of $\Ul$ by $y$.
Then the decomposition \eqref{eq:Zerl} of $\gamma_1$ is given by
$$
\gamma_1 = y \times 1 = K(y\otimes 1).
$$
We apply Theorem~\ref{thm:ext_prod_BB} with $z=\gamma_1$ and observe that we can choose $x=0$ because $\gamma_1=K(S(\gamma_1))$.
Since $[\zeta^\Ul(y)]_{\partial S_2}=[\gamma_1]_{\partial S_2}$ we may choose $a(\gamma_1)=0$.
Now \eqref{eq:ext_prod_BB} says 
$$
i\times i(\gamma_1)
=
i(1)^{-1}
=
1.
$$
Similarly, we get $i\times i(\gamma_2)=1$.
Hence our connection $\nabla$ is given by $A_0=\Theta_1/2d\Theta_2 - \Theta_2/2d\Theta_1$.
\end{example}

\begin{remark}
\emph{K\"unneth sequence.}
The exactness of the K\"unneth sequence for singular cohomology 
\begin{equation*}\label{eq:Kuenneth_sing_cohom}
0
\to 
\big[ H^*(X;\Z) \otimes H^*(X';\Z) \big]_n
\xrightarrow{\times} 
H^n(X \times X';\Z) 
\to 
\Tor( H^*(X;\Z) , H^*(X';\Z))_{n+1}
\to 
0
\end{equation*}
implies that the cohomology cross product is injective.
The K\"unneth sequence is usually constructed in two steps: 
the first one is purely algebraic and relates the homology of tensor products of chain complexes with the tensor product of the homologies;
the second one identifies the singular homology of the cartesian product of spaces with the homology of the tensor product of the singular chain complexes.

The question arises whether there is a K\"unneth sequence for differential cohomology.
As to the above mentioned first step, the differential cohomology groups of a space $X$ can be constructed as the homology groups of a chain complex using a modification of the Hopkins-Singer complex, as described in \cite[p.~271]{BT06}.
\index{K\"unneth sequence}%
\index{Hopkins-Singer complex}%
This way one obtains the homological algebraic K\"unneth sequence for that complex.
The middle term of that sequence is the homology of the tensor product complex. 
The relation of this tensor product homology to the differential cohomology of the cartesian product seems to be unknown.
\end{remark}

The following example illustrates that the differential cohomology cross product is in general not injective:
\begin{example}
Let $X,X'$ be closed manifolds of dimensions $k-1$ and $k'$, respectively.
Let $\varrho \in \Omega^{k-1}(X)$ and $\varrho \in \Omega^{k'}(X')$ be volume forms for some Riemannian metrics on $X$ and $X'$ with total volume $1$.
In particular, $\varrho$ and $\varrho'$ are closed with integral periods, and $\frac{1}{2}\varrho$ does not have integral periods.
Choose a differential character $h' \in \widehat H^{k'}(X';\Z)$ with $\curv(h') = 2\varrho'$.
Set $h:= \iota(\tfrac{1}{2}\cdot \varrho) \neq 0 \in \widehat H^{k}(X)$.
Then we have $h \times h' \stackrel{\eqref{eq:ext_top_triv}}{=} \iota(\tfrac{1}{2} \cdot \varrho \times 2\varrho') = \iota(\varrho \times \varrho')$.
This vanishes since $\int_{X \times X'} \varrho \times \varrho' =1$ and $\varrho \times \varrho'$ thus has integral periods. 
\end{example}

%%%%%%%%%%%%%%%%%%%%%%%%%%%%%%%%%%%%%%%%%%%%%%%%%%%%%%%%%%%%%%%%%%%%%%%%%
\chapter{Fiber integration}
%%%%%%%%%%%%%%%%%%%%%%%%%%%%%%%%%%%%%%%%%%%%%%%%%%%%%%%%%%%%%%%%%%%%%%%%%

In this section we construct the fiber integration map for differential characters.
Fiber integration has been described in some of the various models for differential cohomology.
The construction of Hopkins and Singer in \cite{HS05} is based on their own model and uses embeddings into high-dimensional Euclidean spaces.
\index{Hopkins-Singer complex}%
% Ljungmann discusses the existence and uniqueness of fiber integration for smooth Deligne cohomology in \cite{L06} and gives a geometric construction where the combinatorial complications are taken care of by the calculus of simplicial forms.
% Using this construction, Dupont and Ljungmann prove the up-down formula in \cite{}.
In \cite{DL05} and \cite{L06} Dupont and Ljungmann give a geometric construction of fiber integration for smooth Deligne cohomology where the combinatorial complications are taken care of by the calculus of simplicial forms.
Uniqueness of fiber integration is discussed in \cite[Ch.~6]{L06}.
\index{Deligne cohomology}%
A model for differential characters involving stratifolds is described in \cite{BKS10} where fiber integration is also discussed.
\index{stratifold}%
The fiber integration or Gysin map for de Rham-Federer currents is described in \cite[Sec.~10]{HLZ03}.
\index{de Rham-Federer currents}%
\index{Gysin map}%

We use the original definition of differential characters due to Cheeger and Simons.
Our construction of the fiber integration map works for fiber bundles (with compact oriented fibers) on all smooth spaces in the sense of Section~\ref{sec:smoothspaces}.
The approaches in \cite{BKS10} and \cite{DL05,L06} seem to be limited to fiber bundles over finite dimensional bases.
However, allowing infinite-dimensional manifolds is important.
For example, the transgression map from equivalence classes of gerbes on $X$ to equivalence classes of line bundles with connection on the free loop space $\LL(X)$ is constructed using fiber integration in the trivial bundle $S^1 \times \LL(X) \to \LL(X)$, compare Section~\ref{sec:applications}.

We show that fiber integration (for fiber bundles whose fibers are closed oriented manifolds) is uniquely determined by certain naturality conditions. 
This yields an explicit formula for the fiber integration map which we then use for its definition.
We show that this yields a well-defined fiber integration map that has the required properties.
Finally, we discuss fiber integration in the case where the fiber has a boundary.

Similar approaches to our construction of the fiber integration map have been sketched briefly in \cite[Prop.~2.1]{F02}, in \cite[Sec.~3.6]{CJM04} and in \cite[Thm.~3.135]{Bu12}.

\section{Fiber integration for closed fibers}\label{subsec:fiber_int}

\begin{definition}\label{def:fiber_int_diff_charact_axioms} \index{Definition!fiber integration}%
Let $F \hookrightarrow E \stackrel{\pi}{\twoheadrightarrow} X$ be a fiber bundle over a smooth space $X$ whose fibers are closed (i.e., finite-dimensional, compact and boundaryless) oriented manifolds.
Fiber integration for differential characters associates to each such bundle a group homomorphism 
$\widehat\pi_!: \widehat H^*(E;\Z) \to \widehat H^{*-\dim F}(X;\Z)$ such that the following holds:
\index{+Pihat@$\widehat\pi_\ausruf$, fiber integration for differential characters}\index{fiber integration!for differential characters}%
\begin{itemize}
\item[1.]{\em{Naturality.}}
For any smooth map $g: Y \to X$ the fiber integration map commutes with the maps in the pull-back diagram
\begin{equation*}
\xymatrix{
g^*E \ar^{\pi}[d] \ar^{G}[r] & E \ar^{\pi}[d] \\
Y \ar^{g}[r] & X \,.
}
\end{equation*}
This means that for any $h \in \widehat H^k(E;\Z)$, we have 
\begin{equation}
\widehat\pi_!(G^*h) 
= g^*\widehat\pi_!(h) \,. \label{eq:fiber_int_natural}
\end{equation}
In other words, the following diagram is commutative for all $k$:
\begin{equation}\label{eq:fiber_int_natural_diagram}
\xymatrix{
\widehat H^k(E;\Z) \ar^{G^*}[r] \ar^{\widehat\pi_!}[d] & \widehat H^k(g^*E;\Z) \ar^{\widehat\pi_!}[d] \\
\widehat H^{k-\dim F}(X;\Z) \ar^{g^*}[r] & \widehat H^{k-\dim F}(Y;\Z)  
}
\end{equation}
\index{naturality!of fiber integration}%

\item[2.]{\em{Compatibility with curvature.}}
Let $\fint_F: \Omega^*(E) \to \Omega^{*-\dim F}(X)$ be the usual fiber integration of differential forms, see \cite[Ch.~VII]{GHV-I}.
We require that the fiber integration of differential characters is compatible with the fiber integration of the curvature form, i.e.,
\begin{equation}\label{eq:fiber_int_curvature}
\xymatrix{
\widehat H^k(E;\Z) \ar[d]^{\widehat\pi_!} \ar[r]^{\curv} 
& \Omega_0^k(E) \ar[d]^{\fint} \\
\widehat H^{k-\dim F}(X;\Z)  \ar[r]^{\curv} 
& \Omega_0^{k-\dim F}(X) 
}
\end{equation}
commutes.
\index{compatibility!of fiber integartion with curvature}%

\item[3.]{\em Compatibility with topological trivializations of flat characters.}
We demand that the following diagram commutes:
\begin{equation}\label{eq:fiber_int_iota}
\xymatrix{
\Omega^{k-1}_{cl}(E) \ar[d]^{\fint} \ar[r]^{\iota}
& \widehat H^k(E;\Z) \ar[d]^{\widehat\pi_!} \\
\Omega^{k-1-\dim F}_{cl}(X) \ar[r]^{\iota}
& \widehat H^{k-\dim F}(X;\Z)  \,.
}
\end{equation}
\index{compatibility!of fiber integration with topological trivialization}%
\end{itemize}
\end{definition}
Before we construct fiber integration for differential characters using geometric chains, we first show that it is uniquely determined by the above conditions:

\begin{thm}[Uniqueness of fiber integration]\label{thm:fiber_int_unique} \index{Theorem!uniqueness of fiber integration}%
If fiber integration for differential characters exists, then it is uniquely determined by the conditions of naturality and compatibility in Definition~\ref{def:fiber_int_diff_charact_axioms}.
\end{thm}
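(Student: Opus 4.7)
The plan is to mirror the uniqueness proof of the natural transformation $\Xi$ from the preceding chapter: combine the three axioms with the representation of cycles by geometric chains from Lemma~\ref{lem:Liftazeta} to derive an explicit formula that any fiber integration satisfying the axioms must obey. Fix $h \in \widehat H^k(E;\Z)$ and $z \in Z_{k-\dim F - 1}(X;\Z)$. Choose a geometric cycle $\zeta(z) = [M \xrightarrow{g} X] \in \ZZ_{k-\dim F - 1}(X)$ and a chain $a(z) \in C_{k-\dim F}(X;\Z)$ as in Lemma~\ref{lem:Liftazeta}, so that $[z - \partial a(z)]_{\partial S_{k-\dim F}} = [\zeta(z)]_{\partial S_{k-\dim F}}$. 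Thin invariance (Remark~\ref{rem:thin_invariance}) together with the curvature axiom~\eqref{eq:fiber_int_curvature} then yields
\begin{equation*}
\widehat\pi_!(h)(z)
=
\widehat\pi_!(h)\bigl([\zeta(z)]_{\partial S_{k-\dim F}}\bigr)\cdot\exp\Big(2\pi i\int_{a(z)}\fint_F\curv(h)\Big),
\end{equation*}
reducing the problem to evaluating $\widehat\pi_!(h)$ on $[\zeta(z)]_{\partial S_{k-\dim F}}$.

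For this, the naturality axiom~\eqref{eq:fiber_int_natural} applied to the pullback square over $g:M\to X$ gives $\widehat\pi_!(h)([\zeta(z)]_{\partial S_{k-\dim F}}) = \widehat\pi_!(\widetilde g^*h)([M])$, where $\widetilde g: g^*E \to E$ is the bundle map. Now $g^*E$ is a closed oriented stratifold of dimension $k-1$, so $\Omega^k(g^*E) = 0$ and $H^k(g^*E;\Z) = 0$ for dimensional reasons; consequently $\widetilde g^*h$ is simultaneously flat and topologically trivial. The middle column of diagram~\eqref{eq:3x3diagram} together with $\curv\circ\iota=d$ produces a \emph{closed} form $\eta \in \Omega^{k-1}(g^*E)$ with $\widetilde g^*h = \iota(\eta)$. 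The topological-trivialization axiom~\eqref{eq:fiber_int_iota} then applies and forces $\widehat\pi_!(\widetilde g^*h) = \iota(\fint_F\eta)$. Evaluating on the fundamental class $[M]$ and using Fubini for fiber integration of forms gives
\begin{equation*}
\widehat\pi_!(\widetilde g^*h)([M])
=
\exp\Big(2\pi i\int_M\fint_F\eta\Big)
=
\exp\Big(2\pi i\int_{g^*E}\eta\Big)
=
\iota(\eta)([g^*E])
=
\widetilde g^*h([g^*E])
=
h(\lambda(z)),
\end{equation*}
where the last equality uses that the transfer $\lambda(z)$ from Remark~\ref{rem:lambda} represents $\widetilde g_*[g^*E]$ modulo $\partial S_k(E;\Z)$ and $h$ is thin invariant. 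Combining the two displays yields the explicit formula
\begin{equation*}
\widehat\pi_!(h)(z)
=
h(\lambda(z))\cdot\exp\Big(2\pi i\int_{a(z)}\fint_F\curv(h)\Big),
\end{equation*}
from which uniqueness is immediate.

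The main obstacle is the third step above: extracting a \emph{closed} topological trivialization $\eta$ of $\widetilde g^*h$ on the $(k-1)$-dimensional pullback stratifold $g^*E$, which is exactly the input required by the axiom~\eqref{eq:fiber_int_iota}. Dimensional vanishing of the curvature and characteristic class of $\widetilde g^*h$ is easy; converting these into a closed form with $\widetilde g^*h = \iota(\eta)$ is where the exactness built into diagram~\eqref{eq:3x3diagram} does the real work. Once $\eta$ is in hand, the remaining ingredients (axiom~\eqref{eq:fiber_int_iota}, Fubini, and the definition of $\lambda$) assemble mechanically. Note that for the present uniqueness statement I do not need to verify independence of the right-hand side from the choices of $\zeta$, $a$, $\lambda$ and $\eta$; that independence is precisely the heart of the companion existence theorem.
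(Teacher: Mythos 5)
Your argument is essentially the paper's own proof: the same reduction via Lemma~\ref{lem:Liftazeta} and thin invariance, the same use of naturality to pull back to $g^*E$, the same dimensional vanishing to produce a closed trivialization $\chi$ with $G^*h=\iota(\chi)$, and the same application of axiom~\eqref{eq:fiber_int_iota} to arrive at formula~\eqref{eq:fiber_int_unique}. The only thing missing is the degenerate range $k\le\dim F$, which the paper dispatches in one line ($\widehat H^{k-\dim F}(X;\Z)=\{0\}$ for $k<\dim F$, and for $k=\dim F$ the character $\widehat\pi_!h\in\widehat H^0(X;\Z)$ is pinned down by its curvature via diagram~\eqref{eq:0-3x3}).
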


\begin{proof}
Let $F \hookrightarrow E \stackrel{\pi}{\twoheadrightarrow} X$ be a fiber bundle with closed oriented fibers over a smooth space $X$.
Let $\widehat\pi_!: \widehat H^k(E;\Z) \to \widehat H^{k-\dim F}(X;\Z)$ be a fiber integration map as in Definition~\ref{def:fiber_int_diff_charact_axioms}.

For $k < \dim F$ the map $\widehat\pi_!$ is uniquely determined, since in this case $\widehat H^{k-\dim F}(X;\Z) = \{0\}$ by \eqref{eq:def_Hk_negativ}.
For $k = \dim F$, the compatibility with curvature implies that $\curv(\widehat\pi_!h) = \fint_F \curv(h) \in \Omega^0_0(X)$.
For degree $0$, the diagram \eqref{eq:3x3diagram} yields the isomorphisms
\begin{equation}\label{eq:0-3x3}
\xymatrix{\widehat H^0(X;\Z) \ar[d]_{c=\id}^\cong \ar[rr]^(0.45){\curv}_(0.45)\cong && \Omega^0_0(X) \ar[d]_\cong \\
H^0(X;\Z) \ar[rr]_(0.45)\cong && \Hom(H_0(X;\Z),\Z) \\
}
\end{equation}
Thus $\widehat\pi_!h$ is uniquely determined by its curvature. 

Now let $k > \dim F$.
Let $h \in \widehat H^k(E;\Z)$ be a differential character on the total $E$ and \mbox{$z \in Z_{k-1-\dim F}(X;\Z)$} a smooth singular cycle in the base $X$.
We show that the value of $\widehat\pi_! h$ on $z$ is uniquely determined by the conditions in Definition~\ref{def:fiber_int_diff_charact_axioms}.

As in Lemma~\ref{lem:Liftazeta} we choose a geometric cycle $\zeta(z) = [M\xrightarrow{g}X] \in \ZZ_{k-1-\dim F}(X)$ and a smooth singular chain $a(z) \in C_{k-\dim F}(X;\Z)$ such that $[z-\partial a(z)]_{\partial S_{k-\dim F}} = [\zeta(z)]_{\partial S_{k-\dim F}}$.
We then have:
\begin{align*}
(\widehat \pi_!h)(z)
&\stackrel{\eqref{eq:evaluation_diff_charact_stratifold_2}}{=} 
(g^* \widehat\pi_! h)[M] \cdot \exp \Big( 2\pi i \int_{a(z)} \curv(\widehat \pi_! h) \Big) \\
&\stackrel{\eqref{eq:fiber_int_curvature}}{=} 
(g^* \widehat\pi_! h)[M] \cdot \exp \Big( 2\pi i \int_{a(z)} \fint_F \curv(h) \Big) \\ 
&\stackrel{\eqref{eq:fiber_int_natural}}{=}
(\widehat\pi_! G^*h)[M] \cdot \exp \Big( 2\pi i \int_{a(z)} \fint_F \curv(h) \Big) \,.
\end{align*}
The differential character $G^*h \in \widehat H^k(g^*E;\Z)$ is topologically trivial and flat for dimensional reasons (note that $\dim(g^*E) = k-1$).
Hence $G^*h = \iota(\chi)$ for some closed differential form $\chi \in \Omega^{k-1}(g^*E)$.
From the commutative diagram~\eqref{eq:fiber_int_iota} we then have
\begin{align}
(\widehat \pi_!h)(z)
&= 
(\widehat\pi_! G^*h)[M] \cdot \exp \Big( 2\pi i \int_{a(z)} \fint_F \curv(h) \Big) \nonumber \\
&= 
(\widehat\pi_! \iota(\chi))[M] \cdot \exp \Big( 2\pi i \int_{a(z)} \fint_F \curv(h) \Big) \notag \\
&\ist{\eqref{eq:fiber_int_iota}}\,\,
\iota(\fint_F\chi)[M] \cdot \exp \Big( 2\pi i \int_{a(z)} \fint_F \curv(h) \Big) \nonumber \\ 
&=  
\exp \Big( 2\pi i \int_M \fint_F \chi \Big) \cdot \exp \Big( 2\pi i \int_{a(z)} \fint_F \curv(h) \Big) \,. \label{eq:fiber_int_unique}
\end{align}
We thus obtained an expression for the value of $\widehat \pi_!h$ on $z$, which is uniquely determined by the conditions of naturality and compatibility. 
\end{proof}

We can rewrite formula \eqref{eq:fiber_int_unique} more elegantly in terms of the pull-back operation $\PB_\bullet$ from Section~\ref{sec:geomchains}:
As above, let $h \in \widehat H^k(E;\Z)$ be a differential character on the total space and $z \in Z_{k-1-\dim F}(X;\Z)$ a smooth singular cycle in the base.
As above we get the geometric cycle $\zeta(z) = [M\xrightarrow{g}X] \in \ZZ_{k-1-\dim F}(X)$ and the smooth singular chain $a(z) \in C_{k-\dim F}(X;\Z)$ such that $[z - \partial a(z)]_{\partial S_{k-\dim F}} = [\zeta(z)]_{\partial S_{k-\dim F}}$.
We then have:
\allowdisplaybreaks{
\begin{align*}
(\widehat\pi_! h)(z)
&\ist{\eqref{eq:fiber_int_unique}}\,\,
\exp \Big( 2\pi i \int_M \fint_F \chi \Big) \cdot \exp \Big( 2\pi i \int_{a(z)} \fint_F \curv(h) \Big) \\
&= 
\exp \Big( 2\pi i \int_{g^*E} \chi \Big) \cdot \exp \Big( 2\pi i \int_{a(z)} \fint_F \curv(h) \Big) \\
&= 
\iota(\chi)([g^*E]) \cdot \exp \Big( 2\pi i \int_{a(z)} \fint_F \curv(h) \Big) \\
&= 
G^*h([g^*E]) \cdot \exp \Big( 2\pi i \int_{a(z)} \fint_F \curv(h) \Big) \\
&= 
h([g^*E\xrightarrow{G}E]_{\partial S_k}) \cdot \exp \Big( 2\pi i \int_{a(z)} \fint_F \curv(h) \Big) \\
&= 
h([\PB_E([M\xrightarrow{g}X])]_{\partial S_k}) \cdot \exp \Big( 2\pi i \int_{a(z)} \fint_F \curv(h) \Big) \\
&= 
h([\PB_E \zeta(z)]_{\partial S_k}) \cdot \exp \Big( 2\pi i \int_{a(z)} \fint_F \curv(h) \Big) \,.
\end{align*}
}
Hence we obtain the following constructive definition for the fiber integration map on differential characters:

\begin{definition}\label{def:fiber_int_diff_charact_construction} \index{Definition!fiber integration map}%
Let $F \hookrightarrow E \twoheadrightarrow X$ be a fiber bundle with closed oriented fibers over a smooth space $X$.
For $k < \dim F$, the fiber integration map $\widehat\pi_!:\widehat H^k(E;\Z) \to \widehat H^{k-\dim F}(X;\Z) = \{0\}$ is trivial.
For $k = \dim F$, the fiber integration map $\widehat\pi_!: \widehat H^{\dim F}(E;\Z) \to \widehat H^0(X;\Z) = H^0(X;\Z)$ is defined as:
$$
\widehat\pi_!h
:= \pi_!c(h) \,.
$$
For $k > \dim F$, the fiber integration map $\widehat\pi_!: \widehat H^k(E;\Z) \to \widehat H^{k-\dim F}(X;\Z)$ is defined as:
\begin{align}
(\widehat\pi_!h)(z) 
:=&\, h([\PB_E \zeta(z)]_{\partial S_k}) \cdot \exp \Big( 2\pi i \int_{a(z)} \fint_F \curv(h)  \Big) \label{eq:def_fiber_int} \\
=&\, (G^*h)([g^*E]) \cdot \exp \Big( 2\pi i \int_{a(z)} \fint_F \curv(h) \Big) \,.\label{eq:def_fiber_int_2}
\end{align}
\end{definition}

Using the transfer map $\lambda$ constructed in Remark~\ref{rem:lambda}, we obtain the following expression for fiber integration:
\begin{lemma}[Fiber integration via transfer map] \index{Lemma!fiber integration via transfer map}%
Let $k > \dim F$.
Let $h \in \widehat H^k(E;\Z)$ and let $\lambda:C_{k-1-\dim F}(X;\Z) \to C_{k-1}(E;\Z)$ as defined in Remark~\ref{rem:lambda}.
Then we have for any $z \in Z_{k-1-\dim F}(X;\Z)$:
\begin{equation}\label{eq:pilambda}
(\widehat\pi_!h)(z)
= h(\lambda(z)) \cdot \exp \Big( 2\pi i \int_{a(z)} \fint_F \curv(h)  \Big).  
\end{equation}
\end{lemma}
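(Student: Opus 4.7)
The plan is essentially to combine Definition~\ref{def:fiber_int_diff_charact_construction} with the key property of the transfer map from Remark~\ref{rem:lambda}, using thin invariance of differential characters.

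First I would observe that since $z \in Z_{k-1-\dim F}(X;\Z)$ is a cycle, Remark~\ref{rem:lambda} applies directly: by \eqref{eq:lambdazzeta} (read at the appropriate degree, so $\lambda(z) \in Z_{k-1}(E;\Z)$ and $\PB_E(\zeta(z)) \in \CC_{k-1}(E)$), we have
$$
[\lambda(z)]_{\partial S_k} = [\PB_E(\zeta(z))]_{\partial S_k}
$$
in $Z_{k-1}(E;\Z)/\partial S_k(E;\Z)$. In other words, the two representing chains differ by a boundary of a thin $k$-chain.

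Next, I would invoke thin invariance of differential characters (Remark~\ref{rem:thin_invariance}): the evaluation $h : Z_{k-1}(E;\Z) \to \Ul$ descends to a homomorphism on $Z_{k-1}(E;\Z)/\partial S_k(E;\Z)$. Consequently,
$$
h(\lambda(z)) = h\big([\lambda(z)]_{\partial S_k}\big) = h\big([\PB_E(\zeta(z))]_{\partial S_k}\big).
$$

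Finally, substituting this equality into the defining formula \eqref{eq:def_fiber_int} for $\widehat\pi_! h$ yields \eqref{eq:pilambda} immediately. There is no real obstacle here; the lemma is essentially a repackaging of \eqref{eq:lambdazzeta} once one notices that thin invariance lets us replace the chain $\PB_E\zeta(z)$ by the honest singular cycle $\lambda(z)$ without changing the value of $h$. The only point requiring a moment's care is matching indices between the general statement in Remark~\ref{rem:lambda} (stated for the map $\lambda: Z_{k-\dim F}(X;\Z) \to Z_k(E;\Z)$) and the present degree shift (where the cycle lives in degree $k-1-\dim F$).
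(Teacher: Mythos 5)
Your proposal is correct and follows exactly the paper's own argument: start from \eqref{eq:def_fiber_int}, replace $[\PB_E\zeta(z)]_{\partial S_k}$ by $[\lambda(z)]_{\partial S_k}$ via \eqref{eq:lambdazzeta}, and use thin invariance of differential characters to pass to the honest cycle $\lambda(z)$. Nothing is missing.
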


\begin{proof}
By \eqref{eq:def_fiber_int} and the construction of $\lambda$, we find:
\begin{align*}
(\widehat\pi_!h)(z)
&:= 
h([\PBE\zeta(z)]_{\partial S_k}) \cdot \exp \Big( 2\pi i \int_{a(z)} \fint_F \curv(h)  \Big) \\
&\ist{\eqref{eq:lambdazzeta}}\,
h([\lambda(z)]_{\partial S_k}) \cdot \exp \Big( 2\pi i \int_{a(z)} \fint_F \curv(h)  \Big) \\
&= 
h(\lambda(z)) \cdot \exp \Big( 2\pi i \int_{a(z)} \fint_F \curv(h)  \Big) . 
\end{align*}
In the last equation we have used thin invariance of differential characters.
\end{proof}

\begin{lemma}
The fiber integration $\widehat \pi_!$ as defined in \eqref{eq:def_fiber_int} is a group homomorphism $\widehat H^k(E;\Z) \to \widehat H^{k-\dim F}(X;\Z)$.
In particular, for $k > \dim F$ the map $z \mapsto \widehat\pi_!h(z)$ is indeed a differential character. 
\end{lemma}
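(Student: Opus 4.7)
The proof splits into three verifications: first, that for each $h$ the formula \eqref{eq:def_fiber_int} defines a group homomorphism $Z_{k-1-\dim F}(X;\Z) \to \Ul$; second, that this homomorphism satisfies the boundary condition \eqref{eq:def_diff_charact_2} with the correct curvature and is hence a differential character; and third, that $h \mapsto \widehat\pi_! h$ is itself additive. The degenerate cases are immediate: for $k < \dim F$ the target group is $\{0\}$, and for $k = \dim F$ the characteristic class $c:\widehat H^0(X;\Z) \to H^0(X;\Z)$ is an isomorphism by the degree-$0$ diagram \eqref{eq:0-3x3}, so $\widehat\pi_! h := \pi_! c(h)$ is tautologically a well-defined homomorphism. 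Thus the work is concentrated in the range $k > \dim F$.

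Additivity in $z$ will follow at once from the observation that every building block in \eqref{eq:def_fiber_int} is additive: $\zeta$ and $a$ are homomorphisms by Lemma~\ref{lem:Liftazeta}, the pull-back operation $\PB_E$ is additive on geometric chains, the evaluation $h(\cdot)$ is a homomorphism on cycles and descends to $Z_{k-1}(E;\Z)/\partial S_k(E;\Z)$ by thin invariance (Remark~\ref{rem:thin_invariance}), and integration of a form over a chain is $\Z$-linear.

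The main content is the boundary property, and this is where all the structural identities assembled in Lemma~\ref{lem:Liftazeta} and in the list of properties of $\PB_E$ have to interlock. Let $c \in C_{k-\dim F}(X;\Z)$. Using $\zeta(\partial c) = \partial\zeta(c)$ from \eqref{eq:delzetazetadel} together with the commutation $\partial \circ \PB_E = \PB_E \circ \partial$ from \eqref{eq:partial_iota_commute} (valid because $\partial F = \emptyset$) and the naturality diagram \eqref{eq:geomchainsboundary}, one identifies $[\PB_E\zeta(\partial c)]_{\partial S_k} = \partial [\PB_E\zeta(c)]_{S_k}$. Applying the defining property of $h$ as a differential character rewrites the first factor of $\widehat\pi_! h(\partial c)$ as $\exp\bigl(2\pi i \int_{[\PB_E\zeta(c)]_{S_k}} \curv(h)\bigr)$. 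The compatibility \eqref{axiomD:fiber_int_bound_forms} then converts this to $\exp\bigl(2\pi i \int_{[\zeta(c)]_{S_{k-\dim F}}} \fint_F\curv(h)\bigr)$, and \eqref{eq:cadel} expands $[\zeta(c)]_{S_{k-\dim F}}$ as $[c - a(\partial c) - \partial a(c+y(c))]_{S_{k-\dim F}}$. The contribution of $\partial a(c+y(c))$ vanishes by Stokes's theorem because $\fint_F\curv(h)$ is closed (fiber integration commutes with $d$ up to sign when $\partial F = \emptyset$). The remaining $-\int_{a(\partial c)} \fint_F\curv(h)$ cancels exactly against the correction exponential in \eqref{eq:def_fiber_int}, leaving $\widehat\pi_! h(\partial c) = \exp\bigl(2\pi i \int_c \fint_F\curv(h)\bigr)$, which simultaneously shows that $\widehat\pi_! h$ is a differential character and identifies $\curv(\widehat\pi_!h) = \fint_F\curv(h)$.

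Additivity of the assignment $h \mapsto \widehat\pi_! h$ is then automatic from \eqref{eq:def_fiber_int}: $h$ enters the first factor as evaluation at a fixed argument, which is multiplicative in $h$ in $\Ul$, and $\curv$ is a homomorphism so the exponential correction is also multiplicative in $h$. The only genuinely delicate step is the boundary computation, and its difficulty is almost entirely bookkeeping: making sure that the degree shifts between $\zeta(c) \in \CC_{k-\dim F}(X)$ and $\PB_E\zeta(c) \in \CC_k(E)$ match the degrees at which \eqref{axiomD:fiber_int_bound_forms} and \eqref{eq:cadel} apply, and that the auxiliary cycle $y(c)$ introduced by Lemma~\ref{lem:Liftazeta} drops out after Stokes.
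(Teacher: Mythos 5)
Your proof is correct and takes essentially the same approach as the paper: the same split into the cases $k<\dim F$, $k=\dim F$, $k>\dim F$, the same observation that additivity in $z$ and in $h$ is immediate from the ingredients, and the same boundary computation resting on \eqref{eq:delzetazetadel}, \eqref{eq:partial_iota_commute}, \eqref{axiomD:fiber_int_bound_forms} and \eqref{eq:cadel}, with the $\partial a(c+y(c))$ term killed by Stokes and the $a(\partial c)$ terms cancelling. The only cosmetic difference is that the paper routes the boundary computation through the transfer map $\lambda$ via \eqref{eq:pilambda}, using \eqref{eq:dellambdalambdadel} and \eqref{eq:fint_lambda_closed}, which encode exactly the identities you apply directly to $\PBE\zeta(c)$, so the two computations are term-for-term equivalent.
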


\begin{proof}
For $k < \dim F$, we obtain the trivial map $\widehat H^k(E;\Z) \to \widehat H^{k-\dim F}(X;\Z) = \{0\}$, which is a group homomorphism.  
For $k=\dim F$, the fiber integration map $\widehat\pi_! = \pi_! \circ c: \widehat H^{\dim F}(E;\Z) \to \widehat H^0(X;\Z)$ is the composition of the group homomorphisms $c: \widehat H^{\dim F}(E;\Z) \to H^{\dim F}(E;\Z)$ and $\pi_!:H^{\dim F}(E;\Z) \to H^0(X;\Z)$.

Now let $k > \dim F$.
We show that $\widehat\pi_!h$ is indeed a differential character.
The map $z\mapsto\widehat\pi_!h(z)$ is a group homomorphism $Z_{k-1-\dim F}(X;\Z)\to \Ul$ because all ingredients of the right hand side of \eqref{eq:def_fiber_int} are homomorphisms.

We check that the evaluation of $\widehat\pi_!h$ on a boundary is given by the integral of a differential form.
Let $z = \partial c \in B_{k-1-\dim F}(X;\Z)$ be a smooth singular boundary on $X$.
As in Lemma~\ref{lem:Liftazeta}, we choose geometric chains $\zeta(\partial c)\in \BB_{k-\dim F-1}(X)$ and $\zeta(c) \in \CC_{k-\dim F}(X)$, and smooth singular chains $a(z) \in C_{k-\dim F}(X;\Z)$ and $y(c)\in Z_{k-\dim F}(X;\Z)$ such that $\partial \zeta(c) = \zeta(\partial c)$ and $[c - a(\partial c) - \partial a(c+y(c))]_{S_{k-\dim F}} = [\zeta(c)]_{S_{k-\dim F}}$.
Using \eqref{eq:fint_lambda_closed} for the transfer map $\lambda$, we obtain:
{\allowdisplaybreaks
\begin{align}
(\widehat\pi_!h)(\partial c) 
&=
h(\lambda(\partial c)) \cdot \exp \Big( 2\pi i \int_{a(\partial c)} \fint_F \curv(h)  \Big) \nonumber \\
&\ist{\eqref{eq:dellambdalambdadel}}\,
h(\partial \lambda(c)) \cdot \exp \Big( 2\pi i \int_{a(\partial c)} \fint_F \curv(h)  \Big) \nonumber \\
&=
\exp \Big( 2\pi i \cdot \Big(\int_{\lambda(c)} \curv(h) + \int_{a(\partial c)} \fint_F \curv(h) \Big)\Big) \nonumber \\
&\ist{\eqref{eq:fint_lambda_closed}}\,
\exp \Big( 2\pi i \cdot \Big(\int_{c-a(\partial c)} \fint_F \curv(h) + \int_{a(\partial c)} \fint_F \curv(h) \Big)\Big) \nonumber \\
&=
\exp \Big( 2\pi i \int_c \fint_F \curv(h) \Big) \,.
\label{eq:fiber_int_diff_charact} 
\end{align}
}%
Thus $\widehat\pi_!h$ is indeed a differential character.
From \eqref{eq:def_fiber_int} it is now clear that $h\mapsto \widehat\pi_!h$ is a homomorphism $\widehat H^k(E;\Z) \to \widehat H^{k-\dim F}(X;\Z)$.
\end{proof}

We show that the definition of $\widehat \pi_!h$ in \eqref{eq:def_fiber_int} is independent of the choices.

\begin{lemma}\label{lem:independent}
Let $k > \dim F$.
Let $\zeta':Z_{k-1-\dim F}(X;\Z)\to \ZZ_{k-1-\dim F}(X)$ and \mbox{$a':Z_{k-1-\dim F}(X;\Z)\to C_{k-\dim F}(X;\Z)$} be any maps (not necessarily homomorphisms) such that \eqref{eq:zazeta} in Lemma~\ref{lem:Liftazeta} holds, i.e.,
$$
[z-\partial a'(z)]_{\partial S_{k-\dim F}}=[\zeta'(z)]_{k-1-\dim F}
$$
is true for all $z\in Z_{k-1-\dim F}(X;\Z)$.
Then \eqref{eq:def_fiber_int} remains valid, i.e.,
$$
(\widehat\pi_!h)(z) 
:= h([\PB_E \zeta'(z)]_{\partial S_k}) \cdot \exp \Big( 2\pi i \int_{a'(z)} \fint_F \curv(h)  \Big)
$$
holds for all $z\in Z_{k-1-\dim F}(X;\Z)$ and all $h\in \widehat H^k(E;\Z)$.
\end{lemma}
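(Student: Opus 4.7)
The plan is to show that the ratio of the value given by \eqref{eq:def_fiber_int} using $(\zeta', a')$ to the value using the homomorphisms $(\zeta, a)$ from Lemma~\ref{lem:Liftazeta} equals $1$. Setting $\eta := \fint_F \curv(h) \in \Omega^{k-\dim F}(X)$, this ratio is
\[
\frac{h([\PB_E \zeta'(z)]_{\partial S_k})}{h([\PB_E \zeta(z)]_{\partial S_k})} \cdot \exp\Bigl(2\pi i \int_{a'(z) - a(z)} \eta\Bigr).
\]

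First, I would observe that since $\zeta(z)$ and $\zeta'(z)$ are geometric cycles both representing the singular homology class of $z$, there exists $\beta \in \CC_{k-\dim F}(X)$ with $\partial \beta = \zeta'(z) - \zeta(z)$. Because $\partial F = \emptyset$, property \eqref{eq:partial_iota_commute} gives $\PB_E \zeta'(z) - \PB_E \zeta(z) = \partial(\PB_E \beta)$. Choosing a chain representative $\beta_c \in C_{k-\dim F}(X;\Z)$ of $\beta$ (so that $[\beta_c]_{S_{k-\dim F}} = \phi_{k-\dim F}(\beta)$), the defining property \eqref{eq:def_diff_charact_2} together with \eqref{axiomD:fiber_int_bound_forms} yields
\[
h([\partial \PB_E \beta]_{\partial S_k}) = \exp\Bigl(2\pi i \int_{[\PB_E \beta]_{S_k}} \curv(h)\Bigr) = \exp\Bigl(2\pi i \int_{\beta_c} \eta\Bigr),
\]
so the ratio becomes $\exp\bigl(2\pi i \int_{\beta_c + a'(z) - a(z)} \eta\bigr)$.

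Second, I would carry out the bookkeeping modulo thin chains. Combining the defining equations $[z - \partial a(z)]_{\partial S_{k-\dim F}} = [\zeta(z)]_{\partial S_{k-\dim F}}$ and the analog for $(\zeta', a')$ with the compatibility relation $\psi_{k-1-\dim F}(\partial \beta) = \partial\, \phi_{k-\dim F}(\beta)$ from diagram \eqref{eq:geomchainsboundary}, one obtains $\partial(\beta_c + a'(z) - a(z)) \in \partial S_{k-\dim F}(X;\Z)$. Hence there exists a thin chain $u \in S_{k-\dim F}(X;\Z)$ with $\partial u = \partial(\beta_c + a'(z) - a(z))$, so that $w := \beta_c + a'(z) - a(z) - u$ is an honest cycle in $Z_{k-\dim F}(X;\Z)$.

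Finally, I would conclude by integrality. The form $\eta = \fint_F \curv(h)$ is closed (since $\partial F = \emptyset$) and has integral periods: for any cycle $z' \in Z_{k-\dim F}(X;\Z)$, formula \eqref{eq:fint_lambda_cycle} gives $\int_{z'} \eta = \int_{\lambda(z')} \curv(h) \in \Z$, as $\lambda(z')$ is a cycle and $\curv(h)$ has integral periods. Therefore $\int_w \eta \in \Z$, while $\int_u \eta = 0$ because $u$ is thin. Adding these, $\int_{\beta_c + a'(z) - a(z)} \eta \in \Z$, so the ratio equals $1$. The main obstacle will be the bookkeeping step: pinning down precisely which equivalence relation ($[\,\cdot\,]_{S_n}$ versus $[\,\cdot\,]_{\partial S_{n+1}}$) governs each ingredient, and combining the $(\zeta, a)$ and $(\zeta', a')$ equations with the geometric boundary relation $\partial \beta = \zeta'(z) - \zeta(z)$ to realize $\beta_c + a'(z) - a(z)$ as a true cycle plus a thin chain, rather than merely a cycle modulo thin chains.
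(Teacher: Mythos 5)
Your argument is correct and follows essentially the same route as the paper's proof: choose a geometric chain $\beta$ with $\partial\beta=\zeta'(z)-\zeta(z)$, push it through $\PB_E$ using \eqref{eq:partial_iota_commute} and \eqref{axiomD:fiber_int_bound_forms} to turn the ratio of the $h$-values into $\exp\bigl(2\pi i\int_{\beta_c}\fint_F\curv(h)\bigr)$, and then identify $\beta_c+a'(z)-a(z)$ as a cycle plus a thin chain so that integrality of the periods of $\fint_F\curv(h)$ finishes the proof. The only cosmetic differences are that you spell out the thin-chain bookkeeping (the paper packages it as the single equation \eqref{eq:aay}) and you justify the integrality of $\int_w\fint_F\curv(h)$ via the transfer map, which the paper simply asserts.
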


\begin{proof}
Let $z\in Z_{k-1-\dim F}(X;\Z)$ be a cycle.
Then we find a geometric boundary $\partial\beta(z) \in \BB_{k-1-\dim F}(X)$ such that $\zeta'(z) - \zeta(z) = \partial \beta(z)$.
Since 
$$
[\partial a(z) - \partial a'(z)] _{\partial S_{k-\dim F}} = [\partial\beta(z)]_{\partial S_{k-\dim F}} = \partial [\beta(z)]_{S_{k-\dim F}},
$$ 
we find a smooth singular cycle $w(z) \in Z_{k-\dim F}(X;\Z)$ such that 
\begin{equation}
[a(z) - a'(z) - w(z)]_{S_{k-\dim F}} = [\beta(z)]_{S_{k-\dim F}}. 
\label{eq:aay}
\end{equation}
We then have: 
\begin{align*}
h([\PB_E\zeta'&(z)]_{\partial S_k}) \cdot  h([\PB_E\zeta(z)]_{\partial S_k})^{-1} \\
&= 
h([\PB_E \partial \beta(z)]_{\partial S_k}) \\
&\ist{\eqref{eq:partial_iota_commute}}\,
h([\partial\PB_E\beta(z)]_{\partial S_k}) \\
&\ist{\eqref{eq:geomchainsboundary}}
h(\partial [\PB_E(\beta(z)]_{S_k}) \\
&\ist{\eqref{eq:def_diff_charact_2}}\,
\exp \Big( 2\pi i \int_{[\PB_E\beta(z)]_{_k}} \curv(h) \Big) \\
&\ist{\eqref{axiomD:fiber_int_bound_forms}}\,
\exp \Big( 2\pi i \int_{[\beta(z)]_{S_{k-\dim F}}} \fint_F \curv(h) \Big) \\
&\ist{\eqref{eq:aay}}\,
\exp \Big( 2\pi i \int_{a(z)-a'(z)-w(z)} \fint_F \curv(h) \Big) \\
&=
\exp \Big( 2\pi i \int_{a(z)-a'(z)} \fint_F \curv(h) \Big)\cdot\exp \Big( 2\pi i \underbrace{\int_{w(z)} \fint_F \curv(h)}_{\in\Z} \Big)^{-1} \\
&=
\exp \Big( 2\pi i \int_{a(z)} \fint_F \curv(h) \Big) \cdot \exp \Big( 2\pi i \int_{a'(z)} \fint_F \curv(h) \Big)^{-1} \,.
\end{align*}
This proves the lemma.
\end{proof}

\begin{thm}[Existence of fiber integration]\label{prop:Axiomegelten} \index{Theorem!existence of fiber integration}%
Fiber integration $\widehat \pi_!$ as defined in \eqref{eq:def_fiber_int} satisfies the axioms in Definition~\ref{def:fiber_int_diff_charact_axioms}.
\end{thm}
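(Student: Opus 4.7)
The plan is to verify each of the three axioms of Definition~\ref{def:fiber_int_diff_charact_axioms} in turn, splitting by degree into $k<\dim F$ (all axioms hold trivially because $\widehat H^{k-\dim F}(X;\Z)=\{0\}$), $k=\dim F$ (where $\widehat\pi_!=\pi_!\circ c$ and the axioms reduce to standard properties of the singular fiber integration together with the identification \eqref{eq:0-3x3}), and $k>\dim F$, which is the interesting case. The preceding lemmas already handle all the analytic groundwork: $\widehat\pi_!h$ is a differential character, it is a group homomorphism, and its definition is independent of the choice of lifts $\zeta$ and $a$.

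For curvature compatibility in degree $k>\dim F$, the computation \eqref{eq:fiber_int_diff_charact} carried out during the proof of well-definedness already yields $(\widehat\pi_!h)(\partial c)=\exp\bigl(2\pi i\int_c \fint_F\curv(h)\bigr)$; uniqueness of the curvature of a differential character then forces $\curv(\widehat\pi_!h)=\fint_F\curv(h)$. For the topological-trivialization axiom, I would substitute $h=\iota(\chi)$ with $\chi\in\Omega^{k-1}_{\mathrm{cl}}(E)$ into the transfer-map formula \eqref{eq:pilambda}. Since $\curv(\iota(\chi))=d\chi=0$, the correction factor is trivial, leaving $\iota(\chi)(\lambda(z))=\exp\bigl(2\pi i\int_{\lambda(z)}\chi\bigr)$. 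By \eqref{eq:fint_lambda_cycle} this equals $\exp\bigl(2\pi i\int_{[\zeta(z)]_{\partial S}}\fint_F\chi\bigr)$, and since $\fint_F\chi$ is closed the boundary term $\int_{\partial a(z)}\fint_F\chi$ vanishes by Stokes, whence the expression simplifies to $\exp\bigl(2\pi i\int_z\fint_F\chi\bigr)=\iota(\fint_F\chi)(z)$.

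For naturality in degree $k>\dim F$, given a pull-back diagram with $g:Y\to X$ and $G:g^*E\to E$, I would exploit Lemma~\ref{lem:independent}: for a cycle $z\in Z_{k-1-\dim F}(Y;\Z)$, first choose $\zeta^Y(z)\in\ZZ_{k-1-\dim F}(Y)$ and $a^Y(z)\in C_{k-\dim F}(Y;\Z)$ on $Y$, and then use $g_*\zeta^Y(z)$ and $g_*a^Y(z)$ as valid (though perhaps not linear-in-$z$) choices on $X$ for the cycle $g_*z$. The naturality \eqref{eq:PBnatuerlich} of the pull-back operation gives $\PB_E(g_*\zeta^Y(z))=G_*\PB_{g^*E}(\zeta^Y(z))$, so $h([\PB_E(g_*\zeta^Y(z))]_{\partial S_k})=(G^*h)([\PB_{g^*E}(\zeta^Y(z))]_{\partial S_k})$. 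The usual change-of-variables $\int_{g_*a^Y(z)}\fint_F\curv(h)=\int_{a^Y(z)}\fint_F\curv(G^*h)$ (using $\fint_F\circ g^*=g^*\circ\fint_F$ for a pull-back bundle) then converts the defining formula \eqref{eq:def_fiber_int} for $\widehat\pi_!h$ at $g_*z$ into the defining formula for $\widehat\pi_!(G^*h)$ at $z$, which is exactly \eqref{eq:fiber_int_natural}.

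The cases $k\le\dim F$ are then routine. For $k<\dim F$ there is nothing to check. For $k=\dim F$, naturality follows from naturality of $\pi_!$ on singular cohomology (as in Remark~\ref{rem:fiber_int_cohomology}); the topological trivialization axiom is vacuous because $\iota(\chi)$ is topologically trivial, so $\widehat\pi_!(\iota(\chi))=\pi_!(0)=0$, and the target $\iota:\Omega^{-1}_{\mathrm{cl}}(X)\to\widehat H^0(X;\Z)$ is zero by convention; and compatibility with curvature amounts to the identity $\langle\pi_!c(h),[x]\rangle=\langle c(h),[F_x]\rangle=\int_{F_x}\curv(h)|_{F_x}=(\fint_F\curv(h))(x)$, read off under the isomorphisms of \eqref{eq:0-3x3}. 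No step involves real difficulty since all of the substantive work is already packaged in the lemmas on $\PB_\bullet$, the transfer map $\lambda$, thin invariance, and the independence of choices; the only thing to watch is the bookkeeping between the three degree ranges.
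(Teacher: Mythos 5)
Your proof is correct and follows essentially the same route as the paper: the same split into the degree ranges $k<\dim F$, $k=\dim F$, $k>\dim F$, the same use of \eqref{eq:fiber_int_diff_charact} to read off $\curv(\widehat\pi_!h)=\fint_F\curv(h)$, Lemma~\ref{lem:independent} together with \eqref{eq:PBnatuerlich} and compatibility of $\fint_F$ with pull-back for naturality, and the transfer-map identities for the $\iota$-compatibility. The only (harmless) difference is that you verify \eqref{eq:fiber_int_iota} just for closed $\chi$, which is all the axiom demands, whereas the paper runs the same computation for arbitrary $\eta\in\Omega^{k-1}(E)$ and records the stronger statement in Remark~\ref{rem:shows_more}.
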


\begin{proof}
For $k < \dim F$, the trivial map $\widehat\pi_!:\widehat H^k(X;\Z) \to \widehat H^{k-\dim F}(X;\Z) = \{0\}$ obviously satisfies the axioms in Definition~\ref{def:fiber_int_diff_charact_axioms}. 
For $k = \dim F$, the fiber integration $\widehat \pi_!=\pi_! \circ c$ is natural since $\pi_!:H^k(E;\Z) \to H^{k-\dim F}(X;\Z)$ is natural with respect to bundle maps and \mbox{$c:\widehat H^k(E;\Z) \to H^k(E;\Z)$} is natural with respect to any smooth maps.
Compatibility with curvature follows from the commutative diagram \eqref{eq:0-3x3}.
To show compatibility with topological trivializations, let $h = \iota(\varrho)$ for some $\varrho \in \Omega^{\dim F-1}(E)$.
Then we have $c(\iota(\varrho)) =0$. 
For dimensional reasons, we have $\fint_F \varrho =0$.
Thus $\widehat \pi_!\iota(\varrho) = \pi_!c(\iota(\varrho)) = 0 = \iota(\fint_F\varrho)$.

Now let $k> \dim F$.
Equation \eqref{eq:fiber_int_diff_charact} yields for the curvature of $\widehat\pi_!h$:
\begin{equation}
\curv(\widehat\pi_!h) = \fint_F \curv(h) \,.
\label{eq:fiber_int_curv}
\end{equation}
This is compatibility with curvature \eqref{eq:fiber_int_curvature}.

Now let $h = \iota(\eta)$ for some $ \eta \in \Omega^{k-1}(E)$.
Let $z \in Z_{k-1-\dim F}(X;\Z)$.
Using Stokes's theorem we find:
\begin{align*}
(\widehat\pi_! h)(z)
&= 
h(\lambda(z)) \cdot \exp \Big( 2 \pi i \int_{a(z)} \fint_F \curv(h) \Big) \\
&= 
\exp \Big( 2 \pi i \int_{\lambda(z)} \eta \Big) \cdot \exp \Big( 2 \pi i \int_{a(z)} \fint_F \curv(h) \Big) \\
&\ist{\eqref{eq:fint_lambda}}\,
\exp \Big( 2 \pi i \int_{[\zeta(z)]_{\partial S_{k-\dim F}}} \fint_F \eta \Big) \cdot \exp \Big( 2 \pi i \int_{a(z)} \fint_F d\eta \Big) \\
&\ist{\eqref{eq:zazeta}}\,
\exp \Big( 2 \pi i \cdot \Big( \int_{z - \partial a(z)} \fint_F \eta  + \int_{a(z)} \fint_F d\eta \Big) \Big) \\ 
&= 
\exp \Big( 2 \pi i \int_z \fint_F \eta \Big) \,.
\end{align*}
Hence $\widehat\pi_! h = \iota(\fint_F \eta)$, as claimed in \eqref{eq:fiber_int_iota}.

It remains to prove naturality.
Let $g:Y\to X$ be a smooth map.
We have the pull-back diagram
\begin{equation*}
\xymatrix{
g^*E \ar^{\pi}[d] \ar^{G}[r] & E \ar^{\pi}[d] \\
Y \ar^{g}[r] & X \,.
}
\end{equation*}
Let $z\in Z_{k-1-\dim F}(Y;\Z)$.
As in Lemma~\ref{lem:Liftazeta} we choose $\zeta(z)\in \ZZ_{k-1-\dim F}(Y)$ and $a(z)\in C_{k-\dim F}(Y;\Z)$ such that
$[z-\partial a(z)]_{\partial S_{k-\dim F}}=[\zeta(z)]_{\partial S_{k-\dim F}}$.
Hence
$$
[g_*\zeta(z)]_{\partial S_{k-\dim F}}
= g_*[\zeta(z)]_{\partial S_{k-\dim F}}
= g_*[z-\partial a(z)]_{\partial S_{k-\dim F}}
= [g_*z-\partial g_*a(z)]_{\partial S_{k-\dim F}}.
$$
Now let $h\in \widehat H^k(E;\Z)$.
By Lemma~\ref{lem:independent}, we may choose $\zeta(g_*z) = g_*\zeta(z)$ and $a(g_*z)=g_*a(z)$ to compute $g^*(\widehat\pi_!h)(z)$.
This yields:
{\allowdisplaybreaks
\begin{align*}
\widehat\pi_!G^*h(z)
&=
G^*h([\PB_{g^*E}\zeta(z)]_{\partial S_k}) \cdot \exp\Big(2\pi i \int_{a(z)}\fint_F \curv(G^*h)\Big) \\
&=
h(G_*[\PB_{g^*E}\zeta(z)]_{\partial S_k}) \cdot \exp\Big(2\pi i \int_{a(z)}\fint_F G^*\curv(h)\Big) \\
&=
h([G_*\PB_{g^*E}\zeta(z)]_{\partial S_k}) \cdot \exp\Big(2\pi i \int_{a(z)}g^*\fint_F \curv(h)\Big) \\
&\ist{\eqref{eq:PBnatuerlich}}
h([\PBE (g_*\zeta(z))]_{\partial S_k}) \cdot \exp\Big(2\pi i \int_{g_*a(z)}\fint_F \curv(h)\Big) \\
&=
h([\PBE \zeta(g_*z)]_{\partial S_k}) \cdot \exp\Big(2\pi i \int_{a(g_*z)}\fint_F \curv(h)\Big) \\
&=
\widehat\pi_!h(g_*z)\\
&=
g^*(\widehat\pi_!h)(z) .
\end{align*}
}%
For the third equality we use compatibility of fiber integration and pull-back of differential forms, see \cite[Ch.~VII, Prop.~VIII]{GHV-I}.
This proves \eqref{eq:fiber_int_natural}.
\end{proof}

\begin{cor}[Existence and uniqueness of fiber integration] \index{Corollary!existence and uniquess of fiber integration}
There is a unique fiber integration of differential characters satisfying the axioms in Definition~\ref{def:fiber_int_diff_charact_axioms}.
\end{cor}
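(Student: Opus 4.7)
The corollary is the immediate combination of the two preceding theorems, so the proof proposal is essentially a bookkeeping argument. The plan is to simply invoke Theorem~\ref{thm:fiber_int_unique} for uniqueness and Theorem~\ref{prop:Axiomegelten} for existence, noting that the construction in Definition~\ref{def:fiber_int_diff_charact_construction} (via formula~\eqref{eq:def_fiber_int}) provides the explicit realization.

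More precisely, I would first observe that Theorem~\ref{prop:Axiomegelten} establishes that the map $\widehat\pi_!$ defined by \eqref{eq:def_fiber_int} satisfies naturality \eqref{eq:fiber_int_natural}, compatibility with curvature \eqref{eq:fiber_int_curvature}, and compatibility with topological trivializations \eqref{eq:fiber_int_iota}. Hence at least one fiber integration in the sense of Definition~\ref{def:fiber_int_diff_charact_axioms} exists. Then I would invoke Theorem~\ref{thm:fiber_int_unique}, which asserts that any map $\widehat H^*(E;\Z) \to \widehat H^{*-\dim F}(X;\Z)$ satisfying these three properties is uniquely determined; in fact the proof of that theorem derives the explicit formula \eqref{eq:fiber_int_unique} from the axioms alone, so any two candidates must agree on every cycle $z$.

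There is no real obstacle here since both halves of the argument have been carried out in full above; the only point requiring a brief word is that uniqueness in low degrees ($k<\dim F$ and $k=\dim F$) was handled separately in the proof of Theorem~\ref{thm:fiber_int_unique}, using $\widehat H^{k-\dim F}(X;\Z)=\{0\}$ for $k<\dim F$ and the isomorphism diagram~\eqref{eq:0-3x3} combined with \eqref{eq:fiber_int_curvature} for $k=\dim F$. Thus the proof amounts to a single sentence referring back to Theorems~\ref{thm:fiber_int_unique} and~\ref{prop:Axiomegelten}.
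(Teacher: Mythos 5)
Your proposal is correct and matches the paper exactly: the corollary is stated without proof immediately after Theorem~\ref{prop:Axiomegelten}, precisely because it is the direct combination of that existence result with the uniqueness result of Theorem~\ref{thm:fiber_int_unique}. Your remark on the low-degree cases is accurate but not needed, since both theorems already cover all degrees.
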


\begin{remark}\label{rem:shows_more}
The proof of Theorem~\ref{prop:Axiomegelten} shows more than compatibility with topological trivializations of flat characters.
Namely, \eqref{eq:fiber_int_iota} commutes for all $\eta\in\Omega^{k-1}(E)$, not necessarily closed.
In other words, we have shown compatibility with topological trivializations of characters, not necessarily flat.
\end{remark}

\begin{prop}[Compatibility of fiber integration with characteristic class] \index{Proposition!compatibility of fiber integration with characteristic class}%
Fiber integration of differential characters is compatible with the characteristic class, i.e., the diagram 
\begin{equation}\label{eq:cpi_commute}
\xymatrix{
\widehat H^k(E;\Z) \ar[d]^{\widehat\pi_!} \ar[r]^c &
H^k(E;\Z) \ar[d]^{\pi_!} \\
\widehat H^{k-\dim F}(X;\Z) \ar[r]^c & H^{k-\dim F}(X;\Z)
}
\end{equation}
commutes.
\index{compatibility!of fiber integration with characteristic class}%
\end{prop}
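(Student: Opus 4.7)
The plan is to handle the three degree ranges separately and reduce the nontrivial case $k > \dim F$ to the description of fiber integration on singular cohomology in terms of the transfer map, as in Remark~\ref{rem:fiber_int_cohomology}.

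For $k < \dim F$ both groups $\widehat H^{k-\dim F}(X;\Z)$ and $H^{k-\dim F}(X;\Z)$ vanish, so there is nothing to check. For $k=\dim F$, the definition $\widehat\pi_!h := \pi_!c(h)$ together with the fact that $c:\widehat H^0(X;\Z)\to H^0(X;\Z)$ is the identity (see \eqref{eq:def_Hk_negativ}) immediately gives $c(\widehat\pi_!h) = \pi_!c(h)$.

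Now assume $k>\dim F$. Choose a real lift $\tilde h\in \Hom(Z_{k-1}(E;\Z),\R)$ of $h$ and define
\[
\widetilde{\widehat\pi_!h}: Z_{k-1-\dim F}(X;\Z)\to\R,\qquad
\widetilde{\widehat\pi_!h}(z) := \tilde h(\lambda(z)) + \int_{a(z)} \fint_F \curv(h),
\]
using the transfer map $\lambda$ from Remark~\ref{rem:lambda} and the homomorphism $a$ from Lemma~\ref{lem:Liftazeta}. Since $\tilde h$, $\lambda$ and $a$ are all group homomorphisms, so is $\widetilde{\widehat\pi_!h}$, and by \eqref{eq:pilambda} exponentiating recovers $\widehat\pi_!h$. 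Hence $\widetilde{\widehat\pi_!h}$ is a genuine real lift. I would then extend $\widetilde{\widehat\pi_!h}$ to $C_{k-\dim F}(X;\Z)$ as in Remark~\ref{rem:extend} and compute the associated integral cocycle $\mu^{\widetilde{\widehat\pi_!h}}$ using \eqref{eq:charact_cocycle}, the identity $\curv(\widehat\pi_!h)=\fint_F\curv(h)$, the relation $\lambda(\partial c)=\partial\lambda(c)$ from \eqref{eq:dellambdalambdadel}, and the formula \eqref{eq:fint_lambda_closed} for $\int_{\lambda(c)}\curv(h)$. The integrals $\int_c \fint_F\curv(h)$ and $\int_{a(\partial c)} \fint_F\curv(h)$ cancel pairwise, and what remains is simply
\[
\mu^{\widetilde{\widehat\pi_!h}}(c) = \mu^{\tilde h}(\lambda(c)).
\]

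With this cocycle-level identity in hand, the conclusion follows from Remark~\ref{rem:fiber_int_cohomology}: the fiber integration $\pi_!:H^k(E;\Z)\to H^{k-\dim F}(X;\Z)$ on singular cohomology is represented by precomposition with $\lambda$, so
\[
c(\widehat\pi_!h) \;=\; [\mu^{\widetilde{\widehat\pi_!h}}] \;=\; [\mu^{\tilde h}\circ\lambda] \;\ist{\eqref{eq:fiber_int_singular_cohomology}}\; \pi_![\mu^{\tilde h}] \;=\; \pi_!c(h).
\]
The only delicate point is checking that the candidate $\widetilde{\widehat\pi_!h}$ really is a real lift of $\widehat\pi_!h$ on the nose (not just up to $\Z$); this is immediate from the explicit formula \eqref{eq:pilambda}, so the main obstacle is essentially notational bookkeeping rather than a substantive difficulty.
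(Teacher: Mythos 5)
Your proposal is correct and follows essentially the same route as the paper: the same real lift $\widetilde{\widehat\pi_!h}(z) = \tilde h(\lambda(z)) + \int_{a(z)}\fint_F\curv(h)$, the same cocycle computation yielding $\mu^{\widetilde{\widehat\pi_!h}} = \mu^{\tilde h}\circ\lambda$ via \eqref{eq:dellambdalambdadel} and \eqref{eq:fint_lambda_closed}, and the same appeal to \eqref{eq:fiber_int_singular_cohomology}. Two cosmetic quibbles: the terms $\int_c\fint_F\curv(h)$ and $\int_{a(\partial c)}\fint_F\curv(h)$ do not ``cancel'' but combine into $\int_{c-a(\partial c)}\fint_F\curv(h)=\int_{\lambda(c)}\curv(h)$, and no extension of the lift beyond cycles is needed since the cocycle formula only evaluates it on boundaries.
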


\begin{proof}
For $k < \dim F$, there is nothing to show.
For $k = \dim F$, this follows from the commutative diagram \eqref{eq:0-3x3}.

Thus let $k > \dim F$.
We compute the characteristic class $c(\widehat\pi_! h)$.
Let $\tilde h \in \Hom(Z_{k-1}(E;\Z),\R)$ be a real lift of the differential character $h \in \widehat H^k(E;\Z)$ and denote by 
$$
\mu^{\tilde h}: c \mapsto \int_c \curv(h) - \tilde h(\partial c)
$$
the corresponding cocycle representing the characteristic class $c(h)$.

Let $z \in Z_{k-1-\dim F}(X;\Z)$ be a smooth singular cycle in the base $X$.
As in Lemma~\ref{lem:Liftazeta}, we get the geometric cycle $\zeta(z) \in \ZZ_{k-1-\dim F}(X)$ and the smooth singular chain $a(z) \in C_{k-\dim F}(X;\Z)$ such that $[z - \partial a(z)]_{\partial S_{k-\dim F}} = [\zeta(z)]_{\partial S_{k-\dim F}}$.
By definition, the $(k-1)$-chain $\lambda(z)$ represents the fundamental class of the pull-back $\PBE\zeta(z)$, i.e., $[\lambda(z)]_{\partial S_k} = [\PBE\zeta(z)]_{\partial S_k}$, where $\lambda:Z_{k-1-\dim F}(X;\Z) \to Z_{k-1}(E;\Z)$ is the transfer map constructed in Remark~\ref{rem:lambda}.
We obtain a real lift $\widetilde{\widehat\pi_!h}$ of the differential character $\widehat\pi_!h$ by setting
\begin{equation}\label{eq:real_lift}
\widetilde{\widehat\pi_!h}(z)
:= \tilde h(\lambda(z)) + \int_{a(z)} \fint_F \curv(h) \,. 
\end{equation}
Hence the characteristic class of the differential character $\widehat\pi_!h$ is represented by the cocycle
{\allowdisplaybreaks
\begin{align}
c
&\mapsto 
\int_c \curv(\widehat\pi_!h) - \widetilde{\widehat\pi_!h}(\partial c) \notag \\
&= 
\int_c \fint_F \curv(h) - \tilde h(\lambda(\partial c)) - \int_{a(\partial c)} \fint_F \curv(h) \notag \\
&\ist{\eqref{eq:dellambdalambdadel}}
\int_{c-a(\partial c)} \fint_F \curv(h) - \tilde h(\partial \lambda(c)) \notag \\
&\ist{\eqref{eq:fint_lambda_closed}}
\int_{\lambda(c)} \curv(h) - \tilde h(\partial\lambda(c)) \notag \\
&=
(\mu^{\tilde h}\circ \lambda)(c) \,. \label{eq:cocycle_pi_c}
\end{align}
}%
By Remark~\ref{rem:fiber_int_cohomology} this cocycle represents the cohomology class $\pi_!(c(h))$, hence
\begin{equation*}
c(\widehat\pi_!(h))
= \pi_!(c(h)) \,. 
\qedhere
\end{equation*}
\end{proof}

\begin{prop}
Fiber integration of differential characters is compatible with the inclusion of cohomology classes with coefficients in $\Ul$, i.e.,~the diagram 
$$
\xymatrix{
H^{k-1}(E;\Ul) \ar[rr]^j \ar[d]^{\pi_!} && \widehat H^k(E;\Z) \ar[d]^{\widehat\pi_!} \\
H^{k-1-\dim F}(X;\Ul) \ar[rr]^j && \widehat H^{k-\dim F}(X;\Z)  
}
$$
commutes.
\end{prop}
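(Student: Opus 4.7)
The plan is to split into three degree ranges and in each case reduce to already-established formulas. For $k<\dim F$ both $\widehat H^{k-\dim F}(X;\Z)$ and $H^{k-1-\dim F}(X;\Ul)$ are zero, so there is nothing to check. For $k=\dim F$ the lower-left group $H^{-1}(X;\Ul)$ vanishes, so $j\circ\pi_!=0$; on the other hand, $\widehat\pi_!\circ j=\pi_!\circ c\circ j$ factors through the Bockstein $H^{\dim F-1}(E;\Ul)\to H^{\dim F}(E;\Z)$, whose image consists of torsion classes. Since $\pi_!$ is additive and the target $\widehat H^0(X;\Z)\cong \Hom(H_0(X;\Z),\Z)$ is torsion-free, this composition must also vanish, so the square commutes in this degree.

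For the main case $k>\dim F$, I would combine the transfer-map formula \eqref{eq:pilambda} with the cochain-level identification of $\pi_!$ established in Remark~\ref{rem:fiber_int_cohomology}. Let $u\in H^{k-1}(E;\Ul)$ be represented by a cocycle $\mu\in Z^{k-1}(E;\Ul)$ and let $z\in Z_{k-1-\dim F}(X;\Z)$. Since $j(u)$ is flat, $\curv(j(u))=0$, so formula \eqref{eq:pilambda} collapses to
\[
(\widehat\pi_!\,j(u))(z)
=j(u)(\lambda(z))
=\langle u,[\lambda(z)]\rangle
=\mu(\lambda(z)),
\]
where the middle equality uses the definition \eqref{eq:def_j} of $j$ together with the fact that $\lambda(z)$ is a cycle by \eqref{eq:dellambdalambdadel}. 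On the other hand $j(\pi_!u)(z)=\langle \pi_!u,[z]\rangle$, so the two sides coincide as soon as we know that $\pi_!u\in H^{k-1-\dim F}(X;\Ul)$ is represented by the cocycle $\mu\circ\lambda$.

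This last identification is precisely the content of the calculation \eqref{eq:fiber_int_singular_cohomology} inside Remark~\ref{rem:fiber_int_cohomology}, which was written for $\Z$-coefficients but goes through verbatim for coefficients in any abelian group. The main obstacle is therefore to justify this coefficient-group extension: I would point out that the construction of the transfer map $\lambda$ from Remark~\ref{rem:lambda}, the Eilenberg-Zilber map, and Lemma~\ref{lem:Liftazeta} are all chain-level statements that do not refer to the coefficient group of the cochains being pulled back; in particular, each of the equalities in \eqref{eq:fiber_int_singular_cohomology} remains valid when $\mu$ is a $\Ul$-valued cocycle instead of a $\Z$-valued one. This routine verification completes the proof.
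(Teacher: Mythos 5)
Your proof is correct and follows essentially the same route as the paper's: the same three-way case split on $k$ versus $\dim F$, the torsion argument via $c\circ j$ landing in the torsion-free group $H^0(X;\Z)$ for $k=\dim F$, and for $k>\dim F$ the combination of \eqref{eq:pilambda} (with $\curv(j(u))=0$) and the cochain-level identification $\pi_![\mu]=[\mu\circ\lambda]$ from \eqref{eq:fiber_int_singular_cohomology}. Your explicit remark that the transfer-map identification of $\pi_!$ extends verbatim from $\Z$- to $\Ul$-coefficients is a point the paper uses tacitly, so if anything your write-up is slightly more careful on that step.
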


\begin{proof}
For $k < \dim F$ there is nothing to show since both fiber integration maps are trivial for dimensional reasons.
Let $k = \dim F$ and $u \in H^{\dim F-1}(E;\Ul)$.
Diagram~\eqref{eq:3x3diagram} shows that $c(j(u))$ is a torsion class. 
Thus $\widehat\pi_!j(u)=\pi_!c(j(u))=0$ since $\widehat H^0(X;\Z)=H^0(X;\Z)$ is torsion free.
On the other hand, we have \mbox{$\pi_!u \in H^{-1}(X,\Ul) = \{0\}$} and hence $j(\pi_!u)=0$.

Now let $k> \dim F$ and $u \in H^{k-1}(E;\Ul)$. 
Diagram \eqref{eq:3x3diagram} shows \mbox{$\curv(j(u)) =0$}. 
As explained in Remark~\ref{rem:fiber_int_cohomology}, fiber integration $\pi_!$ for singular cohomo\-logy is induced by pre-composition of cocycles with the transfer map \mbox{$\lambda:C_{k-1-\dim F}(X;\Z) \to C_{k-1}(E;\Z)$} constructed in Remark~\ref{rem:lambda}.  
Thus for any $z \in Z_{k-1-\dim F}(X;\Z)$ we have:
\begin{align*}
\widehat\pi_! j(u)(z) 
&\stackrel{\eqref{eq:pilambda}}{=}
j(u)(\lambda(z)) \cdot \exp \Big( 2 \pi i \int_{a(z)} \fint_F \underbrace{\curv(j(u))}_{=0} \Big) \\
&\stackrel{\eqref{eq:def_j}}{=} 
\langle u,[\lambda(z)] \rangle  \\
&\stackrel{\eqref{eq:fiber_int_singular_cohomology}}{=} 
\langle \pi_!u,[z] \rangle \\
&\stackrel{\eqref{eq:def_j}}{=} 
(j(\pi_!u))(z) . \qedhere
\end{align*}
\end{proof}

\begin{prop}[Orientation reversal]\label{prop:orient_reversal} \index{Proposition!orientation reversal}%
Let $E \xrightarrow{\pi} X$ be a fiber bundle with closed oriented fibers over a smooth space $X$.
Let $\overline{\pi}:\overline{E}\to X$ denote the bundle with fiber orientation reversed and $\widehat{\overline{\pi}}_!$ the corresponding fiber integration.
For every $h \in \widehat H^k(E;\Z)$ we have 
\[
\widehat{\overline{\pi}}_!h = -\widehat\pi_!h .
\]
\end{prop}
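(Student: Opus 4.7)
The plan is as follows. First handle the degenerate cases: for $k<\dim F$ both sides vanish trivially in $\widehat H^{k-\dim F}(X;\Z)=\{0\}$. For $k=\dim F$, Definition~\ref{def:fiber_int_diff_charact_construction} gives $\widehat\pi_!=\pi_!\circ c$ and $\widehat{\overline{\pi}}_!=\overline{\pi}_!\circ c$, so the claim reduces to $\overline{\pi}_!=-\pi_!$ on ordinary singular cohomology. By Remark~\ref{rem:fiber_int_cohomology} this follows from $\overline\lambda=-\lambda$ (modulo boundaries) where $\lambda$ is the transfer map on chains; the sign is immediate because reversing the fiber orientation reverses the orientation of the total space $g^*E$ of the pull-back bundle, thereby flipping every fundamental cycle used in the construction of $\lambda$.

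For the main case $k>\dim F$, I would apply the defining formula
\[
(\widehat\pi_!h)(z) = h([\PB_E\zeta(z)]_{\partial S_k}) \cdot \exp\Big(2\pi i \int_{a(z)} \fint_F \curv(h)\Big)
\]
and track how each factor transforms when $F$ is replaced by $\overline F$. Two elementary observations carry the argument. First, the orientation convention for pull-back bundles (horizontal basis followed by vertical basis) gives $\PB_{\overline{E}}\zeta(z) = \overline{\PB_E\zeta(z)}$ as geometric cycles, since reversing the fiber orientation reverses the orientation of $g^*E$ while leaving the underlying stratifold and the map $\widetilde g$ unchanged. Second, the standard identity $\fint_{\overline F} = -\fint_F$ for fiber integration of differential forms \cite[Ch.~VII]{GHV-I} directly flips the sign in the exponent of the correction factor.

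To convert the first observation into a statement usable with $h$, note that $\PB_E\zeta(z)$ is a closed geometric cycle in $\ZZ_{k-1}(E)$ (since both $M$ and $F$ are boundaryless), so its fundamental cycle is well defined in $Z_{k-1}(E;\Z)/\partial S_k(E;\Z)$; reversing the stratifold orientation negates this fundamental cycle, yielding
\[
[\PB_{\overline{E}}\zeta(z)]_{\partial S_k} = -[\PB_E\zeta(z)]_{\partial S_k}.
\]
Since $h$ is a homomorphism into $\Ul$, and since thin invariance (Remark~\ref{rem:thin_invariance}) lets us evaluate it on this quotient, this yields $h([\PB_{\overline{E}}\zeta(z)]_{\partial S_k}) = h([\PB_E\zeta(z)]_{\partial S_k})^{-1}$. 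Combining with $\fint_{\overline F}\curv(h)=-\fint_F\curv(h)$, formula \eqref{eq:def_fiber_int} gives $(\widehat{\overline{\pi}}_!h)(z) = (\widehat\pi_!h)(z)^{-1}$, which by the convention in Remark~\ref{rem:Hkadditive} is exactly the additive statement $\widehat{\overline{\pi}}_!h = -\widehat\pi_!h$.

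No serious obstacle is anticipated: every orientation sign is localized, and thin invariance absorbs any ambiguity in the choice of fundamental cycle. The only mildly delicate point is verifying $[\overline{\PB_E\zeta(z)}]_{\partial S_k} = -[\PB_E\zeta(z)]_{\partial S_k}$ at the level of $Z_{k-1}/\partial S_k$ rather than merely in homology, but this is immediate from the description of the fundamental cycle of a closed orientation-reversed stratifold as the negative of the original fundamental cycle.
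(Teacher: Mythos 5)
Your proposal is correct and follows essentially the same route as the paper's proof: the two key facts $\PB_{\overline{E}}\zeta(z)=\overline{\PB_E\zeta(z)}$ and $\fint_{\overline F}=-\fint_F$, combined with $[\overline{\PB_E\zeta(z)}]_{\partial S_k}=-[\PB_E\zeta(z)]_{\partial S_k}$ and the homomorphism property of $h$, are exactly the steps the paper uses. Your extra justification of $\overline\pi_!=-\pi_!$ on singular cohomology in the edge case $k=\dim F$ is a harmless elaboration of what the paper simply asserts.
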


\begin{proof}
There is nothing to show in case $k < \dim F$. 
For $k = \dim F$, we have \mbox{$\widehat{\overline{\pi}}_! = \overline{\pi}_! \circ c = -\pi_! \circ c = - \widehat\pi_!$}.

Now let $k > \dim F$.
Let $h \in \widehat H^k(E;\Z)$ and $z \in Z_{k-1-\dim F}(X;\Z)$.
Choose $\zeta(z) \in \ZZ_{k-1-\dim F}(X)$ and $a(z) \in C_{k-\dim F}(X;\Z)$ as in Definition~\ref{def:fiber_int_diff_charact_construction}. 
We have $\PB_{\overline{E}}(\zeta(z)) = \overline{\PBE(\zeta(z))}$ and $\fint_{\overline{F}} = -\fint_F$.
This yields
\begin{align*}
(\widehat{\overline{\pi}}_!h)(z)
&= h([\PB_{\overline{E}} \zeta(z)]_{\partial S_k}) \cdot \exp \Big( 2\pi i \int_{a(z)} \fint_{\overline{F}} \curv(h)  \Big) \\
&= h([\overline{\PBE \zeta(z)}]_{\partial S_k}) \cdot \exp \Big( -2\pi i \int_{a(z)} \fint_F \curv(h)  \Big) \\
&= h(-[\PBE \zeta(z)]_{\partial S_k}) \cdot \exp \Big( -2\pi i \int_{a(z)} \fint_F \curv(h)  \Big) \\
&= (\widehat\pi_!h(z))^{-1} \\
&= (-\widehat\pi_!h)(z) \,. \qedhere
\end{align*}
\end{proof}

\begin{example}
We consider the case $k=1$ and $\dim F=0$.
Then $h\in \widehat H^1(E;\Z)=C^\infty(E,\Ul)$ is a smooth $\Ul$-valued function on $E$ and $\pi:E\to X$ is a finite covering.
We orient the fibers such that each point is positively oriented.
It is easy to see that the function $\widehat \pi_!h \in \widehat H^1(X;\Z)=C^\infty(X,\Ul)$ is given by 
$$
\widehat \pi_!h (x) = \prod_{e\in\pi^{-1}(x)}h(e) .
$$
\index{fiber integration!for differential characters!$0$-dimensional fiber}%
\end{example}

\begin{example}
Again consider a finite covering $\pi:E\to X$, i.e., $\dim F=0$, but now $k=2$.
Let $P\to E$ be a $\Ul$-bundle with connection whose isomorphism class corresponds to a differential character $h\in\widehat H^2(X;\Z)$.
Here it is convenient to take for $P$ the Hermitian line bundle rather than the $\Ul$-principal bundle.
Then $\widehat \pi_!h$ is given by the bundle whose fibers over $x\in X$ is
$$
(\widehat\pi_!P)_x = \bigotimes_{e\in\pi^{-1}(x)}P_e .
$$
This bundle inherits a natural tensor product connection from $P$.
\index{fiber integration!for differential characters!$0$-dimensional fiber}%
\end{example}

\begin{example}
Now let $\pi:E\to X$ be a circle bundle with oriented fibers, hence $\dim F=1$.
The fiber integration map $\widehat \pi_!: \widehat H^2(E;\Z) \to \widehat H^1(X;\Z)$ can be described as follows:
Let $P\to E$ be a $\Ul$-bundle with connection.
For any $x\in X$ the holonomy of $P$ along the oriented fiber $E_x$ yields an element in $\Ul$.
In this way, we obtain a smooth function $X\to\Ul$.
\index{fiber integration!for differential characters!$1$-dimensional fiber}%
\end{example}

We show that fiber integration is functorial with respect to composition of fiber bundle projections, compare~\cite[p.~12]{BKS10}.

\begin{prop}[Functoriality of fiber integration] \index{Proposition!functoriality of fiber integration}%
Let $\kappa: N \to E$ and $\pi:E \to X$ be fiber bundles with compact oriented fibers $L$ and $F$, respectively.
Let $\pi \circ \kappa:N \to X$ be the composite fiber bundle with the composite orientation.
\index{orientation!composite $\sim$}%
Then we have
\begin{equation}\label{eq:fiber_int_funct}
\widehat{(\pi \circ \kappa)}_! = \widehat{\pi}_! \circ \widehat{\kappa}_! \,.  
\end{equation}
\index{functoriality!of fiber integration}%
\index{fiber integration!for differential characters!functoriality}%
\end{prop}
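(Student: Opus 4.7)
The cleanest strategy is to invoke the uniqueness theorem (Theorem~\ref{thm:fiber_int_unique}). Since fiber integration is characterized by the three axioms of Definition~\ref{def:fiber_int_diff_charact_axioms}, I would show that the composition
\[
\widehat{\pi}_! \circ \widehat{\kappa}_!: \widehat H^k(N;\Z) \to \widehat H^{k-\dim L-\dim F}(X;\Z)
\]
satisfies these axioms for the composite bundle $\pi \circ \kappa: N \to X$ (whose fiber has dimension $\dim L + \dim F$). Uniqueness then forces $\widehat{(\pi \circ \kappa)}_! = \widehat{\pi}_! \circ \widehat{\kappa}_!$.

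For naturality, one observes that pulling back a composite bundle factors: given a smooth $g: Y \to X$, first pull back $\pi$ along $g$ to obtain a bundle map $\widetilde g: g^*E \to E$, then pull back $\kappa$ along $\widetilde g$ to obtain $\widetilde{\widetilde g}: g^*N \to N$; the resulting composite is the pull-back of $\pi \circ \kappa$ along $g$. Two applications of \eqref{eq:fiber_int_natural}, first to $\widehat\kappa_!$ with respect to $\widetilde g$ and then to $\widehat\pi_!$ with respect to $g$, then yield naturality for $\widehat\pi_! \circ \widehat\kappa_!$. For compatibility with curvature, two applications of \eqref{eq:fiber_int_curvature} give $\curv(\widehat\pi_!\widehat\kappa_! h) = \fint_F \fint_L \curv(h)$, and one invokes the classical functoriality $\fint_F \circ \fint_L = \fint_{\pi \circ \kappa}$ of ordinary fiber integration of differential forms (with the composite orientation). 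For compatibility with topological trivializations of flat characters, one uses that $\fint_L$ sends closed forms to closed forms and applies \eqref{eq:fiber_int_iota} twice:
\[
\widehat\pi_!\widehat\kappa_! \iota(\eta)
= \widehat\pi_! \iota\bigl(\fint_L \eta\bigr)
= \iota\bigl(\fint_F \fint_L \eta\bigr)
= \iota\bigl(\fint_{\pi \circ \kappa} \eta\bigr).
\]

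The one delicate point is the orientation bookkeeping: one must verify that the ``composite orientation'' on the fibers of $\pi \circ \kappa$ stipulated in the proposition is the orientation consistent with iterated fiber integration of forms (so that $\fint_F \circ \fint_L = \fint_{\pi \circ \kappa}$) and with the iterated geometric pull-back as in \eqref{eq:PB_ass}. This is the main place where convention choices could go wrong, but it is a matter of convention rather than of substance, and the same convention is already in force in \eqref{eq:PB_ass}. The degenerate degree ranges ($k < \dim L$, or $\dim L \le k < \dim L + \dim F$) either vanish trivially on both sides for dimensional reasons, or else reduce to functoriality of the ordinary fiber integration $\pi_! \circ \kappa_! = (\pi \circ \kappa)_!$ on singular cohomology via the characteristic class (using \eqref{eq:cpi_commute}).
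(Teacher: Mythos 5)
Your proposal is correct, but it takes a genuinely different route from the paper. The paper proves \eqref{eq:fiber_int_funct} by direct computation: for $k>\dim F+\dim L$ it evaluates both sides on a cycle $z$ using the explicit formula \eqref{eq:def_fiber_int_2}, forms the iterated pull-back bundles $G^*N\to g^*E\to M$ over a representing stratifold $M$, uses that $\mathbf G^*h$ is topologically trivial on $G^*N$ for dimensional reasons, and then applies \eqref{eq:fiber_int_natural} and \eqref{eq:fiber_int_iota} to rewrite $(\mathbf G^*h)([G^*N])$ as $(G^*(\widehat\kappa_!h))([g^*E])$; the degenerate degrees are handled exactly as you indicate, via \eqref{eq:cpi_commute} and the Borel--Hirzebruch functoriality of $\pi_!$ on singular cohomology. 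You instead verify that $\widehat\pi_!\circ\widehat\kappa_!$ satisfies the three axioms of Definition~\ref{def:fiber_int_diff_charact_axioms} for the composite bundle (and, as needed for the uniqueness argument, for all its pull-backs, which factor as iterated pull-backs) and then invoke Theorem~\ref{thm:fiber_int_unique}. This is legitimate because the uniqueness proof forces the value \eqref{eq:fiber_int_unique} on any family satisfying naturality, \eqref{eq:fiber_int_curvature} and \eqref{eq:fiber_int_iota}, and your composite family does satisfy them: naturality by two applications of \eqref{eq:fiber_int_natural} together with the canonical identification $g^*N\cong\widetilde g^{\,*}N$, curvature by $\fint_F\circ\fint_L=\fint_Q$, and topological trivializations by two applications of \eqref{eq:fiber_int_iota} (which, by Remark~\ref{rem:shows_more}, holds even for non-closed forms). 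Your argument is shorter and reuses the axiomatic machinery; the paper's computation is more self-contained and exhibits the geometric mechanism (iterated pull-back of geometric cycles, consistent with \eqref{eq:PB_ass}) explicitly. The orientation issue you flag is indeed the only point of care, and it is resolved by the paper's convention that the composite orientation is the one for which \eqref{eq:PB_ass} and $\fint_F\circ\fint_L=\fint_Q$ hold.
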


\begin{proof}
Denote the fibers of $\pi \circ \kappa$ by $Q$.
The bundle projection $\kappa$ restricts to a fiber bundle $\kappa|_{Q}:Q \to F$ with fibers $L$.
 
For $k < \dim F + \dim L$, we have $\widehat{(\pi \circ \kappa)}_! \equiv 0$. 
Now let $h \in \widehat H^k(N;\Z)$. 
Then we have $\widehat\kappa_!h \in H^{k-\dim L}(E;\Z)$ and $k-\dim L < \dim F$.
Thus $\widehat\pi_!(\widehat\kappa_!h)=0$.

For $k = \dim F + \dim L$, we have:
$$
\widehat{(\pi\circ\kappa)}_!
=
(\pi \circ \kappa)_! \circ c 
=
\pi_! \circ \kappa_! \circ c
\stackrel{\eqref{eq:cpi_commute}}{=}
\pi \circ c \circ \widehat\kappa_!
=
\widehat\pi_! \circ \widehat\kappa_!.
$$
In the second equality we have used the functoriality of fiber integration for singular cohomology, see \cite[p.~484]{BH53}. 

Now let $k > \dim F + \dim L$. 
Let $h \in \widehat H^k(N;\Z)$ and $z \in Z_{k-\dim(F)-\dim(L)-1}(X;\Z)$.
Choose $\zeta(z) = [M\xrightarrow{g}X] \in \ZZ_{k-\dim(F)-\dim(L)-1}(X)$ and $a(z) \in C_{k-\dim(F)-\dim(L)}(X;\Z)$ as in Lemma~\ref{lem:Liftazeta}.
Then we have the pull-back bundles
\begin{equation*}
\xymatrix{
G^*N \ar[r]^{\mathbf G} \ar[d] & N \ar[d]^\kappa \\
g^*E \ar[r]^G \ar[d] & E \ar[d]^\pi \\
M \ar[r]^g & X
} 
\end{equation*}
which define the geometric cycles $\PB_{\pi}(\zeta(z)) = [g^*E\xrightarrow{G}E]$ and $\PB_{\kappa}(\PB_{\pi}(\zeta(z))) = \PB_{\pi \circ \kappa}(\zeta(z)) = [G^*N\xrightarrow{\mathbf G}N]$.
We pull back $h$ to the stratifold $G^*N$, where it is topologically trivial for dimensional reasons.
Thus we find a differential form $\chi \in \Omega^{k-1}(G^*N)$ such that $\mathbf G^*h = \iota(\chi)$.
By the compatibility conditions~\eqref{eq:fiber_int_natural} and \eqref{eq:fiber_int_iota}, we have $\widehat{\kappa}_! (\mathbf G^*h) = G^*(\widehat{\kappa}_!h) = \iota(\fint_L \chi)$.  
In particular, $(\mathbf G^*h)([G^*N]) = \iota(\chi)([G^*N]) = \iota(\fint_L \chi)([g^*E]) = (\widehat\kappa_!(\mathbf G^*h))([g^*E])$.

This yields:
\begin{align*}
(\widehat{\pi \circ \kappa}_! h)(z)
&\ist{\eqref{eq:def_fiber_int_2}}
(\mathbf G^*h)([G^*N]) \cdot \exp \Big( 2 \pi i \int_{a(z)} \fint_{Q} \curv(h) \Big) \\
&= 
(G^*(\widehat\kappa_!h))([g^*E]) \cdot \exp \Big( 2 \pi i \int_{a(z)} \fint_F \curv(\widehat{\kappa}_! h) \Big) \\
&\ist{\eqref{eq:def_fiber_int_2}}
\big(\widehat\pi_!(\widehat\kappa_!h)\big)(z) . \qedhere
\end{align*}
\end{proof}

\section{Fiber integration for fibers with boundary} \label{subsec:fiber_int_bound}
Let 
$(F,\partial F) \hookrightarrow (E,\partial E) 
\xrightarrow{(\pi^E,\pi^{\partial E})} X$ 
be a fiber bundle bundle whose fibers are compact oriented manifolds with boundary.
For any differential form $\omega \in \Omega^*(E)$ on the total space $E$ we have the fiberwise Stokes theorem \cite[p.~311]{GHV-I}:
\begin{equation}\label{eq:stokes_fiber}
\fint_F d \omega = d \fint_F \omega + (-1)^{\deg\omega + \dim \partial F} \fint_{\partial F} \omega \,.
\end{equation}
\index{fiber integration!for differential forms!for fibers with boundary}%
In particular, if $\omega\in \Omega^k(E)$ is a closed form, then $\fint_{\partial F}\omega \in \Omega^{k-\im \partial F}(X)$ is exact.
Thus fiber integration of differential forms in the bundle $\pi^{\partial E}:\partial E \to X$ induces the trivial map on de Rham cohomology.
The same holds true for fiber integration on singular cohomology.

Denote by $\widehat\pi^{\partial E}_!: \widehat H^k(\partial E;\Z) \to \widehat H^{k-\dim\partial F}(X;\Z)$\index{+PidelE@$\widehat\pi^{\partial E}_\ausruf$, fiber integration for fibers with boundary} the fiber integration map for the bundle 
$\partial F \hookrightarrow \partial E \xrightarrow{\pi^{\partial E}} X$ as constructed in the previous section.
In the following, we do not distinguish in notation between a differential character $h \in \widehat H^k(E;\Z)$ and its pull-back to $\partial E$.
Applying the fiber integration map $\widehat\pi^{\partial E}_!$ to $h \in \widehat H^k(E;\Z)$ yields the following (compare also \cite[p.~363]{HS05} and \cite[p.~305]{F02}):

\begin{prop}[Fiber integration for fibers that bound]\label{prop:fiber_int_rand} \index{Proposition!fiber integration for fibers that bound}%
Let $(F,\partial F) \hookrightarrow (E,\partial E) 
\xrightarrow{(\pi^E,\pi^{\partial E})} X$ 
be a fiber bundle with compact oriented fibers with boundary over $X$ and let $h \in \widehat H^k(E;\Z)$ be a differential character.
\index{fiber integration!for differential characters!fibers that bound}%
Then $\widehat\pi^{\partial E}_!h \in \widehat H^{k-\dim\partial F}(X;\Z)$ is topologically trivial. 
A topological trivialization is given by:
\begin{equation}\label{eq:fiber_int_bound2}
\widehat\pi^{\partial E}_!h
=
\iota\Big( (-1)^{k-\dim F}\fint_F \curv(h) \Big) \,.
\end{equation}
In particular, for $k = \dim \partial F$, we have $\widehat\pi^{\partial E}_!h=0 \in \widehat H^0(X;\Z)$.
\end{prop}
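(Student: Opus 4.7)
The strategy is to evaluate $\widehat\pi^{\partial E}_!h$ on an arbitrary cycle $z \in Z_{k-\dim\partial F -1}(X;\Z) = Z_{k-\dim F}(X;\Z)$ via the explicit formula \eqref{eq:def_fiber_int} and match the result to $\iota((-1)^{k-\dim F}\fint_F \curv(h))(z)$. I will assume $k > \dim\partial F$ throughout (the range $k < \dim\partial F$ is vacuous since $\widehat H^{k-\dim\partial F}(X;\Z)=\{0\}$, and the case $k=\dim\partial F$ will be treated separately at the end).

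First, choose $\zeta(z) \in \ZZ_{k-\dim F}(X)$ and $a(z) \in C_{k-\dim F +1}(X;\Z)$ as in Lemma~\ref{lem:Liftazeta}, so that $[z-\partial a(z)]_{\partial S_{k-\dim F +1}} = [\zeta(z)]_{\partial S_{k-\dim F +1}}$. Applied to $\pi^{\partial E}$, formula \eqref{eq:def_fiber_int} gives
\[
(\widehat\pi^{\partial E}_!h)(z)
= h\bigl([\PB_{\partial E}\zeta(z)]_{\partial S_k}\bigr) \cdot \exp\Bigl(2\pi i \int_{a(z)}\fint_{\partial F}\curv(h)\Bigr).
\]
The key input is the boundary identity \eqref{axiomD:fiber_int_bound}, which in our degree asserts $\partial\PB_E\zeta(z) = (-1)^{k-\dim F}\PB_{\partial E}\zeta(z)$ in the relevant quotient group (the overline reverses orientation, producing the sign on the singular chain level).

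Next, apply the defining property \eqref{eq:def_diff_charact_2} of a differential character to any representing chain of $[\PB_E\zeta(z)]_{S_k} \in C_k(E;\Z)/S_k(E;\Z)$: this yields
\[
h\bigl([\PB_{\partial E}\zeta(z)]_{\partial S_k}\bigr)^{(-1)^{k-\dim F}}
= \exp\Bigl(2\pi i \int_{[\PB_E\zeta(z)]_{S_k}}\curv(h)\Bigr).
\]
By the compatibility \eqref{axiomD:fiber_int_bound_forms} of $\PB_E$ with fiber integration of forms, and then by \eqref{eq:zazeta}, the exponent becomes $\int_z \fint_F\curv(h) - \int_{\partial a(z)}\fint_F\curv(h)$. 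Apply ordinary Stokes and then the fiberwise Stokes formula \eqref{eq:stokes_fiber} to $\curv(h)$, using $d\curv(h)=0$, to get $d\fint_F\curv(h) = (-1)^{k+\dim F}\fint_{\partial F}\curv(h)$; this converts the second integral into $(-1)^{k+\dim F}\int_{a(z)}\fint_{\partial F}\curv(h)$.

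Solving for $h([\PB_{\partial E}\zeta(z)]_{\partial S_k})$ and substituting back into the formula for $(\widehat\pi^{\partial E}_!h)(z)$, the correction term $\int_{a(z)}\fint_{\partial F}\curv(h)$ cancels exactly (because $(-1)^{k-\dim F}\cdot (-1)^{k+\dim F}=1$), leaving
\[
(\widehat\pi^{\partial E}_!h)(z)
= \exp\Bigl(2\pi i\,(-1)^{k-\dim F}\int_z \fint_F\curv(h)\Bigr)
= \iota\bigl((-1)^{k-\dim F}\fint_F\curv(h)\bigr)(z),
\]
which proves \eqref{eq:fiber_int_bound2}. For the case $k=\dim\partial F$, the displayed form lies in $\Omega^{-1}(X)=\{0\}$, so the claim becomes $\widehat\pi^{\partial E}_!h = \pi^{\partial E}_!c(h) = 0$ in $H^0(X;\Z)$; this holds because the homology transfer along the boundary bundle factors through the fundamental class of $\partial F$ in $H_{\dim\partial F}(F;\Z)$, which vanishes since $\partial F$ bounds $F$. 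The main obstacle in this argument is bookkeeping: aligning the three independent sources of signs (the geometric sign in \eqref{axiomD:fiber_int_bound}, the fiberwise Stokes sign in \eqref{eq:stokes_fiber}, and the orientation convention on $g^*E$) so that they cancel as required.
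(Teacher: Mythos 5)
Your proposal is correct and follows essentially the same route as the paper's proof: the paper phrases the key step via transfer maps arranged so that $\lambda^{\partial E}=(-1)^{\deg z}\,\partial\circ\lambda^{E}$, which is just the singular-chain incarnation of your use of \eqref{axiomD:fiber_int_bound} relating $\PB_{\partial E}\zeta(z)$ to $\partial\PB_E\zeta(z)$, and the remaining steps (defining property of characters, \eqref{axiomD:fiber_int_bound_forms}, fiberwise Stokes, cancellation of the $a(z)$ term) coincide. Your sign bookkeeping checks out, and your homological argument for the $k=\dim\partial F$ case is a harmless variant of the paper's cocycle computation $[\mu^{\tilde h}\circ\partial\circ\lambda^E]=0$.
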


\begin{proof}
As explained in Remark~\ref{rem:lambda}, we construct transfer maps $\lambda^E$ and $\lambda^{\partial E}$ for the bundles $\pi^E:E \to X$ and $\pi^{\partial E}:\partial E \to X$, respectively.
By \eqref{axiomD:fiber_int_bound}, we have 
$$
\PB_{\partial E}(\zeta(z)) 
= \begin{cases}
  \partial(\PB_E\zeta(z)) & \; \mbox{for $z \in Z_n(X;\Z)$, $n$ even,}   \\
  \overline{\partial(\PB_E\zeta(z))} & \; \mbox{for $z \in Z_n(X;\Z)$, $n$ odd.}   
  \end{cases}
$$ 
Thus we can arrange the choices in the construction of the transfer maps $\lambda^E$ and $\lambda^{\partial E}$ in such a way that we have: 
\begin{equation}\label{eq:lambdadelE}
\lambda^{\partial E} = (-1)^n \cdot \partial \circ \lambda^E:Z_n(X;\Z) \to B_{n+\dim F}(E;\Z) \,.
\end{equation}
Now we prove the claim:

For $k < \dim \partial F$, there is nothing to show.
Let $k= \dim \partial F$ and $h \in H^{\dim\partial F}(E;\Z)$.
Let $\tilde h$ be a real lift of $h$ and $\mu^{\tilde h} \in C^{\dim F}(E;\Z)$ the corresponding cocycle representing $c(h)$. 
Since $Z_0(X;\Z) = C_0(X;\Z)$, we may use \eqref{eq:lambdadelE} and \eqref{eq:fiber_int_singular_cohomology} to conclude:
$$
\widehat\pi^{\partial E}h
=
\pi^{\partial E} c(h)
\stackrel{\eqref{eq:fiber_int_singular_cohomology}}{=}
[\mu^{\tilde h} \circ \lambda^{\partial E}]
\stackrel{\eqref{eq:lambdadelE}}{=}
[\mu^{\tilde h} \circ \partial \circ \lambda^E]
=
[0].
$$

Now let $k > \dim \partial F$.
Let $h\in\widehat H^{k}(E;\Z)$ and $z \in Z_{k-1-\dim\partial F}(X;\Z)$.
Choose $\zeta(z) \in \CC_{k-1-\dim\partial F}(X)$ and $a(z) \in C_{k-\dim\partial F}(X;\Z)$ such that $[z - \partial a(z)]_{\partial S_{k-\dim\partial F}} = [\zeta(z)]_{\partial S_{k-\dim\partial F}}$.
Then we compute:
\begin{align*}
\widehat\pi^{\partial E}_!h(z)
&\ist{\eqref{eq:pilambda}}\,
h(\lambda^{\partial E}(z)) \cdot \exp \Big( 2\pi i \int_{a(z)} \fint_{\partial F} \curv(h) \Big) \\
&\ist{\eqref{eq:lambdadelE},\eqref{eq:stokes_fiber}}\,\,\,\,\,\,\,
h((-1)^{\deg(z)} \partial \lambda^E(z)) \cdot \exp \Big( 2\pi i \int_{a(z)} d\fint_{F} \curv(h) \Big)  \\ 
&\ist{\eqref{eq:stokes_fiber}}\,
\exp \Big[ 2\pi i (-1)^{k-\dim F} \Big( \int_{\lambda^E(z)} \curv(h) + \int_{\partial a(z)} \fint_F \curv(h) \Big) \Big] \\
&\ist{\eqref{eq:fint_lambda_cycle}}\,
\exp \Big[ 2\pi i (-1)^{k-\dim F} \Big( \int_{[\zeta(z)]_{\partial S_{k-\dim \partial F}}}\fint_F  \curv(h) + \int_{\partial a(z)} \fint_F \curv(h) \Big) \Big] \\
&\ist{\eqref{eq:zazeta}}\,
\exp \Big( 2\pi i \int_z (-1)^{k-\dim F} \fint_F \curv(h) \Big) \\
&=
\iota\Big( (-1)^{k-\dim F} \fint_F \curv(h) \Big)(z) \,. \qedhere
\end{align*}
\end{proof}

\begin{remark}
Proposition~\ref{prop:fiber_int_rand} says that $\widehat\pi^{\partial E}_!(h)$ is topologically trivial.
However, $\widehat\pi^{\partial E}_!(h)$ is in general not flat, since 
$$
\curv(\widehat \pi_!^{\partial E} h) 
= 
\fint_{\partial F} \curv(h) 
\stackrel{\eqref{eq:stokes_fiber}}{=} 
(-1)^{k-\dim F} d \fint_F\curv (h) \,,
$$
is an exact form, but need not be $0$.
\end{remark}

As a special case of fiber integration for fibers with boundary, we obtain the well-known homotopy formula:

\begin{example}
Differential cohomology is not a generalized cohomology theory, in particular, it is not homotopy invariant.
Let $f:[0,1] \times X \to Y$ be a homotopy between smooth maps $f_0,f_1: X \to Y$ and $h \in \widehat H^k(Y;\Z)$ a differential character.
Then we have the well-known homotopy formula \cite[Prop.~3.28]{Bu12}:
\index{differential characters!homotopy formula}%
\index{homotopy formula}%
$$
f_1^*h - f_0^*h 
=
\iota \Big( \int_0^1 f_s^*\curv(h) ds \,\Big) \,.
$$
This is a special case of \eqref{eq:fiber_int_bound2} for the trivial bundle $X \times [0,1] \to X$: for the left hand side we have $f_1^*h - f_0^*h = \widehat \pi^{\partial E}_! f^*h$.
By the orientation conventions, we obtain for the right hand side
$\fint_F f^*\curv(h) = (-1)^{k-1} \int_0^1 f_s^*\curv(h) ds$ with $k=1$.  
\end{example}

\begin{example}\label{ex:FIk1f1}
Let the fibers of $\pi:E\to X$ be diffeomorphic to compact intervals and carry an orientation.
Hence $\dim F=1$.
\index{fiber integration!for differential characters!$1$-dimensional fiber}%
The boundary of $E$ decomposes as $\partial E = \partial^+E \sqcup \partial^-E$ where $\partial^+E$ consists of the endpoints of the oriented fibers and $\partial^-E$ of the initial points.
The restriction of $\pi$ to $\partial^+E$ is a diffeomorphism whose inverse we denote by $j^+:=(\pi|_{\partial^+E})^{-1}:X\to \partial^+E$, and similarly for $j^-$.

We consider the case $k=1$.
Then for any $h\in \widehat H^1(E;\Z)=C^\infty(E,\Ul)$ we have $\widehat \pi_!^{\partial E}h \in C^\infty(X,\Ul)$ where
$$
\widehat \pi_!^{\partial E}h = 
(h \circ j^+)\cdot(h \circ j^-)^{-1} .
$$
The exponent $-1$ in this formula is due to the fact that the points in $\partial^-E$ inherit a negative orientation.

Recall from Example~\ref{ex:H1} that $\curv(h)=d\tilde h$ where $\tilde h$ is a local lift of $h$.
Integration along the fiber $E_x$ over $x\in X$ yields $\rho(x)=\tilde h(j^+(x))-\tilde h(j^-(x))$.
The ambiguity in the choice of $\tilde h$ cancels and we obtain a global smooth function $\rho:X\to\R$.
Obviously, $\rho$ is a lift of $\widehat \pi_!^{\partial E}h$.
\end{example}

\begin{example}\label{ex:FIk2f1}
Let $\pi:E\to X$ be as in Example~\ref{ex:FIk1f1}.
\index{fiber integration!for differential characters!$0$-dimensional fiber}%
Now we consider the case $k=2$.
Let $P\to E$ be a $\Ul$-bundle with connection $\nabla$ corresponding to $h\in\widehat H^2(X;\Z)$.
Fiber integration along $\partial F$ yields the $\Ul$-bundle with connection over $X$ whose fiber over $x$ is 
$$
(\widehat\pi_!^{\partial E} P)_x = P_{j^+(x)}\otimes P_{j^-(x)}^* \,.
$$
Fiber integration of $\curv(h)$ yields the $1$-form $\rho$ on $X$.
Integrating $\rho$ along a closed curve $c$ in $X$ yields
$$
\exp\Big(2\pi i\int_c\rho\Big)
= \exp\Big(2\pi i\int_{\pi^{-1}(c)}\curv(h)\Big)
= h(j^-_*c-j^+_*c)
= \widehat\pi_!h(c)^{-1} \,.
$$
As explained in Example~\ref{ex:U1Buendel}, the $1$-form $\varrho$ corresponds to the parallel transport in $(P,\nabla) \to E$ along $F$.
\end{example}

\section{Fiber products and the up-down formula}\label{subsec:up-down}
In this section we prove that the fiber integration in a fiber product is the external product of the fiber integrations.
The up-down formula is an immediate consequence. 

Let $E \to X$ and $E' \to X'$ be fiber bundles over smooth spaces $X$ and $X'$ with compact oriented fibers $F$ and $F'$, respectively.
We consider the fiber product \mbox{$E \times E' \xrightarrow{\pi^E \times \pi^{E'}} X \times X'$} as the composition of fiber bundles \mbox{$E \times E' \xrightarrow{\id \times \pi^{E'}} E \times X' \xrightarrow{\pi^E \times \id} X \times X'$}.
Fiber integration on singular cohomology commutes up to sign with the external product. 
Explicitly, for singular cohomology classes $u \in H^k(E;\Z)$ and $u'\in H^{k'}(E';\Z)$, we have:
\begin{align}
\pi_!^{E\times E'}(u\times u')
&= (\pi^E \times \id)_!((\id \times \pi^{E'})_!(u \times u')) \notag \\
&= (\pi^E \times \id)_!(u \times \pi^{E'}_!u') \notag \\
&= (-1)^{(k'-\dim F')\dim F}\pi^E_!u \times \pi^{E'}_!u' \, .\label{eq:pi_prod} 
\end{align}
This follows from \cite[p.~585]{C53} and the functoriality of fiber integration for singular cohomology, proved in \cite[p.~484]{BH53}.

Similarly, for differential forms $\omega \in \Omega^k(E)$ and $\omega' \in \Omega^{k'}(E')$, we have:
\begin{equation}
\fint_{F \times F'} \omega \times \omega' 
=(-1)^{(k'-\dim F')\dim F} \Big(\fint_F  \omega\Big) \times \Big( \fint_{F'} \omega' \Big) \, .
\label{eq:fint_prod}
\end{equation}
The analogous result for differential characters is the following:

\begin{thm}[Fiber integration on fiber products] \index{Theorem!fiber integration on fiber products}%
Let $E \to X$ and $E' \to X'$ be fiber bundles over smooth spaces $X$ and $X'$ with closed oriented fibers $F$ and $F'$, respectively.
Let $h \in \widehat H^k(E;\Z)$ and $h' \in \widehat H^{k'}(E';\Z)$.
Then we have:
\index{fiber integration!for differential characters!fiber products}%
\begin{equation}\label{eq:ext_prod_commute}
\widehat\pi_!^{E \times E'}(h \times h') 
= 
(-1)^{(k'-\dim F')\dim F}\cdot\widehat\pi_!^E h \times \widehat\pi_!^{E'} h' . 
\end{equation}
\end{thm}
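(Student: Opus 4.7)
The plan is to verify \eqref{eq:ext_prod_commute} by evaluating both sides on a cycle $z\in Z_{k+k'-\dim F-\dim F'-1}(X\times X';\Z)$ and using the explicit formulas from Definition~\ref{def:fiber_int_diff_charact_construction} and Theorem~\ref{thm:ext_prod_BB}, together with the key compatibilities \eqref{eq:PB_prod} for pull-back of geometric chains under cross products and \eqref{eq:fint_prod} for fiber integration of forms. When $k+k'\le\dim F+\dim F'$ the statement reduces to singular cohomology, where it becomes the classical relation \eqref{eq:pi_prod}, whose sign is already the claimed one.

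For $k+k'>\dim F+\dim F'$ I would decompose $z=K\circ S(z)+(z-K\circ S(z))$ via the K\"unneth splitting used in the proof of Theorem~\ref{thm:ext_prod_BB}. On the torsion summand $z-K\circ S(z)$ the values of both sides are determined by $\curv$ and $c$ alone via Remark~\ref{rem:torsion_cycles}, so equality follows from \eqref{eq:fint_prod} together with the cohomological identity \eqref{eq:pi_prod} applied to $c(h\times h')=c(h)\times c(h')$. For the principal summand, write $S(z)=\sum_m y_{i(m)}^m\otimes (y')_{j(m)}^m$ and arrange, as in the proof of Theorem~\ref{thm:ext_prod_BB}, that $\zeta^{X\times X'}$ factors through the cross product of $\zeta^X$ and $\zeta^{X'}$ chosen by Lemma~\ref{lem:Liftazeta}. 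Applying \eqref{eq:PB_prod} then converts $\PB_{E\times E'}$ on a product geometric cycle into a product $\PBE\times\PB_{E'}$ weighted by the sign $(-1)^{j(m)\dim F}$. Expanding the left-hand side of \eqref{eq:ext_prod_commute} via Definition~\ref{def:fiber_int_diff_charact_construction} and then Theorem~\ref{thm:ext_prod_BB} applied to $h\times h'$, and expanding the right-hand side via Theorem~\ref{thm:ext_prod_BB} applied to $\widehat\pi_!^E h\times\widehat\pi_!^{E'}h'$ with Definition~\ref{def:fiber_int_diff_charact_construction} substituted for the factors, I would then compare contributions bi-degree by bi-degree. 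By Lemma~\ref{lem:x_M_M'}, only the bi-degrees $(i,j)=(k-\dim F-1,k'-\dim F')$ and $(k-\dim F,k'-\dim F'-1)$ contribute nontrivially; the pointwise fiber-integration formula of Definition~\ref{def:fiber_int_diff_charact_construction} together with compatibility of $\widehat\pi_!$ with $c$ identifies the character evaluations on the factors, and the correction term $\int_{a(z)}\fint_{F\times F'}\curv(h\times h')$ matches via \eqref{eq:fint_prod}.

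The main obstacle is the sign bookkeeping. In the first bi-degree the sign from \eqref{eq:PB_prod} is already $(-1)^{(k'-\dim F')\dim F}$, matching the claimed overall sign directly. In the second bi-degree \eqref{eq:PB_prod} produces only $(-1)^{(k'-\dim F'-1)\dim F}$, which must be reconciled with the extra factor $(-1)^k$ coming from the second case of Lemma~\ref{lem:x_M_M'} applied on $E\times E'$ (where the characters have degrees $k,k'$) and the factor $(-1)^{k-\dim F}$ from the same lemma applied on $X\times X'$ (where the characters $\widehat\pi_!^E h,\widehat\pi_!^{E'}h'$ have degrees $k-\dim F,k'-\dim F'$); the parity identity $(k'-\dim F'-1)\dim F+k\equiv(k-\dim F)+(k'-\dim F')\dim F\pmod 2$ then yields the uniform sign $(-1)^{(k'-\dim F')\dim F}$, and \eqref{eq:fint_prod} contributes the same sign on the curvature correction term, thereby completing the comparison.
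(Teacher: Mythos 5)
Your proposal follows essentially the same route as the paper's proof: the Künneth splitting $z = K\circ S(z) + (z - K\circ S(z))$, disposal of the torsion summand via curvature and characteristic class together with \eqref{eq:fint_prod} and \eqref{eq:pi_prod}, and a bi-degree-by-bi-degree comparison of the principal summand using transfer maps compatible with cross products, \eqref{eq:PB_prod}, Lemma~\ref{lem:x_M_M'} and Theorem~\ref{thm:ext_prod_BB}. Your sign identity $(k'-\dim F'-1)\dim F+k\equiv(k-\dim F)+(k'-\dim F')\dim F\pmod 2$ is exactly the reconciliation the argument requires, so the proof is correct.
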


\begin{proof}
Conceptually, the proof is just a computation using the explicit formulas we derived for fiber integration and external product. 
The crucial point is the construction transfer maps commuting with external products.

a) 
We compute the curvature of the differential characters $\widehat\pi_!^{E \times E'}(h \times h')$ and $(\widehat\pi_!^Eh \times \widehat\pi_!^{E'}h')$:
\begin{align}
\curv(\widehat\pi^{E \times E'}_!(h \times h')) 
&\stackrel{\eqref{eq:fiber_int_curvature}}{=} 
\fint_{F \times F'}\curv(h \times h') \notag \\
&\stackrel{\eqref{eq:ext_curv}}{=} 
\fint_{F \times F'} \curv(h) \times \curv(h') \notag \\
&\stackrel{\eqref{eq:fint_prod}}{=} 
(-1)^{(k'-\dim F')\dim F}\cdot\Big(\fint_F \curv(h)\Big) \times \Big(\fint_{F'} \curv(h')\Big) \notag \\
&\stackrel{\eqref{eq:fiber_int_curvature}}{=}
 (-1)^{(k'-\dim F')\dim F}\cdot\curv(\widehat\pi^E_!h) \times \curv(\widehat\pi^{E'}_!h') \notag \\
&\stackrel{\eqref{eq:ext_curv}}{=} 
(-1)^{(k'-\dim F')\dim F}\cdot\curv(\widehat\pi^E_!h \times \widehat\pi^{E'}_!h') \, .  \label{eq:curv_pi_x}
\end{align}
Similarly, we find for the characteristic class:
\begin{align}
c(\widehat\pi^{E \times E'}_!(h \times h')) 
&\stackrel{\eqref{eq:cpi_commute}}{=} 
\pi^{E \times E'}_!(c(h \times h')) \notag \\
&\stackrel{\eqref{eq:ext_c}}{=} 
\pi^{E \times E'}_!(c(h) \times c(h')) \notag \\
&\stackrel{\eqref{eq:pi_prod}}{=} 
(-1)^{(k'-\dim F')\dim F}\cdot\pi^E_!(c(h)) \times \pi^{E'}_!(c(h')) \notag \\
&\stackrel{\eqref{eq:cpi_commute}}{=}
(-1)^{(k'-\dim F')\dim F}\cdot c(\widehat\pi^E_!h) \times c(\widehat\pi^{E'}_!h') \notag \\
&\stackrel{\eqref{eq:ext_c}}{=} 
(-1)^{(k'-\dim F')\dim F}\cdot c(\widehat\pi^E_!h \times \widehat\pi^{E'}_!h'). \label{eq:c_pi_x}
\end{align}
Thus the differential characters $\widehat\pi_!^{E \times E'}(h \times h')$ and $(\widehat\pi_!^Eh \times \widehat\pi_!^{E'}h')$ have the same curvature and characteristic class.
By Remark~\ref{rem:torsion_cycles} this implies that they coincide on cycles $z \in Z_{k+k'-\dim F\times F'-1}(X \times X';\Z)$ that represent torsion classes.

b)
Let $z \in Z_{k+k'-\dim F\times F'-1}(X \times X';\Z)$ be a cycle.
As in Section~\ref{subsec:ring}, we choose a splitting $S$ of the cycles in the K\"unneth sequence.
Composing the homomorphism $\zeta^X$ with the pull-back operation $\PBE$, we construct a transfer map $\lambda^X:Z_{*-\dim F}(X;\Z) \to Z_*(E;\Z)$ as in Remark~\ref{rem:lambda}, and similarly for $\lambda^{X'}$.
We use the splitting to extend $\lambda^{X} \otimes \lambda^{X'}$ to a transfer map $\lambda^{X \times X'}:Z_{*-\dim F\times F'}(X\times X';\Z) \to Z_*(E \times E';\Z)$ such that the following diagram is graded commutative:
\begin{equation}\label{eq:diag_PB_xa}
\xymatrix{
Z_*(E;\Z) \otimes Z_*(E';\Z) \ar[r]^(0.58)\times \ar[d] & Z_*(E \times E';\Z) \ar[d] \\
\frac{Z_*(E;\Z)}{\partial S_{*+1}(E;\Z)} \otimes \frac{Z_*(E';\Z)}{\partial S_{*+1}(E';\Z)} \ar[r]^(0.58)\times & \frac{Z_*(E\times E';\Z)}{\partial S_{*+1}(E\times E';\Z)} \\
\ZZ_*(E) \otimes \ZZ_*(E') \ar[r]^(0.58)\times \ar[u]^{\psi^E_* \otimes \psi^{E'}_*} & \ZZ_*(E \times E') \ar[u]_{\psi^{E\times E'}_*} \\
\ZZ_{*-\dim F}(X) \otimes \ZZ_{*-\dim F'}(X') \ar[u]^{\PBE \otimes \PB_{E'}} \ar[r]^(0.58)\times & \ZZ_{*-\dim F \times F'}(X \times X') \ar[u]_{\PB_{E \times E'}} \\
Z_{*-\dim F}(X;\Z) \otimes Z_{*-\dim F'}(X';\Z) \ar@<3pt>[r]^(0.58)K \ar[u]^{\zeta^X \otimes \zeta^{X'}} \ar`l`[uuuu]^{\lambda^X \otimes \lambda^{X'}}[uuuu]
& Z_{*-\dim F \times F'}(X \times X';\Z) \ar@{-->}[l]^(0.42)S \ar[u]_{\zeta^{X \times X'}} \ar`r`[uuuu]_{\lambda^{X\times X'}}[uuuu]
}
\end{equation}
The graded commutativity is caused by the orientation conventions.
As in \eqref{eq:PB_prod}, we have $\PB_{E \times E'}(\zeta_i \times \zeta'_j) = (-1)^{j \cdot \dim F}\PB_E (\zeta_i) \times \PB_{E'}(\zeta'_j)$ for $\zeta_i \in \ZZ_i(X)$ and $\zeta'_j \in \ZZ_j(X')$.
Consequently, $\lambda^{X \times X'}(y_i \times y'_j) = (-1)^{j \cdot \dim F} \lambda^X(y_i) \times \lambda^{X'}(y'_j)$.

Now write $z = K \circ S(z) + (z-K \circ S(z))$.
By the K\"unneth sequence, the cycle $z-K\circ S(z)$ represents a torsion class.
Thus by part a) the differential characters $\widehat\pi_!^{E \times E'}(h \times h')$ and $(\widehat\pi_!^Eh \times \widehat\pi_!^{E'}h')$ coincide on $z-K\circ S(z)$.
Hence it suffices to evaluate them on $K\circ S(z)$.
By \eqref{eq:pilambda}, we have:
\begin{align}
\widehat\pi^{E \times E'}_!(h \times h')(K \circ S(z))
&= 
(h \times h')(\lambda^{X\times X'} (K\circ S(z))) \notag \\
& 
\qquad \cdot \exp \Big( 2\pi i \int_{a(K\circ S(z))} \fint_{F\times F'} \curv(h \times h') \Big) \,.
\label{eq:updown1a}
\end{align}
As in \eqref{eq:Zerl}, we write $S(z) = \sum_{i+j=k+k'-\dim F\times F' -1} \sum_m y_i^m \otimes {y'}_j^m$.
As in the proof of Theorem~\ref{thm:ext_prod_BB}, we write $\zeta^X(y^m_i) = [M^m_i\xrightarrow{g^m_i}X]$ and $\zeta^{X'}({y'}^m_j) = [{M'}^m_j\xrightarrow{{g'}^m_j}X]$.
Then we have $\PBE(\zeta^X(y^m_i)) = [(g^m_i)^*E\xrightarrow{G^m_i}E]$ and $\PB_{E'}(\zeta^{X'}({y'}^m_j)) = [({g'}^m_j)^*E'\xrightarrow{{G'}^m_i}E']$.
Lemma~\ref{lem:x_M_M'} applied to the product stratifolds $({g_i^m})^*E \times ({g'}^m_j)^*E'$ yields 
\begin{align}
(h \times h')(\lambda^{X\times X'}(y^m_i \times {y'}^m_j))
&\ist{\eqref{eq:diag_PB_xa}}
(h \times h')((-1)^{j \cdot \dim F}\lambda^X(y^m_i) \times \lambda^{X'}({y'}^m_j)) \notag \\
&=
\begin{cases}
h(\lambda^X(y^m_{k-\dim F -1}))^{(-1)^{(k'-\dim F')\dim F}\langle c(h'),\lambda^{X'}({y'}^m_{k'-\dim F'}\rangle} & \\
 \qquad \qquad \quad \mbox{for $(i,j)=(k-\dim F -1,k'-\dim F')$} & \\
h(\lambda^{X'}({y'}^m_{k'-\dim F' -1}))^{(-1)^{(k'-\dim F'-1)\dim F}\langle c(h),\lambda^X(y^m_{k-\dim F}\rangle} & \\ 
 \qquad \qquad \quad \mbox{for $(i,j)=(k-\dim F,k'-\dim F'-1)$} & \\
1 \qquad \quad \quad \;\; \mbox{otherwise} & \,. 
\end{cases}
\label{eq:updown3}
\end{align}
Inserting this into \eqref{eq:updown1a} we obtain:
\allowdisplaybreaks{
\begin{align*}
\widehat\pi_!^{E \times E'}(h &\times h')(K\circ S(z)) 
\cdot \exp \Big( -2\pi i \int_{a(K\circ S(z))} \fint_{F\times F'} \curv(h \times h') \Big) \\
&=
(h \times h')(\lambda^{X \times X'}(K \circ S(z))) \\
&=
(h \times h')(\lambda^{X\times X'}(\sum_{i+j=k+k'-\dim F\times F' -1} \sum_m y^m_i \times {y'}^m_j)) \\
&\ist{\eqref{eq:updown3}}\,\,
\prod_m \left[ h(\lambda^X(y^m_{k-\dim F -1}))^{(-1)^{(k'-\dim F')\dim F}\langle c(h'),\lambda^{X'}({y'}^m_{k'-\dim F'})\rangle}  \right. \\
&
\qquad \qquad \left. h(\lambda^{X'}({y'}^m_{k'-\dim F' -1}))^{(-1)^{(k'-\dim F'-1)\dim F}\langle c(h),\lambda^X(y^m_{k-\dim F})\rangle} \right] \\
&\ist{\eqref{eq:pilambda},\eqref{eq:cocycle_pi_c}}\,\,\,\,\,\,\,\,
\prod_m \left[ \widehat\pi_!^E h([\zeta^X(y^m_{k-\dim F -1})]_{\partial S_{k-\dim F}})^{(-1)^{(k'-\dim F')\dim F}\langle \widehat\pi^{E'}_! c(h'),{y'}^m_{k'-\dim F'}\rangle} \right. \\
&
\qquad \left. \widehat\pi_!^{E'} h'([\zeta^{X'}({y'}^m_{k'-\dim F' -1})]_{\partial S_{k'-\dim F'}})^{(-1)^{(k'-\dim F'-1)\dim F}\langle \widehat\pi^E_! c(h),y^m_{k-\dim F}\rangle} \right] \\
&\ist{\eqref{eq:ext_prod_BB}} \,
\Big[
(\widehat\pi_!^E h \times \widehat\pi_!^{E'} h')(K \circ S(z))   \\
&
\qquad \cdot \exp \Big(- 2\pi i \int_{a(K\circ S(z))} \curv(\widehat\pi^E_!h \times \widehat\pi^{E'}_!h') \Big) \Big]^{(-1)^{(k'-\dim F')\dim F}} \,.
\end{align*}
}%
Using \eqref{eq:curv_pi_x}, we conclude 
$$
\widehat\pi_!^{E \times E'}(h \times h')(K \circ S(z))
=
(-1)^{(k'-\dim F')\dim F}\cdot (\widehat\pi_!^Eh \times \widehat\pi_!^{E'}h')(K \circ S(z))
$$ 
which completes the proof.
\end{proof}

Fiber integration for differential forms satisfies the following up-down formula: for any $\eta \in \Omega^k(X)$ and $\omega \in \Omega^l(E)$, we have:%
\begin{equation}\label{eq:up-down_forms}
\fint_F \pi^*\eta \wedge \omega 
= \eta \wedge \fint_F \omega.   
\end{equation}
\index{up-down formula!for differential forms}%
Likewise, fiber integration on singular cohomology satisfies the corresponding up-down formula: for any $u \in H^k(X;\Z)$ and $w \in H^l(E;\Z)$, we have:
\begin{equation*}
\pi_!(\pi^*u \cup w) 
= u \cup \pi_!w. 
\end{equation*}
\index{up-down formula!for singular cohomology}%
For a proof, see~\cite[p.~585]{C53} or \cite[p.~483]{BH53}. 

Now we prove the corresponding up-down formula for fiber integration of differential characters.
The idea of the proof is due to Chern who proved the up-down formula for singular cohomology in \cite[p.~585]{C53}.
The same idea has been used in \cite{BKS10} along the lines of a representation of differential cohomology by cohomology stratifolds. 
 
\begin{thm}[Up-down formula] \index{Theorem!up-down formula}%
Let $E \to X$ be a fiber bundle over a smooth space $X$ with closed oriented fibers $F$.
Let $h \in \widehat H^k(X;\Z)$ and $f \in \widehat H^l(E;\Z)$.
Then  we have
\begin{equation}\label{eq:up_down}
\widehat \pi_!(\pi^*h * f) 
= h * (\widehat \pi_!f) \in \widehat H^{k+l-\dim F}(X;\Z).
\end{equation}
\index{up-down formula!for differential characters}%
\index{differential characters!up-down formula}%
\index{fiber integration!for differential characters!up-down formula}%
\end{thm}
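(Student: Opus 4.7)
The plan is to deduce the up-down formula from the fiber product formula just proved, by rewriting $\pi^*h * f$ as the pullback of the external product $h \times f$ along a diagonal-type embedding and then invoking naturality of fiber integration.

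The central observation is that the square
\[
\xymatrix{
E \ar[r]^{(\pi,\id_E)} \ar[d]_\pi & X \times E \ar[d]^{\id_X \times \pi} \\
X \ar[r]^{\Delta_X} & X \times X
}
\]
is a pullback diagram: the pullback of $\id_X \times \pi$ along $\Delta_X$ is canonically $E$ with projection $\pi$, and the induced map to $X \times E$ is $e \mapsto (\pi(e),e)$. First I would identify $(\pi, \id_E)^*(h \times f) = \pi^*h * f$. Since $h \times f = \pr_X^*h * \pr_E^*f$ by definition, and since $\pr_X \circ (\pi,\id_E) = \pi$ and $\pr_E \circ (\pi,\id_E) = \id_E$, this follows from naturality of the internal product with respect to smooth maps. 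Next, I would apply the fiber product formula \eqref{eq:ext_prod_commute} to the bundles $\id_X: X \to X$ (with point fibers, so $\dim F_1 = 0$) and $\pi: E \to X$; the sign $(-1)^{(l-\dim F)\cdot 0}$ is trivial and fiber integration along a point fiber is the identity, yielding
\[
\widehat{(\id_X \times \pi)}_!(h \times f) = h \times \widehat\pi_! f \in \widehat H^{k+l-\dim F}(X \times X;\Z).
\]

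Combining these with naturality of fiber integration along the pullback square above, and with $\Delta_X^*(h \times \widehat\pi_!f) = h * \widehat\pi_!f$ from \eqref{eq:ext_int}, the up-down formula drops out:
\[
\widehat\pi_!(\pi^*h * f) = \widehat\pi_!\bigl((\pi,\id_E)^*(h \times f)\bigr) = \Delta_X^*\widehat{(\id_X \times \pi)}_!(h \times f) = \Delta_X^*(h \times \widehat\pi_!f) = h * \widehat\pi_!f.
\]
There is no real analytic obstacle: once the fiber product formula has absorbed the K\"unneth bookkeeping and sign conventions, the projection formula is purely formal, following from pullback-naturality of $\widehat\pi_!$ together with the fact that the diagonal converts external products into internal products. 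The only points to verify carefully are that the displayed diagram is a pullback and that the sign in \eqref{eq:ext_prod_commute} collapses because one of the two fibers is a point, so the proof amounts to a clean chain of four equalities.
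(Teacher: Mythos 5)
Your proposal is correct and is essentially the paper's own proof: the map $(\pi,\id_E)$ is exactly $(\pi\times\id_E)\circ\Delta_E$, and the paper likewise realizes $\pi$ as the pullback of $\id_X\times\pi:X\times E\to X\times X$ along $\Delta_X$, applies naturality of $\widehat\pi_!$, invokes the fiber product formula \eqref{eq:ext_prod_commute} with the sign collapsing because the first fiber is a point, and finishes with $\Delta_X^*(h\times\widehat\pi_!f)=h*\widehat\pi_!f$. The only cosmetic difference is that you verify $(\pi,\id_E)^*(h\times f)=\pi^*h*f$ directly from the definition of the external product, while the paper routes through \eqref{eq:ext_int} and \eqref{eq:ext_nat}.
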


\begin{proof}
We decompose the fiber product $E \times E' \to X \times X'$ as the composite fiber bundle $E\times E \xrightarrow{\pi \times \id_E} X \times E \xrightarrow{\id_X \times \pi} X \times X$.
Let $\Delta_E:E \to E \times E$ and $\Delta_X:X \to X \times X$ denote the diagonal maps.
Then we have the bundle map 
\begin{equation*}
\xymatrix{
E \ar[rrr]^{(\pi \times \id_E) \circ \Delta_E} \ar[d]_\pi &&& X \times E \ar[d]^{\id_X \times \pi} \\
X \ar[rrr]_{\Delta_X} &&& X \times X 
} 
\end{equation*}
The up-down formula now follows from the product formula \eqref{eq:ext_prod_commute} and the naturality \eqref{eq:fiber_int_natural} of fiber integration:
\begin{align*}
\widehat\pi^E_!(\pi^*h * f) 
&\stackrel{\eqref{eq:ext_int}}{=} \widehat\pi^E_!(\Delta_E^*(\pi^*h \times f)) \\  
&\stackrel{\eqref{eq:ext_nat}}{=} \widehat\pi^E_!(\Delta_E^*(\pi \times \id_E)^*(h \times f)) \\
&\stackrel{\eqref{eq:fiber_int_natural}}{=} \Delta_X^* (\widehat\pi^{X\times E}_!(h\times f)) \\
&\stackrel{\eqref{eq:ext_prod_commute}}{=} \Delta_X^* (h \times \widehat\pi^E_!f) \\
&\stackrel{\eqref{eq:ext_int}}{=} h*\widehat\pi^E_!f. 
\end{align*}
There is no sign in the second last equation because the fiber over the first factor is zero-dimensional.
\end{proof}

%%%%%%%%%%%%%%%%%%%%%%%%%%%%%%%%%%%%%%%%%%%%%%%%%%%%%%%%%%%%%%%%%%%%%%%%%
\chapter{Relative differential characters}\label{sec:rel_diff_charact}
%%%%%%%%%%%%%%%%%%%%%%%%%%%%%%%%%%%%%%%%%%%%%%%%%%%%%%%%%%%%%%%%%%%%%%%%%

In this chapter, we discuss several aspects of relative differential characters, defined in \cite{BT06}.
From a geometric point of view, relative differential characters are to be considered as topological trivializations or global sections of differential characters. 
We explain this point of view in Section~\ref{subsec:Rel_1}.

From a topological point of view, the group of relative differential characters should be considered as a relative version of differential cohomology.
However, differential cohomology is not a (generalized) cohomology theory in the sense of Eilenberg and Steenrod. 
In particular, one cannot expect to obtain the usual long exact sequence relating the groups of relative and absolute differential characters.
In Section~\ref{subsec:Rel_2}, we derive an exact sequence that relates the groups of relative and absolute differential characters.
This sequence characterizes in particular the existence and uniqueness of global sections.

\section{Definition and examples}\label{subsec:Rel_1}

Let $k\ge1$ and $\varphi: A \to X$ a smooth map.
Relative differential characters in $\widehat H^k_\varphi(X,A;\Z)$ may be considered as differential characters on $X$ with sections along the map $\varphi$.
We briefly recall the construction of $\widehat H^k_\varphi(X,A;\Z)$ from \cite{BT06}.
Then we construct an exact sequence which characterizes those differential characters in $\widehat H^k(X;\Z)$ which admit sections along the map $\varphi$, i.e., which are in the image of the natural map $\widehat H^k_\varphi(X,A;\Z) \to \widehat H^k(X;\Z)$.

The {\em mapping cone complex} of a smooth map $\varphi: A \to X$ is the complex $C_k^\varphi(X,A;\Z) := C_k(X;\Z) \times C_{k-1}(A;\Z)$ of pairs of smooth singular chains with the differential $\partial_\phi (s,t) := (\partial s + \varphi_* t,-\partial t)$.
\index{mapping cone complex!for singular chains}%
\index{+CkphiXAZ@$C_k^\varphi(X,A;\Z)$, group of relative chains}% 
\index{relative chains}%
\index{+Delphi@$\partial_\phi (s,t)$, relative boundary map}%
The homology $H_k^\phi(X,A;\Z)$ of this complex coincides with the homology of the mapping cone of $\varphi$ in the topological sense.
\index{+HkphiXAZ@$H_k^\phi(X,A;\Z)$, homology of the mapping cone complex}
For the special case of an embedding $A \subset X$ it coincides with the relative homology $H_k(X,A;\Z)$.

Similarly, we consider the complex $\Omega^k_\varphi(X,A) := \Omega^k(X) \times \Omega^{k-1}(A)$ of pairs of differential forms with the differential 
$d_\phi(\omega,\vartheta) := (d\omega,\varphi^*\omega - d\vartheta)$.
\index{mapping cone complex!for differential forms}%
\index{+OmegakphiXA@$\Omega^k_\varphi(X,A)$, space of relative differential forms}%
\index{relative differential forms}%
\index{+Dphi@$d_\phi$, relative exterior differential}%
The homology $H^k_{\mathrm{dR},\phi}(X,A)$ of this complex is the relative de Rham cohomology for the map $\varphi$, as explained in \cite[p.~78]{BT82}.
\index{+HkdRphiXA@$H^k_{\mathrm{dR},\phi}(X,A)$, relative de Rham cohomology}
\index{relative de Rham cohomology}%

We denote by $Z_k^\varphi(X,A;\Z)$ the group of cycles of the mapping cone complex and by $B_k^\phi(X,A;\Z)$ the space of boundaries. 
\index{relative cycles}%
\index{+ZkphiXAZ@$Z_k^\varphi(X,A;\Z)$, group of cycles of the mapping cone complex}
The group of relative differential characters is defined as: 
\begin{equation*}
\widehat H^k_\varphi(X,A;\Z) 
:= 
\big\{\, f \in \Hom(Z^\varphi_{k-1}(X,A;\Z),\Ul) \,\big|\, f \circ \partial_\phi \in \Omega^k_\varphi(X,A) \,\big\} \,.  
\end{equation*}
\index{+HhatkphiXAZ@$\widehat H^k_\varphi(X,A;\Z)$, group of relative differential characters}%
\index{relative differential characters}%
\index{differential characters!relative $\sim$}%
The notation $f \circ \partial_\phi \in \Omega^k_\varphi(X,A)$ means that there exists a pair of differential forms $(\omega,\vartheta) \in \Omega^k_\varphi(X,A)$ such that for every pair of smooth singular chains $(x,y) \in C_k^\varphi(X,A)$ we have
\begin{equation}
f(\partial_\phi(x,y))
= 
\exp \Big[ 2 \pi i \cdot \Big( \int_x \omega + \int_y \vartheta \Big) \Big] \,. \label{eq:def_rel_diff_charact_2}
\end{equation}
The form $\omega=: \curv(f)$ in the definition is called the {\em curvature} of the relative differential character $f$ and the form $\vartheta =: \cov(f)$ is called its {\em covariant derivative}.
\index{curvature!of a relative differential character}%
\index{covariant derivative!of a relative differential character}%
\index{+Cov@$\cov$, covariant derivative of a relative differential character}%
As in the absolute case, the curvature is uniquely determined by the differential character.
For $k\ge2$, this is also true for the covariant derivative.
For $k=1$, the function $\vartheta$ is unique only up to addition of a locally constant integer valued function, see Example~\ref{ex:H1rel}.

It is shown in \cite[p.~273f]{BT06} that relative differential characters $f\in\widehat H^k_\varphi(X,A;\Z)$ have characteristic classes $c(f) \in H^k_\phi(X,A;\Z)$, the $k$-th cohomology of the mapping cone complex.
\index{+Cd@$c$, characteristic class of a relative differential character}%
\index{characteristic class!of a relative differential character}
By \cite[Thm.~2.4]{BT06}, the group $\widehat H^k_\varphi(X,A;\Z)$ fits into short exact sequences similar to the ones in \eqref{eq:3x3diagram}:
\begin{equation*}
\xymatrix@R=5mm{
0 \ar[r] &
\frac{\Omega^{k-1}_\varphi(X,A)}{\Omega^{k-1}_{\varphi,0}(X,A)} \ar[r]^\iota &
\widehat H^k_\varphi(X,A;\Z) \ar^c[rr] &&
H^k_\varphi(X,A;\Z) \ar[r] \ar[r] &
0 ,\\
0 \ar[r] &
H^{k-1}_\varphi(X,A;\Ul) \ar[r] &
\widehat H^k_\varphi(X,A;\Z) \ar^{(\curv,\cov)}[rr] &&
\Omega^k_{\varphi,0}(X,A) \ar[r] &
0 .}
\end{equation*}
Here $\Omega^k_{\varphi,0}(X,A)$ denotes the space of all $d_\phi$-closed pairs $(\omega,\vartheta) \in \Omega^k_\phi(X,A)$ with integral periods, i.e., $\int_{(s,t)} (\omega,\vartheta) \in \Z$ for all relative cycles $(s,t) \in Z^k_\phi(X,A;\Z)$.
\index{+OmegakphioXA@$\Omega^k_{\varphi,0}(X,A)$, space of closed relative differential forms with integral periods}

Furthermore, we have the obvious maps \index{+I@$\ti:\widehat H^{k-1}(A;\Z)\to \widehat H^k_\phi(X,A;\Z)$}\index{+P@$\vds:\widehat H^k_\phi(X,A;\Z)\to\widehat H^k(X;\Z)$}
\begin{equation}\label{eq:natural_maps_rel_diff_charact}
\xymatrix{
\widehat H^{k-1}(A;\Z) 
\ar^{\ti}[r] &
\widehat H^k_\varphi(X,A;\Z)
\ar^{\vds}[r] &
\widehat H^k(X;\Z) 
}
\end{equation}
which map a differential character $g \in \widehat H^{k-1}(A;\Z)$ to $\ti(g):(s,t) \mapsto g(t)$ and a relative differential character $f \in \widehat H^k_\varphi(X,A;\Z)$ to $\vds(f):z \mapsto f(z,0)$.
\index{+Ii@$\ti:\widehat H^{*-1}(A;\Z)\to \widehat H^*_\varphi(X,A;\Z)$}
\index{+Pp@$\vds:\widehat H^*_\varphi(X,A;\Z) \to \widehat H^*(X;\Z)$}
One easily checks that $\curv(\ti(g))=0$, $\cov(\ti(g))=-\curv(g)$, and $\curv(\vds(f))=\curv(f)$.

\begin{remark}
Note that $\vds$ is defined also for $k=1$.
As in the absolute case we set $\widehat H^0_\phi(X,A;\Z):=H^0_\phi(X,A;\Z)$ and $\widehat H^k_\phi(X,A;\Z):=0$ for $k<0$.
Moreover, $\ti:\widehat H^0_\phi(A;\Z) \to \widehat H^1_\phi(X,A;\Z)$ is defined to be zero while $\vds:\widehat H^0_\varphi(X,A;\Z) \to
\widehat H^0(X;\Z)$ is defined to coincide with the usual map in the long exact sequence
\[
\xymatrix{
0
\ar[r] &
H^0_\phi(X,A;\Z)
\ar[r] &
H^0(X;\Z) 
\ar[r] & \cdots
}
\]
\end{remark}

\begin{definition} \index{Definition!section}%
Let $\varphi:A \to X$ be a smooth map of differentiable manifolds.
A differential character $h \in \widehat H^k(X;\Z)$ is said to {\em admit sections} along $\varphi$ if it lies in the image of the map $\vds:\widehat H^k_\varphi(X,A;\Z) \to \widehat H^k(X;\Z)$.
\index{differential characters!admitting sections}%

Let $h \in \im(\vds) \subset \widehat H^k(X;\Z)$ be a differential character that admits sections along the map $\varphi$.
Then any relative differential character $f \in \widehat H^k_\varphi(X,A;\Z)$ with $\vds(f) = h$ is called a {\em section} of $h$ along $\phi$.
\index{section!of a differential character}%
A section $f \in \widehat H^k_\phi(X,A;\Z)$ of $\vds(f) \in \widehat H^k(X;\Z)$ along $\phi$ is called {\em parallel} if $\cov(f)=0$.
\index{section!parallel $\sim$} 
\end{definition}

\begin{example}\label{ex:H1rel}
Let $k=1$.
Since $Z^\phi_0(X,A;\Z) = Z_0(X;\Z)$, any relative differential character of degree $1$ corresponds to a function $\bar f:X \to \Ul$ as in the absolute case.
Using \eqref{eq:def_rel_diff_charact_2} with $y=0$ one sees that $\bar f$ is smooth and $\curv(f) = d\tilde f$ where $\tilde f$ is a local lift of $\bar f$ as in Example~\ref{ex:H1}.
Equation \eqref{eq:def_rel_diff_charact_2} with $x=0$ shows that $\vartheta$ is a lift of $\bar f\circ \phi$ on $A$.
Such a lift is unique only up to addition of a locally constant integer valued function.

To summarize, $\widehat H^1_\phi(X,A;\Z)$ is the subgroup of $\widehat H^1(X;\Z)=C^\infty(X,\Ul)$ containing those functions $\bar f$ which are trivial along $\phi$ in the sense that $\bar f\circ \phi$ has a lift.
\index{relative differential characters!of degree $1$}%
\end{example}

\begin{example}\label{ex:H2rel}
Let $k=2$.
Given $f\in \widehat H^2_\phi(X,A;\Z)$ we have $\vds(f)\in \widehat H^2(X;\Z)$ which by Example~\ref{ex:U1Buendel} corresponds to a $\Ul$-principal bundle $P\to X$ with connection $\nabla$.
We pull back $P$ and $\nabla$ along $\phi$ and we obtain a $\Ul$-principal bundle $\phi^*P\to A$ with connection $\phi^*\nabla$.
\index{relative differential characters!of degree $2$}%

\emph{Relative characters determine sections}.
Fix $x_0\in A$.
For any two curves $c$ and $c'$ emanating from $x_0$ and ending at the same point $x\in A$, we look at the cycle $c-c'\in Z_1(A;\Z)$.
Using \eqref{eq:def_rel_diff_charact_2} we compute:
\begin{align*}
\phi^*(\vds f)(c-c')
&=
(\vds f)(\phi_*(c-c')) \\
&=
f(\phi_*(c-c'),0) \\
&=
f(\partial_\phi(0,c-c')) \\
&=
\exp \Big( 2 \pi i \int_{c-c'} \vartheta \Big) \\
&=
\exp \Big( 2 \pi i \int_{c} \vartheta \Big) \cdot
\exp \Big( 2 \pi i \int_{c'} \vartheta \Big)^{-1} .
\end{align*}
We recall from Example~\ref{ex:U1Buendel} that for any $p_0\in\phi^*P$ over
$x_0$ we have
$$
(\PP^{\phi^*\nabla}_{c'})^{-1}\circ \PP^{\phi^*\nabla}_c (p_0) 
= \PP^{\phi^*\nabla}_{c-c'}(p_0)
= p_0\cdot\phi^*(\vds f)(c-c').
$$
Therefore
$$
\PP^{\phi^*\nabla}_c (p_0) \cdot \exp \Big( 2 \pi i \int_{c} \vartheta
\Big)^{-1}
=
\PP^{\phi^*\nabla}_{c'} (p_0) \cdot \exp \Big( 2 \pi i \int_{c'} \vartheta
\Big)^{-1} .
$$
Hence the expression 
$$
\PP^{\phi^*\nabla}_c (p_0) \cdot \exp \Big( 2 \pi i \int_{c} \vartheta
\Big)^{-1}
$$
depends on $x$ but not on the choice of curve connecting $x_0$ and $x$.
Fixing $x_0$ and $p_0$ we can define a smooth section of $\phi^*P$ over the
connected component containing $x_0$ by 
\begin{equation}
\sigma(x) := 
\PP^{\phi^*\nabla}_c (p_0) \cdot \exp \Big( 2 \pi i \int_{c} \vartheta
\Big)^{-1} .
\label{eq:Schnitt}
\end{equation}
Choosing $x_0$ and $p_0$ in each connected component of $A$ we obtain a smooth
section of $\phi^*P$ over all of $A$.
If $\sigma'$ is a section obtained by different choices of the $x_0$'s and
$p_0$'s, then $\sigma'=\sigma\cdot u$ where $u:A\to \Ul$ is a locally constant
function.
\index{+Sigma@$\sigma$, section of $\Ul$-bundle or complex line bundle}
\index{section!of a line bundle}%

\emph{Isomorphism classes of sections}.
We further restrict the freedom in the choices of the $p_0$'s.
Consider the pull-back diagram
$$
\xymatrix{
\phi^*P \ar[r]^\Phi \ar[d] & P \ar[d] \\
A \ar[r]^\phi & X
}
$$
Equation \eqref{eq:Schnitt} yields for any closed curve $c$ in $A$ starting and
ending at $x_0$ that
\begin{align*}
\PP^\nabla_{\phi_*c}(\Phi(p_0))
&=
\Phi(p_0)\cdot \exp \Big( 2 \pi i
\int_{c} \vartheta \Big) \\
&=
\Phi(p_0)\cdot f(\partial_\phi(0,c))  .
\end{align*}
For a closed curve $s$ in $X$ (instead of $A$) starting and ending at
$\phi(x_0)$ we have more generally
$$
\PP^\nabla_{s}(\Phi(p_0)) = \Phi(p_0)\cdot f(s,0).
$$
Now, if $x_0$ and $x_0'$ lie in different connected components of $A$ but
$\phi(x_0)$  and $\phi(x_0')$ lie in the same connected component of $X$, then
we demand for any curve $s$ in $X$ starting at $\phi(x_0)$ and ending at
$\phi(x_0')$ that 
$$
\PP^\nabla_{s}(\Phi(p_0)) = \Phi(p_0')\cdot f(s,x_0-x_0') .
$$
In this way, the choice of $p_0'$ is determined by the choice of $p_0$.
Moreover, this relation does not depend on the choice of $s$.
With this additional requirement the freedom to choose the $p_0$'s reduces to
one choice for each maximal set of $x_0$'s which are mapped to the same
connected component of $X$.
Hence two sections $\sigma$ and $\sigma'$ constructed in this way are related by
$\sigma' = \sigma\cdot (u\circ\phi)$ where $u:X\to \Ul$ is a locally
constant function.

Said differently, a relative differential character $f \in \widehat H^2_\phi(X,A;\Z)$ determines an isomorphism class $[P,\nabla,\sigma]$ of $\Ul$-bundles with connection $(P,\nabla)$ and section $\sigma$ along the map $\phi$.
\index{+Pnablasigma@$[P,\nabla,\sigma]$, isomorphism class of complex line bundle with connection and section}
Here $(P,\nabla,\sigma)$ is identified with $(P',\nabla',\sigma')$ if and only if there is a bundle isomorphism $\Psi:P \to P'$ such that $\nabla = \Psi^*\nabla'$ and $\Phi' \circ \sigma' = \Psi \circ \Phi \circ \sigma$. 
In particular, sections of the pull-back bundle are identified by bundle isomorphisms of $P$ (and not of the pull-back bundle $\phi^*P$).

\emph{Sections determine relative characters}.
Conversely, let $P \to X$ be a $\Ul$-bundle with connection $\nabla$ and $\sigma$ a section of $\phi^*P$ over $A$.
For any relative cycle of the form $(s,x-x')$ we define $f(s,x-x')$ by
$$
\PP^\nabla_{s}(\Phi(\sigma(x))) = \Phi(\sigma(x'))\cdot f(s,x-x') .
$$
Since $Z^\phi_1(X,A;\Z)$ is generated by cycles of this form, the differential
character $f$ is uniquely determined.
The definition of $f$ is invariant under bundle isomorphisms as defined above.  

\emph{Curvature and connection form}.
To summarize, we have a 1-1 correspondence between relative differential characters $f \in \widehat H^2_\phi(X,A;\Z)$ and isomorphism classes $[P,\nabla,\sigma]$ of $\Ul$-bundles with connection and section along $\phi$.   
Under this correspondence, $-2\pi i \cdot \curv(f)$ is the curvature form of $(P,\nabla)$ and $-2\pi i\cdot \cov(f)$ is the connection $1$-form of $\phi^*\nabla$ with respect to $\sigma$.
\end{example}

\begin{remark}[Relative differential cohomology]\index{relative differential cohomology}
The group $\widehat H^k_\varphi(X,A;\Z)$ of relative differential characters may be considered as a relative differential cohomology group.
There have appeared different versions of relative differential cohomology in the literature:
In \cite{HL01}, de Rham-Federer currents on manifolds $X$ with boundary are used to describe differential cohomology relative to $A = \partial X$.  
\index{de Rham-Federer currents}%
In \cite{U11}, relative differential cohomology is defined for the case of a submanifold $A \subset X$.
In both these models, the curvature of a relative cohomology class vanishes upon restriction to the subset $A$. 

However, the covariant derivatives of relative differential characters need not be closed.
In this sense, the relative differential cohomology group defined by relative differential characters is more general (or is a larger group) than the ones the relative differential cohomology groups described in \cite{HL01} and \cite{U11}.
\end{remark}

\section{Existence of sections}\label{subsec:Rel_2}
Since differential cohomology is not a (generalized) cohomology theory, the question arises whether there are long exact sequences that relate the absolute and relative differential cohomology groups.
Here we fit the maps from \eqref{eq:natural_maps_rel_diff_charact} into an exact sequence that characterizes those differential characters in $\widehat H^k(X;\Z)$ that admit sections along $\varphi$.

\begin{thm}[Exact sequence]\label{thm:rel_diff_charact_exact_sequence} \index{Theorem!exact sequence}%
Let $\varphi:A \to X$ be a smooth map.
Then the following sequences are exact:
\begin{equation}\label{eq:rel_diff_charact_exact_sequence}
0 \to
\phi^* \widehat H^{k-1}_\mathrm{flat}(X;\Z) \to
\widehat H^{k-1}(A;\Z)  \xrightarrow{\ti}
\widehat H^k_\varphi(X,A;\Z) \xrightarrow{\vds}
\widehat H^k(X;\Z) \xrightarrow{\varphi^* \circ c}
\varphi^*H^k(X;\Z) \to
0 \, .
\end{equation}
if $k\ge2$ and
\begin{equation}\label{eq:rel_diff_charact_exact_sequence01}
0 \xrightarrow{}
\widehat H^k_\varphi(X,A;\Z) \xrightarrow{\vds}
\widehat H^k(X;\Z) \xrightarrow{\varphi^* \circ c}
\varphi^*H^k(X;\Z) \to
0 \, .
\end{equation}
if $k=0$ or $k=1$.
\end{thm}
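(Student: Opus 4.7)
The plan is to establish exactness at each intermediate term of \eqref{eq:rel_diff_charact_exact_sequence} separately, combining the defining short exact rows of the 3x3 diagrams for $\widehat H^*(X;\Z)$ and $\widehat H^*_\varphi(X,A;\Z)$ with the long exact cohomology sequence of the mapping cone,
\begin{equation*}
\cdots \to H^{k-1}(A;\Z) \to H^k_\varphi(X,A;\Z) \xrightarrow{\vds^*} H^k(X;\Z) \xrightarrow{\varphi^*} H^k(A;\Z) \to \cdots,
\end{equation*}
which comes from the short exact sequence $0 \to C_*(X;\Z) \to C^\varphi_*(X,A;\Z) \to C_{*-1}(A;\Z) \to 0$ of mapping cone complexes. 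I focus on $k\ge 2$ and treat the degenerate cases $k=0,1$ at the end. The leftmost map $\varphi^*\widehat H^{k-1}_\mathrm{flat}(X;\Z) \hookrightarrow \widehat H^{k-1}(A;\Z)$ is trivially injective since it is an inclusion of subgroups, and surjectivity of $\varphi^*\circ c$ onto its image $\varphi^*H^k(X;\Z)$ is immediate from surjectivity of $c:\widehat H^k(X;\Z)\to H^k(X;\Z)$ in \eqref{eq:3x3diagram}. The containment $\im\vds \subseteq \ker(\varphi^*\circ c)$ follows because $c\circ\vds = \vds^*\circ c_\varphi$ (a direct verification from the definitions of the relative characteristic class and $\vds$) and $\varphi^*\circ\vds^* = 0$ by exactness of the mapping cone sequence.

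The principal obstacle is the reverse containment $\ker(\varphi^*\circ c) \subseteq \im\vds$: given $h\in \widehat H^k(X;\Z)$ with $\varphi^*c(h)=0$, I must build a relative character $f$ with $\vds(f)=h$. First, since $c(\varphi^*h)=0$, the character $\varphi^*h$ is topologically trivial and I choose $\eta\in\Omega^{k-1}(A)$ with $\iota(\eta)=\varphi^*h$; automatically $d\eta=\varphi^*\curv(h)$, so $(\curv(h),\eta)\in\Omega^k_{\varphi,0}(X,A)$ is $d_\varphi$-closed. Second, for a real lift $\tilde h$ of $h$ the cocycle $\mu=\mu^{\tilde h}\in Z^k(X;\Z)$ represents $c(h)$, and the hypothesis gives $T\in C^{k-1}(A;\Z)$ with $\delta T = \varphi^*\mu$; the pair $(\mu,T)$ is a cocycle in the dual mapping cone complex representing a class in $H^k_\varphi(X,A;\Z)$ which $\vds^*$ sends to $c(h)$. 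Using the short exact sequence
\begin{equation*}
0 \to Z_{k-1}(X;\Z) \to Z^\varphi_{k-1}(X,A;\Z) \to \{t\in Z_{k-2}(A;\Z):\varphi_*[t]=0\} \to 0,
\end{equation*}
I define $f$ by setting $f(z,0):=h(z)$ on the first subgroup and, on a chosen splitting of the quotient, prescribing $f(s,t)$ using $T$ as the book-keeping device measuring the integer-valued discrepancy between the two obvious partial candidates (chain-lift of $\varphi_*t$ paired with $h$ versus integration of $\eta$). The technical crux is to verify that $f$ is well-defined modulo all the choices of $\tilde h$, $T$, $\eta$ and the splitting, which follows because the cocycle identity $\delta T=\varphi^*\mu$ together with $d\eta=\varphi^*\curv(h)$ forces the ambiguities to be integer-valued, and that $f$ satisfies $f(\partial_\varphi(x,y))=\exp 2\pi i\bigl(\int_x\curv(h)+\int_y\eta\bigr)$ with $\vds(f)=h$.

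For the remaining exactness points: at $\widehat H^k_\varphi(X,A;\Z)$, $\vds\circ\ti=0$ is immediate from $\ti(g)(z,0)=g(0)=1$, and conversely, given $\vds(f)=0$ I set $g(t):=f(s,t)$ for any $s\in C_{k-1}(X;\Z)$ with $\partial s=-\varphi_*t$ (such $s$ exists precisely when $\varphi_*[t]=0$), with independence of the choice of $s$ guaranteed by $\vds(f)=0$; extending from this subgroup of $Z_{k-2}(A;\Z)$ to all of $Z_{k-2}(A;\Z)$ uses injectivity of $\Ul$ as a $\Z$-module (divisibility), and a direct check yields $\curv(g)=-\cov(f)$ and $\ti(g)=f$. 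At $\widehat H^{k-1}(A;\Z)$: $\ti(g)=0$ forces $\curv(g)=-\cov(\ti g)=0$, so $g$ is flat and corresponds via $j$ to $u\in H^{k-2}(A;\Ul)=\Hom(H_{k-2}(A;\Z),\Ul)$ (using $\Ext(-,\Ul)=0$ since $\Ul$ is divisible); the hypothesis $\ti(g)=0$ means $u$ vanishes on $\ker(\varphi_*:H_{k-2}(A;\Z)\to H_{k-2}(X;\Z))$, so injectivity of $\Ul$ extends $u$ to $u'\in H^{k-2}(X;\Ul)$ with $\varphi^*u'=u$, whence $g=\varphi^*j(u')$ with $j(u')\in\widehat H^{k-1}_\mathrm{flat}(X;\Z)$. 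Finally, for $k=0$ the sequence \eqref{eq:rel_diff_charact_exact_sequence01} is the beginning of the mapping cone cohomology sequence since $\widehat H^0=H^0$ throughout, and for $k=1$ Example~\ref{ex:H1rel} identifies $\widehat H^1_\varphi(X,A;\Z)$ with the subgroup of $\bar h\in C^\infty(X,\Ul)$ such that $\bar h\circ\varphi$ admits a global $\R$-lift on $A$, which is precisely $\ker(\varphi^*c:\widehat H^1(X;\Z)\to\varphi^*H^1(X;\Z))$.
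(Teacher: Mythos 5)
Your overall strategy coincides with the paper's: exactness is checked spot by spot at the level of elements, with the divisibility (injectivity) of $\Ul$ used to extend homomorphisms defined on subgroups of the free abelian groups of cycles. Your treatment of exactness at $\widehat H^k_\varphi(X,A;\Z)$, the two easy inclusions, the surjectivity of $\varphi^*\circ c$, and the degenerate cases $k=0,1$ all match the paper. At exactness in $\widehat H^{k-1}(A;\Z)$ you take a slightly different, more conceptual route: you pass to homology, identify flat characters with $\Hom(H_*(\cdot;\Z),\Ul)$, and extend across $\ker\varphi_*$ using injectivity of $\Ul$; the paper instead stays at the cycle level and extends $g$ over the subgroup of $Z_{k-2}(X;\Z)$ generated by $\varphi_*Z_{k-2}(A;\Z)$ and $B_{k-2}(X;\Z)$. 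Both work; yours trades a consistency check on cycles for the universal coefficient identification.

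The one place where your write-up falls short of a proof is the containment $\ker(\varphi^*\circ c)\subseteq\im(\vds)$. The integral cochain $T$ with $\delta T=\varphi^*\mu$ and the ``book-keeping of integer-valued discrepancies'' are never turned into an actual formula, and the justification you offer --- that $\delta T=\varphi^*\mu$ together with $d\eta=\varphi^*\curv(h)$ forces integrality --- is not the right reason: those two identities only control $\varphi^*h$ up to a flat class, whereas the construction needs the full relation $\iota(\eta)=\varphi^*h$ as characters. The paper's version of this step is cleaner and needs no splitting and no $T$: define $f$ only on the subgroup $W\subset Z^\varphi_{k-1}(X,A;\Z)$ generated by $B^\varphi_{k-1}(X,A;\Z)$ and the pairs $(z,0)$, by $f(\partial_\varphi(x,y)):=\exp\big[2\pi i\big(\int_x\curv(h)+\int_y\eta\big)\big]$ and $f(z,0):=h(z)$; the single consistency identity to verify is that these agree when $(z,0)=\partial_\varphi(x,y)$, and it follows from $h(\partial x+\varphi_*y)=\exp\big(2\pi i\int_x\curv(h)\big)\cdot\iota(\eta)(y)$, which uses $\iota(\eta)=\varphi^*h$ evaluated on the cycle $y$. (Equivalently, the ``integer-valued ambiguity'' is precisely the statement that $(\curv(h),\eta)$ has integral periods on relative cycles, and that is where $\iota(\eta)=\varphi^*h$, not merely $d\eta=\varphi^*\curv(h)$, enters.) One then extends $f$ from $W$ to all of $Z^\varphi_{k-1}(X,A;\Z)$ by divisibility of $\Ul$, exactly as you do elsewhere. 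With that step made precise your proof is complete.
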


\begin{remark}
Sequences~\eqref{eq:rel_diff_charact_exact_sequence} and \eqref{eq:rel_diff_charact_exact_sequence01} can be derived by homological algebraic methods.
There are several ways to obtain differential cohomology as the cohomology of a chain complex. 
The smooth Deligne complex and the Hopkins-Singer complex that compute degree-$k$ differential cohomology both depend on $k$.
\index{Hopkins-Singer complex}%
\index{Deligne complex}%
Thus the cohomology groups in the long exact sequence obtained from the corresponding mapping cone complexes coincide with differential cohomology only in degree $k$.

The Hopkins-Singer complex can be modified so that all its cohomologies realize differential cohomology, see \cite[p.~271]{BT06}.
The mapping cone construction then yields a long exact sequence where the absolute cohomology groups coincide with differential cohomology. 
But the corresponding relative groups for this modified complex are only subquotients of the groups of relative differential characters, \cite[p.~278ff.]{BT06}.

Another long exact sequence for relative and absolute differential (generalized) cohomology is constructed in \cite[Thm.~2.7]{U11}. 
Another way to define global trivializations of differential cohomology, based on the Hopkins-Singer complex, is described in \cite{R12}.

Here we do not use any of these identifications of the groups of differential characters with the cohomologies of a chain complex, but give a direct proof.   
\end{remark}

\begin{proof}[Proof of Theorem~\ref{thm:rel_diff_charact_exact_sequence}]
We only consider the case $k\ge2$ because the case $k=0$ is obvious and the case $k=1$ is similar to but simpler than the case $k\ge2$.

At several steps in the proof we use the fact that the group $\Ul$ is divisible, hence that for every injective group homomorphism $G' \to G$ the induced homomorphism $\Hom(G,\Ul) \to \Hom(G',\Ul)$ is surjective, see e.g.\ \cite[pp.~32 and 372]{Pr}.
In other words, any homomorphism from a subgroup of $G$ to $\Ul$ can be extended to a homomorphism from $G$ to $\Ul$.

a) 
The map $\phi^*\widehat H^{k-1}_\mathrm{flat}(X;\Z) \to \widehat H^{k-1}(A;\Z)$ is the inclusion of a subgroup and hence injective.

b)
We prove exactness at $\widehat H^{k-1}(A;\Z)$.
For $\bar g \in \widehat H^{k-1}_\mathrm{flat}(X;\Z)$ and any $(s,t) \in Z_{k-1}^\varphi(X,A;\Z)$ we have:
\begin{align*}
\ti(\varphi^*\bar g)(s,t)
&= (\varphi^*\bar g)(t) \\
&= \bar g(\varphi_*t) \\
&= \bar g(-\partial s) \\
&= \exp \Big( 2\pi i \int_s -\curv(\bar g) \Big) \\
&= 1 \,,  
\end{align*}
because $\curv(\bar g)=0$.
Hence $\phi^*\widehat H^{k-1}_\mathrm{flat}(X;\Z) \subset \ker(\ti)$.

To show the converse inclusion we pick $g\in \widehat H^{k-1}(A;\Z)$ with $\ti(g)=1$.
Let $Q \subset Z_{k-2}(A;\Z)$ be the subgroup of those $t \in Z_{k-2}(A;\Z)$ for which there exists an $s \in C_{k-1}(X;\Z)$ such that $(s,t) \in Z_{k-1}^\varphi(X,A;\Z)$.
The condition $\ti(g)=1$ is equivalent to $g$ being trivial on $Q$.
We construct $\bar g \in \widehat H^{k-1}_\mathrm{flat}(X;\Z)$ such that $\phi^* \bar g = g$.
Let $Q'\subset Z_{k-2}(X;\Z)$ be the subgroup generated by $\phi_*Z_{k-2}(A;\Z)$ and $B_{k-2}(X;\Z)$.
We define a group homomorphism $\bar g:Q' \to \Ul$ by setting
\begin{align}
\bar g(\varphi_* x)
&:=  g(x) \label{eq:include_H_flat_2} \,,\\
\bar g(\partial y)
&:= 1 \label{eq:include_H_flat_1} \,.
\end{align}
The conditions are consistent since $\phi_*Z_{k-2}(A;\Z) \cap B_{k-2}(X;\Z) = \phi_*Q$.
By \eqref{eq:include_H_flat_1}, any extension to a group homomorphism $\bar g:Z_{k-2}(X;\Z) \to \Ul$ yields a flat differential character on $X$.
By \eqref{eq:include_H_flat_2}, we have $\phi^*\bar g=g$.

c)
We prove exactness at $\widehat H^k_\varphi(X,A;\Z)$.
For every $z \in Z_{k-1}(X;\Z)$, we have $\vds(\ti(g))(z) = \ti(g)(z,0) = g(0) = 1$.
Hence $\im(\ti) \subset \ker(\vds)$.

Conversely, let $f \in \ker(\vds)$.
Thus $f(z,0) = 1$ for every $z \in Z_{k-1}(X;\Z)$.
For cycles $(s,t), (s',t) \in Z_{k-1}^\varphi(X,A;\Z)$, we have $\partial(s-s') = -\phi_*t + \phi_*t =0$.
Hence $f(s-s',0)=1$ and thus $f(s,t) = f(s',t)$.
Let $Q \subset Z_{k-2}(A;\Z)$ be the subgroup defined in b).
We define a group homomorphism $g:Q \to \Ul$ by setting $g(t) := f(s,t)$. 

Now $B_{k-2}(A;\Z) \subset Q$, since for $t = \partial y$, we have $(-\varphi_* y,t) = \partial_\phi(0,-y) \in B_{k-1}^\phi(X,A;\Z) \subset Z_{k-1}^\varphi(X,A;\Z)$.
We can extend $g$ as a group homomorphism \mbox{$g: Z_{k-2}(A;\Z) \to \Ul$.}
On $B_{k-2}(A;\Z)$, we have
$$
g(\partial y)
= f(\partial_\phi(0,-y))
= \exp \Big( -2\pi i \int_y \cov(f) \Big) \,.
$$
Hence $g: Z_{k-2}(A;\Z) \to \Ul$ is a differential character $g \in \widehat H^{k-1}(A;\Z)$ with curvature $\curv(g) = -\cov(f)$.
Since $f(s,t) = g(t)$ for every $(s,t) \in Z_k^\phi(X,A;\Z)$, we have $f=\ti(g)$.

d)
For the exactness at $\widehat H^k(X;\Z)$ consider the following commutative diagram with exact columns:
\begin{equation*}
\xymatrix{
0 \ar[d] & 0 \ar[d] & 0 \ar[d]  \\
\frac{\Omega^{k-1}_\varphi(X,A)}{\Omega^{k-1}_{\varphi,0}(X,A)} \ar[r] \ar^\iota[d] 
  & \frac{\Omega^{k-1}(X)}{\Omega^{k-1}_0(X)} \ar^{\varphi^*}[r] \ar^\iota[d]
  & \frac{\Omega^{k-1}(A)}{\Omega^{k-1}_0(A)} \ar^\iota[d] \\
\widehat H^k_\varphi(X,A;\Z) \ar^c[d] \ar^{\vds}[r] 
  & \widehat H^k(X;\Z) \ar^c[d] \ar^{\varphi^*}[r] 
  & \widehat H^k(A;\Z) \ar^c[d] \\
H^k_\varphi(X,A;\Z) \ar[r] \ar[d]
  & H^k(X;\Z) \ar^{\varphi^*}[r]  \ar[d] 
  & H^k(A;\Z)  \ar[d] \\
0 & 0 & 0 
}
\end{equation*}
The bottom row is part of the long exact cohomology sequence obtained from the short exact sequence of chain complexes 
$$
0 \to C_*(X;\Z) \to C_*^\phi(X,A;\Z) \to C_{*-1}(A;\Z) \to 0\,.
$$
Let $f \in \widehat H^k_\phi(X,A;\Z)$.
From the commutativity of the diagram and the exactness of the bottom row we conclude $c(\phi^*\vds(f))=0$.
Hence $\im(\vds) \subset \mbox{$\ker(\varphi^* \circ c)$}$.

Conversely, let $h \in \ker(\varphi^* \circ c)$.
We construct a section along $\phi$.
From the diagram we conclude that there exists a differential form $\chi \in \Omega^{k-1}(A)$ such that $\varphi^*h = \iota(\chi)$.
Hence $\varphi^*\curv(h) = \curv(\varphi^*h) = \curv(\iota(\chi))= d\chi$.
Let $W \subset Z_{k-1}^\varphi(X,A;\Z)$ be the subgroup generated by $B_{k-1}^\phi(X,A;\Z)$ and by pairs of the form $(s,t) = (z,0)$ with $z \in Z_{k-1}(X;\Z)$.
We define a group homomorphism $f:W \to \Ul$ by setting:
\begin{align}
f(\partial_\phi(x,y))
&:= \exp \Big[ 2 \pi i \cdot \Big( \int_x \curv(h) + \int_y \chi  \Big) \Big] \, , \label{eq:constr_sections_1} \\
f((z,0))
&:= h(z) \label{eq:constr_sections_2} \,.
\end{align}
This definition is consistent, since for $(z,0) = \partial_\phi(x,y)=(\partial x + \varphi_* y,-\partial y)$, we have 
\begin{align*}
f((z,0))\,\,
&\ist{\eqref{eq:constr_sections_2}}\,\, 
h( \partial x + \varphi_* y) \\
&= 
\exp \Big( 2\pi i \int_x \curv(h) \Big) \cdot \varphi^*h(y) \\
&= 
\exp \Big( 2\pi i \int_x \curv(h) \Big) \cdot \iota(\chi)(y) \\
&= 
\exp \Big[ 2 \pi i \cdot \Big( \int_x \curv(h) + \int_y \chi  \Big) \Big] \\
&\ist{\eqref{eq:constr_sections_1}}\,\, 
f(\partial_\phi(x,y)) \,.
\end{align*}
We extend $f$ to a $\Ul$-valued group homomorphism on $Z_{k-1}^\varphi(X,A;\Z)$.
By equation \eqref{eq:constr_sections_1}, this homomorphism satisfies \eqref{eq:def_rel_diff_charact_2}.
Thus $f \in \widehat H^k_\varphi(X,A;\Z)$.
Equation~\eqref{eq:constr_sections_2} implies that $\vds(f) = h$.

e)
Finally, the map $\varphi^* \circ c: \widehat H^k(X;\Z) \to \varphi^*H^k(X;\Z)$ is surjective since $c$ is surjective by \eqref{eq:3x3diagram} and pull-back along $\varphi$ is surjective onto its image.
\end{proof}

\begin{cor}[Long exact sequence] \index{Corollary!long exact sequence}%
For $k \geq 2$ we have the following long exact sequence:
\begin{equation}\label{eq:rel_diff_charact_exact_sequence2}
\ldots \to
H^{k-2}(X;\Ul) \xrightarrow{j \circ\varphi^*}
\widehat H^{k-1}(A;\Z)  \xrightarrow{\ti}
\widehat H^k_\varphi(X,A;\Z) \xrightarrow{\vds}
\widehat H^k(X;\Z) \xrightarrow{c \circ \varphi^*}
H^k(A;\Z) \to
\cdots \, .
\end{equation}
The sequence extends on the left and on the right as the mapping cone sequence for singular cohomology with coefficients $\Ul$ and $\Z$, respectively. 
\index{differential characters!long exact sequence}%
\index{relative differential characters!long exact sequence}%
\end{cor}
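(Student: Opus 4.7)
The plan is to splice the five-term sequence \eqref{eq:rel_diff_charact_exact_sequence} of Theorem~\ref{thm:rel_diff_charact_exact_sequence} between the mapping cone long exact sequences in singular cohomology, using $\Ul$-coefficients on the left and $\Z$-coefficients on the right. Exactness at the three middle terms $\widehat H^{k-1}(A;\Z)$, $\widehat H^k_\varphi(X,A;\Z)$, $\widehat H^k(X;\Z)$ has already been established in that theorem, so everything reduces to identifying the two ``seams'' with the correct pieces of the mapping cone sequences.

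For the left seam I would read off from the middle row of diagram~\eqref{eq:3x3diagram} that $\widehat H^{k-1}_{\mathrm{flat}}(X;\Z)=\ker(\curv)=\mathrm{im}\bigl(j\colon H^{k-2}(X;\Ul)\to\widehat H^{k-1}(X;\Z)\bigr)$. Naturality of $j$ then yields
\[
\varphi^*\widehat H^{k-1}_{\mathrm{flat}}(X;\Z)
= j\bigl(\varphi^* H^{k-2}(X;\Ul)\bigr)
= \mathrm{im}(j\circ\varphi^*),
\]
so Theorem~\ref{thm:rel_diff_charact_exact_sequence} identifies $\ker\ti$ with $\mathrm{im}(j\circ\varphi^*)$, which is the required exactness at $\widehat H^{k-1}(A;\Z)$. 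Injectivity of $j$ (again from \eqref{eq:3x3diagram}) gives $\ker(j\circ\varphi^*)=\ker\varphi^*$, and the standard mapping cone sequence
\[
\cdots\to H^{k-2}_\varphi(X,A;\Ul)\xrightarrow{\vds} H^{k-2}(X;\Ul)\xrightarrow{\varphi^*} H^{k-2}(A;\Ul)\to H^{k-1}_\varphi(X,A;\Ul)\to\cdots
\]
then supplies exactness at $H^{k-2}(X;\Ul)$ and extends the sequence indefinitely to the left.

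For the right seam I would invoke surjectivity of $c\colon\widehat H^k(X;\Z)\to H^k(X;\Z)$ (also from \eqref{eq:3x3diagram}) to conclude $\mathrm{im}(c\circ\varphi^*)=\varphi^* H^k(X;\Z)$, which by exactness of the $\Z$-coefficient mapping cone sequence equals $\ker\bigl(H^k(A;\Z)\to H^{k+1}_\varphi(X,A;\Z)\bigr)$. This gives exactness at $H^k(A;\Z)$, and the $\Z$-coefficient mapping cone sequence then carries the tail indefinitely to the right. The whole argument is essentially formal bookkeeping; the only real point requiring care is the compatibility of $j\circ\varphi^*$ with $\varphi^*\circ j$ in the two naturality squares above, and this is immediate from the defining formula \eqref{eq:def_j}. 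I foresee no genuine obstacle.
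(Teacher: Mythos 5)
Your proposal is correct and follows essentially the same route as the paper: exactness at the three middle terms is quoted from Theorem~\ref{thm:rel_diff_charact_exact_sequence}, the left seam is handled via the identification $\widehat H^{k-1}_{\mathrm{flat}} \cong \im(j)$, injectivity and naturality of $j$, and the $\Ul$-coefficient mapping cone sequence, while the right seam uses surjectivity of $c$ and the $\Z$-coefficient mapping cone sequence. No substantive difference from the paper's argument.
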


\begin{proof}
We use the identification $H^{k-2}(X;\Ul) \xrightarrow{\cong} \widehat H^{k-1}_\mathrm{flat}(X;\Z)$ from diagram~\eqref{eq:3x3diagram}.
In particular, the map $j: H^{k-2}(X;\Ul) \to \widehat H^{k-1}(X;\Z)$ is injective. 

Exactness at the three middle terms is clear from Theorem~\ref{thm:rel_diff_charact_exact_sequence}.
From the mapping cone sequence for cohomology with $\Ul$-coefficients, we conclude:
\begin{align*}
\ker\big[\, j \circ \varphi^* : H^{k-2}(X;\Ul) \to \widehat H^{k-1}(A;\Z) \,\big] 
&= 
\ker\big[\,\varphi^*:H^{k-2}(X;\Ul) \to H^{k-2}(A;\Ul)\,\big] \\
&=
\im\big[\,H^{k-2}_\varphi(X,A;\Ul) \to H^{k-2}(X;\Ul)\,\big] \,.
\end{align*}
This proves exactness at $H^{k-2}(X;\Ul)$.

From the mapping cone sequence for cohomology with integral coefficients and surjectivity of $c$, we conclude:
\begin{align*}
\ker\big[\,H^k(A;\Z) \to H^{k+1}_\varphi(X,A;\Z)\,\big]
&=
\im\big[\,\varphi^*:H^k(X;\Z) \to H^k(A;\Z)\,\big] \\
&=
\im\big[\,\varphi^* \circ c:\widehat H^k(X;\Z) \to H^k(A;\Z)\,\big] \,.
\end{align*}
This proves exactness at $H^k(A;\Z)$.
\end{proof}

A differential character $h \in \widehat H^k(X;\Z)$ is called {\em topologically trivial along $\phi$} if $\varphi^*c(h)=0$.
\index{differential characters!topologically trivial along $\phi$}%
\index{topologically trivial along $\phi$}%
A differential character $h \in \widehat H^k(X;\Z)$ is called {\em flat along $\phi$} if $\varphi^*\curv(h)=0$.
\index{differential characters!flat along $\phi$}%
\index{flat along $\phi$}%

As is well-known, a $\Ul$-bundle is topologically trivial if and only if it admits sections. 
Topological triviality is detected by the first Chern class.
Thus the pull-back bundle along a smooth map $\varphi$ is topologically trivial if and only if the original bundle admits sections along $\varphi$.
A similar statement holds for differential characters of any degree: 

\begin{cor}[Properties of sections]\label{cor:existence_sections} \index{Corollary!properties of sections}%
A differential character $h \in \widehat H^k(X;\Z)$ admits sections along a smooth map $\phi:A \to X$ if and only if it is topologically trivial along $\varphi$.

If $h$ admits parallel sections along $\phi$, then $h$ is also flat along $\phi$.
Conversely, if $(\curv(h),0) \in \Omega^k_{\phi,0}(X,A)$ and $h$ is topologically trivial along $\phi$, then it admits a parallel section. 

Sections along $\phi$ are uniquely determined by their covariant derivatives if $\phi_*:H_{k-2}(A;\Z) \to H_{k-2}(X;\Z)$ is injective. 
Explicitly, if $f_1,f_2 \in \widehat H^k_\phi(X,A;\Z)$ with $\vds(f_1)=\vds(f_2)$ and $\cov(f_1) = \cov(f_2)$, then $f_1 = f_2$. 
\end{cor}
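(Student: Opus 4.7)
All three claims read off the exact sequence \eqref{eq:rel_diff_charact_exact_sequence} of Theorem~\ref{thm:rel_diff_charact_exact_sequence} together with the short exact sequence
$$0\to H^{k-1}_\varphi(X,A;\Ul)\to\widehat H^k_\varphi(X,A;\Z)\xrightarrow{(\curv,\cov)}\Omega^k_{\varphi,0}(X,A)\to 0.$$
The first assertion is immediate: by exactness at $\widehat H^k(X;\Z)$, $\im(\vds)=\ker(\varphi^*\circ c)$, so $h$ admits a section along $\varphi$ if and only if $\varphi^*c(h)=0$.

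For the second assertion, the forward direction is a one-line $d_\varphi$-computation: a parallel section $f$ of $h$ has $(\curv(f),\cov(f))=(\curv(h),0)\in\Omega^k_{\varphi,0}(X,A)$, and $d_\varphi(\curv(h),0)=(0,\varphi^*\curv(h))=0$ forces $\varphi^*\curv(h)=0$, so $h$ is flat along $\varphi$. For the converse, I would first use surjectivity of $(\curv,\cov)$ to produce $f_1\in\widehat H^k_\varphi(X,A;\Z)$ with $(\curv(f_1),\cov(f_1))=(\curv(h),0)$. The residue $h_0:=h-\vds(f_1)$ is then flat and, being a difference of two characters in $\im(\vds)$, also topologically trivial along $\varphi$. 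A parallel section of $h_0$ is constructed from a class in $H^{k-1}_\varphi(X,A;\Ul)$ lifting $h_0\in\widehat H^k_\mathrm{flat}(X;\Z)\cong H^{k-1}(X;\Ul)$ along the $\Ul$-cohomology mapping-cone sequence; the integer-period half of $(\curv(h),0)\in\Omega^k_{\varphi,0}(X,A)$ enters precisely here, to guarantee the existence of the lift. Adding $f_1$ and the parallel section of $h_0$ produces the required parallel section of $h$.

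For the third assertion, exactness gives $f_1-f_2\in\ker(\vds)=\im(\ti)$, so $f_1-f_2=\ti(g)$ for some $g\in\widehat H^{k-1}(A;\Z)$. Since $\cov(\ti(g))=-\curv(g)$ and $\cov(f_1)=\cov(f_2)$, we have $\curv(g)=0$, i.e.\ $g$ is flat, so $g=j(u)$ with $u\in H^{k-2}(A;\Ul)$. The desired equality $f_1=f_2$ amounts to $\ti(g)=0$, which by exactness of \eqref{eq:rel_diff_charact_exact_sequence} at $\widehat H^{k-1}(A;\Z)$ is equivalent to $g\in\varphi^*\widehat H^{k-1}_\mathrm{flat}(X;\Z)=j(\varphi^*H^{k-2}(X;\Ul))$, i.e.\ to $u\in\im(\varphi^*)$. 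Universal coefficients together with divisibility of $\Ul$ (so $\Ext(\cdot,\Ul)=0$) identify $H^{k-2}(\cdot;\Ul)=\Hom(H_{k-2}(\cdot;\Z),\Ul)$, and injectivity of $\Ul$ as an abelian group converts surjectivity of the induced map $\varphi^*$ on these $\Hom$-groups into injectivity of $\varphi_*\colon H_{k-2}(A;\Z)\to H_{k-2}(X;\Z)$, which is exactly the hypothesis.

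The main obstacle is the converse of~(ii): upgrading the parallel relative character $f_1$ with the correct curvature data to a genuine parallel \emph{section} of $h$ requires handling the flat residue $h_0$ and verifying that the mapping-cone class in $H^{k-1}_\varphi(X,A;\Ul)$ lifting $h_0$ actually exists under the two stated hypotheses; this is where the integer-period clause on $(\curv(h),0)$ must be made to interact with $\varphi^*h$ on $(k{-}1)$-cycles of $A$.
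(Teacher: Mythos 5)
Your treatment of the first assertion, of the forward direction of the second, and of the third assertion is correct and essentially identical to the paper's: exactness of \eqref{eq:rel_diff_charact_exact_sequence} at $\widehat H^k(X;\Z)$ gives the first claim; the computation $d_\varphi(\curv(h),0)=(0,\varphi^*\curv(h))$ gives the forward half of the second; and for the third the paper runs exactly your chain $\varphi^*H^{k-2}(X;\Ul)\cong\varphi^*\Hom(H_{k-2}(X;\Z),\Ul)=\Hom(H_{k-2}(A;\Z),\Ul)\cong H^{k-2}(A;\Ul)$, using injectivity of $\varphi_*$ together with divisibility of $\Ul$ to conclude $\varphi^*\widehat H^{k-1}_\mathrm{flat}(X;\Z)=\widehat H^{k-1}_\mathrm{flat}(A;\Z)$ and hence $\ti(g)=0$.

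The converse of the second assertion is where your argument has a genuine gap; you have located it correctly but not closed it. After producing $f_1$ with $(\curv(f_1),\cov(f_1))=(\curv(h),0)$, you need to lift the flat residue $h_0=h-\vds(f_1)$ to a class in $H^{k-1}_\varphi(X,A;\Ul)$, and by the mapping cone sequence with $\Ul$-coefficients this requires $\varphi^*h_0=0$. Since $\varphi^*\vds(f)=\iota(\cov(f))$ for any relative character $f$, and $\cov(f_1)=0$, this amounts to $\varphi^*h=0$ in $\widehat H^k(A;\Z)$. The integral-period hypothesis on $(\curv(h),0)$ only yields $h(\varphi_*t)=1$ for those cycles $t\in Z_{k-1}(A;\Z)$ whose image $\varphi_*t$ bounds in $X$; it says nothing about cycles with non-nullhomologous image, so the required lift need not exist. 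Indeed the stated hypotheses do not suffice: take $\varphi=\id_{S^1}$, $k=2$, and $h=\iota(\eta)$ with $\int_{S^1}\eta=\tfrac12$. Then $(\curv(h),0)=(0,0)\in\Omega^2_{\varphi,0}$ and $c(h)=0$, but $\cov:\widehat H^2(S^1,S^1;\Z)\to\Omega^1(S^1)$ is an isomorphism with $\vds=\iota\circ\cov$, so the only parallel relative character is $0$, and $h\neq 0$ admits no parallel section. Be aware that the paper's own proof of this direction is no better: it invokes surjectivity of $(\curv,\cov)$ to produce a parallel relative character with curvature $\curv(h)$, but never shows that this character is a section of $h$ rather than of some other character with the same curvature. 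The obstacle you flagged is therefore real and is not resolved in the source either.
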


\begin{proof}
The first statement follows immediately from Theorem~\ref{thm:rel_diff_charact_exact_sequence}.

For the second, let $f \in \widehat H^k_\phi(X,A;\Z)$ with $\vds(f)=h$ and $\cov(f) =0$.
Then $d_\phi(\curv(f),\cov(f))=0$ implies $0=\phi^*\curv(f) - d\cov(f) = \phi^*\curv(h)$.   
Conversely, by surjectivity of the map $(\curv,\cov):\widehat H^k(X,A;\Z) \to \Omega^k_{\phi,0}(X,A)$, we find a parallel section if $(\curv(h),0) \in \Omega^k_{\varphi,0}(X,A)$.
A necessary condition is $\phi^*\curv(h) =0$, but this might not be sufficient. 

To show the last assertion, observe that $\phi_*H_{k-2}(A;\Z) = H_{k-2}(X;\Z)$ implies 
\begin{align*}
\phi^*\widehat H^{k-1}_\mathrm{flat}(X;\Z)
&\cong
\phi^*H^{k-2}(X;\Ul) \\
&=
\phi^*\Hom(H_{k-2}(X;\Z),\Ul) \\
&= 
\Hom(\phi_*H_{k-2}(A;\Z),\Ul) \\
&= 
\Hom(H_{k-2}(A;\Z),\Ul) \\
&= 
H^{k-2}(A;\Ul) \\
&\cong
\widehat H^{k-1}_\mathrm{flat}(A;\Z) \,.
\end{align*}
Now let $f_1,f_2 \in \widehat H^k_\phi(X,A;\Z)$ be sections of $h \in \widehat H^k(X;\Z)$ with $\cov(f_1) = \cov(f_2)$.
By Theorem~\ref{thm:rel_diff_charact_exact_sequence}, we have $f_1 - f_2 = \ti(g)$ for some $g \in \widehat H^{k-1}(A;\Z)$.
Since $\curv(g) = - \cov(f_1-f_2) =0$, we have  $g \in \widehat H^{k-1}_\mathrm{flat}(A;\Z) = \phi^*\widehat H^{k-1}_\mathrm{flat}(X;\Z)$.
Hence $f_1 - f_2 = \ti(g) = 0$ by Theorem~\ref{thm:rel_diff_charact_exact_sequence}.  
\end{proof}

\begin{remark}
Any differential character $h\in \widehat H^k(X;\Z)$ has local sections in the
following sense:
\index{section!local $\sim$}%
\index{differential characters!local sections}%
If $\phi:A\to X$ is smooth where $A$ is contractible, then $H^k(A;\Z)=0$.
Hence $h$ is topologically trivial along $\phi$ and therefore admits sections
along $\phi$.
\end{remark}

\begin{example}\label{ex:CCS}
Let $G$ be a compact Lie group with Lie algebra $\g$.
An invariant polynomial, homogeneous of degree $k$, is a symmetric $\Ad_G$-invariant multilinear map $q:\g^{\otimes k}\to \R$.
The Chern-Weil construction associates to any principal $G$-bundle with connection $(P,\nabla)$ a closed differential form $CW(q) = q(R^\nabla) \in \Omega^{2k}(X)$ by applying the polynomial $q$ to the curvature $2$-form $R^\nabla$ of the connection $\nabla$.
Consider those polynomials $q$ for which the Chern-Weil form $CW(q)$ has integral periods. 
Let $u \in H^{2k}(X;\Z)$ be a universal characteristic class for principal $G$-bundles that coincides in $H^{2k}(X;\R)$ with the de Rham class of $CW(q)$. 
The Cheeger-Simons construction~\cite[Thm~2.2]{CS83} associates to this setting a differential character $\widehat{CW}(q,u) \in \widehat H^{2k}(X;\Z)$ with curvature $\curv(\widehat{CW}(q,u)) = CW(\nabla,q)$, the Chern-Weil form, and characteristic class $c(\widehat{CW}(q,u))=u$, the fixed universal characteristic class.
\index{Cheeger-Simons construction}%
\index{differential characters!Cheeger-Simons construction}%
The construction is natural with respect to bundle maps.

Since the total space $EG$ of the universal principal $G$-bundle is contractible, universal characteristic classes vanish upon pull-back to the total space.
By Theorem~\ref{thm:rel_diff_charact_exact_sequence} the Cheeger-Simons character $\widehat{CW}(q,u)$ thus admits sections along the bundle projection $\pi:P \to X$.
The so-called Cheeger-Chern-Simons construction of \cite{B12} yields a canonical section $\widehat{CCS}(q,u) \in \widehat H^{2k}_\pi(X,P;\Z)$ with covariant derivative $\cov(\widehat{CCS}(q,u)) = CS(q) \in \Omega^{2k-1}(P)$, the Chern-Simons form.
\index{Cheeger-Chern-Simons construction}%
\index{relative differential characters!Cheeger-Chern-Simons construction}%
The construction is natural with respect to bundle maps.
\end{example}

\section{Relative differential characters and fiber integration}

Throughout this section, we consider the case that $A\subset X$ is a smooth subspace and $\phi:A \to X$ the inclusion.
We drop $\phi$ in the notation and write $\widehat H^k(X,A;\Z)$ instead of $\widehat H^k_\phi(X,A;\Z)$.

Let us consider the space $\widehat H^k(X,X;\Z)$ of differential characters with global sections.
Let $(x,y) \in Z_k(X,X;\Z)$.
Then we have $x = -\partial y$, hence $(x,y) = \partial (0,-y)$ and $Z_k(X,X;\Z)=B_k(X,X;\Z)$.
Therefore any relative differential character $f\in\widehat H^k(X,X;\Z)$ is of the form
$$
f(c,-\partial c) = f(\partial(0,c)) = \exp\Big(2\pi i \int_c\cov(f)\Big) .
$$
Conversely, each $(k-1)$-form $\vartheta$ defines a relative differential character in $\widehat H^k(X,X;\Z)$ by
$$
f(c,-\partial c) = \exp\Big(2\pi i \int_c\vartheta\Big) .
$$
Thus $\cov:\widehat H^k(X,X;\Z) \to \Omega^{k-1}(X)$ is an isomorphism.
Moreover, the diagram
$$
\xymatrix{
\widehat H^k(X,X;\Z) \ar[rr]^{\cov}_{\cong} \ar[dr]_{\vds} & & \Omega^{k-1}(X)\ar[ld]^\iota
\\
& \widehat H^k(X;\Z) &\\
}
$$
commutes.

We may now reinterprete fiber integration for fibers $F$ with boundary as follows:
Given $h\in H^k(E;\Z)$, fiber integration along $F$ yields a form \mbox{$\rho = (-1)^{k-\dim F} \fint_F \curv(h) \in\Omega^{k-\dim F}(X)$} in the notation of Proposition~\ref{prop:fiber_int_rand}.
Applying the isomorphism $\cov^{-1}$, we obtain a relative differential character $\widehat \pi_!^Eh\in\widehat H^{k-\dim(F)+1}(X,X;\Z)$ with the property, that $\vds(\widehat\pi_!h)=\iota(\rho)=\widehat\pi_!^{\partial E}h$.
Hence we have constructed a fiber integration map 
\begin{equation}\label{eq:def_fiber_int_relative}
\widehat \pi_!^E = \cov^{-1} \circ \fint_F \circ \, (-1)^{k-\dim F}\curv: 
\quad H^k(E;\Z) \to H^{k-\dim(F)+1}(X,X;\Z)
\end{equation} 
\index{+PihatE@$\widehat \pi_\ausruf^E$, fiber integration for fibers with boundary}%
\index{relative differential characters!obtained from fiber integration}%
\index{fiber integration!for differential characters!for fibers with boundary}%
such that the diagram
\begin{equation}
\xymatrix{
& \widehat H^k(E;\Z) \ar[ld]_{\widehat \pi_!^E} \ar[rd]^{\widehat\pi_!^{\partial E}} & \\
\widehat H^{k-\dim F+1}(X,X;\Z) \ar[rr]_{\vds}&& \widehat H^{k-\dim\partial F}(X;\Z)
}
\label{eq:FI_bound_relative}
\end{equation}
commutes.

\begin{example}\label{ex:partrans}
Let the fibers of $\pi:E\to X$ be diffeomorphic to compact intervals and carry an orientation.
Let $k=2$ and let $P$ be a $\Ul$-bundle with connection $\nabla$ corresponding to $h\in \widehat H^2(E;\Z)$.
In the notation of Examples~\ref{ex:FIk1f1} and \ref{ex:FIk2f1} we have
$$
\widehat\pi_!^{\partial E} P 
= (j^+)^*P\otimes (j^-)^*P^* 
= \Hom((j^-)^*P,(j^+)^*P)
$$
where $\Hom$ stands for (unitary)\footnote{If we regard $\widehat\pi_!^{\partial E} P$ as a $\Ul$-principal bundle, we have to take unitary homomorphisms. If we regard it as a complex line bundle, we have to take all $\C$-linear homomorphisms.} homomorphisms.
Now $\widehat \pi_!^EP\in \widehat H^2(X,X;\Z)$ yields a global section $\sigma$ of $\widehat\pi_!^{\partial E} P= \Hom((j^-)^*P,(j^+)^*P)$, uniquely determined up to multiplication by an element in $\Ul$ over each connected component of $X$.
In the construction of $\sigma$ we choose $\sigma(x_0)\in(\widehat\pi_!^{\partial E} P)_{x_0}= \Hom(P_{j^-(x_0)},P_{j^+(x_0)})$ as the parallel transport in $P$ along the fiber $E_{x_0}$ for some fixed $x_0$.
Then one can check that $\sigma(x)$ is parallel transport in $P$ along the fiber $E_x$ for all $x$ in the connected component of $X$ containing $x_0$.
\index{fiber integration!for differential characters!$1$-dimensional fiber}
\end{example}

%%%%%%%%%%%%%%%%%%%%%%%%%%%%%%%%%%%%%%%%%%%%%%%%%%%%%%%%%%%%%%%%%%%%%%%%%
\chapter{Applications}\label{sec:applications}
%%%%%%%%%%%%%%%%%%%%%%%%%%%%%%%%%%%%%%%%%%%%%%%%%%%%%%%%%%%%%%%%%%%%%%%%%

We will now see how various constructions occurring in different contexts in the literature, such as higher-dimensional holonomy, parallel transport and transgression as well as chain field theories, can be described using the general calculus of absolute and relative differential characters developed in the preceding sections.

\section{Higher dimensional holonomy and parallel transport}\label{subsec:Hol_PT}
In this section, we discuss holonomy and parallel transport of differential characters along compact oriented smooth manifolds.  
Holonomy of smooth Deligne classes has been discussed in \cite[Sec.~3]{CJM04}.
Surface holonomy was considered as classical action for a quantum field theory in \cite{G88,FNSW10}.
An approach to holonomy along surfaces with boundary using $D$-branes is described in \cite[Sec.~6]{FNSW10}. 

For a $\Ul$-bundle with connection $(P,\nabla)$ on $X$, holonomy around a closed loop is defined geometrically by parallel transport.
Parallel transport along a path \mbox{$\gamma:[0,1] \to X$} in the associated complex line bundle takes values in the line $L_{\gamma(0)}^* \otimes L_{\gamma(1)}$.
Holonomy along a closed path $\gamma:[0,1] \to X$ is the element in $\Ul$ that corresponds to the value of the parallel transport in $L_{\gamma(0)}^* \otimes L_{\gamma(0)} \cong \C$.
\index{holonomy!of a line bundle with connection}%
\index{parallel transport!for line bundle with connection}%

\emph{Higher dimensional holonomy.}
In abritrary degree $k$, let $h \in \widehat H^k(X;\Z)$ be a differential character.
In view of Example~\ref{ex:U1Buendel}, we may think of the map $h$ as defining holonomy around orientable closed manifolds of dimension $k-1$.
More explicitly, for a smooth map $\phi:\Sigma \to X$ from an oriented closed $(k-1)$-manifold $\Sigma$, we set
\begin{equation}\label{eq:def_hol}
\hol^h(\phi) := \phi^*h([\Sigma]) = \phi^*h([\Sigma]_{\partial S_k}) = h(\phi_*[\Sigma]_{\partial S_k}). 
\end{equation}
\index{+Holh@$\hol^h$, higher holonomy}%
\index{higher holonomy}%
\index{holonomy!higher $\sim$}%
\index{differential characters!holonomy}
Holonomy is invariant under thin cobordism in the sense of \cite{BTW04}: 
for a cobordism $\Phi:W\to X$ from $\phi:\Sigma \to X$ to $\phi':\Sigma'\to X$, we have $\phi'_*[\Sigma']_{\partial S_k} - \phi_*[\Sigma]_{\partial S_k} = \partial \Phi_*[W]_{S_k}$.
If the cobordism is thin, then $\Phi_* c \in S_k(X;\Z)$ for any fundamental cycle $c$ of $W$.
Thus $\hol^h(\phi') = h(\phi'_*[\Sigma']_{\partial S_k}) = h(\phi_*[\Sigma]_{\partial S_k}) = \hol^h(\phi)$.
\index{holonomy!thin invariance}

Higher dimensional parallel transport will be defined analogously by evaluating differential characters along oriented smooth $(k-1)$-manifolds with boundary.
The result will be an element in a complex line attached to the boundary.
For surfaces such constructions are well known from Chern-Simons theory, see e.g.\ \cite[Sec.~2]{RSW89} and \cite[Sec.~2]{F95}.

\emph{Construction of the line bundle $\LL^h$.}
Let $h \in \widehat H^k(X;\Z)$ and let $W$ be a compact oriented $(k-1)$-manifold $W$ with boundary $\partial W = \Sigma$.
Let $\phi:\Sigma \to X$ be a smooth map which extends to a map defined on $W$.
In other words, it lies in the image of the restriction map $r:C^\infty(W,X) \to C^\infty(\partial W,X)$, $\Phi \mapsto \Phi|_{\partial W}$.   
For a smooth map $\Phi:W \to X$ we set \mbox{$-\Phi:\overline{W} \to X$} for the same map from the manifold with reversed orientation. 
On the set $C^\infty(W,X) \times \C$, we consider the equivalence relation
\begin{equation}\label{eq:eq_rel}
(\Phi,c) \sim (\Phi',c')
:\Leftrightarrow \; \mbox{$r(\Phi) = r(\Phi')$ and $c = \hol^h(\Phi' \cup_\phi -\Phi) \cdot c'$} .
\footnote{In general, $\Phi' \cup_\phi -\Phi$ is not smooth as a map defined on the \emph{manifold} $W\cup_{\partial W}\overline W$ but it defines a smooth singular cycle if the fundamental cycle of $W\cup_{\partial W}\overline W$ is chosen appropriately.}
\end{equation}
For $\phi \in r(C^\infty(W,X))$, this defines a complex line 
\begin{equation}\label{eq:def_LL}
\LL^h_\phi 
:= \{(\Phi,c) \,|\, \Phi \in r^{-1}(\phi), c \in \C\} / \sim \,.
\end{equation}\index{+Lh@$\LL^h$, line bundle for higher parallel transport}%
Varying the map $\phi$, we obtain a complex line bundle $\LL^h \to r(C^\infty(W,X))$.
Holonomy defines a Hermitian metric on $\LL^h$ by
\begin{equation}\label{eq:def_metr}
\langle [\Phi_1,c_1],[\Phi_2,c_2] \rangle := \hol^h(\Phi_1 \cup_\varphi -\Phi_2) \cdot c_1 \cdot \overline{c_2} \,. 
\end{equation}
This is well defined, since for $(\Phi_1,c_1) \sim (\Phi'_1,c'_1)$ we have $c'_1 = \hol^h(\Phi_1 \cup_\phi -\Phi'_1) \cdot c_1$ and thus 
\begin{align*}
\langle [\Phi'_1,c'_1],[\Phi_2,c_2]\rangle 
&= 
\hol^h(\Phi'_1 \cup_\varphi -\Phi_2) \cdot c'_1 \cdot \overline{c_2} \\
&=
\hol^h(\Phi_1 \cup_\phi -\Phi'_1) \cdot \hol^h(\Phi'_1 \cup_\varphi -\Phi_2) \cdot c_1 \cdot \overline{c_2} \\
&=
\hol^h(\Phi_1 \cup_\varphi -\Phi_2) \cdot c_1 \cdot \overline{c_2} \\
&=
\langle [\Phi_1,c_1],[\Phi_2,c_2]\rangle \,. 
\end{align*}

\emph{Higher dimensional parallel transport.}
Parallel transport along $\Phi:W \to X$ is defined by 
\begin{equation}\label{eq:def_PT}
\PT^h(\Phi) 
:= 
[\Phi,1] \in \LL^h_{r(\Phi)} . 
\end{equation}
\index{+PTh@$\PT^h$, higher parallel transport}%
\index{higher parallel transport}%
\index{parallel transport!higher $\sim$}
\index{differential characters!parallel transport}
The map $\PT^h:C^\infty(W,X) \to \LL^h$, $\Phi \mapsto [\Phi,1]$, is a section of $\LL^h$ along the restriction map $r$.
Moreover, $[\Phi,1]$ has unit length.
Thus parallel transport yields a section of the $\Ul$-bundle associated with the Hermitian line bundle $\LL^h$. 

\emph{The connection $\nabla^h$ on $\LL^h$.}
We construct a connection $\nabla^h$ on the bundle $\LL^h$ by describing its parallel transport (not to be confused with the higher dimensional parallel transport constructed above):
Choose a path $\gamma:[0,1] \to r(C^\infty(W,X))$ and a lift $\Gamma:[0,1] \to C^\infty(W,X)$ with $r \circ \Gamma = \gamma$.
Define $F: [0,1] \times W \to X$ by $F(t,w) := \Gamma(t)(w)$.

The path $\Gamma$ yields a lift of the path $\gamma$ to the total space $\LL^h$, defined by 
$$
\overline\Gamma:[0,1] \to \LL^h, \quad\overline\Gamma(t):= [\Gamma(t),1].
$$
We define parallel transport along the path $\gamma$ to be the homomorphism
\begin{equation}\label{eq:def_nabla_PT}
\PP^{\nabla^h}_\gamma: \LL^h_{\gamma(0)} \to \LL^h_{\gamma(t)},\quad
 \overline\Gamma(0) = [\Gamma(0),1] \mapsto \exp \Big( - 2\pi i \int_{[0,t] \times W} F^*\curv(h) \Big) \cdot [\Gamma(t),1] .   
\end{equation}

\emph{Identification of the holonomy of $\nabla^h$.}
Now we compute the holonomy of this connection.
Let $\gamma:[0,1] \to r(C^\infty(W,X))$ be a closed curve, i.e.~$\gamma(0) = \gamma(1)=\phi \in r(C^\infty(W,X))$.
Then the lift $\Gamma:[0,1] \to C^\infty(W,X)$ need not be closed.
But for any $w \in \partial W$, we have: 
$$
F(0,w) 
= 
\Gamma(0)(w) 
= 
\gamma(0)(w) 
=
\phi(w)
=
\gamma(1)(w)
=
F(1,w).
$$
Hence $F|_{[0,1] \times \partial W}$ descends to a map $f:S^1 \times \partial W \to X$.
 
By definition, holonomy along $\gamma$ in the bundle $(\LL^h,\nabla^h)$ is the complex number $\hol^{\nabla^h}(\gamma) \in \C^*$ defined by 
\begin{align*}
\PP^{\nabla^h}_\gamma (\overline\Gamma(0)) 
&= \hol^{\nabla^h}(\gamma) \cdot \overline\Gamma(0) \,. 
\end{align*}%
By \eqref{eq:eq_rel}, we may write 
$$
\overline\Gamma(0) 
= 
[\Gamma(0),1] 
= 
\big( \hol^h(\Gamma(1) \cup_\phi -\Gamma(0)) \big)^{-1} \cdot [\Gamma(1),1] 
= 
\big( \hol^h(\Gamma(1) \cup_\phi -\Gamma(0)) \big)^{-1} \cdot \overline\Gamma(1).
$$
Thus we obtain for the parallel transport along the closed curve $\gamma$:
\begin{align*}
\PP^{\nabla^h}_\gamma(\overline\Gamma(0))
&= 
\exp \Big( -2\pi i \int_{[0,1] \times W} F^*\curv(h) \Big) \cdot \overline\Gamma(1) \\
&= 
\exp \Big( -2\pi i \int_{F_*([0,1]\times W)} \curv(h) \Big) \cdot \overline\Gamma(1) \\
&= 
h \big(-\partial F_*([0,1]\times W)\big) \cdot \overline\Gamma(1) \\
&= 
h \big( F_*([0,1] \times \partial W) - F_*(\{1\} \times W \sqcup \{0\} \times \overline W) \big) \cdot \overline\Gamma(1) \\
&= 
h \big( F_*([0,1] \times \partial W)\big) \cdot h \big(\Gamma(1)_*W - \Gamma(0)_* W)\big)^{-1} \cdot \overline\Gamma(1) \\
&= 
f^*h([S^1 \times \partial W]) \cdot \hol^h(\Gamma(1) \cup_\phi -\Gamma(0))^{-1} \cdot \overline\Gamma(1) \\
&=
f^*h([S^1 \times \partial W]) \cdot \hol^h(\Gamma(1) \cup_\phi -\Gamma(0))^{-1} \cdot \overline\Gamma(1) \\
&=
\hol^h(f) \cdot \overline\Gamma(0) \,.
\end{align*}
Consequently, 
\begin{equation}\label{eq:hol_LL}
\hol^{\nabla^h}(\gamma) = \hol^h(f) \in \Ul \subset \C^* \,. 
\end{equation}
Thus the holonomy of $\nabla^h$ along the path $\gamma$ coincides with the higher dimensional holonomy along the map $f:S^1 \times \partial W \to X$.

In particular, we have defined a unitary connection $\nabla^h$ on the line bundle \mbox{$\LL^h \to r(C^\infty(W,X))$} with holonomy given by the holonomy of the differential character $h \in \widehat H^k(X;\Z)$. 

\emph{Computation of the connection $1$-form.}
The bundle $\LL^h \to r(C^\infty(W,X))$ with connection $\nabla^h$ and section $\PT^h$ along the restriction map \mbox{$r:C^\infty(W,X) \to C^\infty(\partial W,X)$} yields a relative differential character $[\LL^h,\nabla^h,\PT^h] \in \widehat H^2_r(r(C^\infty(W,X)),C^\infty(W,X);\Z)$.
To complete the picture of the equivalence class $[\LL^h,\nabla^h,\PT^h]$ as a relative differential character, it remains to 
compute the $1$-form $\cov([\LL^h,\nabla^h,\PT^h]) \in \Omega^1(C^\infty(W,X))$.
By Example~\ref{ex:H2rel}, this corresponds to the connection $1$-form of $r^*\nabla^h$ with respect to the section $\PT^h$.
We now compute this $1$-form.

Let $\Gamma:[0,1] \to C^\infty(W,X)$ be a path as above and $\overline\Gamma$ the corresponding lift of the path $\gamma = r \circ \Gamma$ to the total space $r^*\LL^h$.
The connection $1$-form $\vartheta^{r^*\nabla^h}$ of $r^*\nabla^h$ 
is determined by parallel transport along the path $\Gamma$ through the equation
$$
\PP^{r^*\nabla^h}_\gamma: r^*\LL^h_{\gamma(0)} \to r^*\LL^h_{\gamma(t)},\quad
\overline{\Gamma}(0)
\mapsto
\exp \Big( - \int_0^t \vartheta^{r^*\nabla^h}(\overline{\Gamma}')(s) ds \Big)  \cdot \overline{\Gamma}(t)\,.
$$
Comparing with \eqref{eq:def_nabla_PT}, we obtain
\begin{align}
\exp \Big( - \int_0^t (\vartheta^{r^*\nabla^h}(\overline{\Gamma}'))(s) \, ds \Big)
&=
\exp \Big( -2\pi i \int_{[0,t] \times W} F^*\curv(h) \Big) \notag \\
&=
\exp \Big( -2\pi i \int_{[0,t]} \fint_W F^*\curv(h) \Big) \notag \\
&=
\exp \Big( -2\pi i \int_{[0,t]} \Gamma^*\Big(\fint_W \ev_W^*\curv(h)\Big) \Big) \,. \label{eq:nabla_h_form}
\end{align}
Here $\ev_W:C^\infty(W,X) \times W \to X$, $(\Phi,w) \mapsto \Phi(w)$, denotes the evaluation map and $\fint_W$ the fiber integration in the trivial bundle $[0,1] \times W \to [0,1]$. 
Since \eqref{eq:nabla_h_form} holds for any $t \in [0,1]$, we have $\vartheta^{r^*\nabla^h}(\overline{\Gamma}')(s) =  2\pi i \cdot  \Gamma^*(\fint_W \ev_W^*\curv(h))_{s}(\tfrac{\partial\;}{\partial s})$.
This determines the connection $1$-form of $r^*\nabla^h$ with respect to the section $\overline{\Gamma}$ along the path $\gamma: [0,1] \to r(C^\infty(W,X))$.
By Example~\ref{ex:H2rel}, we conclude
\begin{equation}\label{eq:covLL}
\cov([\LL^h,\nabla^h,\PT^h]) 
=
- \fint_W \ev_W^*\curv(h) \,.
\end{equation}

The transgression maps defined in the following sections use fiber integration to generalize the construction of the line bundle with connection $(\LL^h,\nabla^h)$ and the section $\PT^h$ along the restriction map. 

\section{Higher dimensional transgression}\label{subsec:transgression}
In this section, we define transgression of differential characters of arbitrary degree along oriented closed manifolds.
The classical case studied in the literature is transgression along $S^1$ for degree-$2$ and degree-$3$ differential cohomology.
Our construction generalizes these classical cases to transgression along oriented closed manifolds of arbitrary finite dimension.
It turns out that the holonomy defined in Section~\ref{subsec:Hol_PT} is a special case of this transgression.

Let $\Sigma$ be a compact smooth manifold without boundary, and let $X$ be any smooth manifold.
Then the space $C^\infty(\Sigma,X)$ of smooth maps from $\Sigma$ to $X$ is again a smooth space as explained in Section~\ref{sec:smoothspaces}.
The best-known space of this type is the {\em free loop space} $\LL(X) := C^\infty(S^1,X)$ of smooth maps from the circle $S^1$ to $X$.
\index{+LlX@$\LL(X)$, free loop space of $X$}
\index{loop space}%

The evaluation map $\ev_\Sigma$ is defined in the obvious way:
$$
\ev_\Sigma: C^\infty(\Sigma,X) \times \Sigma \to X ,\quad (\phi,s) \mapsto \phi(s) .
$$
\index{+EvSigma@$\ev_\Sigma$, evaluation map}%
\index{evaluation map}%
We consider the pull-back $\ev_\Sigma^*: \widehat H^{k}(X;\Z) \to \widehat H^{k}(C^\infty(\Sigma,X) \times \Sigma;\Z)$.
If $\Sigma$ is oriented, then we can integrate differential characters in $\widehat H^k(C^\infty(\Sigma,X) \times \Sigma;\Z)$ over the fiber of the trivial fiber bundle $C^\infty(\Sigma,X) \times \Sigma \stackrel{\pi}{\twoheadrightarrow} C^\infty(\Sigma,X)$. 

\begin{definition}[Transgression along closed manifold] \index{Definition!transgression along closed manifold}%
Let $\Sigma$ be a compact oriented smooth manifold without boundary, and let $X$ be any smooth manifold. 
\emph{Transgression along $\Sigma$} is the map 
\begin{equation}\label{eq:def_trans}
\tau_\Sigma: \widehat H^*(X;\Z) \to \widehat H^{*-\dim \Sigma}(C^\infty(\Sigma,X);\Z)\,,\quad 
h \mapsto \widehat\pi_! (\ev_\Sigma^*h) \,.
\end{equation}
\index{+TauSigma@$\tau_\Sigma$, transgression along $\Sigma$}
\index{transgression!along closed manifold}%
\index{differential characters!transgression along closed manifold}%
In particular, for $\Sigma=S^1$ we have
$$
\tau_{S^1}: \widehat H^*(X;\Z) \to \widehat H^{*-1}(\LL(X);\Z)  \,,\quad 
h \mapsto \widehat\pi_! (\ev_{S^1}^*h) \,.
$$
\end{definition}

\begin{example}\label{ex:trans_k23}
\index{transgression!along $S^1$}%
For $k=2$, transgression along $\Sigma = S^1$ associates to a $\Ul$-bundle on $X$ its holonomy map $\LL(X)\to \Ul$.

For $k=3$, transgression along $\Sigma = S^1$ has been discussed in quantum field theory to construct the anomaly bundle over loop space \cite{G88,B93}.
In this case, the image of the transgression map has been characterized \cite{W1,W2}.
\end{example}

\begin{example}\label{ex:trans_k-1}
Let $h \in \widehat H^k(X;\Z)$.
Let $\dim \Sigma = k-1$.
Transgression along $\Sigma$ yields a differential character $\tau_\Sigma h \in \widehat H^1(C^\infty(\Sigma,X);\Z)$, which by Example~\ref{ex:H1} corresponds to $\Ul$-valued function on the mapping space $C^\infty(\Sigma,X)$.
We verify that this function coincides with holonomy of $h$ as defined in Section~\ref{subsec:Hol_PT}.
For any fixed $\phi \in C^\infty(\Sigma,X)$, we have the pull-back diagram:
\begin{equation*}
\xymatrix{
\{\phi\} \times \Sigma \ar[d]_{\widehat\pi_!} \ar[r]^{\imath_\phi \times \id} \ar`u`[rr]^{\widetilde\phi}[rr] & C^\infty(\Sigma,X) \times \Sigma \ar[d]_{\widehat\pi_!} \ar[r]^(0.7){\ev_\Sigma} & X \\
\{\phi\} \ar[r]^{\imath_\phi} & C^\infty(\Sigma,X)\,. & \\
}
\end{equation*}
Thus by naturality of fiber integration, we have:
\begin{align}
\hol^h(\phi)\,\,
&\ist{\eqref{eq:def_hol}}\,\,
(\phi^*h)([\Sigma]) \notag \\
&= 
(\widehat\pi_!\widetilde\phi^*h) \notag \\
&=
(\widehat\pi_!(\imath_\phi \times \id)^*\ev_\Sigma^*h)) \notag \\
&\ist{\eqref{eq:fiber_int_natural}}
\imath_\phi^*(\underbrace{\widehat\pi_!\ev_\Sigma^*h}_{=\tau_\Sigma h}) \notag \\
&\ist{\eqref{eq:def_trans}}\,\,
(\tau_\Sigma h)(\phi) \,. \label{eq:trans_hol}
\end{align}
\end{example}

To evaluate the $\Ul$-valued function $\tau_\Sigma h$ on the map $\phi$, we could have used Definition~\ref{def:fiber_int_diff_charact_construction} instead of the pull-back diagram. 
But the argument above can be generalized to compute the holonomy of the character $\tau_\Sigma h$ for transgression of any degree:
Let $h \in \widehat H^k(X;\Z)$, and let $\Sigma_2$ be an oriented closed manifold.
Let $\Sigma_1$ be an oriented closed manifold of dimension $\dim(\Sigma_1) = k-\dim(\Sigma_2)-1$, and let $\phi:\Sigma_1 \to C^\infty(\Sigma_2,X)$ be a smooth map.
By \eqref{eq:trans_hol} we have:
\begin{equation}\label{eq:hol_tau}
\hol^{\tau_{\Sigma_2}h}(\phi) 
=
(\tau_{\Sigma_1}(\tau_{\Sigma_2}h))(\phi). 
\end{equation}
\index{holonomy!of transgressed character}%
We generalize this equation, replacing holonomy by transgression: 
Let $\Sigma_1$ and $\Sigma_2$ be compact oriented smooth manifolds without boundary.
The evaluation in the first entry yields a canonical identification
\begin{equation*}
\ev_1: C^\infty(\Sigma_1 \times \Sigma_2,X) \xrightarrow{\cong} C^\infty(\Sigma_1,C^\infty(\Sigma_2,X)), \quad
f \mapsto (t \mapsto f(t,\cdot)). 
\end{equation*}
\index{+Evone@$\ev_1$, evaluation map}%
\index{evaluation map}%
Using functoriality and naturality of fiber integration, we conclude that higher dimensional transgression is functorial and graded commutative:

\begin{prop}[Functoriality of transgression] \index{Proposition!functoriality of transgression}%
Let $\Sigma_1$ and $\Sigma_2$ be compact oriented smooth manifolds without boundary.
Let $h \in \widehat H^k(X;\Z)$.
Then we have: 
\begin{equation}\label{eq:tautau}
\tau_{\Sigma_1 \times \Sigma_2} h 
=
\ev_1^*(\tau_{\Sigma_1} \circ \tau_{\Sigma_2})h 
= 
(-1)^{\dim \Sigma_1 \cdot \dim \Sigma_2} \tau_{\Sigma_2 \times \Sigma_1} h .
\end{equation}
\index{transgression!functoriality}%
\index{functoriality!of transgression}%
\end{prop}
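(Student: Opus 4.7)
The plan is to derive both equalities from three tools already established in the excerpt: functoriality of fiber integration (equation \eqref{eq:fiber_int_funct}), naturality \eqref{eq:fiber_int_natural}, and orientation reversal (Proposition \ref{prop:orient_reversal}). Throughout, set $Z := C^\infty(\Sigma_1\times\Sigma_2,X)$ and $Y := C^\infty(\Sigma_1,C^\infty(\Sigma_2,X))$, so that $\ev_1: Z \to Y$ is a diffeomorphism.

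For the first equality, the crucial observation is the factorisation
\[
\ev_{\Sigma_1\times\Sigma_2}
= \ev_{\Sigma_2}\circ (\ev_{\Sigma_1}\times\id_{\Sigma_2})\circ (\ev_1\times\id_{\Sigma_1\times\Sigma_2}) ,
\]
valid on $Z\times\Sigma_1\times\Sigma_2$, as a direct check on $(f,s_1,s_2)\mapsto f(s_1,s_2)$ shows. Applied to the two-step trivial bundle
\[
Y\times\Sigma_1\times\Sigma_2 \to Y\times\Sigma_1 \to Y
\]
(with successive fibers $\Sigma_2$ and $\Sigma_1$), functoriality \eqref{eq:fiber_int_funct} expresses the fiber integration over $\Sigma_1\times\Sigma_2$ as the composition of fiber integrations over $\Sigma_2$ and then $\Sigma_1$; one verifies that the composite orientation agrees with the product orientation. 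Naturality \eqref{eq:fiber_int_natural} in the pull-back square along $\ev_{\Sigma_1}\times\id_{\Sigma_2}$ commutes $\ev_{\Sigma_1}^*$ past integration over $\Sigma_2$, and one further application of naturality along the diffeomorphism $\ev_1\times\id$ assembles everything into $\tau_{\Sigma_1\times\Sigma_2}h = \ev_1^*(\tau_{\Sigma_1}\tau_{\Sigma_2}h)$.

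For the second equality, I would use the swap $\sigma:\Sigma_1\times\Sigma_2\to\Sigma_2\times\Sigma_1$, $(s_1,s_2)\mapsto(s_2,s_1)$. With respect to the product orientations on source and target, $\sigma$ has degree $(-1)^{\dim\Sigma_1\cdot\dim\Sigma_2}$. It induces a diffeomorphism $\sigma^\sharp:C^\infty(\Sigma_2\times\Sigma_1,X)\to C^\infty(\Sigma_1\times\Sigma_2,X)$, $f\mapsto f\circ\sigma$, together with a fiberwise diffeomorphism $(\sigma^\sharp)^{-1}\times\sigma$ that intertwines $\ev_{\Sigma_1\times\Sigma_2}$ and $\ev_{\Sigma_2\times\Sigma_1}$. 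When $\dim\Sigma_1\cdot\dim\Sigma_2$ is odd the map $\sigma$ reverses the fiber orientation, so naturality combined with orientation reversal (Proposition \ref{prop:orient_reversal}) produces exactly the sign $(-1)^{\dim\Sigma_1\cdot\dim\Sigma_2}$; when it is even no sign appears and the same formula holds trivially.

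The main obstacle will be the orientation bookkeeping — verifying that the composite orientation produced by functoriality really matches the product orientation implicit in the definition of $\tau_{\Sigma_1\times\Sigma_2}$, and similarly tracking the sign through the swap argument. Beyond that, the proof is a mechanical chaining of the three tools listed above, all of which are already available.
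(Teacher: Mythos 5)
Your proposal is correct and follows essentially the same route as the paper: the first equality via the factorisation $\ev_{\Sigma_1\times\Sigma_2} = \ev_{\Sigma_2}\circ(\ev_{\Sigma_1}\times\id_{\Sigma_2})\circ(\ev_1\times\id)$ combined with functoriality \eqref{eq:fiber_int_funct} and two applications of naturality \eqref{eq:fiber_int_natural}, and the second equality via the swap of factors changing the fiber orientation by $(-1)^{\dim\Sigma_1\cdot\dim\Sigma_2}$ together with Proposition~\ref{prop:orient_reversal}. The orientation bookkeeping you flag is exactly the point the paper also handles (tacitly), so nothing is missing.
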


\begin{proof}
The canonical diffeomorphism $\Sigma_1 \times \Sigma_2 \xrightarrow{\cong} \Sigma_2 \times \Sigma_1$ yields a canonical identification $C^\infty(\Sigma_2 \times \Sigma_1,X) \xrightarrow{\cong} C^\infty(\Sigma_1 \times \Sigma_2,X)$.
The fiber orientation in the trivial fiber bundles with fiber $\Sigma_2 \times \Sigma_1$ is $(-1)^{\dim \Sigma_1 \cdot \dim \Sigma_2}$ times the one in the bundles with fiber $\Sigma_1 \times \Sigma_2$.
According to Proposition~\ref{prop:orient_reversal}, we obtain $\tau_{\Sigma_1 \times \Sigma_2} h = (-1)^{\dim \Sigma_1 \cdot \dim \Sigma_2} \tau_{\Sigma_2 \times \Sigma_1}$. 

The evaluation maps fit into the commutative diagram:
\begin{equation*}
\xymatrix{
C^\infty(\Sigma_1 \times \Sigma_2,X) \times (\Sigma_1 \times \Sigma_2) \ar[r]^(0.48){\ev_1 \times \id} \ar[d]^{\pi^{\Sigma_2}} 
\ar`u`[rrr]^{\ev_{\Sigma_1\times \Sigma_2}}[rrr] 
&  (C^\infty(\Sigma_1,C^\infty(\Sigma_2,X)) \times \Sigma_1) \times \Sigma_2 \ar[d]^{\pi^{\Sigma_2}} \ar[r]^(0.62){\ev_{\Sigma_1}\times \id} & C^\infty(\Sigma_2,X) \times \Sigma_2 \ar[r]^(0.65){\ev_{\Sigma_2}} \ar[d]^{\pi^{\Sigma_2}} & X \\
C^\infty(\Sigma_1 \times \Sigma_2,X) \times \Sigma_1 \ar[r]^(0.48){\ev_1\times \id} \ar[d]^{\pi^{\Sigma_1}} & C^\infty(\Sigma_1,C^\infty(\Sigma_2,X)) \times \Sigma_1 \ar[r]^(0.62){\ev_{\Sigma_1}} \ar[d]^{\pi^{\Sigma_1}} & C^\infty(\Sigma_2,X)  & \\
C^\infty(\Sigma_1 \times \Sigma_2,X) \ar[r]^{\ev_1} & C^\infty(\Sigma_1,C^\infty(\Sigma_2,X)) \,. && \\
}
\end{equation*}
Here $\pi^{\Sigma_i}$ denote the various projections in trivial bundles with fiber $\Sigma_i$ and $\pi^{\Sigma_1 \times \Sigma_2} = \pi^{\Sigma_1} \circ \pi^{\Sigma_2}$ denotes the projection in the trivial bundle with fiber $\Sigma_1 \times \Sigma_2$.  
We decompose $\ev_{\Sigma_1 \times \Sigma_2}$ as in the top row of the above diagram:
$$\ev_{\Sigma_1 \times \Sigma_2} = \ev_{\Sigma_2} \circ (\ev_{\Sigma_1} \times \id_{\Sigma_2}) \circ (\ev_1 \times \id_{\Sigma_1 \times \Sigma_2})\,.
$$
Using naturality of fiber integration, we obtain:
\begin{align*}
\tau_{\Sigma_1 \times \Sigma_2}
&= 
\widehat\pi^{\Sigma_1 \times \Sigma_2}_! (\ev_{\Sigma_1 \times \Sigma_2}^*h) \\
&\ist{\eqref{eq:fiber_int_funct}}
\widehat\pi^{\Sigma_1}_! \pi^{\Sigma_2}_! (\ev_1 \times \id_{\Sigma_1 \times \Sigma_2})^* (\ev_{\Sigma_1} \times \id_{\Sigma_2})^* \ev_{\Sigma_2}^*h \\
&\ist{\eqref{eq:fiber_int_natural}}
\widehat\pi^{\Sigma_1}_! (\ev_1 \times \id_{\Sigma_1})^* \widehat\pi^{\Sigma_2}_! (\ev_{\Sigma_1} \times \id_{\Sigma_2})^* \ev_{\Sigma_2}^*h \\
&\ist{\eqref{eq:fiber_int_natural}}
\ev_1^* \underbrace{(\widehat\pi^{\Sigma_1}_! \circ \ev_{\Sigma_1}^*)}_{=\tau_{\Sigma_1}} \underbrace{(\widehat\pi^{\Sigma_2}_! \circ \ev_{\Sigma_2}^*)}_{=\tau_{\Sigma_2}}h \\
&= 
\ev_1^* (\tau_{\Sigma_1} \circ \tau_{\Sigma_2})h .
\qedhere
\end{align*}
\end{proof}

Holonomy of differential characters is additive with respect to topological sums (i.e.~disjoint union of oriented closed manifolds): for $h \in \widehat H^k(X;\Z)$ and $\phi:\Sigma_1 \sqcup \Sigma_2 \to X$, we have 
$$
\hol^h(\phi) 
= 
h(\phi_*[\Sigma_1 \sqcup \Sigma_2]_{\partial S_k})
=
h({\phi_1}_*[\Sigma_1]_{\partial S_k} + {\phi_2}_*[\Sigma_2]_{\partial S_k})
=
\hol^h(\phi_1) \cdot \hol^h(\phi_2).
$$
Here $\phi_i$ denotes the restriction of $\phi:\Sigma_1 \sqcup \Sigma_2 \to X$ to $\Sigma_i$ for $i=1,2$.

Likewise, transgression along oriented closed manifolds is additive with respect to topological sums: 
Denote by $r_i: C^\infty(\Sigma_1 \sqcup \Sigma_2,X) \to C^\infty(\Sigma_i;X)$, $\phi \mapsto \phi_i$, $i=1,2$, the restriction maps.
Then we have:

\begin{prop}[Additivity of transgression] \index{Proposition!additivity of transgression}%
Let $\Sigma_1$ and $\Sigma_2$ be oriented closed manifolds.
Let $h \in \widehat H^k(X;\Z)$.
Then we have: 
\index{transgression!additivity}%
\begin{equation}\label{eq:tautau2}
\tau_{\Sigma_1 \sqcup \Sigma_2} h 
=
r_1^*(\tau_{\Sigma_1}h) + r_2^*(\tau_{\Sigma_2}h) \,. 
\end{equation}
\end{prop}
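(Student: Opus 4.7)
The plan is to unwind the definition $\tau_{\Sigma_1\sqcup\Sigma_2}h = \widehat\pi_!\,\ev_{\Sigma_1\sqcup\Sigma_2}^*h$ using the explicit formula for fiber integration and the elementary fact that the trivial bundle with fiber $\Sigma_1\sqcup\Sigma_2$ decomposes into two trivial subbundles.

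First, I would set up notation. Let $M := C^\infty(\Sigma_1\sqcup\Sigma_2,X)$ and identify it canonically with $C^\infty(\Sigma_1,X)\times C^\infty(\Sigma_2,X)$, so that $r_1,r_2$ become the two projections. Write $E := M\times(\Sigma_1\sqcup\Sigma_2)$ and $E_i := M\times\Sigma_i$, and let $\iota_i : E_i\hookrightarrow E$ denote the canonical inclusion. Then $\ev_{\Sigma_1\sqcup\Sigma_2}\circ\iota_i = \ev_{\Sigma_i}\circ(r_i\times\id_{\Sigma_i})$, so the pulled-back character $\ev_{\Sigma_1\sqcup\Sigma_2}^*h$ restricts on $E_i$ to $(r_i\times\id_{\Sigma_i})^*\ev_{\Sigma_i}^*h$.

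Second, I would observe the two compatible splittings that do the work. For any $\zeta\in\ZZ_\ast(M)$, the definition of $\PB_E$ gives directly
\[
\PB_E\zeta \;=\; \iota_{1*}\PB_{E_1}\zeta + \iota_{2*}\PB_{E_2}\zeta \;\in\; \CC_\ast(E),
\]
and for any $\omega\in\Omega^\ast(E)$ the fiber integral splits as $\fint_{\Sigma_1\sqcup\Sigma_2}\omega = \fint_{\Sigma_1}(\omega|_{E_1}) + \fint_{\Sigma_2}(\omega|_{E_2})$. Fix a cycle $z\in Z_{k-\dim(\Sigma_1\sqcup\Sigma_2)-1}(M;\Z)$ and choose $\zeta(z)\in\ZZ_{k-\dim(\Sigma_1\sqcup\Sigma_2)-1}(M)$ and $a(z)\in C_{k-\dim(\Sigma_1\sqcup\Sigma_2)}(M;\Z)$ as in Lemma~\ref{lem:Liftazeta}. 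Since $\dim\Sigma_1=\dim\Sigma_2=\dim(\Sigma_1\sqcup\Sigma_2)$, the same $\zeta(z)$ and $a(z)$ are admissible choices for computing $\widehat\pi_!^{\Sigma_i}$ on $M$ by Lemma~\ref{lem:independent}. Plugging the two splittings into formula \eqref{eq:def_fiber_int} yields
\begin{align*}
(\tau_{\Sigma_1\sqcup\Sigma_2}h)(z)
&= \ev_{\Sigma_1\sqcup\Sigma_2}^*h\bigl([\iota_{1*}\PB_{E_1}\zeta(z)]_{\partial S_k} + [\iota_{2*}\PB_{E_2}\zeta(z)]_{\partial S_k}\bigr)\\
&\qquad \cdot \exp\!\Big(2\pi i\int_{a(z)}\!\!\fint_{\Sigma_1}\!\!\curv(\ev_{\Sigma_1\sqcup\Sigma_2}^*h|_{E_1})\Big)\\
&\qquad \cdot \exp\!\Big(2\pi i\int_{a(z)}\!\!\fint_{\Sigma_2}\!\!\curv(\ev_{\Sigma_1\sqcup\Sigma_2}^*h|_{E_2})\Big),
\end{align*}
which factors as a product of two terms, each of which is precisely the value on $z$ of $\widehat\pi_!^{\Sigma_i}\bigl((r_i\times\id_{\Sigma_i})^*\ev_{\Sigma_i}^*h\bigr)$ computed via \eqref{eq:def_fiber_int}.

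Third, I would apply naturality of fiber integration \eqref{eq:fiber_int_natural} to the pull-back diagram with bundle map $(r_i\times\id_{\Sigma_i},r_i):E_i\to C^\infty(\Sigma_i,X)\times\Sigma_i$, obtaining
\[
\widehat\pi_!^{\Sigma_i}\bigl((r_i\times\id_{\Sigma_i})^*\ev_{\Sigma_i}^*h\bigr) \;=\; r_i^*\widehat\pi_!^{\Sigma_i}\ev_{\Sigma_i}^*h \;=\; r_i^*\tau_{\Sigma_i}h.
\]
Combined with the previous step, this gives \eqref{eq:tautau2} on $z$, and since differential characters are determined by their values on cycles, the identity follows. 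The main obstacle is the low-degree bookkeeping: when $k-1<\dim\Sigma_i$ the target group $\widehat H^{k-\dim\Sigma_i}$ is either trivial or defined via the characteristic class, in which case the corresponding factor $r_i^*\tau_{\Sigma_i}h$ vanishes; this has to be checked separately, either directly from $\PB_{E_i}\zeta(z)=0$ for dimensional reasons or from the analogous splitting of $\pi_!\circ c$ in degree $k=\dim\Sigma_i$.
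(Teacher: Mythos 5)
Your proposal is correct and follows essentially the same route as the paper's proof: decompose the trivial bundle with fiber $\Sigma_1\sqcup\Sigma_2$ into the two sub-bundles over $C^\infty(\Sigma_1\sqcup\Sigma_2,X)$, split $\PB_E\zeta(z)$ and the fiber integral of the curvature accordingly in formula \eqref{eq:def_fiber_int}, and identify each factor with $r_i^*\tau_{\Sigma_i}h$ via naturality of fiber integration (the paper pushes the geometric chains forward with \eqref{eq:PBnatuerlich} where you invoke \eqref{eq:fiber_int_natural} on the intermediate character, but these are the same computation).
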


\begin{proof}
For $i=1,2$ set: 
\begin{align*}
E   &:= C^\infty(\Sigma_1 \sqcup \Sigma_2,X) \times (\Sigma_1 \sqcup \Sigma_2) \\
D_i &:= C^\infty(\Sigma_1 \sqcup \Sigma_2,X) \times \Sigma_i \\
E_i &:= C^\infty(\Sigma_i,X) \times \Sigma_i \,.
\end{align*}
The canonical inclusions $\Sigma_i \hookrightarrow (\Sigma_1 \sqcup \Sigma_2)$ yield inclusions $j_i: D_i \hookrightarrow E$.
From the restriction maps $r_i$ and the evaluation maps, we obtain the commutative diagram:
\begin{equation}\label{eq:EiE_ev}
\xymatrix{
& E \ar[dr]^{\ev_{\Sigma_1 \sqcup \Sigma_2}} & \\
D_i \ar[ur]^{j_i} \ar[dr]_{r_i \times \id_{\Sigma_i}} && X \\
& E_i \ar[ur]_{\ev_{\Sigma_i}}
}
\end{equation}
Let $z \in Z_{k-1-\dim \Sigma_i}(C^\infty(\Sigma_1 \sqcup \Sigma_2,X);\Z)$.
Choose $\zeta(z) \in \ZZ_{k-1-\dim \Sigma_i}(C^\infty(\Sigma_1 \sqcup \Sigma_2,X))$ and $a(z) \in C_{k-\dim \Sigma_i}(C^\infty(\Sigma_1 \sqcup \Sigma_2,X);\Z)$ such that $[z-\partial a(z)]_{\partial S_{k-\dim \Sigma_i}} = [\zeta(z)]_{\partial S_{k-\dim \Sigma_i}}$.
Moreover, choose $\zeta({r_i}_*z) = {r_i}_*\zeta(z)$ and $a({r_i}_*z) = {r_i}_*a(z)$.
Then we have:
\begin{align*}
[\PBE\zeta(z)]_{\partial S_k}
&= {j_1}_*[\PB_{D_1}\zeta(z)]_{\partial S_k} + {j_2}_*[\PB_{D_2}\zeta(z)]_{\partial S_k} \,. \notag
\end{align*}
Applying the evaluation map $\ev_{\Sigma_1 \sqcup \Sigma_2}$ and using \eqref{eq:EiE_ev}, we obtain:
\begin{align}
(\ev_{\Sigma_1 \sqcup \Sigma_2})_*[\PBE\zeta(z)]_{\partial S_k}
&= 
(\ev_{\Sigma_1 \sqcup \Sigma_2})_*\Big( \sum_{i=1}^2 \,{j_i}_*[\PB_{D_i}\zeta(z)]_{\partial S_k}\,\Big)  \notag \\
&\ist{\eqref{eq:EiE_ev}}\,\,\,\,
\sum_{i=1}^2 {\ev_{\Sigma_i}}_*((r_i \times \id_{\Sigma_i})_*[\PB_{D_i}\zeta(z)]_{\partial S_k})  \notag \\
&= 
\sum_{i=1}^2 {\ev_{\Sigma_i}}_*[\PB_{E_i}\zeta({r_i}_*z)]_{\partial S_k}  \,.\label{eq:psievSi}
\end{align}
In the last equality we have used \eqref{eq:PBnatuerlich} for the pull-back diagram:
\begin{equation*}
\xymatrix{
D_i \ar[d] \ar[rr]^{\hspace{2em}r_i \times \id_{\Sigma_i}} && E_i \ar[d] \\
C^\infty(\Sigma_1 \sqcup \Sigma_2,X)  \ar[rr]_{r_i} && C^\infty(\Sigma_i,X) \,.
}
\end{equation*}
Now we compute the transgression along $\Sigma_1 \sqcup \Sigma_2$:
\begin{align*}
(\tau_{\Sigma_1 \sqcup \Sigma_2}h)(z)
&=
(\widehat\pi_! (\ev_{\Sigma_1 \sqcup \Sigma_2})^*h)(z) \\
&\ist{\eqref{eq:def_fiber_int}}
(\ev_{\Sigma_1 \sqcup \Sigma_2})^*h ([\PBE\zeta(z)]_{\partial S_k}) \cdot \exp \Big( 2\pi i \int_{a(z)} \fint_{\Sigma_1\sqcup \Sigma_2} \curv(\ev_{\Sigma_1 \sqcup \Sigma_2}^*h) \Big) \\
&= 
h\Big((\ev_{\Sigma_1 \sqcup \Sigma_2})_*[\PBE\zeta(z)]_{\partial S_k}\Big) 
 \cdot \exp \Big( 2\pi i \int_{a(z)} \sum_{i=1}^2 \fint_{\Sigma_i} j_i^* (\ev_{\Sigma_1 \sqcup \Sigma_2})^* \curv(h) \Big)  \\
&\ist{\eqref{eq:psievSi},\eqref{eq:EiE_ev}}\,\,\,\,\,\,\,\,\,\,\,\,
h\Big( \sum_{i=1}^2 (\ev_{\Sigma_i})_*[\PB_{E_i}\zeta({r_i}_*z)]_{\partial S_k} \Big) \\
& \qquad \qquad \qquad \qquad \cdot \exp \Big( 2\pi i \int_{a(z)} \sum_{i=1}^2 \fint_{\Sigma_i} (r_i \times \id_{\Sigma_i})^* (\ev_{\Sigma_i})^* \curv(h) \Big)\\
&\ist{\eqref{eq:fiber_int_natural}}\,\,
h\Big( \sum_{i=1}^2 (\ev_{\Sigma_i})_*[\PB_{E_i}\zeta({r_i}_*z)]_{\partial S_k} \Big) 
 \cdot \exp \Big( 2\pi i \sum_{i=1}^2 \int_{a({r_i}_*z)} \fint_{\Sigma_i} (\ev_{\Sigma_i})^* \curv(h) \Big)  \\
&= 
\tau_{\Sigma_1}h({r_1}_*z) + \tau_{\Sigma_2}h({r_2}_*z)  \\
&= 
(r_1^* \tau_{\Sigma_1}h + r_2^* \tau_{\Sigma_2}h)(z) \,.  \qedhere
\end{align*}
\end{proof}

\begin{example}\label{ex:trans_k-2}
Let $W$ be a compact oriented closed $(k-1)$-manifold with boundary $\partial W = \Sigma$.
In Section~\ref{subsec:Hol_PT}, we have constructed a Hermitian line bundle with unitary connection $(\LL^h,\nabla^h)$ on $r(C^\infty(W,X))$.
By Example~\ref{ex:U1Buendel}, this corresponds to a degree-$2$ differential character on $r(C^\infty(W,X)$.
To show that $[\LL^h,\nabla^h]= \tau_{S}h \in \widehat H^2((r(C^\infty(W,X));\Z)$ it suffices to compare the holonomies, since  holonomy classifies line bundles with connection up to isomorphism.\footnote{Here we do not distinguish notationally between $\tau_\Sigma h$ as differential character on $C^\infty(S,X)$ and its restriction to $r(C^\infty(W,X)) \subset C^\infty(S,X)$.}
Let $\gamma:S^1 \to r(C^\infty(W,X))$ be a closed path, and let $f:S^1 \times \Sigma \to X$ be the induced map as in Section~\ref{subsec:Hol_PT}.
We then have:
$$
\hol^{\nabla^h}(\gamma) 
\stackrel{\eqref{eq:hol_LL}}{=} 
\hol^h(f)
\stackrel{\eqref{eq:trans_hol}}{=}
(\tau_{S^1 \times \Sigma}h)(f)
\stackrel{\eqref{eq:tautau}}{=}
\tau_{S^1}(\tau_\Sigma h)(\gamma)
\stackrel{\eqref{eq:hol_tau}}{=}
\hol^{\tau_\Sigma h}(\gamma) \,.
$$
Thus $[\LL^h,\nabla^h]= \tau_{\Sigma}h$.
\end{example}

\begin{remark}
\emph{Transgression and Topological Quantum Field Theories}.
\index{topological quantum field theories}
\index{field theory!topological quantum $\sim$}
Topological quantum field theories in the sense of Atiyah \cite{A89} are symmetric monoidal functors from a cobordism category to the category of complex vector spaces.
In particular, they associate to topological sums of closed oriented manifolds the tensor products of the vector spaces associated to the summands.
Transgression of differential characters has similar functorial properties in the sense that it is additive with respect to topological sums.

Topological quantum field theories associate to an oriented compact manifold with boundary an element in the vector space associated to the boundary.
Similarly, transgression along oriented manifolds with boundary yields a section along the restriction map of the differential character obtained by transgression along the boundary. 
Transgression along manifolds with boundary will be constructed in the following section.
\end{remark}

\section{Transgression along manifolds with boundary}\label{subsec:transgression_bound}
Let $W$ be a compact oriented smooth manifold with boundary $\partial W$.
Restriction to the boundary defines a map $r:C^\infty(W,X) \to C^\infty(\partial W,X)$, $r(\phi) = \phi|_{\partial W}$.
We consider the trivial bundles
\begin{align*}
E &= C^\infty(W,X) \times W \to C^\infty(W,X) \,,\\
\partial E &= C^\infty(W,X) \times \partial W \to C^\infty(W,X) \,
\end{align*}
and the evaluation map
\begin{align*}
\ev_W : C^\infty(W,X) \times W &\to X, \quad (\phi,w) \mapsto \phi(w) \,.
\end{align*}
In analogy to the transgression along oriented closed manifolds, we define:

\begin{definition}[Transgression along manifold with boundary] \index{Definition!transgression along manifold with boundary}%
Let $W$ be a compact oriented smooth manifold with boundary $\partial W$ and let $X$ be a smooth manifold.
Fiber integration for fibers with boundary yields the following two \emph{transgression maps} along $W$ and $\partial W$:   
\index{transgression!along manifold with boundary}%
\index{differential characters!transgression along manifold with boundary}
\begin{align*}
\tau^E: \widehat H^k(X;\Z)
&\to
\widehat H^{k-\dim W+1}(C^\infty(W,X),C^\infty(W,X);\Z) 
\,, \\
h
&\mapsto
\widehat \pi^E_!\ev_W^*h \,, \\
\tau^{\partial E}: \widehat H^k(X;\Z)
&\to
\widehat H^{k-\dim\partial W}(C^\infty(W,X);\Z)\,, \\
h
&\mapsto
\widehat \pi^{\partial E}_!\ev_W^*h \,.
\end{align*}
\index{+TauE@$\tau^E$, transgression along manifolds with boundary}
\index{+TaudelE@$\tau^{\partial E}$, transgression along the boundary}
\end{definition}

\begin{example}
We consider the special case $k=2$ and $W=I=[0,1]$.
\index{transgression!along interval $I$}%
The space $\PP(X):=C^\infty(I,X)$ is called the {\em path space} of $X$.
\index{+PpX@$\PP(X)$, path space of $X$}%
\index{path space}%
In this case, the trivial bundle $\partial E= C^\infty(I,X) \times \partial I = \PP(X) \times \{0,1\} \to \PP(X)$ is a twofold covering.
By Example~\ref{ex:U1Buendel}, any differential character $h \in \widehat H^2(X;\Z)$ corresponds to (the isomorphism class of) a $\Ul$-bundle with connection $(P,\nabla)$ on $X$.
Transgression along $\partial I$ yields a $\Ul$-bundle with connection $\tau^{\partial E}(P,\nabla)$ on the path space $\PP(X)$.
Its fiber over a path $\gamma \in \PP(X)$ is given by $P_{\gamma(0)}^* \otimes P_{\gamma(1)}$.
Transgression along $I$ yields a section $\sigma$ of this bundle along the restriction map $r: \PP(X) \to C^\infty(\{0,1\},X) = X \times X$, $\gamma \mapsto (\gamma(0),\gamma(1))$.
As we have seen in Example~\ref{ex:partrans}, $\sigma(\gamma)$ can be chosen to be the parallel transporter along $\gamma\in\PP(X)$.
\end{example}

In the following we consider the relations between the three transgression maps $\tau_{\partial W}$, $\tau^{\partial E}$ and $\tau^E$.  
We first note that $\tau^Eh$ is a section of $\tau^{\partial E}h$:
\begin{equation}
\vds(\tau^E h)
= \vds(\widehat \pi^E_! \ev_W^*h) 
\stackrel{\eqref{eq:FI_bound_relative}}{=} \widehat\pi^{\partial E}_! \ev_W^*h
= \tau^{\partial E} h \,.
\label{eq:tauEdelE}
\end{equation}
We note further that $\tau^{\partial E}$ is not the same as $\tau_{\partial W}$ defined in Section~\ref{subsec:transgression} (with $\Sigma=\partial W$) since the former takes values in differential characters on $C^\infty(W,X)$ rather than on $C^\infty(\partial W,X)$.
But they are related by the restriction map $r:C^\infty(W,X) \to C^\infty(\partial W,X)$, $\phi \mapsto \phi|_{\partial W}$.
\index{restriction map}%
We have the pull-back diagram:
\begin{equation}\label{eq:Rr}
\xymatrix{
\partial E = C^\infty(W,X) \times \partial W \ar[r]^(0.53){R} \ar[d] & C^\infty(\partial W,X) \times \partial W \ar[d]\\
C^\infty(W,X) \ar[r]^(0.53){r} & C^\infty(\partial W,X) \,.
}
\end{equation}
By \eqref{eq:fiber_int_natural}, fiber integration is natural with respect to pull-back along smooth maps, hence $\widehat \pi^{\partial E}_! \circ R^* = r^* \circ \widehat \pi^E_!$.
This yields 
\begin{equation}\label{eq:taudelWdelE}
\tau^{\partial E}h
=
\widehat\pi_!^{\partial E}(\ev_W^*h)
=
\widehat\pi_!^{\partial E}(R^* \ev_{\partial W}^*h)
=
r^*\widehat\pi_!(\ev_{\partial W}^*h)
=
r^*(\tau_{\partial W} h) \,.
\end{equation}
Thus the three transgression maps fit into the following commutative diagram:
\begin{equation}\label{eq:diag_3_tau}
\xymatrix{
&&& \widehat H^{k-\dim W+1}(C^\infty(W,X),C^\infty(W,X);\Z) \ar_{\qquad \vds}[d] \\
\widehat H^k(X;\Z)  
\ar[rrru]^{\tau^E} \ar[rrr]^{\tau^{\partial E}} \ar[rrrd]_{\tau_{\partial W}}
&&& \widehat H^{k-\dim\partial W}(C^\infty(W,X);\Z) \\
&&& \widehat H^{k-\dim \partial W}(C^\infty(\partial W,X);\Z) \ar[u]^{r^*}
} 
\end{equation}
In particular, $\tau_{\partial W}h \in \widehat H^{k-\dim \partial W}(C^\infty(\partial W,X);\Z)$ is topologically trivial along $r$: for the pull-back along $r$ of the characteristic class, we find: 
\[
r^*c(\tau_{\partial W}h)
= c(r^*\tau_{\partial W}h) 
\stackrel{\eqref{eq:taudelWdelE}}{=} c(\tau^{\partial E}h)  
\stackrel{\eqref{eq:tauEdelE}}{=} c(\vds(\tau^E h)) 
\stackrel{\eqref{eq:rel_diff_charact_exact_sequence}}{=} 0. 
\]
By Corollary~\ref{cor:existence_sections}, we conclude that $\tau_{\partial W}h$ has sections along the restriction map.
Thus there exist relative characters $f \in \widehat H^{k-\dim \partial W}_r(C^\infty(\partial W,X),C^\infty(W,X);\Z)$ with $\vds(f) = \tau_{\partial W}h$.
It would be nice to extend the transgression maps to a construction of such a section.
In some cases, it is possible to presribe its covariant derivative. 
In more special cases, this uniquely determines the section. 

\emph{Sections for $\tau_{\partial W}h$ with prescribed covariant derivative}.
We want to construct a section $f$ of $\tau_{\partial W}h$ along $r$ with prescribed covariant derivative. 
\index{covariant derivative!prescribed $\sim$}
Assume that $r^*:H^{k-\dim\partial W-1}(C^\infty(\partial W,X);\Ul) \to H^{k-\dim\partial W-1}(C^\infty(W,X);\Ul)$ is the trivial map.
This holds for instance if $W=I$ and $X$ is connected, since in this case the path space $C^\infty(W,X) = \PP(X)$ is contractible.
We start with a pair $(\curv(\tau_{\partial W}h,\chi) \in \Omega^{k-\dim\partial W}_{r,0}(C^\infty(\partial W,X),C^\infty(W,X))$.
Since the map 
$$
(\curv,\cov):\widehat H^{k-\dim\partial W}(C^\infty(\partial W,X),C^\infty(W,X);\Z) \to \Omega^{k-\dim\partial W}_{r,0}(C^\infty(\partial W,X),C^\infty(W,X))
$$
is surjective, we find a relative character $f_0 \in H^{k-\dim \partial W}_r(C^\infty(\partial W,X),C^\infty(W,X);\Z)$ with $(\curv(f_0),\cov(f_0)) = (\curv(\tau_{\partial W}h,\chi)$.
Now take any section $f_1$ of $\tau_{\partial W}h$.
Since $\curv(f_0) = \curv(\tau_{\partial W}h) = \curv(f_1)$, we have $\vds(f_1) - \vds(f_0) = j(u)$ for some $u \in H^{k-\dim\partial W-1}(C^\infty(\partial W,X);\Ul)$.
By the mapping cone sequence for cohomo\-logy with $\Ul$-coefficients and the assumption on the restriction map, we find $\bar u \in H^{k-\dim\partial W-1}(C^\infty(\partial W,X),C^\infty(W,X);\Ul)=\{0\}$ which maps to $u$.
Now put $f:= f_0 + j(\bar u)$.
Then we have $\vds(f) = \vds(f_0) + j(u) = \vds(f_1) = \tau_{\partial W}h$.
Moreover, $\cov(f) = \cov(f_0) = \chi$.
Thus we have found a section $f$ of $\tau_{\partial W}h$ along the restriction map $r$ with prescribed covariant derivative $\cov(f) =\chi$. 
By Corollary~\ref{cor:existence_sections}, the differential form $\chi$ uniquely determines the section $f$ if in addition the map $r^*:H^{k-\dim\partial W-1}(C^\infty(\partial W,X);\Ul) \to H^{k-\dim\partial W-1}(C^\infty(W,X);\Ul)$ is surjective, i.e.~if $H^{k-\dim\partial W-1}(C^\infty(W,X);\Ul)= \{0\}$.
Thus we have proved:

\begin{cor}[Transgression with prescribed covariant derivative I]\label{cor:rel_trans_chi} \index{Corollary!transgression with prescribed covariant derivative I}%
Let $X$ be a smooth manifold, and let $h \in \widehat H^k(X;\Z)$.
Let $W$ be an oriented manifold with boundary.
Assume $H^{k-\dim\partial W-1}(C^\infty(W,X);\Ul)= \{0\}$.
Let $\chi \in \Omega^{k-\dim\partial W-1}(C^\infty(W,X))$ be a differential form such that $(\curv(\tau_{\partial W}h),\chi) \in \Omega^{k-\dim\partial W}_{r,0}(C^\infty(\partial W,X),C^\infty(W,X))$.

Then the transgression maps $\tau_{\partial W}$, $\tau^{\partial E}$ and $\tau^E$ defined in Sections~\ref{subsec:transgression} and \ref{subsec:transgression_bound} uniquely determine a relative differential character $\tau^\chi_{W,\partial W}h \in \widehat H^{k-\dim \partial W}_r(C^\infty(\partial W,X),C^\infty(W,X);\Z)$ satisfying
\index{transgression!with prescribed covariant derivative}%
\begin{align*}
\vds(\tau^\chi_{W,\partial W}h) &= \tau_{\partial W}h \\
\cov(\tau^\chi_{W,\partial W}h) &= \chi \,.  
\end{align*}
\index{+TauWdelW@$\tau_{W,\partial W}$, transgression with presribed covariant derivative}
\end{cor}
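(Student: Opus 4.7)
The plan is to assemble three ingredients already available in the excerpt. First, the commutative diagram~\eqref{eq:diag_3_tau} together with $\vds(\tau^E h)=\tau^{\partial E}h=r^*(\tau_{\partial W}h)$ gives
\[
r^*c(\tau_{\partial W}h) = c(r^*\tau_{\partial W}h) = c(\vds(\tau^E h)) = 0
\]
by exactness of \eqref{eq:rel_diff_charact_exact_sequence}, so $\tau_{\partial W}h$ is topologically trivial along the restriction map $r$ and Corollary~\ref{cor:existence_sections} produces some section $f_1$ of $\tau_{\partial W}h$. The remaining task is to adjust $f_1$ so that its covariant derivative becomes the prescribed form $\chi$, and to show that this prescription determines the section uniquely.

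For existence I would use surjectivity of $(\curv,\cov)$, coming from the second of the two short exact sequences that immediately follow the definition of relative differential characters, applied to the pair $(\curv(\tau_{\partial W}h),\chi)\in\Omega^{k-\dim\partial W}_{r,0}$ which is $d_r$-closed and integral by hypothesis. This yields a relative character $f_0$ with $\curv(f_0)=\curv(\tau_{\partial W}h)$ and $\cov(f_0)=\chi$. The absolute characters $\vds(f_0)$ and $\vds(f_1)=\tau_{\partial W}h$ have the same curvature, so by diagram~\eqref{eq:3x3diagram} their difference is flat and equals $j(u)$ for some $u\in H^{k-\dim\partial W-1}(C^\infty(\partial W,X);\Ul)$. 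The vanishing assumption $H^{k-\dim\partial W-1}(C^\infty(W,X);\Ul)=\{0\}$ forces $r^*u=0$, so the $\Ul$-coefficient mapping cone exact sequence
\[
H^{k-\dim\partial W-1}_r(\cdots;\Ul)\longrightarrow H^{k-\dim\partial W-1}(C^\infty(\partial W,X);\Ul)\xrightarrow{r^*}H^{k-\dim\partial W-1}(C^\infty(W,X);\Ul)
\]
supplies a class $\bar u$ lifting $u$. I would then set
\[
\tau^\chi_{W,\partial W}h := f_0 + j(\bar u),
\]
where $j$ denotes the inclusion of flat classes into relative differential cohomology. Since $j(\bar u)$ is flat one has $\cov(j(\bar u))=0$, so $\cov(\tau^\chi_{W,\partial W}h)=\chi$; and $\vds(j(\bar u))=j(u)=\vds(f_1)-\vds(f_0)$ yields $\vds(\tau^\chi_{W,\partial W}h)=\tau_{\partial W}h$ as required.

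For uniqueness I would invoke the final clause of Corollary~\ref{cor:existence_sections}: two sections of $\tau_{\partial W}h$ sharing both $\vds$ and $\cov$ coincide as soon as $r_*$ on integral homology in the appropriate degree is injective. By the universal coefficient theorem and divisibility of $\Ul$, this injectivity is dual to surjectivity of $r^*$ on $\Ul$-cohomology, and vanishing of the target group $H^{k-\dim\partial W-1}(C^\infty(W,X);\Ul)$ supplies this surjectivity trivially.

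The main obstacle I anticipate is the bookkeeping of degrees between the two short exact sequences governing relative characters, the mapping cone sequence in $\Ul$-cohomology, and the uniqueness clause of Corollary~\ref{cor:existence_sections}: an off-by-one slip would propagate and leave the single cohomological hypothesis insufficient for either existence or uniqueness. Beyond that indexing, the argument is an essentially mechanical diagram chase combining the previously assembled exact sequences, with no further geometric input needed.
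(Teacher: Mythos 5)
Your argument is, step for step, the one the paper gives: existence of some section $f_1$ via $r^*c(\tau_{\partial W}h)=0$ from diagram~\eqref{eq:diag_3_tau} and Corollary~\ref{cor:existence_sections}; a relative character $f_0$ with the prescribed pair $(\curv(\tau_{\partial W}h),\chi)$ from surjectivity of $(\curv,\cov)$; the observation that $\vds(f_1)-\vds(f_0)=j(u)$ is flat; the lift of $u$ through the mapping cone sequence in $\Ul$-coefficients using the vanishing hypothesis; and the correction $f:=f_0+j(\bar u)$. That existence half is complete and coincides with the paper's proof.

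The place where the degree bookkeeping you were worried about actually bites is the uniqueness clause. Two sections $f_1,f_2$ with equal $\vds$ and $\cov$ differ by $\ti(g)$ with $g\in\widehat H^{n-1}_{\mathrm{flat}}(C^\infty(W,X);\Z)$, where $n=k-\dim\partial W$; by diagram~\eqref{eq:3x3diagram} this flat group is $H^{n-2}(C^\infty(W,X);\Ul)=H^{k-\dim\partial W-2}(C^\infty(W,X);\Ul)$, and by Theorem~\ref{thm:rel_diff_charact_exact_sequence} one needs $g\in r^*\widehat H^{n-1}_{\mathrm{flat}}(C^\infty(\partial W,X);\Z)$, i.e.\ surjectivity of $r^*$ on $H^{n-2}(\cdot;\Ul)$ (equivalently, injectivity of $r_*$ on $H_{k-\dim\partial W-2}$, which is the literal hypothesis of the last clause of Corollary~\ref{cor:existence_sections}). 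Your assertion that ``vanishing of the target group $H^{k-\dim\partial W-1}(C^\infty(W,X);\Ul)$ supplies this surjectivity trivially'' is therefore one degree off: that hypothesis is precisely what the existence step consumes (it kills the obstruction $r^*u$ in degree $n-1$), but it says nothing about $H^{n-2}(C^\infty(W,X);\Ul)$. To be fair, the paper's own text makes the identical reduction, so you have faithfully reproduced the intended argument; but as written, uniqueness requires the additional input that $r^*$ is surjective on $H^{k-\dim\partial W-2}(\cdot;\Ul)$, and you should either add that hypothesis or explain why it follows in the situations at hand.
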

A distinguished form $\chi \in \Omega^{k-\dim\partial W-1}(C^\infty(W,X))$ is obtained by integrating $\ev_W^*\curv(h)$ over the fiber of the trivial bundle $C^\infty(W,X) \times W \to C^\infty(W,X)$. 
This will be discussed in the remainder of this section: 

\emph{Sections for $\tau_{\partial W}h$ with covariant derivative determined by transgression}.
Transgression along $W$ yields the form $\cov(\tau^Eh) \in \Omega^{k-\dim\partial W-1}(C^\infty(W,X))$ as a natural candidate for the covariant derivative of a section $\tau_{\partial W}h$ along the restriction map.
The pair $(\curv(\tau_{\partial W}h,\cov(\tau^Eh))$ is $d_r$-closed since 
$$
r^*\curv(\tau_{\partial W}h)
=
\curv(r^*\tau_{\partial W}h)
\stackrel{\eqref{eq:taudelWdelE}}{=}
\curv(\tau^{\partial E}h)
\stackrel{\eqref{eq:tauEdelE}}{=}
d\cov(\tau^Eh) \,.
$$
It remains to check that it has integral periods.
In general this might not be the case.

\begin{cor}[Transgression with prescribed covariant derivative II]\label{cor:rel_trans2} \index{Corollary!transgression with prescribed covariant derivative II}
Let $X$ be a smooth manifold, and let $h \in \widehat H^k(X;\Z)$.
Let $W$ be an oriented manifold with boundary.
Assume $H^{k-\dim\partial W-1}(C^\infty(W,X);\Ul)= \{0\}$.
Assume further that $(\curv(\tau_{\partial W}h),\cov(\tau^Eh)) \in \Omega^{k-\dim\partial W}_{r,0}(C^\infty(\partial W,X),C^\infty(W,X))$.

Then the transgression maps $\tau_{\partial W}$, $\tau^{\partial E}$ and $\tau^E$ defined in Sections~\ref{subsec:transgression} and \ref{subsec:transgression_bound} uniquely determine a relative differential character $\tau_{W,\partial W}h \in \widehat H^{k-\dim \partial W}_r(C^\infty(\partial W,X),C^\infty(W,X);\Z)$ satisfying
\begin{align*}
\vds(\tau_{W,\partial W}h) &= \tau_{\partial W}h  \\
\cov(\tau_{W,\partial W}h) &= \cov(\tau^Eh) \stackrel{\eqref{eq:def_fiber_int_relative}}{=} (-1)^{k-\dim W} \fint_W \ev_W^*\curv(h) \,. 
\end{align*}
\index{+TauWdelW@$\tau_{W,\partial W}$, transgression with presribed covariant derivative}
\end{cor}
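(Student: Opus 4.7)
The plan is to deduce this corollary directly from Corollary~\ref{cor:rel_trans_chi} by taking the prescribed covariant derivative to be $\chi := \cov(\tau^Eh)$. To apply that corollary one must verify two things: that the pair $(\curv(\tau_{\partial W}h),\chi)$ is a $d_r$-closed relative form with integral periods, and that the cohomological vanishing hypothesis on $C^\infty(W,X)$ holds. The latter is assumed verbatim in the statement, so I would only have to address the former.

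First I would check $d_r$-closedness, which amounts to the identity $r^*\curv(\tau_{\partial W}h) = d\chi$. This has actually already been recorded in the discussion leading up to the corollary: using \eqref{eq:taudelWdelE} and \eqref{eq:tauEdelE} together with naturality of $\curv$ under pull-back, one computes
\[
r^*\curv(\tau_{\partial W}h) = \curv(r^*\tau_{\partial W}h) = \curv(\tau^{\partial E}h) = \curv(\vds(\tau^Eh)) = d\cov(\tau^Eh).
\]
The integrality of periods of $(\curv(\tau_{\partial W}h),\cov(\tau^Eh))$ is the second hypothesis of the corollary, so both conditions defining $\Omega^{k-\dim\partial W}_{r,0}(C^\infty(\partial W,X),C^\infty(W,X))$ are in place.

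Next I would invoke Corollary~\ref{cor:rel_trans_chi} with this choice of $\chi$. This produces a unique relative differential character
\[
\tau_{W,\partial W}h \in \widehat H^{k-\dim \partial W}_r\big(C^\infty(\partial W,X), C^\infty(W,X);\Z\big)
\]
with $\vds(\tau_{W,\partial W}h) = \tau_{\partial W}h$ and $\cov(\tau_{W,\partial W}h) = \chi = \cov(\tau^Eh)$. Finally, the alternative description of the covariant derivative as $(-1)^{k-\dim W}\fint_W \ev_W^*\curv(h)$ is nothing but \eqref{eq:def_fiber_int_relative} applied to the trivial fibre bundle $\pi^E:C^\infty(W,X)\times W \to C^\infty(W,X)$ and the pulled-back character $\ev_W^*h$, since by definition $\tau^Eh = \widehat\pi^E_!\ev_W^*h$.

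There is essentially no obstacle once Corollary~\ref{cor:rel_trans_chi} is available: the entire content of the statement is packaged into the verification of the $d_r$-closedness identity, which was already extracted from the commutativity of diagram~\eqref{eq:diag_3_tau}. The only point demanding a little care is bookkeeping of signs in the formula for $\cov(\tau^Eh)$, which is handled by the orientation conventions built into \eqref{eq:def_fiber_int_relative}.
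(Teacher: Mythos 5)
Your proposal is correct and follows exactly the paper's route: the corollary is obtained by specializing Corollary~\ref{cor:rel_trans_chi} to $\chi=\cov(\tau^Eh)$, with the $d_r$-closedness identity $r^*\curv(\tau_{\partial W}h)=d\cov(\tau^Eh)$ verified via \eqref{eq:taudelWdelE} and \eqref{eq:tauEdelE} and integrality of periods taken as a hypothesis. The identification of $\cov(\tau^Eh)$ with $(-1)^{k-\dim W}\fint_W\ev_W^*\curv(h)$ via \eqref{eq:def_fiber_int_relative} is likewise exactly what the paper does.
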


\begin{example}
Let $k=\dim W$.
In this case, the assumption on $r^*$ is automatically satisfied.
Given a differential character $h\in\widehat H^k(X;\Z)$, we obtain the relative character \mbox{$\tau_{W,\partial W}h \in \widehat H^{1}_r(C^\infty(\partial W,X),C^\infty(W,X);\Z)$}, in other words a $\Ul$-valued function $\tau_{\partial W}h=\hol^h$ on $C^\infty(\partial W,X)$ together with a real-valued function $\cov(\tau^Eh)$ on $C^\infty(W,X)$.
The condition $\vds(\tau_{W,\partial W})=\tau_{\partial W}h$ says that $\hol^h\circ r = \exp \circ 2\pi i\cdot\cov(\tau^Eh)$.
In the special case $k=2$, this is the well-known fact that the holonomy along a contractible loop is given by the integral of the curvature over a spanning disk.
\end{example}

\begin{example}
Let $k=\dim W+1$.
In Section~\ref{subsec:Hol_PT} we have constructed a Hermitian line bundle with connection $(\LL^h,\nabla^h)$ on $r(C^\infty(W,X))$ together with a section $\PT^h$ along the restriction map $r$.
By Example~\ref{ex:H2rel}, this determines a relative differential character $[\LL^h,\nabla^h,\PT^h] \in \widehat H^2_r(r(C^\infty(W,X)),C^\infty(W,X);\Z)$.
By Example~\ref{ex:trans_k-2}, we have $\vds([\LL^h,\nabla^h,\PT^h]) = [\LL^h,\nabla^h] = \tau_{\partial W} h$.
Moreover, by \eqref{eq:covLL}, we have $\cov([\LL^h,\nabla^h,\PT^h]) = -\fint_W \ev_W^*\curv(h) = \cov(\tau^Eh)$.
Under the assumption of Corollary~\ref{cor:rel_trans2}, we conclude $[\LL^h,\nabla^h,\PT^h] = \tau_{W,\partial W}h$.
\end{example}

\section{Chain field theories}\label{subsubsec:Chain_field}
Topological quantum field theories in the sense of Atiyah \cite{A89} are symmetric monoidal functors from a cobordism category to the category of complex vector spaces. 
\index{topological quantum field theories}
\index{field theory!topological quantum $\sim$}
This concept of topological field theories has been modified in several directions, e.g.~by replacing the source or target category. 

Chain field theories in the sense of \cite{T04} are a modification of topological field theories where the source category is replaced by a category with smooth cycles as objects and chains as morphisms.
Chain field theories are closely related to differential characters.
Using the notion of thin chains, we generalize \cite[Thm.~3.5]{BTW04} from 2-dimensional thin invariant field theories to chain field theories of arbitrary dimension: chain field theories are invariant under thin $2$-morphisms.

We briefly recall the notion of chain field theories:
The objects of the category $\Ch^{n+1}(X)$ are smooth singular $n$-cycles in $X$.
\index{+ChainX@$\Ch^{n+1}(X)$, chain category}%
\index{chain category}%
A morphism from $x$ to $x'$ is an $(n+1)$-chain $a$ such that $\partial a = x'-x$.
Taking the additive group structure of $Z_n(X;\Z)$ and $C_{n+1}(X;\Z)$ as the tensor product turns $\Ch^{n+1}(X)$ into a strict monoidal category, more precisely a strict symmetric monoidal groupoid, see \cite[Prop.~1.1]{T04}.

Let $z \in Z_{n+1}(X;\Z)$ and let $x$ be any object in the category $\Ch^{n+1}(X)$.
Then we have $\partial z = 0= x-x$.
This yields a 1-1 correspondence of the automorphism group of any object of $\Ch^{n+1}(X)$ with the group $Z_{n+1}(X;\Z)$ of smooth singular $(n+1)$-cycles in $X$. 

Let $a,a' \in C_{n+1}(X;\Z)$ and $x,x' \in Z_n(X;\Z)$ with $\partial a = \partial a' = x'-x$.
Then the chains $a,a'$ yield morphisms $x\xrightarrow{a}x'$, $x\xrightarrow{a'}x'$ in $\Ch^{n+1}(X)$ between the same objects $x,x'$.
A chain $b \in C_{n+2}(X;\Z)$ satisfying $\partial b = a'-a$ is called a \emph{$2$-morphism} from the morphism $x\xrightarrow{a}x'$ to the morphism $x\xrightarrow{a'}x'$.\footnote{In \cite[p.~91]{T04}, this is called a chain deformation.}
\index{twomorphism@$2$-morphism}%
We write $a \xRightarrow{b} a'$ for a $2$-morphism from $x\xrightarrow{a}x'$ to $x\xrightarrow{a'}x'$.
\index{+ATToa@$a \xRightarrow{b} a'$, $2$-morphism in $\Ch^{n+1}(X)$}
If $b \in C_{n+2}(X;\Z)$ is thin in the sense of Definition~\ref{def:thin}, i.e.~$b \in S_{n+2}(X;\Z)$, then we call $a \xRightarrow{b} a'$ a \emph{thin $2$-morphism}.
\index{thin $2$-morphism}%
\index{twomorphism@$2$-morphism!thin $\sim$}%

Denote by $\C$-$\tt{Lines}$ the category whose objects are Hermitian lines and whose morphisms are isometries.   
\index{+CLines@$\C$-$\tt{Lines}$, category of Hermitian lines}
\index{category of Hermitian lines}%
A \emph{chain field theory} on $X$ is defined to be a functor of symmetric monoidal tensor categories $E:\Ch^{n+1}(X) \to \mbox{$\C$-$\tt{Lines}$}$ with an additional smoothness condition.
\index{chain field theories}%
\index{field theory!chain $\sim$}%
\index{+EChnXCLines@$E:\Ch^{n+1}(X) \to \mbox{$\C$-$\tt{Lines}$}$, chain field theory}
To formulate this condition, note that $E$ maps the automorphism group $Z_{n+1}(X;\Z)$ of the monoidal unit $0$ of $\Ch^{n+1}(X)$ to the automorphism group $\Ul$ of the monoidal unit $\C$ of $\C$-$\tt{Lines}$.
Hence we obtain a homomorphism $Z_{n+1}(X;\Z) \to \Ul$.
The smoothness condition for the functor $E$ is the requirement that there exists a closed differential form $\omega \in \Omega^{n+2}(X)$ such that for any chain $b \in C_{n+2}(X;\Z)$, we have
\begin{equation}\label{eq:ChFT_smooth}
E(0\xrightarrow{\partial b}0) 
=
\exp\Big( 2 \pi i \int_b \omega \Big) \in \Ul \,.
\end{equation}
Thus a chain field theory $E$ induces a homomorphism $Z_{n+1}(X;\Z) \to \Ul$, $z \mapsto E(z)(1)$.
By the smoothness condition \eqref{eq:ChFT_smooth}, this yields a differential character in $\widehat H^{n+1}(X;\Z)$ with curvature $\omega$.
\index{differential characters!chain field theories}%
\index{chain field theories!differential characters}%
Moreover, chain field theories are classified up to equivalence by the differential characters obtained in this manner, see~\cite[Thm.~2.1]{T04}.

For any Hermitian line $L$, the group of isometric automorphisms of $L$ is canonically identified with $\Ul$.
Thus let $E$ be a chain field theory, $x \in Z_n(X;\Z)$ an object, and $z \in Z_{n+1}(X;\Z)$ an automorphism of $x$.
Then the isometry $E(x\xrightarrow{z}x)$ of the Hermitian line $E(x)$ is given as 
\begin{equation}\label{eq:autom}
E(x\xrightarrow{z}x) 
=  
(E(0\xrightarrow{z}0)(1)) \cdot \id_{E(x)} \,.
\end{equation}

By \cite[p.~434]{BTW04}, chain field theories in the sense of \cite{T04} generalize thin invariant field theories in the sense of \cite{BTW04}.
By \cite[Thm.~3.5]{BTW04}, thin invariant field theories are invariant under thin cobordism of morphisms.
In the context of chain field theories, we obtain the analogous result:

\begin{prop}[Thin invariance] \index{Proposition!thin invariance}%
Chain field theories are invariant under thin $2$-morphisms:
\index{thin invariance!of chain field theories}%
\index{chain field theories!thin invariance}%
Let $E:\Ch^{n+1}(X) \to \mbox{$\C$-$\tt{Lines}$}$ be a chain field theory.
Let $x,x' \in Z_n(X;\Z)$ be objects and $x\xrightarrow{a}x'$, $x\xrightarrow{a'}x'$ morphisms in $\Ch^{n+1}(X)$.
Let $b \in S_{k+2}(X;\Z)$ with $\partial b = a'-a$ and $a\xRightarrow{b}a'$ the corresponding thin $2$-morphism.
Then we have 
\[
E(x\xrightarrow{a}x') = E(x\xrightarrow{a'}x') . 
\]
\end{prop}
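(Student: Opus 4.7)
The plan is to exploit the decomposition $a' = a + \partial b$ together with the smoothness condition \eqref{eq:ChFT_smooth}, reducing the claim to the fact that thin chains have vanishing integral against any differential form. Concretely, I would first observe that since $\partial b = a' - a$, the morphism $x\xrightarrow{a'}x'$ factors in $\Ch^{n+1}(X)$ as the composition of $x\xrightarrow{a}x'$ followed by the automorphism $x'\xrightarrow{\partial b}x'$. This uses only the monoidal/additive structure of $\Ch^{n+1}(X)$ and the fact that $\partial b \in Z_{n+1}(X;\Z)$ is a cycle (hence an automorphism of $x'$). Applying the functor $E$ then gives
\[
E(x\xrightarrow{a'}x')
= E(x'\xrightarrow{\partial b}x') \circ E(x\xrightarrow{a}x').
\]

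Next I would evaluate the automorphism $E(x'\xrightarrow{\partial b}x')$ using \eqref{eq:autom}, which reduces it to multiplication by the scalar $E(0\xrightarrow{\partial b}0)(1)\in\Ul$. By the smoothness condition \eqref{eq:ChFT_smooth} this scalar equals $\exp(2\pi i \int_b \omega)$, where $\omega\in\Omega^{n+2}(X)$ is the curvature form of the character associated with $E$. Since $b\in S_{n+2}(X;\Z)$ is thin, Definition~\ref{def:thin} gives $\int_b\omega = 0$, so the scalar is $1$ and $E(x'\xrightarrow{\partial b}x') = \id_{E(x')}$. Substituting back yields $E(x\xrightarrow{a'}x') = E(x\xrightarrow{a}x')$.

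There is no real obstacle here; the only point that requires care is the bookkeeping that translates the additive structure on morphisms of $\Ch^{n+1}(X)$ into an honest composition of morphisms with a matching source and target, so that the functoriality of $E$ applies. Once that is set up, the conclusion is an immediate application of \eqref{eq:ChFT_smooth}, \eqref{eq:autom}, and the defining property of thin chains.
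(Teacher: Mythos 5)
Your argument is correct and is essentially the paper's own proof: the paper composes $x\xrightarrow{a'}x'$ with the inverse of $x\xrightarrow{a}x'$ to produce the automorphism $x'\xrightarrow{a'-a}x'$ and then applies \eqref{eq:autom} and \eqref{eq:ChFT_smooth}, which is the same computation as your factorization $a'=a+\partial b$ read in the other direction. No substantive difference.
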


\begin{proof}
The composition of the morphism $x\xrightarrow{a'}x'$ with the inverse of $x\xrightarrow{a}x'$ yields an automorphism of $x'$.
For the corresponding automorphism of $E(x')$, we have: 
\allowdisplaybreaks{
\begin{align*}
E(x\xrightarrow{a'}x') \circ (E(x\xrightarrow{a}x'))^{-1}
&=
E((x\xrightarrow{a'}x') \circ (x'\xrightarrow{-a}x)) \\
&=
E(x'\xrightarrow{a'-a}x') \\
&\ist{\eqref{eq:autom}}\,\,
(E(0\xrightarrow{a'-a}0)(1)) \cdot \id_{x'} \\
&\ist{\eqref{eq:ChFT_smooth}}\,\,
\exp \Big( 2\pi i \underbrace{\int_b \omega}_{=0} \Big) \cdot  \id_{x'} \\
&= \id_{x'} \,.
\end{align*}
}
Thus $E(x\xrightarrow{a'}x') = E(x\xrightarrow{a}x')$.
\end{proof}

%%%%%%%%%%%%%%%%%%%%%%%%%%%%%%%%%%%%%%%%%%%%%%%%%%%%%%%%%%%%%%%%%%%%%%%%%

%%%%%%%%%%%%%%%%%%%%%%%%%%%%%%%%%%%%%%%%%%%%%%%%%%%%%%%%%%%%%%%%%%%%%%%%%
\nopagebreak[5]
\printindex

%%%%%%%%%%%%%%%%%%%%%%%%%%%%%%%%%%%%%%%%%%%%%%%%%%%%%%%%%%%%%%%%%%%%%%%%%

%%%%%%%%%%%%%%%%%%%%%%%%%%%%%%%%%%%%%%%%%%%%%%%%%%%%%%%%%%%%%%%%%%%%%%%%%
\end{document}